\DeclareFontFamily{OT1}{rsfs}{}
\DeclareFontShape{OT1}{rsfs}{n}{it}{<-> rsfs10}{}
\DeclareMathAlphabet{\mathscr}{OT1}{rsfs}{n}{it}
\newtheorem{theorem}{Theorem}[section]
\newtheorem{lemma}[theorem]{Lemma}
\newtheorem{corol}[theorem]{Corollary}
\newtheorem{prop}[theorem]{Proposition}
\newtheorem{conj}{Conjecture}
\theoremstyle{definition} 
	\newtheorem{defin}[theorem]{Definition}}
\theoremstyle{remark} \newtheorem{remark}[theorem]{Remark}
\newtheorem{example}[theorem]{Example}}
\newcommand{\nc}{\newcommand}
\nc{\rnc}{\renewcommand}
\nc{\bb}[1]{{\mathbb #1}}
\nc{\bbA}{\bb{A}}\nc{\bbB}{\bb{B}}\nc{\bbC}{\bb{C}}\nc{\bbD}{\bb{D}}
\nc{\bbE}{\bb{E}}\nc{\bbF}{\bb{F}}\nc{\bbG}{\bb{G}}\nc{\bbH}{\bb{H}}
\nc{\bbI}{\bb{I}}\nc{\bbJ}{\bb{J}}\nc{\bbK}{\bb{K}}\nc{\bbL}{\bb{L}}
\nc{\bbM}{\bb{M}}\nc{\bbN}{\bb{N}}\nc{\bbO}{\bb{O}}\nc{\bbP}{\bb{P}}
\nc{\bbQ}{\bb{Q}}\nc{\bbR}{\bb{R}}\nc{\bbS}{\bb{S}}\nc{\bbT}{\bb{T}}
\nc{\bbU}{\bb{U}}\nc{\bbV}{\bb{V}}\nc{\bbW}{\bb{W}}\nc{\bbX}{\bb{X}}
\nc{\bbY}{\bb{Y}}\nc{\bbZ}{\bb{Z}}
\nc{\mbf}[1]{{\mathbf #1}}
\nc{\bfA}{\mbf{A}}\nc{\bfB}{\mbf{B}}\nc{\bfC}{\mbf{C}}\nc{\bfD}{\mbf{D}}
\nc{\bfE}{\mbf{E}}\nc{\bfF}{\mbf{F}}\nc{\bfG}{\mbf{G}}\nc{\bfH}{\mbf{H}}
\nc{\bfI}{\mbf{I}}\nc{\bfJ}{\mbf{J}}\nc{\bfK}{\mbf{K}}\nc{\bfL}{\mbf{L}}
\nc{\bfM}{\mbf{M}}\nc{\bfN}{\mbf{N}}\nc{\bfO}{\mbf{O}}\nc{\bfP}{\mbf{P}}
\nc{\bfQ}{\mbf{Q}}\nc{\bfR}{\mbf{R}}\nc{\bfS}{\mbf{S}}\nc{\bfT}{\mbf{T}}
\nc{\bfU}{\mbf{U}}\nc{\bfV}{\mbf{V}}\nc{\bfW}{\mbf{W}}\nc{\bfX}{\mbf{X}}
\nc{\bfY}{\mbf{Y}}\nc{\bfZ}{\mbf{Z}}
\nc{\bfa}{\mbf{a}}\nc{\bfb}{\mbf{b}}\nc{\bfc}{\mbf{c}}\nc{\bfd}{\mbf{d}}
\nc{\bfe}{\mbf{e}}\nc{\bff}{\mbf{f}}\nc{\bfg}{\mbf{g}}\nc{\bfh}{\mbf{h}}
\nc{\bfi}{\mbf{i}}\nc{\bfj}{\mbf{j}}\nc{\bfk}{\mbf{k}}\nc{\bfl}{\mbf{l}}
\nc{\bfm}{\mbf{m}}\nc{\bfn}{\mbf{n}}\nc{\bfo}{\mbf{o}}\nc{\bfp}{\mbf{p}}
\nc{\bfq}{\mbf{q}}\nc{\bfr}{\mbf{r}}\nc{\bfs}{\mbf{s}}\nc{\bft}{\mbf{t}}
\nc{\bfu}{\mbf{u}}\nc{\bfv}{\mbf{v}}\nc{\bfw}{\mbf{w}}\nc{\bfx}{\mbf{x}}
\nc{\bfy}{\mbf{y}}\nc{\bfz}{\mbf{z}}
\nc{\mcal}[1]{{\mathcal #1}}
\nc{\calA}{\mcal{A}}\nc{\calB}{\mcal{B}}\nc{\calC}{\mcal{C}}\nc{\calD}{\mcal{D}}
\nc{\calE}{\mcal{E}} \nc{\calF}{\mcal{F}}\nc{\calG}{\mcal{G}}\nc{\calH}{\mcal{H}}
\nc{\calI}{\mcal{I}}\nc{\calJ}{\mcal{J}}\nc{\calK}{\mcal{K}}\nc{\calL}{\mcal{L}}
\nc{\calM}{\mcal{M}}\nc{\calN}{\mcal{N}}\nc{\calO}{\mcal{O}}\nc{\calP}{\mcal{P}}
\nc{\calQ}{\mcal{Q}}\nc{\calR}{\mcal{R}}\nc{\calS}{\mcal{S}}\nc{\calT}{\mcal{T}}
\nc{\calU}{\mcal{U}}\nc{\calV}{\mcal{V}}\nc{\calW}{\mcal{W}}\nc{\calX}{\mcal{X}}
\nc{\calY}{\mcal{Y}}\nc{\calZ}{\mcal{Z}}
\nc{\fB}{\frak{B}}\nc{\fC}{\frak{C}} \nc{\fD}{\frak{D}}
\nc{\fE}{\frak{E}}\nc{\fF}{\frak{F}}\nc{\fG}{\frak{G}}\nc{\fH}{\frak{H}}
\nc{\fI}{\frak{I}}\nc{\fJ}{\frak{J}}\nc{\fK}{\frak{K}}\nc{\fL}{\frak{L}}
\nc{\fM}{\frak{M}}\nc{\fN}{\frak{N}}\nc{\fO}{\frak{O}}\nc{\fP}{\frak{P}}
\nc{\fQ}{\frak{Q}}\nc{\fR}{\frak{R}}\nc{\fS}{\frak{S}}\nc{\fT}{\frak{T}}
\nc{\fU}{\frak{U}}\nc{\fV}{\frak{V}}\nc{\fW}{\frak{W}}\nc{\fX}{\frak{X}}
\nc{\fY}{\frak{Y}}\nc{\fZ}{\frak{Z}}
\nc{\fa}{\frak{a}}\nc{\fb}{\frak{b}}\nc{\fc}{\frak{c}} \nc{\fd}{\frak{d}}
\nc{\fe}{\frak{e}}\nc{\fFf}{\frak{f}}\nc{\fg}{\frak{g}}\nc{\fh}{\frak{h}}
\nc{\fri}{\frak{i}}\nc{\fj}{\frak{j}}\nc{\fk}{\frak{k}}\nc{\fl}{\frak{l}}
\nc{\fm}{\frak{m}}\nc{\fn}{\frak{n}}\nc{\fo}{\frak{o}}\nc{\fp}{\frak{p}}
\nc{\fq}{\frak{q}}\nc{\fr}{\frak{r}}\nc{\fs}{\frak{s}}\nc{\ft}{\frak{t}}
\nc{\fu}{\frak{u}}\nc{\fv}{\frak{v}}\nc{\fw}{\frak{w}}\nc{\fx}{\frak{x}}
\nc{\fy}{\frak{y}}\nc{\fz}{\frak{z}}
\newcommand{\C}{{\mathbb C}}
\newcommand{\bZ}{{\mathbb Z}}
\newcommand{\caD}{{\mathcal D}}
\newcommand{\caF}{{\mathcal F}}
\newcommand{\cO}{{\mathcal O}}
\newcommand{\Fl}{\mathrm{Fl}}
\newcommand{\Gr}{\mathrm{Gr}}
\newcommand{\csm}{c_{\mathrm{SM}}}
\newcommand{\ssm}{s_{\mathrm{M}}}
\newcommand{\one}{1\hskip-3.5pt1}
\DeclareMathOperator{\id}{id}
\DeclareMathOperator{\rk}{rk}
\DeclareMathOperator{\stab}{stab}
\DeclareMathOperator{\Lie}{Lie}
\DeclareMathOperator{\Pic}{Pic}
\DeclareMathOperator{\Frac}{Frac}
\DeclareMathOperator{\End}{End}
\DeclareMathOperator{\Attr}{Attr}
\DeclareMathOperator{\Supp}{Supp}
\DeclareMathOperator{\aff}{aff}
\DeclareMathOperator{\SL}{SL}
\DeclareMathOperator{\Stab}{Stab}
\DeclareMathOperator{\loc}{loc}
\DeclareMathOperator{\Rep}{Rep}
\DeclareMathOperator{\pt}{pt}
\DeclareMathOperator{\fA}{A_\circ}
\DeclareMathOperator{\ext}{ext}
\DeclareMathOperator{\opp}{opp}
\begin{document}
\title[Chevalley formulae for motivic classes and stable envelopes]{Chevalley formulae for the motivic Chern classes of Schubert cells and for the stable envelopes}

\author{Leonardo C.~Mihalcea}
\address{ 225 Stanger St.,
460 McBryde Hall,
Department of Mathematics, 
Virginia Tech University, 
Blacksburg, VA 24061
USA
}
\email{lmihalce@vt.edu}

\author{Hiroshi Naruse}
\address{Graduate School of Education, University of Yamanashi, 
Kofu, 400-8510, Japan}
\email{hnaruse@yamanashi.ac.jp}

\author{Changjian Su}
\address{Yau Mathematical Sciences Center, Tsinghua University, Beijing, China}
\email{changjiansu@mail.tsinghua.edu.cn}

\thanks{L.~C.~Mihalcea was supported in part by the NSF grant DMS-2152294 and a Simons Collaboration Grant. H.~Naruse was supported in part by JSPS KAKENHI Grant Number 16H03921. C. Su is supported by the National Key R\&D Program of China (No. 2024YFA1014700).}

\keywords{motivic Chern classes of Schubert cells, stable envelopes, $\lambda$-chains, Hecke algebra, Demazure--Lusztig operators}
\subjclass[2020]{Primary 14M15, 14C17; Secondary 14N15, 17B10, 33D80.}
\date{\today}

\begin{abstract} We prove a Chevalley formula to multiply the motivic 
Chern classes of Schubert cells in a generalized flag manifold $G/P$ 
by the class of any line bundle $\mathcal{L}_\lambda$. Our formula 
is given in terms of the $\lambda$-chains of Lenart and Postnikov. 
Its proof relies on a change of basis formula in the affine Hecke algebra 
due to Ram, and on the Hecke algebra action on torus-equivariant 
K-theory of the complete flag manifold $G/B$ via left Demazure--Lusztig operators. 
We revisit some wall-crossing formulae for the stable envelopes in
$T^*(G/B)$. We use our Chevalley formula, and the equivalence 
between motivic Chern classes of Schubert cells and K-theoretic stable envelopes
in $T^*(G/B)$, to give formulae for the change of polarization, and 
for the change of slope for stable envelopes.
We prove several additional applications, including Serre, star, and Dynkin, 
dualities of the Chevalley coefficients, new formulae for the 
Whittaker functions, and for the Hall--Littlewood polynomials.
{We also discuss positivity properties of  
Chevalley coefficients, and properties of the coefficients arising from multiplication by minuscule weights.}
\end{abstract}

\maketitle

\tableofcontents

\section{Introduction}\label{S:intro}
Let $G$ be a complex, semisimple, Lie group and $T \subset B \subset P \subset G$ be a parabolic subgroup containing a Borel subgroup and the (standard) maximal torus.
Let $W$ be the Weyl group determined by $(G,T)$. In the study of cohomology and K-theory rings of (generalized) flag manifolds $G/P$, the Chevalley formula expresses 
the multiplication of a Schubert class by the class of a line bundle, or a Schubert divisor in $G/P$. 
If one works equivariantly, this formula determines completely the multiplication in the equivariant K ring.
In this paper we prove a Chevalley formula for the coefficients $C_{u,\lambda}^w(y) \in K_T(pt)[y]$ 
arising in the multiplication 
\begin{equation}\label{E:Chev-intro} MC_y(X(w)^\circ) \cdot \mathcal{L}_\lambda = \sum C_{u,\lambda}^w(y) MC_y(X(u)^\circ)
\end{equation} 
in the equivariant K-theory ring $K_T(G/B)[y]$; see \eqref{Chev:MC} and \Cref{thm:chemc} below. 
Here $MC_y(X(w)^\circ) \in K_T(G/B)[y]$ is 
the {\em motivic Chern class} of a Schubert cell $X(w)^\circ \subset G/B$, and
$\mathcal{L}_\lambda = G \times^B \C_\lambda$ is the line bundle on $G/B$ associated to the one dimensional $B$-module of weight $\lambda$.

The motivic Chern classes $MC_y(X(w)^\circ) \in K_T(G/B)[y]$ have been defined by Brasselet, Sch{\"u}rmann, 
and Yokura \cite{brasselet.schurmann.yokura:hirzebruch} more generally for elements $[Y \to X]$ in the Grothendieck
group $G_0(var/X)$ of varieties over $X$. They are the unique classes which are functorial with respect to
proper morphisms $f: X_1 \to X_2$, and which satisfy the normalization condition 
$MC[id_X: X \to X] = \lambda_y(T^*_X)$ for $X$ smooth, where
$\lambda_y(T^*_X) = \sum y^i [\wedge^i T^*_X]$ is the Hirzebruch $\lambda_y$ class; see \S \ref{sec:MCclasses} below. They may be thought of
as the K-theoretic generalizations of Chern--Schwartz--MacPherson classes 
defined by MacPherson \cite{macpherson:chern}. 

The motivic Chern classes of Schubert cells generalize well studied classes from Schubert calculus. 
If $y=0$, the
motivic class $MC_y(X(w)^\circ)$ is equal to the class of the ideal sheaf 
$[\cO_{X(w)}(-\partial X(w))]$ 
of the boundary $\partial X(w) = X(w) \setminus X(w)^\circ$, where $X(w) = \overline{X(w)^\circ}$ is the Schubert variety. If $y=-1$, then $MC_y(X(w)^\circ)$ is equal to the class of the unique $T$-fixed 
point in $X(w)^\circ$; see \cite{AMSS:specializations}. The Poincar{\'e} duals of the classes $MC_y(X(w)^\circ)$, the {\em Segre 
motivic classes} \cite{AMSS:motivic,mihalcea2022whittaker}, specialize when $y=0$ to the Grothendieck classes of the structure sheaves of the opposite Schubert varieties.  
Our Chevalley formula \eqref{E:Chev-intro}, and its analogous formula for the Segre motivic classes, 
specialize to known Chevalley formulae for K-theoretic Schubert classes and ideal sheaves from 
\cite{griffeth.ram:affine,lenart.postnikov:affine}.

The formula for the coefficients $C_{u,\lambda}^w(y) \in K_T(pt)[y]$ from \eqref{E:Chev-intro} follows from 
a formula of Ram \cite{R06} 
in the affine Hecke algebra $\mathbb{H}$, calculating transition coefficients between
two bases $\{ T_w X^\lambda \}$ and $\{ X^\lambda T_w \}$ of the affine Hecke algebra:
\begin{equation}\label{E:Hecke-exp} 
T_{w}X^{-\lambda}=\sum_{\mu\in X^*(T),u\in W}(-q)^{\ell(w)-\ell(u)}c_{u,\mu}^{w,\lambda}X^{-\mu}T_{u} \/.
\end{equation}
Here $T_w$ is an element in the standard basis of 
$\mathbb{H}$, $X^{-\lambda}$ is an affine element in $\mathbb{H}$,
and $X^*(T)$ denotes the weight lattice of $T$. Ram's formula is stated 
in terms of a combinatorial model utilizing alcove walks, and it is convenient for our purposes to 
rewrite it utilizing in terms of $\lambda$-chains, a model introduced and studied by Lenart and Postnikov 
\cite{lenart.postnikov:affine,lenart2008combinatorial} in relation to equivariant K 
theory of flag manifolds. We refer to 
\Cref{thm:lambda-chain1} and \Cref{thm:lambda-chain2} 
for the precise statements in the Hecke 
algebra in terms 
of $\lambda$-chains, and to \S \ref{sec:checoeff} for 
the formulae involving motivic Chern classes. We also note that (affine) Hecke algebras have long been 
used to obtain Chevalley formulas in various contexts, for example in \cite{pittie.ram,lenart.postnikov:affine}.
We state next our main result.

Assume $\lambda$ is an integral weight and fix a reduced 
$\lambda$-chain $(\beta_1, \beta_2,\ldots, \beta_l)$. The chain
corresponds to an alcove walk from the fundamental alcove $\fA$ to 
$\fA - \lambda$, with separating hyperplanes $H_{-\beta_j, d_j}$. 
Denote by $s_\beta$ the reflection determined by the root $\beta$. 
We refer the reader to \S \ref{sec:lambda-chains} below 
for full definitions. 
The following
is our main result, cf.~\Cref{thm:lambda}.

{
\begin{theorem}\label{thm:chemc-intro} 
The following Chevalley formula holds in $K_T(G/B)[y]$:
\[\calL_{-\lambda}\otimes MC_y(X(w)^\circ)=\sum_{\mu\in X^*(T),u\in W} C_{u,-\lambda}^w MC_y(X(u)^\circ),\]
where the Chevalley coefficients are given by
\begin{equation}\label{equ:-lambda-intro}
	C_{u,-\lambda}^w= \sum_{J\subset \{ 1,2,\ldots, l \}}
	(-1)^{n(J)} (1+y)^{|J|} (-y)^{\frac{\ell(w)-\ell(u)-|J|}{2}}
	e^{-w\tilde{r}_{J_{>}}(\lambda)},
\end{equation}
and the sum is over subsets $J = \{ j_1 < \ldots < j_t \} \subset \{ 1, 2, \ldots, l \}$
such that $u < u s_{\beta_{j_1}} < u s_{\beta_{j_1}} s_{\beta_{j_2}} < \ldots < u s_{\beta_{j_1}} s_{\beta_{j_2}} \cdot \ldots \cdot s_{\beta_{j_t}} =w$; the Weyl group element $\tilde{r}_{J_>}$ is defined in \eqref{E:defrtilde}.
For the multiplication $\calL_{\lambda}\otimes MC_y(X(w)^\circ)$, the Chevalley 
coefficients are given by
\begin{equation}\label{equ:lambda-intro}
	C_{u,\lambda}^w= \sum_{J\subset \{ 1,2,\ldots, l \}}
	(-1)^{n(J)}
	(-1-y)^{|J|} (-y)^{\frac{\ell(w)-\ell(u)-|J|}{2}}
	e^{-w \hat{r}_{J_{<}}(-\lambda)},
\end{equation}
where the sum is over subsets $J = \{ j_1 < \ldots < j_t \} \subset \{ 1, 2, \ldots, l \}$
such that $u < u s_{\beta_{j_t}} < u s_{\beta_{j_t}} s_{\beta_{j_{t-1}}} < \ldots < u s_{\beta_{j_t}} 
\cdot \ldots \cdot s_{\beta_{j_1}} =w$, and with $\hat{r}_{J_{<}}$ defined in \eqref{E:hatr}.
\end{theorem}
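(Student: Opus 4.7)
The plan is to translate the Chevalley multiplication in $K_T(G/B)[y]$ into an algebraic identity in the affine Hecke algebra $\mathbb{H}$ via the left Demazure--Lusztig action, and then invoke the $\lambda$-chain reformulation of Ram's identity \eqref{E:Hecke-exp} stated in \Cref{thm:lambda-chain1} and \Cref{thm:lambda-chain2}.

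First, I would set up the dictionary between the geometric and algebraic sides. After a suitable normalization between Ram's parameter $q$ and the motivic parameter $y$, the motivic Chern classes $MC_y(X(w)^\circ)$ are obtained by applying a product of left Demazure--Lusztig operators (indexed by a reduced word for $w$) to a distinguished starting class; this is the content of the Hecke-module structure on $K_T(G/B)[y]$ recalled in the introduction. Multiplication by the line bundle $\mathcal{L}_\lambda$ is implemented by the left action of the translation element $X^\lambda \in \mathbb{H}$. Hence the geometric identity we wish to prove is obtained, after applying the Hecke-module map to the base class, from a purely algebraic identity of the form $X^{-\lambda}T_w = \sum_u c_u(y)\, T_u$ in $\mathbb{H}$, with the coefficients $c_u(y)$ belonging to the group algebra of $X^*(T)$ tensored with $\bZ[y]$.

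Second, I would reduce this algebraic identity to Ram's expansion. For the product $\mathcal{L}_{-\lambda}\otimes MC_y(X(w)^\circ)$, one uses the expansion of $T_w X^{-\lambda}$ in the basis $\{X^{-\mu}T_u\}$ given by \eqref{E:Hecke-exp} and reformulated in \Cref{thm:lambda-chain1}; for $\mathcal{L}_{\lambda}\otimes MC_y(X(w)^\circ)$, one instead uses the dual expansion, in which the orientation of the alcove walk is reversed, as in \Cref{thm:lambda-chain2}. In each case, subsets $J \subset \{1,\ldots,l\}$ parameterize the folded alcove paths; the Bruhat chain condition on $J$ is exactly the condition under which such a folded path contributes a nonzero $T_u$-component; the Weyl group element $\tilde r_{J_{>}}$ (respectively $\hat r_{J_{<}}$) records the composition of reflections produced by the folds in the order dictated by the expansion; and the exponential $e^{-w\tilde r_{J_{>}}(\lambda)}$ (respectively $e^{-w\hat r_{J_{<}}(-\lambda)}$) appears because the Hecke expansion takes place ``at the base point'' while $MC_y(X(w)^\circ)$ is located at $wB$.

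Finally, reading off the coefficient of $T_u$, substituting $q \mapsto -y$ with the conventions used throughout the paper, and comparing with \eqref{E:Chev-intro} yields the explicit formulas \eqref{equ:-lambda-intro} and \eqref{equ:lambda-intro}, with the motivic weights $(1+y)^{|J|}(-y)^{(\ell(w)-\ell(u)-|J|)/2}$ and signs $(-1)^{n(J)}$ coming directly from the Hecke-algebra quadratic relation and the sign statistic on foldings. The principal technical obstacle is the bookkeeping: reconciling sign and orientation conventions between Ram's alcove-walk model and Lenart--Postnikov's $\lambda$-chain model, ensuring that the two orderings of the Bruhat chains (increasing $j_1<\cdots<j_t$ versus reversed $j_t>\cdots>j_1$) correspond to the two directions $\pm\lambda$, and tracking which weight is reflected versus which appears as an exponent. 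Once the normalizations are fixed, both formulas fall out of the $\lambda$-chain expansions essentially directly.
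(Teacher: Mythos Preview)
Your overall strategy matches the paper's: reduce the Chevalley multiplication to a transition-of-bases identity in the affine Hecke algebra via the left Demazure--Lusztig action, and then invoke the $\lambda$-chain formulas of \Cref{thm:lambda-chain1} and \Cref{thm:lambda-chain2}. The endgame (substituting $q=-y$, reading off coefficients) is also correct.

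However, there is a genuine gap in your first paragraph: the claim that ``multiplication by the line bundle $\mathcal{L}_\lambda$ is implemented by the left action of the translation element $X^\lambda \in \mathbb{H}$'' is false. In the Hecke module structure (\Cref{lem:heckeactionk}), $X^\lambda$ acts as multiplication by the equivariant scalar $e^\lambda \in K_T(\pt)$, \emph{not} by the line bundle $\mathcal{L}_\lambda$. If you literally apply $X^{-\lambda}T_w$ to the base class you get $e^{-\lambda}MC_y(X(w)^\circ)$, which is the wrong thing. This is also why your write-up oscillates between $X^{-\lambda}T_w$ and $T_w X^{-\lambda}$.

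The missing mechanism is the two-step argument in the proof of \Cref{thm:chemc}: (i) the operators $\mathcal{T}_w^L$ commute with multiplication by $\mathcal{L}_\lambda$ because $\mathcal{L}_\lambda$ is $G$-equivariant, hence lies in $K_G(G/B)[y]$ (\Cref{lem:leftDL}); and (ii) at the identity fixed point one has $\mathcal{L}_{-\lambda}\otimes[\mathcal{O}_{X(id)}]=e^{-\lambda}[\mathcal{O}_{X(id)}]$. Combining these,
\[
\mathcal{L}_{-\lambda}\otimes MC_y(X(w)^\circ)
=\mathcal{L}_{-\lambda}\otimes \mathcal{T}_w^L[\mathcal{O}_{X(id)}]
=\mathcal{T}_w^L\bigl(e^{-\lambda}[\mathcal{O}_{X(id)}]\bigr)
=\Psi(T_w X^{-\lambda})[\mathcal{O}_{X(id)}],
\]
which is why the relevant Hecke identity is $T_w X^{-\lambda}=\sum (-q)^{\ell(w)-\ell(u)}c_{u,\mu}^{w,\lambda}X^{-\mu}T_u$ (equation \eqref{equ:matrix2}), with the $X^{-\mu}$ absorbed as the equivariant weight $e^{-\mu}$ when acting on $[\mathcal{O}_{X(id)}]$. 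Your remark that ``the Hecke expansion takes place at the base point while $MC_y(X(w)^\circ)$ is located at $wB$'' does not capture this; the exponentials $e^{-\mu}$ arise directly from the $X^{-\mu}$ factors in the Hecke expansion, not from any change of base point. Once you replace your first paragraph with this commutation argument, the rest of your outline goes through exactly as in the paper.
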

The connection between the Hecke algebra coefficients from \eqref{E:Hecke-exp} and the Chevalley 
coefficients above is given by
 \begin{equation}\label{equ:intro-leftdle}
	C_{u,-\lambda}^w= \sum_{\mu\in X^*(T)} y^{\ell(w)-\ell(u)}e^{-\mu}c_{u,\mu}^{w,\lambda}|_{q=-y}.
\end{equation}
The coefficients $c_{u,\mu}^{w,\lambda}$ are in general Laurent polynomials in $y$, while
$C_{u,-\lambda}^w$ are polynomials in $K_T(pt)[y]$. In fact, the power 
$y^{\ell(w)-\ell(u)}$ from the formula \eqref{equ:intro-leftdle} is absorbed into 
$c_{u,\mu}^{w,\lambda}$ so it becomes polynomial in $y$.}

As mentioned above, our Chevalley formula generalizes to 
the motivic situation the classical Chevalley multiplication in $K_T(G/B)$. It
also generalizes the Chevalley multiplication 
by (equivariant) Chern--Schwartz--MacPherson classes
of Schubert cells from \cite{AMSS:shadows}; 
a short, self-contained, proof of this is added in an Appendix. 

All these specializations are appropriately positive, in the sense of 
\cite{buch:Kpos,brion:Kpos,anderson.griffeth.miller:positivity}.
In \S \ref{sec:positivity} below we investigate some positivity results for the general formula.
Notably, our formula for the multiplication by $\mathcal{L}_\lambda$ 
with $\lambda$ dominant (i.e., when
$\mathcal{L}_{-\lambda}$ is globally generated) 
may be written as a positive combination of products $q^a(q-1)^b$, with $q=-y$; see
\Cref{prop:lambda-pos}. This positivity is similar to the one satisfied by $R$-polynomials in Kazhdan--Lusztig theory. In an earlier ar$\chi$iv version of this paper, we conjectured different positivity
properties for special cases of the Chevalley coefficients, regarded as polynomials in $y$. 
As we explain in \cref{rmk:former-conj}, we since found examples in Lie types $D_6,E_6,A_7$ where the conjectured positivity fails.

We now give a rough idea on the proof of \Cref{thm:chemc-intro}.
The key connection between the Chevalley formula in the Hecke algebra to motivic Chern classes, 
proved in \cite{mihalcea2020left}, and ultimately based on results from \cite{AMSS:motivic}, is that the motivic 
Chern classes are recursively obtained
by certain {\em left} Demazure--Lusztig operators $\mathcal{T}_w^L$ acting on $K_T(G/B)[y]$:
\[ MC_y(X(w)^\circ) =\mathcal{T}_{w}^L[\mathcal{O}_{1.B}] \/. \]
These operators commute with elements in $K_G(G/B)[y]$ (i.e., the Weyl-group
invariants of $K_T(G/B)$), and an argument based on equivariant localization shows
that 
\[ \begin{split} MC_y(X(w)^\circ) \cdot \mathcal{L}_\lambda  = \mathcal{T}_{w}^L[\mathcal{O}_{1.B}] \cdot \mathcal{L}_\lambda
= \mathcal{T}_{w}^L (\mathcal{L}_\lambda \cdot [\mathcal{O}_{1.B}]) 
= \mathcal{T}_{w}^L (e^\lambda \cdot [\mathcal{O}_{1.B}]) \/. \end{split} \]
Therefore, the knowledge of the expansion from \eqref{E:Hecke-exp}
implies the Chevalley formula in the geometric case. This argument may be generalized to any homogeneous 
bundle, see \Cref{rmk:bundle-mult} below. In cohomology (i.e., for the Chern--Schwartz--MacPherson classes), and for 
$G=\mathrm{SL}_n$, this argument is implicitly utilized in the paper \cite{fan.xiong.guo:MN} to obtain 
a Murnaghan--Nakayama formula. 

We briefly survey next other results from this note.
Having established a formula to calculate the Chevalley coefficients, in \S \ref{sec:chev-dualities} we utilize several dualities with geometric origin (the Serre duality, the star duality, and the Dynkin automorphism duality) to obtain several
symmetries of the coefficients $C_{v,\lambda}^w(y)$. See e.g. \Cref{prop:dual}. Combining these dualities shows that the polynomials $C_{v,\lambda}^w(y)$ are palindromic.

A remarkable property of the motivic Chern classes of Schubert cells, proved in \cite{AMSS:motivic,feher2021motivic}, 
is that they are equivalent to the K-theoretic version of Maulik and 
Okounkov's stable envelopes, see \cite{maulik2019quantum,aganagic2021elliptic}. The stable envelopes 
are elements in the $T \times \C^*$-equivariant K-theory of the cotangent bundle, $K_{T \times \C^*}(T^*(G/B))$. 
In this context, 
the formal variable $y$ may be identified to the (inverse) of the character given by the $\C^*$ fibre 
dilation on the cotangent bundle.
If $\iota:G/B \hookrightarrow T^*(G/B)$ is the inclusion of the zero section, then $\iota^*(\stab(w))$ is a multiple of the motivic Chern class of the (opposite) Schubert cell for $w$,
where $\stab$ is a stable envelope, appropriately normalized. 

The stable envelopes depend on three parameters: a chamber, a polarization, and a slope, 
and the precise normalizations are essential for this paper.
A variation in the chamber results in conjugating by the Borel subgroup \cite{AMSS:motivic}, 
and it is encoded in the left Weyl group
action \cite{mihalcea2020left} and certain $R$-matrix operators \cite{RTV:Kstable,rimanyi2017elliptic}.
Varying the polarization, or the slope, results in the multiplication of $\stab(w)$ by a line bundle $\mathcal{L}_\lambda$ 
pulled back from $G/B$; cf.~\cite{AMSS:motivic,okounkov:Klectures}, see also \S \ref{sec:K-stab} below. In particular,
the coefficients $C_{v,\lambda}^w(y)$ from \eqref{E:Chev-intro} 
give `wall-crossing' formulae, recording 
the change of stable envelopes when its defining parameters are varied.
While these wall crossing formulae have been worked out in 
\cite{okounkov:Klectures,su2020k,su2021wall} 
(see also \cite{koncki2023hecke}), in
\S \ref{sec:K-stab} we revisit some of these from the point of view
of \Cref{thm:chemc-intro}. In particular, we utilize 
the Chevalley formula to give an explicit combinatorial rule
relating the stable envelope for the fundamental alcove $\fA$ 
to the one corresponding to any translation $\fA + \lambda$; see
\Cref{prop:chestab}.

In addition to our application mentioned above to wall crossing formulae for stable envelopes, in 
\S \ref{sec:special-functions} we utilize known relations between motivic Chern classes
of Schubert cells, Whittaker functions, and Hall--Littlewood polynomials, to 
obtain new formulae for the latter.

Finally, in an Appendix we obtain an analogue of the Chevalley formula \eqref{E:Chev-intro}
for the homological analogue of the motivic Chern classes, the Chern--Schwartz--MacPherson classes.
While this formula may be obtained by a specialization argument as in \cite{AMSS:specializations}, the degenerate affine Hecke algebra 
is much simpler in this case, and a direct proof of the Chevalley formula may be obtained rather quickly. 

{\em Acknowledgments.} HN and CS thank Arun Ram and Andrei Okounkov for answering their questions and providing references; LM and CS thank Bogdan Ion for a related collaboration on Hall--Littlewood polynomials, and Paolo Aluffi and J{\"o}rg Sch{\"u}rmann for previous collaborations on motivic Chern classes and CSM classes; LM thanks Andrzej Weber for stimulating discussions. This project started in Spring 2021 while LM was in residence at the Institute for Computational and Experimental Research in Mathematics in Providence, RI, during the Combinatorial Algebraic Geometry program; LM wishes to thank the institute for excellent working conditions.

\subsection*{Notation}
We fix the notation utilized throughout the paper. Let $G$ be a simply connected complex Lie group with Borel subgroup $B$ and maximal torus $T\subset B$. Denote by $\mathfrak g= \mathrm{Lie}(G)$ and by $\fh= \mathrm{Lie}(T)$ be corresponding Lie algebras. Let $R^+\subset \fh^*:=\fh^*_\bbQ$ denote the positive roots, 
which by convention are the roots in $B$, and by $\Sigma =\{ \alpha_i: i \in I \}$ the set of simple roots. The set of all roots is $R:=R^+\sqcup -R^+$. 
We use $\alpha>0$ (resp. $\alpha<0$) to denote $\alpha\in R^+$ (resp. $\alpha\in -R^+$). For any root $\alpha\in R$, let $\alpha^\vee \subset \fh$ denote the corresponding coroot. Denote by 
$\langle \cdot,\cdot \rangle: \frak{h}^{*}\times \frak {h}\to \bbQ$ the evaluation pairing, and let $X^*(T)\subset \fh^*$ be the weight lattice. For any weight $\lambda\in X^*(T)$, let $\calL_\lambda:=G\times^B \bbC_\lambda$ be the line bundle on $G/B$ associated to $\lambda$. The Weyl group is $W = N_G(T)/T$ and it is generated by simple reflections $s_i= s_{\alpha_i}$ ($i \in I$). It is equipped with a length function $\ell:W \to \bZ_{\ge 0}$ defined as the length of a minimal expression of $w$ in terms of the simple reflections; we denote by $w_0$ the longest element.
The Bruhat order on $W$ is a partial order determined by the covering relations $u \le u s_\alpha$ where $\alpha \in R$ and 
{$\ell(us_\alpha)= \ell(u)+1$.}

For any $w\in W$, let $X(w)^\circ:=BwB/B\subset G/B$ and $Y(w)^\circ:=B^-wB/B\subset G/B$ be Schubert cells, where $B^-$ is the opposite Borel subgroup. Let $X(w):=\overline{X(w)^\circ}$ and $Y(w):=\overline{Y(w)^\circ}$ be the Schubert varieties, respectively. Let $P (\supseteq B)$ be a parabolic subgroup with simple roots $\Sigma_P\subset \Sigma$. Let $R_P^+$ denote the positive roots spanned by $\Sigma_P$. Let $W_P$ be the Weyl group generated by the simple reflections $s_\alpha$, $\alpha\in \Sigma_P$. Let $W^P\simeq W/W_P$ denote the set of minimal length representatives. For any $w\in W^P$, let $X(wW_P)^\circ:=BwP/P\subset G/P$ (resp. $Y(wW_P)^\circ:=B^-wP/P\subset G/P$) denote the Schubert cell with closure $X(wW_P)$ (resp. $Y(wW_P)$). Let $X^{*}(T)_P:=\{\lambda\in X^{*}(T)\mid \langle\lambda, \gamma^\vee\rangle=0 \text{ for all }
\gamma\in R^+_P \}$ be the set of integral weights which vanish on $ (R^+_P)^\vee$. For any 
$\lambda\in X^*(T)_P$, we still use $\calL_\lambda$ to denote the line bundle $G\times^P\bbC_\lambda\in\Pic(G/P)$, which has 
fiber over $1.P$ the one dimensional $T$-module of weight $\lambda$.

\section{Affine Hecke algebra via alcove walk algebra}\label{sec:affinehecke}
In this section, we introduce the alcove walk algebra, and a formula of Ram \cite{R06} describing a change of bases matrix for the affine Hecke algebra.

\subsection{Affine Hecke algebra}\label{sec:AHA}
The affine Hecke algebra $\bbH$ is a free $\bbZ[q,q^{-1}]$ module with basis $\{T_wX^\lambda|w\in W, \lambda\in X^*(T)\}$, such that
\begin{itemize}
	\item 
	For any $\lambda,\mu\in X^*(T)$, $X^\lambda X^\mu=X^{\lambda+\mu}$.
	\item
	For any simple root $\alpha$, $(T_{s_\alpha}+1)(T_{s_\alpha}-q)=0$.
	\item
	For any $w, y\in W$, such that $\ell(wy)=\ell(w)+\ell(y)$, $T_wT_y=T_{wy}$
	\item 
	For any simple root $\alpha$ and $\lambda\in X^*(T)$, 
	\[T_{s_\alpha}X^\lambda-X^{s_\alpha\lambda}T_{s_\alpha}=(1-q)\frac{X^{s_\alpha\lambda}-X^\lambda}{1-X^{-\alpha}}.\]
\end{itemize}
For our geometric application we will need two other bases of the affine Hecke algebra $\bbH$:
$\{T_{w^{-1}}^{-1}X^\lambda\mid w\in W, \lambda\in X^*(T)\}$ and $\{X^\lambda T_{w^{-1}}^{-1}\mid w\in W, \lambda\in X^*(T)\}$. Define the transition matrix coefficients $c_{u,\mu}^{w,\lambda}\in \bbZ[q,q^{-1}]$ by
\begin{equation}\label{equ:matrix1}
	T_{w^{-1}}^{-1}X^\lambda=\sum_{\mu\in X^*(T),u\in W}c_{u,\mu}^{w,\lambda}X^\mu T_{u^{-1}}^{-1}.
\end{equation}
The main result of this section is a formula for $c_{u,\mu}^{w,\lambda}$ obtained by Ram \cite{R06}, see \Cref{thm:ramformula} below. 

For the later application to the motivic Chern classes, we also introduce the Iwahori--Matsumoto $\bbZ[q,q^{-1}]$-algebra involution $\Theta$ on $\bbH$ defined by  
\[\Theta(T_{s_\alpha})=-qT_{s_\alpha}^{-1}, \textit{\quad and \quad}\Theta(X^\lambda)=X^{-\lambda},\]
where $s_\alpha$ is a simple reflection; see \cite[Section 5.1]{MR1432304}. Hence,  $\Theta(T_{w^{-1}}^{-1})= (-q)^{-\ell(w)}T_{w}$.
Applying $\Theta$ to Equation \eqref{equ:matrix1}, we obtain:
\begin{equation}\label{equ:matrix2}
	 T_{w}X^{-\lambda}=\sum_{\mu\in X^*(T),u\in W}(-q)^{\ell(w)-\ell(u)}c_{u,\mu}^{w,\lambda}X^{-\mu}T_{u}
\end{equation}

\subsection{Alcove walk algebra}
In this section, we review Ram's definition of the alcove walk algebra, and state his formula for the matrix coefficients $c_{u,\mu}^{w,\lambda}$. We refer the reader to \cite{R06} for a more detailed account of the alcove walk algebras. 
\subsubsection{Alcoves}\label{sec:alcoves}
Let $\ft^*_\bbR$ be the dual of the Lie algebra of the maximal torus $T$. For any root $\alpha$ and $j\in \bbZ$, define 
\[H_{\alpha,j}:=\{\lambda\in \ft^*_\bbR\mid \langle\lambda,\alpha^\vee\rangle=j\}.\]
Notice that $H_{\alpha,j}=H_{-\alpha,-j}$.
The connected components of $\ft^*_\bbR\setminus \cup_{\alpha>0,j\in \bbZ} H_{\alpha,j}$ are called {\bf alcoves}. The codimension 1 faces of any alcove are called the walls of that alcove. The {\bf fundamental alcove} $\fA$ 
is defined by:
\[\fA=\{\lambda\in \ft^*_\bbR\mid 0<\langle\lambda, \alpha^\vee\rangle<1, 
\textit{ for any positive root } \alpha\}.\]
If $\alpha_1,\alpha_2,\ldots,\alpha_r$ denote the simple roots, and
$\theta^\vee$ denotes the highest coroot, then the walls of the 
fundamental alcove $\fA$ are $H_{\theta,1}$ and $H_{\alpha_i,0}$ 
($1\leq i\leq r$). We label these walls of $\fA$ by $0,1,\ldots, r$ respectively. 

The affine Weyl group for the dual root system is defined by 
$W_{\aff}:=Q\rtimes W$, where $Q$ is the root lattice.
Then $W_{\aff}$ acts simply transitively 
on the set of alcoves, and this action is determined by 
the reflections across the hyperplanes
$h=H_{\alpha, j}$, given by 
\begin{equation}
	s_{\alpha,j}(\mu)=\hat{r}_{h}(\mu):=s_{\alpha}(\mu)+j \alpha \text{ for } \mu \in X^{*}(T),
\end{equation}
The affine Weyl group is a 
Coxeter group generated by the reflections $s_0:= s_{\theta, 1}$ and $s_i$ ($1\leq i\leq r)$ 
along the walls 
of $\fA$. In fact, $\fA$ is a fundamental domain for the action of $W_{\aff}$ on 
the set of alcoves, in the sense that any element in 
$\ft^*_\bbR\setminus \cup_{\alpha>0,j\in \bbZ} H_{\alpha,j}$
is sent to exactly one element in $\fA$. See \cite{R06,humphreys1990reflection}
for more details.

The extended affine Weyl group for the dual root system is $W^{\ext}_{\aff}=X^*(T)\rtimes W$, where $X^*(T)$ is the weight lattice. For any $\lambda\in X^*(T)$, let $t_\lambda$ denote the corresponding element in $W^{\ext}_{\aff}$. There is a length function $\ell$ on $W^{\ext}_{\aff}$ defined by the following formula (see \cite[Equation (2.8)]{macdonald1996affine}):
	\[ \ell(t_{\mu}w) = \sum_{\alpha \in R^+} | \langle \mu, w(\alpha^\vee) \rangle + \chi(w (\alpha))| \quad \textrm{ where } \quad \chi(\alpha) = \begin{cases} 0 & \alpha  \in R^+ \\ 1 & \alpha \in R^- \/. \end{cases}\]
	
Let $\Omega\subset W^{\ext}_{\aff}$ be the {subgroup} of length zero elements in $W^{\ext}_{\aff}$. Then $W_{\aff}^{\ext} \simeq W_{\aff} \rtimes \Omega$, see \cite[Equation (2.10)]{macdonald1996affine}, and $\ell(wg) = \ell(w)$ for any $w \in W_{\aff}$ and $g\in \Omega$. The elements in $\Omega$ preserve the fundamental alcove $\fA$ and act as automorphisms.

Utilizing a bijection between $W_{\aff}$ and the alcoves in $\ft^*_\bbR$, one can define a bijection between $W_{\aff}^{\ext} \simeq W_{\aff} \rtimes \Omega$ and the alcoves in $\Omega\times \ft^*_\bbR$ ($|\Omega|$ copies of $\ft^*_\bbR$, each tiled by alcoves).
We label the walls of every alcove in $\Omega\times \ft^*_\bbR$ in an $W^{\ext}_{\aff}$-equivariant way. This means that for each $w\in W^{\ext}_{\aff}$ the walls of $w\fA$ are $wH_{\alpha_i,0}$ ($1\leq i\leq n$) and $wH_{\theta,1}$, and they are labeled by $i$ and $0$, respectively. 
In particular, if two adjacent alcoves $A_1$ and $A_2$ are separated by a wall labeled by $i$ 
(in both $A_1$ and $A_2$), and $A_1=w\fA$ for some $w\in W^{\ext}_{\aff}$, then $A_2=ws_i\fA$. Equivalently, in terms of the wall crossings, if $w=gs_{i_1}s_{i_2}\cdot \ldots \cdot s_{i_\ell}\in W^{\ext}_{\aff}$, with $g\in \Omega$ and $0\leq i_j\leq r$, then the alcove $w\fA$ in $\Omega\times \ft^*_\bbR$ 
is the alcove obtained from rotating the fundamental alcove $\fA$ according to the automorphism $g$, then 
reflect along the walls labelled (in order) by $i_1, \ldots, i _\ell$; see also \Cref{lemma_alcove_path} below.

\begin{example}\label{exm:p1} We consider the example of the root system of type $A_1$.
	Let $\alpha$ be the positive root, and $\omega=\alpha/2$ be the fundamental weight. 
	The weight lattice $X^*(T)=\bbZ \omega$, the root lattice $Q=\bbZ\alpha$, 
	and the finite Weyl group $W=\{id,s_1=s_\alpha\}$. The affine Weyl group 
	$W_{\aff}=Q\rtimes W$ has Coxeter generators $s_1$ and $s_0=t_\alpha s_1$. 
	The subgroup of length zero elements in $W^{\textit{ext}}_{\aff}$ is 
	$\Omega=\{id, g=t_\omega s_1\} \simeq \bZ/ 2\bZ$.
	
	In the following picture for $\Omega\times \ft_\bbR^*$, the lower sheet is the identity sheet, while the upper sheet is the sheet $g\times \ft_\bbR^*$. Each alcove $w\fA$ is labeled by the corresponding $w\in W^{\textit{ext}}_{\aff}$, both in the Coxeter presentation and the translation presentation. In the lower sheet, the walls $H_{\alpha,n}$ are labeled by $1$ if $n$ is even, and $0$ if $n$ is odd. On the upper sheet, the labelings are in the opposite way. 
	
	\begin{tikzpicture}[scale=1.5]
		\draw[very thick] (-4,1)--(5,1);
		\draw[very thick] (-4,0)--(5,0);
		\foreach \x in {-3,...,4}
		{\draw[very thick] (\x,-0.2)--(\x,0.2);
			\draw[very thick]  (\x, 0.8)--(\x,1.2);
			\node at (\x,-0.6) {$H_{\alpha,\x}$};}
		\foreach \n/\l [count=\i from -3]  in 
		{s_1 s_0 s_1/t_{-\alpha} s_1, s_1 s_0/t_{-\alpha}, s_1/s_1, id/id,
			s_0/t_{\alpha} s_1 , s_0 s_1/t_{\alpha} ,s_0 s_1 s_0/t_{2\alpha} s_1}
		{\node at (\i+0.5,0.25) {$\n$};\node at (\i+0.5,-0.25) {$\l$};}
		\foreach \n/\l [count=\i from -3]  in 
		{ g s_0 s_1 s_0/t_{-3\omega}, g s_0 s_1/t_{-\omega} s_1, 
			g s_0/t_{-\omega}, g/t_{\omega} s_1,
			g s_1/t_{\omega}, g s_1 s_0/t_{3\omega} s_1, g s_1 s_0 s_1/t_{3\omega}}
		{\node at (\i+0.5,1.25) {$\n$};\node at (\i+0.5,0.75) {$\l$};}
		\node at (0,0) {$\bullet$};
		\foreach \i in {-3,...,4}
		{\node at (\i,0.4) {\pgfmathparse{Mod(\i,2)==0?1:0}\pgfmathresult};
			\node at (\i,1.4) {\pgfmathparse{Mod(\i,2)==0?0:1}\pgfmathresult};}
	\end{tikzpicture}
	
\end{example}

\subsubsection{Alcove walk algebra}\label{sec:alcovewalk}
In this section we recall a realization of the Hecke algebra
in terms of alcove walks; see \cite{R06}. For each positive root $\alpha$ and hyperplane $H_{\alpha,j}$, set the positive side of it to be $\{\lambda\in \ft^*_\bbR\mid\langle\lambda,\alpha^\vee\rangle>j\}$. 
\begin{defin}
	The {\bf alcove walk algebra} is generated over $\bbZ[q,q^{-1}]$ by elements $g\in \Omega$, and for $0 \le i \le r$, the elements $c_i^+$ (positive $i$-crossing), $c_i^-$ (negative $i$-crossing), $f_i^+$ (positive $i$-fold) and $f_i^-$ (negative $i$-fold), subject to the following relations, sometimes called straightening laws:
	\[c_i^+=c_i^-+f_i^+,\quad c_i^-=c_i^++f_i^-\/, \quad \textrm{ and } \quad 
	gc_i^\pm=c_{g(i)}^\pm g \/, \quad gf_i^\pm=f_{g(i)}^\pm g \/. \]
\end{defin}
In terms of pictures, these generators can be drawn as follows:
\begin{center}
	\begin{minipage}{2.5cm}
		\begin{tikzpicture}[scale=0.7]
			\node at (-0.5,0.5) {$-$};
			\node at (0.5,0.5) {$+$};
			\node at (0,1) {$i$};
			\node at (0,-1.2) {$c_i^{+}$};
			\draw [very thick] (0,-0.7)--(0,0.7);
			\draw [->, very thick] (-0.8,0)--(0.8,0);
		\end{tikzpicture}
	\end{minipage}
	\begin{minipage}{2.5cm}
		\begin{tikzpicture}[scale=0.7]
			\node at (-0.5,0.5) {$-$};
			\node at (0.5,0.5) {$+$};
			\node at (0,1) {$i$};
			\node at (0,-1.2) {$c_i^{-}$};
			\draw [very thick] (0,-0.7)--(0,0.7);
			\draw [<-, very thick] (-0.8,0)--(0.8,0);
		\end{tikzpicture}
	\end{minipage}
	\begin{minipage}{2.5cm}
		\begin{tikzpicture}[scale=0.7]
			\node at (-0.5,0.5) {$-$};
			\node at (0.5,0.5) {$+$};
			\node at (0,1) {$i$};
			\node at (0,-1.2) {$f_i^{+}$};
			\draw [very thick] (0,-0.7)--(0,0.7);
			\draw [-> ,very thick] (0.8,-0.2)--(0.1,-0.2)--(0.1,0)--(0.8,0);
		\end{tikzpicture}
	\end{minipage}
	\begin{minipage}{2.5cm}
		\begin{tikzpicture}[scale=0.7]
			\node at (-0.5,0.5) {$-$};
			\node at (0.5,0.5) {$+$};
			\node at (0,1) {$i$};
			\node at (0,-1.2) {$f_i^{-}$};
			\draw [very thick] (0,-0.7)--(0,0.7);
			\draw [-> ,very thick] (-0.8,-0.2)--(-0.1,-0.2)--(-0.1,0)--(-0.8,0);
		\end{tikzpicture}
	\end{minipage}
	\begin{minipage}{2cm}
		$(0\leq i\leq r)$
	\end{minipage}
\end{center}
Here, $c_i^+$ represents a crossing of a wall labeled by $i$ from its negative to its positive side, and similarly for the other generators. The product is given as concatenation.
An \textbf{alcove walk} is a word in the generators such that,
\begin{itemize}
	\item the tail of the first step is in the fundamental alcove $\fA$;
	\item at every step, either we change the sheet according to an element in $\Omega$
	(thus rotating the alcove according to this elements), or the head of each arrow is in the same alcove as the tail of
	the next arrow.
\end{itemize} 
An alcove walk $p$ is called {\bf nonfolded} if there is no $f_i^\pm$ in its word. 
The {\bf length} of an alcove walk is the number of letters $c_i^{\pm}, f_i^{\pm}$
in an alcove walk. (In particular, rotation with respect to an element of $\Omega$ does not contribute to the length.) For a {\em minimal} alcove walk between two alcoves, one can show that  
the walk is non-folded, thus its length is the number of $c_i^\pm$ in the walk \cite{R06}. 
From this it follows that if $w \in W^{\ext}_{\aff}$, then $\ell(w)$=length of a minimal length walk from $\fA$ to $w\fA$.

Pick a square root $q^{\frac{1}{2}}$ of $q$. The following is proved by A. Ram.
\begin{prop}\label{Alcove_walk_to_affine_Hecke}
	\cite[\S 3.2]{R06}
	\begin{enumerate}
		\item[(a)]
		The affine Hecke algebra $\bbH$ is isomorphic as a $\bZ[q^{\frac{1}{2}},q^{-\frac{1}{2}}]$-algebra 
		to the quotient of the alcove walk algebra by the relations 
		\begin{equation}\label{equiv1}
			c_i^+=(c_i^-)^{-1},\quad f_i^+=(q^{\frac{1}{2}}-q^{-\frac{1}{2}}),\quad f_i^-=-(q^{\frac{1}{2}}-q^{-\frac{1}{2}})
		\end{equation}
		and\\
		\begin{equation}\label{equiv2}
			p=p'\text{ if }p\text{ and }p'\text{ are nonfolded alcove walks with }end(p)=end(p'),
		\end{equation}
		where $end(p)$ means the final alcove of $p$.
		\item[(b)] Under the previous isomorphism,
		for any $w\in W$ and $\lambda\in X^*(T)$ \footnote{The $q^2$ and $T_{s_i}$ in \cite[Proposition 3.2(b)]{R06} are our $q$ and $q^{-\frac{1}{2}}T_i$, respectively.}:
		\begin{itemize}
			\item a minimal length alcove walk from $\fA$ to $w\fA$ is sent to $q^{\frac{\ell(w)}{2}}T^{-1}_{w^{-1}}$, and,
			\item a minimal length alcove walk from $\fA$ to $t_\lambda \fA$ is sent to $X^\lambda$.
		\end{itemize}
	\end{enumerate}
\end{prop}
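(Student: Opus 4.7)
The plan is to construct an algebra homomorphism $\Phi$ from the alcove walk algebra to $\bbH$, verify that it descends modulo the quotient relations \eqref{equiv1}--\eqref{equiv2} of part (a), and argue that the resulting quotient map is an isomorphism acting on minimal walks as in part (b). On generators I would set $\Phi(g)=g$ for $g\in\Omega$ (using the identification $\Omega\subset W_{\aff}^{\ext}$ and its embedding in $\bbH$), $\Phi(f_i^{\pm})=\pm(q^{1/2}-q^{-1/2})$, and send each crossing $c_i^{\pm}$ to an appropriate normalization of $T_{s_i}^{\pm 1}$, with the signs chosen so that $\Phi$ respects the straightening laws $c_i^+=c_i^-+f_i^+$ and $c_i^-=c_i^++f_i^-$; concatenation of walks corresponds to multiplication in $\bbH$.

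The first step is to verify that $\Phi$ is well defined on the alcove walk algebra. The straightening laws reduce to a rewriting of the Hecke quadratic relation $(T_{s_i}+1)(T_{s_i}-q)=0$, and the $\Omega$-equivariance relations $gc_i^{\pm}=c_{g(i)}^{\pm}g$ and $gf_i^{\pm}=f_{g(i)}^{\pm}g$ follow from the action of length-zero elements on the affine Dynkin diagram together with the standard conjugation formula $gT_{s_i}g^{-1}=T_{s_{g(i)}}$ in $\bbH$. The first quotient relation \eqref{equiv1} is then automatic: the images of $f_i^{\pm}$ are scalars by construction, and $\Phi(c_i^+)\Phi(c_i^-)=1$ is a direct computation using the Hecke relation.

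The heart of the argument lies in establishing part (b) jointly with the path-independence relation \eqref{equiv2}. A minimal walk from $\fA$ to $w\fA$ factors through a reduced decomposition $w=gs_{i_1}\cdots s_{i_{\ell(w)}}$ (with $g\in\Omega$), and inductively tracking the sheet and the side of each wall crossing shows that the image of this walk is $q^{\ell(w)/2}T_{w^{-1}}^{-1}$. Independence of the chosen reduced word is precisely the braid relation, which holds in $\bbH$. Any two non-folded walks from $\fA$ to $w\fA$ differ by a sequence of \emph{detours}, that is, a crossing of some hyperplane followed later by a recrossing of the same hyperplane without an intervening fold, and each detour can be contracted modulo \eqref{equiv1} and the Hecke quadratic relation; this proves \eqref{equiv2} by induction on walk length. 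The identification of a minimal walk from $\fA$ to $t_\lambda \fA$ with $X^\lambda$ then follows from the Bernstein presentation of $\bbH$.

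Finally I would conclude the isomorphism statement of part (a). Surjectivity of the induced map is clear since $\Omega$ together with the $T_{s_i}^{\pm 1}$ generate $\bbH$. For injectivity, the quotient alcove walk algebra is spanned by (classes of) minimal walks, one per element of $W_{\aff}^{\ext}$, and these map to the known $\bbZ[q^{\pm 1/2}]$-basis $\{q^{\ell(w)/2}T_{w^{-1}}^{-1}:w\in W_{\aff}^{\ext}\}$ of $\bbH$. The main obstacle I anticipate is the combinatorial detour-contraction step for non-minimal non-folded walks, which essentially repackages how non-reduced words in the Hecke algebra simplify via the Hecke quadratic and braid relations; carrying this out cleanly in the alcove-walk language is the technical core of the argument.
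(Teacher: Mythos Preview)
The paper does not give its own proof of this proposition: it is stated as a result of Ram, with the citation \cite[\S 3.2]{R06} in lieu of a proof, introduced by ``The following is proved by A.~Ram.'' So there is nothing in the paper to compare your argument against beyond that attribution.

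Your outline is broadly in the spirit of Ram's argument and is a reasonable sketch. One point deserves sharpening: your ``detour-contraction'' step for establishing \eqref{equiv2} is doing more work than you acknowledge. Two non-folded walks from $\fA$ to $w\fA$ need not differ only by cancellable back-and-forth crossings of a single hyperplane; in general they correspond to two (possibly non-reduced) words for $w$ in the generators $s_0,\dots,s_r$ and an element of $\Omega$. The clean way to show they map to the same element of $\bbH$ is to invoke Matsumoto's theorem (any two reduced words are connected by braid moves, and the $T_{s_i}$ satisfy the braid relations) together with the quadratic relation to pass between reduced and non-reduced words, rather than an ad hoc geometric contraction. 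With that adjustment, and with the identification of $X^\lambda$ carried out via the Bernstein presentation as you indicate, your plan matches what is needed.
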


For an alcove walk $p$, define the functions {\bf weight} $wt(p)\in X^*(T)$, and {\bf final direction} $\varphi(p)\in W$ of $p$ 
by the condition that $p$ ends in the alcove $t_{wt(p)}\varphi(p)\fA$. Let
\begin{align*}
	f^-(p) &= \textit{number of negative folds of } p,\\
	f^+(p) &= \textit{number of positive folds of } p, \textit{ and}\\
	f(p) &= f^+(p)+f^-(p).
\end{align*}

Now we can state the formula for the matrix coefficients $c_{u,\mu}^{w,\lambda}$ defined in Equation \eqref{equ:matrix1}, see also Equation \eqref{equ:matrix2}.
\begin{theorem}\label{thm:ramformula}\cite[Theorem 3.3]{R06}
	Let $\lambda\in X^*(T)$ and $w\in W$. Fix a minimal length walk $p_w=c_{i_1}^-c_{i_2}^-\cdots c_{i_r}^-$ from $\fA$ to $w\fA$ and a minimal length walk 
	$p_\lambda=c_{j_1}^{\epsilon_1}c_{j_2}^{\epsilon_2}\cdots c_{j_s}^{\epsilon_s}g$ 
	from $\fA$ to $t_\lambda \fA$, where $g\in W^{ext}_{\aff} $ is defined by the condition 
		$gW_{\aff} = t_{\lambda} W_{\aff}$ \footnote{ We need to add this extra $g\in \Omega$ since $t_\lambda \fA$ may not on the same sheet as $\fA$, see Example \ref{exm:p1continued}. The stated conditions determine $g$ uniquely.}, and $\epsilon_i = \pm$ for each $i$. Then 
	\begin{equation}\label{transition}
		T^{-1}_{w^{-1}}X^\lambda=\sum_p (-1)^{f^-(p)} (q^{\frac{1}{2}}-q^{-\frac{1}{2}})^{f(p)}
		q^{\frac{\ell(\varphi(p))-\ell(w)}{2}}X^{wt(p)}T_{\varphi(p)^{-1}}^{-1},
	\end{equation}
	where the sum is over all alcove walks $p$ of the form
	\begin{equation}\label{alcove_path}
		p=c_{i_1}^-c_{i_2}^-\cdots c_{i_r}^-p_{j_1}p_{j_2}\cdots p_{j_s}g \text{ such that } p_{j_k} \in \{c_{j_k}^{\pm},f_{j_k}^{\epsilon_k} \} \/.\end{equation}
	Therefore the matrix coefficients $c_{u,\mu}^{w,\lambda}$ in \Cref{equ:matrix1} are given by:
	\begin{equation}
		c_{u,\mu}^{w,\lambda}=\sum_{\substack{p \text{ of the form  } (\ref{alcove_path})\\\varphi(p)=u,wt(p)=\mu}}
		(-1)^{f^{+}(p)} (1-q)^{f(p)} q^{\frac{\ell(u)-\ell(w)-f(p)}{2}}.
	\end{equation}
\end{theorem}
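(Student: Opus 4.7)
The plan is to realize both sides of the claimed identity inside the alcove walk algebra, and then transition between them by iterated application of the straightening relations $c_i^+ = c_i^- + f_i^+$ and $c_i^- = c_i^+ + f_i^-$, finally passing to the Hecke algebra quotient.

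First, I would invoke \Cref{Alcove_walk_to_affine_Hecke}(b) to realize
\[ T^{-1}_{w^{-1}} X^{\lambda} = q^{-\ell(w)/2} \cdot p_w \cdot p_\lambda \]
in the Hecke algebra, where the concatenated word $p_w p_\lambda = c_{i_1}^- \cdots c_{i_r}^- c_{j_1}^{\epsilon_1} \cdots c_{j_s}^{\epsilon_s} g$ is a (generally non-minimal) alcove walk starting at $\fA$ and passing through $w\fA$. Although this concatenation is not a minimal walk to its endpoint, as a word in the alcove walk algebra its image in the Hecke algebra under the isomorphism of \Cref{Alcove_walk_to_affine_Hecke}(a) is unambiguous.

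Next, I would iteratively apply the straightening relation to each crossing $c_{j_k}^{\epsilon_k}$ in the $p_\lambda$-portion, replacing the $k$-th letter either by the same symbol $c_{j_k}^{\pm}$ recording the wall crossing along the currently traversed alcove, or by the fold $f_{j_k}^{\epsilon_k}$. Expanding the product in all possible ways yields the decomposition
\[ p_w \cdot p_\lambda = \sum_{p} p, \]
where the sum runs precisely over the walks $p$ of the form \eqref{alcove_path}.

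I would then compute the image of each summand in the Hecke algebra. A negative fold contributes the scalar $-(q^{1/2}-q^{-1/2})$ and a positive fold contributes $+(q^{1/2}-q^{-1/2})$, producing the overall fold weight $(-1)^{f^-(p)}(q^{1/2}-q^{-1/2})^{f(p)}$. The non-folded portion of $p$, together with the terminal element $g$, is a non-folded alcove walk from $\fA$ to $t_{wt(p)}\varphi(p)\fA$. By the equivalence relation \eqref{equiv2}, its image in the Hecke algebra depends only on this endpoint and coincides with that of the minimal walk to $t_{wt(p)}\varphi(p)\fA$, which by \Cref{Alcove_walk_to_affine_Hecke}(b) is $q^{\ell(\varphi(p))/2} X^{wt(p)} T^{-1}_{\varphi(p)^{-1}}$. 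Combining with the prefactor $q^{-\ell(w)/2}$ gives the claimed exponent $q^{(\ell(\varphi(p))-\ell(w))/2}$.

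The main difficulty I foresee lies in the third step: the non-folded skeleton of $p$ has length $r+s-f(p)$, which need not equal the length of its endpoint in $W^{\mathrm{ext}}_{\mathrm{aff}}$, so one must show that the extra backtracking cancels correctly under the Hecke relation $c_i^+ = (c_i^-)^{-1}$ to leave exactly $q^{\ell(\varphi(p))/2} X^{wt(p)} T^{-1}_{\varphi(p)^{-1}}$. This is essentially the content of \eqref{equiv2}, but applying it here requires a careful induction on the number of adjacent cancelling pairs, together with the length-additivity property $\ell(wg)=\ell(w)$ for $g\in\Omega$ used to absorb the sheet change. Once this bookkeeping is handled, the second statement describing the coefficients $c_{u,\mu}^{w,\lambda}$ follows by grouping summand walks according to their pair $(\varphi(p),wt(p))$ and using $(q^{1/2}-q^{-1/2})^{f(p)} = q^{-f(p)/2}(q-1)^{f(p)}$ together with $(-1)^{f^-(p)} \cdot (-1)^{f(p)} = (-1)^{f^+(p)}$.
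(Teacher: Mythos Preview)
The paper does not give its own proof of this theorem; it is imported directly from Ram \cite[Theorem~3.3]{R06} and stated without argument. Your sketch is essentially Ram's original proof and is correct in outline.

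Two small clarifications are worth making. First, in the straightening step you cannot freely choose ``crossing or fold'' at every position: a fold $f_{j_k}^{\epsilon_k}$ is a valid alcove-walk step precisely when the crossing $c_{j_k}^{\epsilon_k}$ is \emph{not} (one requires being on the $\epsilon_k$ side of the wall, the other on the $-\epsilon_k$ side). So the expansion is not ``in all possible ways'' but rather: process left to right, and whenever $c_{j_k}^{\epsilon_k}$ is geometrically invalid replace it by $c_{j_k}^{-\epsilon_k}+f_{j_k}^{\epsilon_k}$. The resulting sum is exactly over the alcove walks of the form \eqref{alcove_path}, each with coefficient~$1$. Second, \Cref{Alcove_walk_to_affine_Hecke}(b) as stated only identifies minimal walks to $w\fA$ ($w\in W$) and to $t_\lambda\fA$; to get that a nonfolded walk to $t_\mu v\fA$ maps to $q^{\ell(v)/2}X^\mu T_{v^{-1}}^{-1}$ you concatenate the minimal walk to $t_\mu\fA$ with the $t_\mu$-translate of the minimal walk from $\fA$ to $v\fA$ and invoke \eqref{equiv2}. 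This is exactly the bookkeeping you flagged, and it goes through without further difficulty.
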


\begin{example}\label{exm:p1continued}
	Let $G=\SL(2,\bbC)$. We use the same notation as in Example \ref{exm:p1}. We check the above theorem for $w=s_1$ and $\lambda=\omega$. From the alcove picture in Example \ref{exm:p1}, $T_{s_1}^{-1}$ is represented by the minimal length walk $q^{-1/2} c_1^-$, while $X^\omega$ is represented by the walk $gc_1^+=c_0^+g$. Thus, the sum in the right hand side of the Theorem is over the alcove walks $c_1^-c_0^-g$ and $c_1^-f_0^+g$, which end at the alcoves $t_{-\omega} s_1\fA$ and $t_{-\omega} \fA$, respectively. 
	(Note that $c_1^- c_0^+ g$, which represents $T_{s_1}^{-1}X^\omega$, is not an alcove walk.)
	Therefore, the identity in the theorem is
	\[T_{s_1}^{-1}X^\omega=X^{-\omega}T_{s_1}^{-1}-q^{-1}(1-q)X^{-\omega}.\]
	On the other hand, it is easy to check the above equation using the definition of the affine Hecke algebra in Section \ref{sec:AHA}.
\end{example}

\begin{remark}\label{nonfolded}
In \Cref{thm:ramformula}, one may relax the hypotheses about 
	the alcove walks $p_w$ and $p_{\lambda}$ to be of non-minimal length. This follows from analyzing the proof of Ram's result in \cite{R06}. We do not use this, but it is consistent with our use of non-reduced 
	$\lambda$-chains in \S \ref{sec:lambda-chains} below.
\end{remark}

Consider an ordered collection of hyperplanes  $\mathcal{H} = \{ H_{\beta_1, k_1}, \ldots, H_{\beta_s, k_s} \}$
	and set $h_i:=H_{\beta_i, k_i}$. Associated to this sequence we define
	the elements 
	\[\hat{r}_{\mathcal{H}} = s_{\beta_1,k_1}\cdot \ldots \cdot s_{\beta_s,k_s} \in W_{\aff} \/; \quad 
	r_{\mathcal{H}} = {s}_{\beta_1}\cdot \ldots \cdot s_{\beta_s} \in W \/. \]
(Note that these depend on the order of $\mathcal{H}$.)
	We also define an $\mathcal{H}$-restricted version
	of the Bruhat order on $W$ by 
	\begin{equation}\label{cond_M}
		w\stackrel{\mathcal{H}}{\Longrightarrow} u  \iff 
		w>w s_{\beta_{1}}>w s_{\beta_{1}}s_{\beta_{2}}>\cdots>w s_{\beta_{1}}s_{\beta_{2}}\cdot \ldots \cdot s_{\beta_{s}}=u.
	\end{equation}
	
	The following is a mild generalization of \cite[Proposition 2.5]{lenart2011hall}.
	\begin{lemma}\label{lemma:paths-bij} Let $w \in W$ and $\lambda \in X^*(T)$. Fix:
		\begin{itemize} \item an alcove walk $p_w$ from $\fA$ to $w\fA$;
			\item an alcove walk 
			$p_{\lambda}=c_{j_1}^{\epsilon_1}c_{j_2}^{\epsilon_2}\cdots c_{j_s}^{\epsilon_s}g$
			from $\fA$ to $t_\lambda\fA$. 
		\end{itemize}
		Let $h_i = H_{\beta_i,k_i}$, for $1 \le i \le s$ be 
		the sequence of hyperplanes  
		defined by the walls of alcoves crossed by $p_{\lambda}$,
		with $\beta_i \in R^{\epsilon_i}$ and $k_i \in \bZ$.
		
		Then there is a bijection between the following two sets:
		\begin{enumerate} \item The set of alcove walks of the form $\bar{p}=p_w p_{j_1} \ldots p_{j_s}$ 
			such that $p_{j_k} \in \{ c_{j_k}^\pm, f_{j_k}^{\epsilon_k} \}$;
			
				\item[(2)] The set of subsets $\mathcal{M} = \{ h_{m_1} , \ldots, h_{m_t} \} \subset \mathcal{H} = \{ h_1, \ldots, h_s \}$ with $m_1 < m_2 < \ldots < m_t$, s.t. $w\stackrel{\mathcal{M}}{\Longrightarrow} w r_\mathcal{M}$.
		\end{enumerate}
		Under this bijection, the indices $m_i$ correspond to the positions of foldings 
		$f_{m_i}^{\epsilon_{m_i}}$. Furthermore, 
		\[ \varphi(\bar{p} g)=wr_\mathcal{M} \textrm{ and } 
		\mathrm{wt}(\bar{p}g)=w\hat{r}_\mathcal{M}(\lambda) \/. \]
	\end{lemma}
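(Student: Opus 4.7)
The plan is to exhibit the bijection explicitly and verify its properties by induction on the length $s$ of $p_\lambda$. Given a walk $\bar p = p_w p_{j_1}\cdots p_{j_s}$ as in (1), let $\mathcal M$ be the set of hyperplanes $h_{m_i}$ at which $p_{j_{m_i}}$ is a folding; conversely, given $\mathcal M$ as in (2), place folds at positions indexed by $\mathcal M$ and crossings (with the unique sign dictated by the current alcove) at the remaining positions. As unstructured words in the alcove walk algebra these assignments are manifestly inverse to each other; the content of the lemma is that the word produced from $\mathcal M$ is a legal alcove walk if and only if $\mathcal M$ satisfies the descending Bruhat condition $w \stackrel{\mathcal M}{\Longrightarrow} w r_\mathcal{M}$.

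For the admissibility check, I would argue inductively on $k$. By $W^\ext_\aff$-equivariance of the wall labeling, and because each preceding fold reflects the subsequent trajectory across its own hyperplane, the wall encountered at step $k$ of $\bar p$ is the image of $h_k$ under the affine Weyl element $w\, \hat r_{\mathcal M_{<k}}$, where $\mathcal M_{<k} = \mathcal M \cap \{h_1,\dots,h_{k-1}\}$. The fold $f_{j_k}^{\epsilon_k}$ is an admissible move from the current alcove precisely when that alcove lies on the same side of this shifted wall as the corresponding alcove in $p_\lambda$ lies of $h_k$ (which is automatic in $p_\lambda$, since $p_\lambda$ actually crosses $h_k$ at step $k$). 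Translating the side condition into the language of the Weyl group, this is equivalent to
\[
  \ell\bigl(w\, r_{\mathcal M_{<k}}\, s_{\beta_k}\bigr) < \ell\bigl(w\, r_{\mathcal M_{<k}}\bigr),
\]
which for $k = m_i$ is precisely the $i$-th covering relation in the chain $w > w s_{\beta_{m_1}} > \cdots > w s_{\beta_{m_1}}\cdots s_{\beta_{m_i}}$ defining $w \stackrel{\mathcal M}{\Longrightarrow} w r_\mathcal{M}$. Thus admissible fold-placements biject with the stated Bruhat chains.

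Finally, for the weight and final direction formulas I would track the endpoint of $\bar p g$ by induction on $|\mathcal M|$. In the base case $\mathcal M = \emptyset$, the walk is simply the concatenation $p_w p_\lambda$, whose endpoint is $w\, t_\lambda \fA = t_{w\lambda}\, w\, \fA$, giving $\varphi(\bar p g) = w$ and $\mathrm{wt}(\bar p g) = w\lambda = w \hat r_\emptyset(\lambda)$. Inserting one additional fold at a new position $m$ reflects the tail of the previously valid walk across the affine hyperplane $h_m$; this right-multiplies the finite direction by $s_{\beta_m}$, while the weight gets transformed by the corresponding affine reflection, reducing the computation to the elementary identity $s_{\beta,k}(\nu) = s_\beta(\nu) + k\beta$. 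Iterating yields $\varphi(\bar p g) = w r_\mathcal{M}$ and $\mathrm{wt}(\bar p g) = w\hat r_\mathcal{M}(\lambda)$. The main technical obstacle is precisely this last bookkeeping: the finite and affine parts must be updated consistently, respecting the order of foldings in $\mathcal M$, and the length-zero element $g$ must be tracked carefully so that the endpoint lies on the correct sheet of $\Omega \times \ft^*_{\mathbb R}$.
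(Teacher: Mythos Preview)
Your approach is correct and is essentially the same as the paper's: both identify the bijection via the set of folding positions and reduce the well-definedness check to showing that the fold at position $m_i$ carries the required orientation $\epsilon_{m_i}$ precisely when $w r_{\mathcal M_{i-1}} > w r_{\mathcal M_i}$, invoking the standard fact that $ws_\beta < w \iff w(\beta)<0$ for $\beta>0$. The paper phrases this by applying folding operators $\phi_{m_t}\cdots\phi_{m_1}$ to the unique unfolded walk $p_0$ and comparing the resulting orientation $\epsilon'_{m_i}$ to $\epsilon_{m_i}$, while you phrase it as a step-by-step admissibility check; these are equivalent, though your ``same side'' description of the admissibility condition is a bit loose (the fold $f_{j_k}^{\epsilon_k}$ requires the current alcove to be on the $\epsilon_k$-side, which is the side of the \emph{post}-crossing alcove in $p_\lambda$, not the pre-crossing one).
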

\begin{remark} This statement is a slight generalization of Lenart's result. We do not require $\lambda$ to be dominant, and therefore we need to allow $\beta_i \in R^{\epsilon_i}$ to be a negative root.\end{remark} 

\begin{proof} The proof follows the same outline as that of \cite[Proposition 2.5]{lenart2011hall}, so we will be brief.
To start, consider the {\em unique} unfolded alcove walk $p_0=p_w p_{j_1} \ldots p_{j_s}$ 
such that $p_{j_k} \in \{ c_{j_k}^\pm\}$. Any other alcove walk $\bar{p}$ as in the statement
is of the form
\[\bar{p}=\phi_{m_t}\cdots \phi_{m_1}(p_0) \/,\]
where $\phi_{m_j}$ is the folding operation at position $m_j$ (cf.~ \cite{lenart2011hall}) and
the $m_i$'s with $m_{i-1}<m_i$ are the folding positions of $\bar{p}$. This alcove walk has the property 
that if $k\notin \{m_1,\ldots,m_t\}$ then $p_{j_k} \in \{ c_{j_k}^\pm \}$  and $p_{j_{m_i}}=f_{j_{m_i}}^{\epsilon'_{m_i}}$ ($1\leq i\leq t$). In addition $\varphi(\bar{p} g)=wr_\mathcal{M}$ and 
$\mathrm{wt}(\bar{p}g)=w\hat{r}_\mathcal{M}(\lambda)$. Let 
$\mathcal{M}_i = \{ h_{m_1} , \ldots, h_{m_i} \} \subset \mathcal{H} = \{ h_1, \ldots, h_s \}$ 
for $1\leq i\leq t$ with the convention that
$\mathcal{M}_0 =\emptyset$ and $r_{\emptyset}=id$.
A key point is that $w r_{\mathcal{M}_{i-1}}> w r_{\mathcal{M}_{i}}$ if and only if
the folding orientations satisfy $\epsilon_{m_i}=\epsilon'_{m_i}$. (The proof uses the condition that if $\beta >0$, then 
$ws_\beta > w$ if and only if $w(\beta) >0$.) All this put together implies that
$\{ h_{m_1} , \ldots, h_{m_i} \} \leftrightarrow \phi_{m_i}\cdots \phi_{m_1}(p_0)$
gives the bijection in the statement.
\end{proof}
For a subset $\mathcal{M} \subset \mathcal{H}$ as in \Cref{lemma:paths-bij},
set $p(\mathcal{M})$ to be the alcove walk $\bar{p} g$ associated to $\mathcal{M}$, and
set $f^+(\mathcal{M})= f^+(p(\mathcal{M}))$. 

The following is a reformulation of Ram's result from \Cref{thm:ramformula} in terms of paths 
in the Bruhat order. 
\begin{corol}\label{corol:Chev} Let $u,w \in W$ and $\lambda, \mu \in X^*(T)$. Assume the same hypotheses
and notation as in \Cref{lemma:paths-bij}.
Then 
\[ c_{u,\mu}^{w,\lambda}=
		\sum_{\mathcal{M}} (-1)^{f^{+}(\mathcal{M})}  (1-q)^{|\mathcal{M}|} q^{\frac{\ell(u)-\ell(w)-|\mathcal{M}|}{2}},\]
where the sum is over subsets $\mathcal{M} \subset \mathcal{H}$ which satisfy
$w \stackrel{\mathcal{M}}{\Longrightarrow}u = w r_{\mathcal{M}}$ and $\mu = w \hat{r}_{\mathcal{M}}(\lambda)$. 
\end{corol}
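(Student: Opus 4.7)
The plan is to obtain the corollary as a direct translation of Ram's formula (the preceding theorem) via the bijection established in \Cref{lemma:paths-bij}. There are essentially no new geometric inputs; the work is purely combinatorial bookkeeping of signs and powers of $q$.

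First I would apply \Cref{thm:ramformula} to extract the coefficient of $X^{-\mu} T_{u^{-1}}^{-1}$ in the expansion of $T_{w^{-1}}^{-1} X^\lambda$. This writes $c_{u,\mu}^{w,\lambda}$ as a sum over alcove walks $\bar{p}g$ of the form \eqref{alcove_path} satisfying the constraints $\varphi(\bar{p}g)=u$ and $\mathrm{wt}(\bar{p}g)=\mu$, with summand
\[
(-1)^{f^{-}(\bar{p}g)} (q^{1/2}-q^{-1/2})^{f(\bar{p}g)} q^{\frac{\ell(u)-\ell(w)}{2}}.
\]
Second, I would invoke \Cref{lemma:paths-bij} to replace the index set of alcove walks by the set of subsets $\mathcal{M}\subseteq \mathcal{H}$ satisfying $w\stackrel{\mathcal{M}}{\Longrightarrow}wr_{\mathcal{M}}$. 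The bijection translates $f(\bar{p}g)=|\mathcal{M}|$, and the geometric content of the lemma gives $\varphi(\bar{p}g)=wr_{\mathcal{M}}$ and $\mathrm{wt}(\bar{p}g)=w\hat{r}_{\mathcal{M}}(\lambda)$. Consequently the constraints $\varphi(\bar{p}g)=u$ and $\mathrm{wt}(\bar{p}g)=\mu$ become exactly $u=wr_{\mathcal{M}}$ and $\mu=w\hat{r}_{\mathcal{M}}(\lambda)$, matching the index set in the corollary.

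Third, I would verify the coefficients agree. Using $q^{1/2}-q^{-1/2} = -q^{-1/2}(1-q)$, one has
\[
(q^{1/2}-q^{-1/2})^{|\mathcal{M}|} = (-1)^{|\mathcal{M}|} q^{-|\mathcal{M}|/2} (1-q)^{|\mathcal{M}|}.
\]
Combining this with the prefactor $(-1)^{f^{-}(\bar{p}g)}$ and using $|\mathcal{M}|=f^{+}(\bar{p}g)+f^{-}(\bar{p}g)$ gives an overall sign $(-1)^{f^{+}(\bar{p}g)}=(-1)^{f^{+}(\mathcal{M})}$ (with the shorthand $f^{+}(\mathcal{M}):=f^{+}(p(\mathcal{M}))$ defined just before the corollary). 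The $q$-power becomes $q^{(\ell(u)-\ell(w)-|\mathcal{M}|)/2}$, as claimed.

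Since both steps are literal substitutions, there is no substantive obstacle; the only potential source of slip is the sign/exponent rearrangement, so I would be most careful there. One minor point worth flagging is that \Cref{thm:ramformula} as stated requires $p_w$ to be the all-negative minimal walk $c_{i_1}^-\cdots c_{i_r}^-$, so the corollary should be read with that specialization of the more general setup of \Cref{lemma:paths-bij}; alternatively, one can appeal to \Cref{nonfolded} to allow more flexibility in $p_w$ without affecting the conclusion.
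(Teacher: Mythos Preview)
Your proposal is correct and matches the paper's approach: the corollary is stated without proof precisely because it is the immediate translation of \Cref{thm:ramformula} through the bijection of \Cref{lemma:paths-bij}, with $f(p)=|\mathcal{M}|$ and the sign/power bookkeeping you wrote out (note that this bookkeeping is already absorbed into the second displayed formula of \Cref{thm:ramformula}, so you could cite that directly). One small slip: in your first step you should be extracting the coefficient of $X^{\mu}T_{u^{-1}}^{-1}$, not $X^{-\mu}T_{u^{-1}}^{-1}$, per \eqref{equ:matrix1}.
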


\begin{remark} {\em A prior}i, the coefficients $c_{u,\mu}^{w,\lambda}$ appearing in the walk algebra are in $\bZ[q^{\pm \frac{1}{2}}]$. However, after matching these with the initial definition 
of Hecke algebras, it turns out that $c_{u,\mu}^{w,\lambda} \in  \bZ[q^{\pm 1}]$. This can also be seen directly in the formula above, by observing that
$\ell(u)-\ell(w) - |\mathcal{M}|$ is even. (This uses that a reflection has odd length, and the cancellation property of non-reduced expressions.)
\end{remark}

\section{A $\lambda$-chain formula for the transition coefficients $c_{u,\mu}^{w,\lambda}$}\label{sec:lambda-chains}
In this section, we reformulate the alcove walk formula \Cref{corol:Chev} 
in terms of the notion of $\lambda$-chains introduced in \cite{lenart.postnikov:affine}. This notion was utilized
to obtain a $K$-theory Chevalley formula for the structure sheaves of Schubert varieties. 
The main results of this section are \Cref{thm:lambda-chain1} and \Cref{thm:lambda-chain2}. Specializing $y \mapsto 0$, 
these recover \cite[Theorem 6.1]{lenart.postnikov:affine}.
Throughout this section, we only consider alcoves on the identity sheet $\ft_{\bbR}^*$. Recall that 
$\fA+\lambda:=\{ x+\lambda\mid x\in \fA\}$ is the alcove on $\mathfrak t_\bbR^{*}$.
(If $\lambda$ is not a root, the alcove $t_{\lambda}(\fA)$ is not the alcove $\fA+\lambda$.)

\subsection{Alcove paths and $\lambda$-chains}
For any two alcoves $A$ and $B$, which are separated by a common wall lying on a hyperplane $H_{\beta,k}$, write $A\stackrel{\beta}{\longrightarrow} B$ if the root $\beta$ points from $A$ to $B$. 
\begin{defin}\cite[Definition 5.2]{lenart.postnikov:affine}
An {\bf alcove path} is a sequence of alcoves $(A_0,A_1,\ldots, A_l)$ such that $A_{j-1}$ and $A_{j}$ are adjacent, $1\leq j\leq l$. We denote this by
$$A_0\stackrel{-\beta_1}{\longrightarrow} A_1 \stackrel{-\beta_2}{\longrightarrow} \cdots \stackrel{-\beta_l}{\longrightarrow}  A_l.$$
When the length $l$ is minimal among all alcove paths from $A_0$ to $A_l$, it is called a {\bf reduced alcove path}.
\end{defin}

\begin{remark}\label{-lambdaTo_alcove_walk} 
	By definition, there is a one-to-one correspondence between 
	alcove paths 
	\begin{equation}
		\fA=A_0  \stackrel{-\beta_1}{\longrightarrow}  A_1  \stackrel{-\beta_2}{\longrightarrow}  \cdots  \stackrel{-\beta_l}{\longrightarrow} A_l=\fA-\lambda
	\end{equation}
	from $\fA$ to $A_l=v_{-\lambda}(\fA)$
	and
	alcove walks from $A_0$ to $A_l$ 
	of the form
	$c^{\epsilon_1}_{i_1} c^{\epsilon_2}_{i_2}\cdots c^{\epsilon_l}_{i_l}$,
	where $-\beta_j\in R^{\epsilon_j}$ for  $1\leq j\leq l$.  
\end{remark}

Recall that $s_{0}=s_{\theta,1}$ is the affine reflection along the hyperplane $H_{\theta,1}$
with $\theta^\vee$ the highest coroot. Define
$\alpha_0 = -\theta$,
$\bar{s}_0=s_{\theta}$ and $\bar{s_i}=s_i$ for $1\leq i\leq r$.

\begin{lemma}\label{lemma_alcove_path}
\cite[Lemma 5.3]{lenart.postnikov:affine}
For any $v\in W_{\aff}$, there is a one-to-one correspondence between
decompositions of $v$ in the simple reflections $s_i \in W_{\aff}$ ($0 \le i \le r$) 
and alcove paths from $\fA$ to $v(\fA)$ as follows.

For any decomposition $v=s_{i_1} s_{i_2}\cdots s_{i_l}$, define
\[\beta_j:=\bar{s}_{i_1}\bar{s}_{i_2}\cdots \bar{s}_{i_{j-1}}(\alpha_{i_j}),\quad 1\leq j\leq l.\]
Then 
\[\fA=A_0\stackrel{-\beta_1}{\longrightarrow} A_1 \stackrel{-\beta_2}{\longrightarrow} \cdots \stackrel{-\beta_l}{\longrightarrow}  A_l=v(\fA)\] 
is an alcove path from $\fA$ to $v(\fA)$. 

Moreover, the affine reflection along the $j$-th hyperplane separating $A_{j-1}$ and $A_j$ is 
$s_{i_1} s_{i_2}\cdots s_{i_{j-1}}s_{i_j} s_{i_{j-1}}\cdots s_{i_2}s_{i_1}$; in particular, the separating
wall is labeled by $i_j$.
Under this correspondence, a reduced alcove path corresponds to a reduced decomposition for $v$.
\end{lemma}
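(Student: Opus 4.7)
The plan is to proceed by induction on the length $l$ of the expression $v = s_{i_1} s_{i_2} \cdots s_{i_l}$. The base case $l = 0$ is immediate: $v = \id$ and the trivial path $A_0 = \fA$ matches. For the inductive step, set $v_j := s_{i_1} \cdots s_{i_j}$ and $A_j := v_j(\fA)$. Since $A_l = v_{l-1}(s_{i_l}(\fA))$ and $s_{i_l}(\fA)$ is adjacent to $\fA$ across the wall labeled $i_l$, applying $v_{l-1}$ shows that $A_l$ is adjacent to $A_{l-1}$, and by $W_{\aff}$-equivariance of the wall labelings (recalled in \S\ref{sec:alcoves}) the shared wall is labeled $i_l$. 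Consequently the affine reflection across that wall is the conjugate $v_{l-1}\, s_{i_l}\, v_{l-1}^{-1} = s_{i_1} \cdots s_{i_{l-1}} s_{i_l} s_{i_{l-1}} \cdots s_{i_1}$, which is the claimed formula.

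To identify the root $\beta_l = \bar{s}_{i_1}\cdots \bar{s}_{i_{l-1}}(\alpha_{i_l})$, I will write $v_{l-1} = t_\mu \bar{v}_{l-1}$ with $\bar{v}_{l-1} = \bar{s}_{i_1}\cdots \bar{s}_{i_{l-1}}$ (finite parts multiply) and note the standard identity
\[
v_{l-1}\bigl(H_{\alpha, k}\bigr) = H_{\bar{v}_{l-1}(\alpha),\, k + \langle \mu, \bar{v}_{l-1}(\alpha)^\vee\rangle}.
\]
Applied to $\alpha = \alpha_{i_l}$ and $k = 0$ (or $\alpha = \alpha_0 = -\theta$ and $k = -1$, so that $H_{\alpha_0,-1} = H_{\theta,1}$ is the affine wall of $\fA$), this shows that the separating hyperplane between $A_{l-1}$ and $A_l$ is orthogonal to $\beta_l$. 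For the orientation, the alcove $\fA$ lies on the positive side of $H_{\alpha_{i_l},0}$ for $i_l \neq 0$ (since $\langle\lambda, \alpha_{i_l}^\vee\rangle > 0$ on $\fA$) and on the positive side of $H_{\alpha_0,-1}$ for $i_l = 0$ (since $\langle \lambda, \alpha_0^\vee\rangle = -\langle \lambda, \theta^\vee\rangle > -1$ on $\fA$); equivalently, the root $\alpha_{i_l}$ points from $s_{i_l}(\fA)$ into $\fA$. Applying $v_{l-1}$ preserves this orientation while sending $\alpha_{i_l}$ to $\beta_l$, so $\beta_l$ points from $A_l$ to $A_{l-1}$, i.e.\ $-\beta_l$ points from $A_{l-1}$ to $A_l$, as required.

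The bijectivity of the correspondence is essentially tautological: given any alcove path $\fA = A_0 \to A_1 \to \cdots \to A_l = v(\fA)$, reading off the wall labels $i_j$ of the crossings produces a word $s_{i_1}\cdots s_{i_l}$, and by the inductive construction above $A_j = s_{i_1}\cdots s_{i_j}(\fA)$, so the word evaluates to $v$ at $j = l$. This inverts the map from decompositions to paths. Finally, the length $\ell(v)$ in the Coxeter group $W_{\aff}$ equals the number of affine hyperplanes separating $\fA$ from $v(\fA)$, and any alcove path from $\fA$ to $v(\fA)$ has length at least this number; hence minimal-length alcove paths correspond exactly to reduced decompositions of $v$.

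The main obstacle I anticipate is handling the affine root $\alpha_0 = -\theta$ and the wall $H_{\theta,1}$ uniformly with the finite simple roots $\alpha_i$ and walls $H_{\alpha_i,0}$; the sign convention that $\fA$ is on the \emph{positive} side of each relevant hyperplane (with respect to the root $\alpha_{i_l}$) needs to be checked separately for $i_l = 0$, and the computation of $v_{l-1}(H_{\alpha,k})$ must be compatible with the convention that finite parts of elements of $W_{\aff}$ multiply. Once these are set up carefully, the induction runs smoothly.
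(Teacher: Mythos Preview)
The paper does not give its own proof of this lemma; it simply quotes it from Lenart--Postnikov \cite[Lemma 5.3]{lenart.postnikov:affine}. Your inductive argument is correct and is essentially the standard proof: setting $A_j=v_j(\fA)$ with $v_j=s_{i_1}\cdots s_{i_j}$, using the $W_{\aff}$-equivariance of wall labels to get the label $i_l$ and the conjugate reflection, and then tracking the root direction via the decomposition $v_{l-1}=t_\mu\bar v_{l-1}$ to obtain $\beta_l=\bar v_{l-1}(\alpha_{i_l})$. Your separate check of the $i_l=0$ case (rewriting $H_{\theta,1}=H_{\alpha_0,-1}$ and verifying $\fA$ lies on the $\alpha_0$-positive side) is the only place requiring care, and you handle it correctly. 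The bijectivity and reducedness claims are standard Coxeter-theoretic facts and your sketches suffice.
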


Let $\lambda$ be an integral weight and let $v_{-\lambda}\in W_{\aff}$ be the affine Weyl group element 
which satisfies $v_{-\lambda}(\fA)=\fA-\lambda$, i.e., $t_{-\lambda}=v_{-\lambda}g$ with 
$g\in \Omega$. Choose a (possibly non-reduced) decomposition
$v_{-\lambda}=s_{i_1}\cdots s_{i_l}$ and let $\beta_j$ be defined by 
\[\beta_j:=\bar{s}_{i_1}\bar{s}_{i_2}\cdots \bar{s}_{i_{j-1}}(\alpha_{i_j}),\quad 1\leq j\leq l \/,\]
with the convention that $\alpha_0 = -\theta$
(see \Cref{lemma_alcove_path}). Then 
$$\fA=A_0\stackrel{-\beta_1}{\longrightarrow} A_1 \stackrel{-\beta_2}{\longrightarrow} \cdots \stackrel{-\beta_l}{\longrightarrow}  \fA-\lambda$$
is an alcove path from $\fA$ to $\fA - \lambda$. 
\begin{defin}\cite[Definition 5.4]{lenart.postnikov:affine}
	The sequence of roots $(\beta_1, \ldots, \beta_l)$ is a {\bf $\lambda$-chain} of roots
associated to the decomposition of $v_{-\lambda}$. A $\lambda$-chain is called reduced if the decomposition 
	$v_{-\lambda}=s_{i_1}\cdots s_{i_l}$ is reduced.
\end{defin}
Let $H_{-\beta_j, d_j}$ be the hyperplane separating the alcoves $A_j$ and $A_{j+1}$. 
The sequence of integers $d_j$ are determined by the sequence of roots $\beta_j$, but
we occasionally keep the information of $d$'s in the notation, and we refer to 
the sequence of pairs 
$(\beta_1, d_1),(\beta_2, d_2),\ldots , (\beta_l, d_l)$ as a $\lambda$-chain 
\begin{footnote}{If $\lambda$ is dominant, this definition was extended to the Kac--Moody situation 
in \cite{lenart2008combinatorial}.}\end{footnote}. 
Following \cite[Prop.~6.8]{lenart.postnikov:affine} we recall a combinatorial construction of a $\lambda$-chain
for an integral weight $\lambda$. 

Fix a linear order on the index of Dynkin nodes 
(for example $1<2<\cdots<r$ in $I=\{1,2,\ldots,r\}$). The 
set $\mathcal{R}_\lambda \subset W_\mathrm{aff}$ of affine reflections
$s_{\alpha,k}$ for the hyperplanes $H_{\alpha, k}$ 
separating the fundamental alcove $\fA$ and $\fA - \lambda$ is given by
\[ \mathcal{R}_\lambda = \bigcup_{\alpha \in R^+} \begin{cases}
 \{ s_{\alpha,k} : 0 \ge k > - \langle \lambda, \alpha^\vee \rangle \} & \textrm{ if } \langle \lambda, \alpha^\vee \rangle > 0 \\
  \{ s_{\alpha,k} : 0 < k \le - \langle \lambda , \alpha^\vee \rangle \} & \textrm{ if } \langle \lambda, \alpha^\vee \rangle < 0 \\
\emptyset & \textrm{ otherwise } 
\end{cases}
\]
One defines a `height function' 
\[ h:\mathcal{R}_\lambda\to \mathbb R^{r+1} \/; \quad 
h(s_{\alpha, k})=\frac{1}{\langle \lambda, \alpha^\vee \rangle}
(-k,
 \langle \varpi_1, \alpha^\vee \rangle, 
\cdots, \langle \varpi_r, \alpha^\vee \rangle) \/. \]
It turns out that this function is injective.
Now order the images of $h$ in lexicographic order, so we obtain
$h(s_{\alpha_1, k_1}) < \ldots < h(s_{\alpha_l, k_l})$. Define another function 
$b: \mathcal{R}_\lambda \to R^+ \cup R^-$ by
\[ b(s_{\alpha,k}) = \begin{cases} \alpha & \textrm{ if } k \le 0 , \alpha \in R^+ \/;\\
-\alpha & \textrm{ if } k > 0 , \alpha \in R^+ \/. \end{cases} \] 
Then 
$b(s_{\alpha_1,k_1}), \ldots, b(s_{\alpha_l,k_l})$ is a (reduced) $\lambda$-chain of roots.

\begin{remark} A particularly nice situation occurs for 
a minuscule fundamental weight $\varpi_i$, i.e. when
$\langle \varpi_i, \alpha^\vee \rangle \in \{0, 1 \}$
for any positive root $\alpha$. In this case all $k_i=0$ and
$v_{-\varpi_i} = (w^{P_i})^{-1} \in W$, with $w^{P_i}$ being
the longest minimal length representative 
for cosets of $W/W_{P_i}$, where
$W_{P_i} = \Stab_W(\varpi_i)$; equivalently, the Schubert variety 
$X(w^{P_i}W_{P_i}) = G/P_i$.
A reduced decomposition and the associated $\varpi_i$-chain may be read from the associated 
Young poset of $G/P_i$ - see e.g. \cite{proctor:dynkin,stembridge2001minuscule} 
and also \cite[\S 3.1]{BCMP:QKChev} and \Cref{ex:v-o2} below. 
It may also be obtained as
a reverse linear extension of the heap $H(w^{P_{i}})$, and 
this gives a one-to-one correspondence
between reduced $\varpi_i$-chains and 
reverse linear extensions of the heap $H(w^{P_{i}})$.
We refer the readers to \cite{mihalcea2022hook,naruse2019skew} 
for the heap perspective. \end{remark}


\begin{example}\label{ex:v-o2}
Consider $G=\mathrm{SL}_5$ and the fundamental weight $\varpi_2$. The stabilizer
is the maximal parabolic $P_2$ so that $G/P_2$ is the 
Grassmannian $\Gr(2,5)$. The inversion set and a reduced decomposition
of $v_{-\varpi_2}$ may be read from the Young diagrams below, with the notation
$(i-j) = \alpha_i +\ldots + \alpha_{j-1}$. 
\[ 
\ytableausetup{centertableaux,boxsize=2.4em}
\begin{ytableau}
{\tiny{2-3}}   & {{\tiny 2-4}}  & {\tiny{2-5}} \\
{\tiny{1- 3}}   & {\tiny{1 - 4}}  & {\tiny{1-5}}
\end{ytableau} 
\hskip1cm
\ytableausetup{centertableaux,boxsize=2.4em}
\begin{ytableau}
{\alpha_2}   & {\alpha_3}  & {\alpha_4} \\
{\alpha_1}   & {\tiny{\alpha_2}}  & {\alpha_3}
\end{ytableau} 
\]
Then $v_{-\varpi_2} =s_2 s_3 s_4 s_1 s_2 s_3$
and a $\varpi_2$-chain of roots is given
by $\{\alpha_2, \alpha_2+\alpha_3, \alpha_2+\alpha_3+\alpha_4, \alpha_1 +  \alpha_2,  
\alpha_1+ \alpha_2+\alpha_3, \alpha_1+ \alpha_2+\alpha_3+\alpha_4\}$. 

\end{example}
\begin{example}
Let $G=\SL_3$, with the Weyl group $W=S_3$, and consider 
$\lambda=2\varpi_1-2\varpi_2$. An example of alcove path from
$\fA$ to $\fA-\lambda$ is
 $$\fA=
 A_0  \stackrel{-\beta_1}{\longrightarrow}  
 A_1  \stackrel{-\beta_2}{\longrightarrow} 
 A_2  \stackrel{-\beta_3}{\longrightarrow} 
 A_3  \stackrel{-\beta_4}{\longrightarrow} 
 A_4  \stackrel{-\beta_5}{\longrightarrow} 
 A_5  \stackrel{-\beta_6}{\longrightarrow} 
 A_6=\fA-\lambda\;
 (A_{i}=r_i A_{i-1}, 1\leq i\leq 6),
 $$
which is the red path below,
and it corresponds to
the decomposition $v_{-\lambda}=s_0 s_1 s_0 s_1 s_2 s_1$.
The corresponding alcove walk is 
$\bar{p} =c_0^{+} c_1^{+} c_0^{-} c_1^{-} c_2^{-} c_1^{+}$, and
the corresponding $\lambda$-chain of roots is
$(\beta_1,1), (\beta_2,1), (\beta_3, 0), (\beta_4,-1), (\beta_5, 1), (\beta_6,2)$,
as calculated below. Here we included the $d$'s in the notation.

\begin{minipage}{7cm}

\begin{tikzpicture}[scale=1.0]
\newcommand*\rows{4}


\newcommand*\view{3.3}
\clip (-\view,-\view+1) rectangle (\view,\view+0.05);
\foreach \row in {-\rows,...,\rows}{
\draw[gray] (-\rows,{\row*sqrt(3)/2}) -- (\rows,{\row*sqrt(3)/2});}
\foreach \row in {-\rows,...,\rows}{
 \draw[gray] ({-\rows+\row},{-\rows*sqrt(3)})--({\rows+\row},{\rows*sqrt(3)});
 \draw[gray] ({-\rows+\row},{\rows*sqrt(3)})--({\rows+\row},{-\rows*sqrt(3)});
 };


\draw [lightgray,fill] (0,0)--(0.5,{sqrt(3)/2})--(-0.5,{sqrt(3)/2})--cycle;
\draw [lightgray,fill] (2,0)--(2.5,{sqrt(3)/2})--(2-0.5,{sqrt(3)/2})--cycle;

   \draw[brown,thick] (-\rows,{-\rows*sqrt(3)}) -- (\rows,{\rows*sqrt(3)});
   \draw[brown, thick] (-\rows,{\rows*sqrt(3)}) -- (\rows,{-\rows*sqrt(3)});
   \draw[brown,thick] (-\rows,0) -- (\rows,0); 

\node[brown,rotate=0] at ({-2+0.5},{sqrt(3)*1.8-0.2}) {\small$H_{\alpha_2,0}$};
\node[brown,rotate=0] at ({2-0.5},{sqrt(3)*1.8-0.2}) {\small$H_{\alpha_1,0}$};
\node[brown,rotate=0] at ({2.5},{sqrt(3)*0}) {\small$H_{\alpha_1+\alpha_2,0}$};

\draw[name path=circleA,thick] (0,0) circle (0.07);

\draw [->,thick] (0,0)--(-1.5,{sqrt(3)/2});
\node at (-1.5,{sqrt(3)/2+0.2}) {$\alpha_1$};
\draw [->,thick] (0,0)--(1.5,{sqrt(3)/2});
\node at (1.5,{sqrt(3)/2+0.2}) {$\alpha_2$};
\draw [->,thick] (0,0)--(0,{-sqrt(3)});
\node at (-0.3,{-sqrt(3)}) {$\alpha_0$};

\draw [->,thick,blue] (0,0)--(-0.5,{sqrt(3)/2});
\node [blue]  at (-0.5,{sqrt(3)/2+0.2}) {$\varpi_1$};
\draw [->,thick,blue] (0,0)--(0.5,{sqrt(3)/2});
\node [blue] at (0.5,{sqrt(3)/2+0.2}) {$\varpi_2$};

\foreach \x/\y in {0/1,0/-1, 0.75/0.5,-0.75/-0.5,-0.75/0.5, 0.75/-0.5}
{\node[gray] at (\x,{\y*sqrt(3)/2}) {\small 0};
\node[gray] at (\x,{(\y+2)*sqrt(3)/2}) {\small 0};
\node[gray] at ({\x+1.5},{(\y+1)*sqrt(3)/2}) {\small 0};
};

\foreach \x/\y in {0/1,0/-1, 0.75/0.5,-0.75/-0.5,-0.75/0.5, 0.75/-0.5}
{\node[gray] at ({\x+1},{\y*sqrt(3)/2}) {\small 1};
\node[gray] at ({\x+1},{(\y+2)*sqrt(3)/2}) {\small 1};
};
\foreach \x/\y in {0/0,0/2,1.5/1,1.5/-1}
{\node[gray] at ({\x+0.5},{\y*sqrt(3)/2}) {\small 2};
\node[gray] at ({\x-0.25},{\y*sqrt(3)/2+sqrt(3)/4}) {\small 2};
\node[gray] at ({\x-0.25},{\y*sqrt(3)/2-sqrt(3)/4}) {\small 2};
}

\node at (0,0.55) {\tiny$\fA$};
\node at (2,0.68) {\tiny$\fA-\lambda$};

\draw [->,thick,red] (0, {sqrt(3)*1/3})--(0,{sqrt(3)*2/3})--(0.5,{sqrt(3)*5/6})
--(1,{sqrt(3)*2/3})--(1,{sqrt(3)*1/3})--(1.5,{sqrt(3)*1/6})--(2,{sqrt(3)*1/3});
\end{tikzpicture}
\end{minipage}
\hspace{0.3cm}
\begin{minipage}{7.6cm}

$
\begin{array}{lll}
i_1=0,&\beta_1=\alpha_0=-(\alpha_1+\alpha_2)\\
i_2=1,&\beta_2=\bar{s_0}(\alpha_1)=-\alpha_2\\
i_3=0,&\beta_3=\bar{s_0}\bar{s_1}(\alpha_0)=\alpha_1\\
i_4=1,&\beta_4=\bar{s_0}\bar{s_1}\bar{s_0}(\alpha_1)=\alpha_1+\alpha_2\\
i_5=2,&\beta_5=\bar{s_0}\bar{s_1}\bar{s_0}\bar{s_1}(\alpha_2)=\alpha_1\\
i_6=1,&\beta_6=\bar{s_0}\bar{s_1}\bar{s_0}\bar{s_1}\bar{s_2}(\alpha_1)=-\alpha_2\\
\end{array}
$
\vspace{0.2cm}

\hspace{0.15cm}$s_0=\hat{r}_{H_{-\beta_1,1}}$

\hspace{0.15cm}$s_0 s_1 s_0=\hat{r}_{H_{-\beta_2,1}}$

\hspace{0.15cm}$(s_0 s_1) s_0 (s_1 s_0)=s_1=\hat{r}_{H_{-\beta_3,0}}$

\hspace{0.15cm}$(s_0 s_1 s_0) s_1(s_0 s_1 s_0)= s_0=\hat{r}_{H_{-\beta_4,-1}}$

\hspace{0.15cm}$(s_0 s_1 s_0 s_1) s_2 (s_1 s_0 s_1 s_0) = \hat{r}_{H_{-\beta_5,1}}$

\hspace{0.15cm}$(s_0 s_1 s_0 s_1 s_2) s_1 (s_2 s_1 s_0 s_1 s_0) = \hat{r}_{H_{-\beta_6,2}}$
\vspace{0.2cm}

\end{minipage}

\noindent
We can choose another $\lambda$-chain for $\lambda=2\varpi_1-2\varpi_2$ by using the reduced decomposition $v_{-\lambda}=s_1 s_0 s_2 s_1$.
The corresponding reduced $\lambda$-chain is
$(\alpha_1,0),( -\alpha_2,1),(\alpha_1,1),(-\alpha_2, 2)$.

\end{example}

\begin{lemma}\label{-lambda_chain}
 \cite[Remark 5.5]{lenart.postnikov:affine}
 Let $L=(\beta_1,\ldots,\beta_l)$ be a $\lambda$-chain. Then
 $\bar{L}:=(-\beta_l,\ldots,-\beta_1)$ is a $(-\lambda)$-chain.
 If $H_{-\beta_j, d_j}$ is the $j$-th hyperplane of the alcove path from $\fA$ to $\fA-\lambda$ determined by $L$, then the $j$-th hyperplane 
 of alcove path from $\fA$ to $\fA+\lambda$ determined by
 $\bar{L}$ is $H_{\beta_{l+1-j}, \langle \lambda, \beta_{l+1-j}^\vee \rangle -d_{l+1-j}}$.
 If $L$ is reduced, then $\bar{L}$ is also reduced.
\end{lemma}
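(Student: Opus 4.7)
The plan is to interpret the operation $L \mapsto \bar L$ geometrically as the composition of two length-preserving operations on the alcove path: reversal, followed by translation by $+\lambda$. Everything else becomes a short computation once this is set up.

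Concretely, starting from the alcove path
\[
\fA = A_0 \stackrel{-\beta_1}{\longrightarrow} A_1 \stackrel{-\beta_2}{\longrightarrow} \cdots \stackrel{-\beta_l}{\longrightarrow} A_l = \fA-\lambda,
\]
I would first run it backwards: reverse each arrow to get an alcove path
\[
\fA - \lambda = A_l \stackrel{\beta_l}{\longrightarrow} A_{l-1} \stackrel{\beta_{l-1}}{\longrightarrow} \cdots \stackrel{\beta_1}{\longrightarrow} A_0 = \fA.
\]
Translating every alcove by $+\lambda$ (which preserves adjacency and the crossed roots) yields
\[
\fA = A_l + \lambda \stackrel{\beta_l}{\longrightarrow} \cdots \stackrel{\beta_1}{\longrightarrow} A_0 + \lambda = \fA + \lambda,
\]
so in the convention $A_{j-1} \stackrel{-\gamma_j}{\longrightarrow} A_j$ the new chain of roots is $(\gamma_1,\ldots,\gamma_l) = (-\beta_l, -\beta_{l-1},\ldots, -\beta_1) = \bar L$, which is by definition a $(-\lambda)$-chain.

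For the hyperplanes, note that the $j$-th step of the reversed path crosses the $(l+1-j)$-th hyperplane $H_{-\beta_{l+1-j}, d_{l+1-j}}$ of the original walk. Translation by $+\lambda$ sends the hyperplane $H_{\alpha, k}$ to $\{x+\lambda : \langle x, \alpha^\vee\rangle = k\} = H_{\alpha,\, k + \langle\lambda,\alpha^\vee\rangle}$. Applying this with $\alpha = -\beta_{l+1-j}$ and $k = d_{l+1-j}$, and then using $H_{\alpha, k} = H_{-\alpha, -k}$, produces
\[
H_{-\beta_{l+1-j},\, d_{l+1-j} - \langle \lambda, \beta_{l+1-j}^\vee\rangle}
\;=\; H_{\beta_{l+1-j},\, \langle \lambda, \beta_{l+1-j}^\vee\rangle - d_{l+1-j}},
\]
as claimed.

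Finally, for the reduced assertion: both reversal and translation preserve the number of hyperplane crossings, so the length $l$ is unchanged. Since a $\lambda$-chain is reduced iff the associated alcove path is of minimal length from $\fA$ to the translated alcove, and the above operations set up a bijection between alcove paths $\fA \to \fA - \lambda$ and alcove paths $\fA \to \fA + \lambda$ preserving length, minimality of one forces minimality of the other. The main bookkeeping obstacle is simply keeping the identifications $H_{\alpha,k}=H_{-\alpha,-k}$ and the sign of the arrow direction straight under the reverse-then-translate move; once these are nailed down, the claim falls out.
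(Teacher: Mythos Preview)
Your argument is correct. The paper does not actually prove this lemma; it merely cites \cite[Remark 5.5]{lenart.postnikov:affine} and states the result. Your reverse-then-translate argument is exactly the standard way to see it, and the bookkeeping with $H_{\alpha,k}=H_{-\alpha,-k}$ and the translation formula $H_{\alpha,k}+\lambda=H_{\alpha,\,k+\langle\lambda,\alpha^\vee\rangle}$ is handled correctly.
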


\subsection{$\lambda$-chain formulae}\label{ss:lambda-chain}
Next we state the main theorem of this section. For any root hyperplane $h=H_{\beta,k}$, let $r_h$ denote the reflection along the hyperplane $H_{\beta,0}$, and $\hat{r}_h$ be the reflection along $h$.

Assume $\lambda$ is an integral weight and 
fix a reduced $\lambda$-chain 
$(\beta_1, \beta_2,\ldots, \beta_l)$, which corresponds to an alcove walk from $\fA$ to $\fA - \lambda$, with separating hyperplanes 
$H_{-\beta_j, d_j}=:h_j$.

For a subset $J=\{j_1<j_2<\cdots <j_t\}\subset \{1,2,\ldots, l\}$, define the following relation
$$
u\stackrel{J_{>}}
{\longrightarrow} w \stackrel{def.}{\iff}
u<u r_{h_{j_t}}<u r_{h_{j_t}}r_{h_{j_{t-1}}}<\cdots <u r_{h_{j_{t}}}\cdots r_{h_{j_1}}=w,
$$ 
and let 
\begin{equation}\label{E:hatr}\hat{r}_{J_{<}}:=  \hat{r}_{h_{j_1}}\cdots \hat{r}_{h_{j_t}}.\end{equation}
Hence, $w\stackrel{J}{\Longrightarrow} u $ from \Cref{cond_M} is equivalent to $u\stackrel{J_{>}}
	{\longrightarrow} w$.
Therefore, \Cref{corol:Chev} can be restated as follows.
\begin{theorem}\label{thm:lambda-chain1} 
In the above setting,
\begin{equation}\label{trans:lambda1}
c_{u,\mu}^{w,-\lambda}=
\sum
(-1)^{n(J)} (1-q)^{|J|} q^{\frac{\ell(u)-\ell(w)-|J|}{2}},
\end{equation}
where the sum is over subsets $J\subset \{ 1,2,\ldots, l \}$ such that 
$u\stackrel{J_{>}}{\longrightarrow} w$ and $w \hat{r}_{J_{<}}(-\lambda)=\mu$,
and where $n(J):=\#\{ j\in J\mid \beta_j<0\}$.
\end{theorem}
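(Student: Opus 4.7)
The plan is to derive \Cref{thm:lambda-chain1} as a direct reformulation of Ram's formula (\Cref{corol:Chev}) with $\lambda$ replaced by $-\lambda$, using the $\lambda$-chain data to provide the required alcove walk. The first step is to observe that a reduced $\lambda$-chain $(\beta_1,\ldots,\beta_l)$, which arises from a reduced decomposition $v_{-\lambda} = s_{i_1}\cdots s_{i_l}$, canonically determines an alcove walk $p_{-\lambda} = c_{i_1}^{\epsilon_1}\cdots c_{i_l}^{\epsilon_l} g$ from $\fA$ to $t_{-\lambda}\fA$, where $g \in \Omega$ is the unique length-zero element making $t_{-\lambda} \in v_{-\lambda}\Omega$, and where the sequence of walls crossed is precisely $h_j = H_{-\beta_j,d_j}$. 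Combined with any minimal walk $p_w$ from $\fA$ to $w\fA$, this feeds directly into the hypotheses of \Cref{lemma:paths-bij} and \Cref{corol:Chev}.

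The second step, where the content really lies, is matching the sign conventions. Since the positive side of $H_{\alpha,k}$ (with $\alpha>0$) is $\{\langle\cdot,\alpha^\vee\rangle>k\}$, and the $j$-th step of the walk moves in the direction of the root $-\beta_j$, a short case check gives
\[\epsilon_j = + \iff -\beta_j > 0 \iff \beta_j < 0, \qquad \epsilon_j = - \iff \beta_j > 0.\]
By the straightening laws $c_i^{\pm} = c_i^{\mp} + f_i^{\pm}$, a folding at position $j$ in the bijection of \Cref{lemma:paths-bij} produces a fold of the same sign as the underlying unfolded crossing $c_{i_j}^{\epsilon_j}$. Consequently, for any subset $J \subset \{1,\ldots,l\}$ of folding positions,
\[f^{+}(\mathcal{M}) \;=\; \#\{j \in J : \epsilon_j = +\} \;=\; \#\{j \in J : \beta_j < 0\} \;=\; n(J),\]
which is exactly the sign appearing in \eqref{trans:lambda1}.

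With the sign identification established, the rest of the proof is a dictionary check. The $\mathcal{H}$-restricted Bruhat condition $w \stackrel{J}{\Longrightarrow} u$ from \Cref{cond_M} translates term-by-term to $u \stackrel{J_{>}}{\longrightarrow} w$, because $r_{h_j}$ is by definition the finite reflection $s_{\beta_j} = s_{-\beta_j}$; the Weyl group element $\hat r_{J_{<}} = \hat r_{h_{j_1}}\cdots\hat r_{h_{j_t}}$ (with $j_1 < \cdots < j_t$) is literally the element $\hat r_{\mathcal{M}}$ of \Cref{lemma:paths-bij}; and the weight condition $\mu = w\hat r_{\mathcal{M}}(-\lambda)$ from \Cref{corol:Chev} (applied to $-\lambda$) is precisely $\mu = w\hat r_{J_{<}}(-\lambda)$. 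Substituting into the formula of \Cref{corol:Chev} with $\lambda \mapsto -\lambda$ yields \eqref{trans:lambda1}. The only nontrivial point in the whole argument is the sign bookkeeping $f^{+}(\mathcal{M}) = n(J)$; everything else is a transcription between the alcove walk language of \S\ref{sec:affinehecke} and the $\lambda$-chain language of \cite{lenart.postnikov:affine}.
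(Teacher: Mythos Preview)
Your proof is correct and follows essentially the same approach as the paper's own proof: both derive the formula by specializing \Cref{corol:Chev} to the alcove walk coming from the $\lambda$-chain and then identifying $f^{+}(\mathcal{M})$ with $n(J)$. The paper compresses the sign identification into a one-line appeal to \Cref{-lambdaTo_alcove_walk} (which records that $-\beta_j \in R^{\epsilon_j}$), whereas you spell out the case check $\epsilon_j = + \iff \beta_j < 0$ and the dictionary between $w \stackrel{\mathcal{M}}{\Longrightarrow} u$ and $u \stackrel{J_{>}}{\longrightarrow} w$ explicitly; the content is the same.
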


\begin{proof}
	In the summation in \Cref{corol:Chev}, we let $J\subset  \{ 1,2,\ldots, l \}$ be the indices of the hyperplanes in $\calM$. Then by \Cref{-lambdaTo_alcove_walk}, $f^+(\calM)=n(J)$. This finishes the proof.
\end{proof}

Using the transformation between a $\lambda$-chain and a $(-\lambda)$-chain, we can get rid of the negative sign in front of $\lambda$ in \Cref{thm:lambda-chain1}  as follows.

First of all,  $(-\beta_l, -\beta_{l-1},\ldots, -\beta_1)$ is a $(-\lambda)$-chain, which corresponds to an alcove walk from $\fA$ to $\fA + \lambda$, with the $j$-th separating hyperplane being 
\begin{equation}\label{E:h'} h'_j:=H_{\beta_{l+1-j}, \langle \lambda, \beta^\vee_{l+1-j}\rangle - d_{l+1-j}} \/. \end{equation}
Then $r_{h'_j}=r_{h_{l+1-j}}$. Let $\tilde{r}_{h_{l+1-j}}:=\hat{r}_{h'_j}$ be the reflection along $h'_j$.
Define
\[
u\stackrel{J_{<}}{\longrightarrow} w  \stackrel{def.}{\iff}
u<u r_{h_{j_1}}<u r_{h_{j_1}}r_{h_{j_2}}< \cdots <
u r_{h_{j_1}}\cdots r_{h_{j_t}}=w \/,\]
and
\begin{equation}\label{E:defrtilde}
\tilde{r}_{J_{>}}:= \tilde{r}_{h_{j_t}}\cdots \tilde{r}_{h_{j_1}}.
\end{equation}

\begin{theorem}\label{thm:lambda-chain2} 
The following holds,
	\begin{equation}\label{trans:lambda2}
		c_{u,\mu}^{w,\lambda}=
		\sum
		(-1)^{n(J)} (q-1)^{|J|} q^{\frac{\ell(u)-\ell(w)-|J|}{2}},
	\end{equation}
	where the sum is over subsets $J\subset \{ 1,2,\ldots, l \}$ such that 
	$u\stackrel{J_{<}}{\longrightarrow} w$ and $w \tilde{r}_{J_{>}}(\lambda)=\mu$,
	where $n(J):=\#\{ j\in J\mid \beta_j<0\}$.
\end{theorem}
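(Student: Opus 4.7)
The plan is to derive Theorem \ref{thm:lambda-chain2} directly from Theorem \ref{thm:lambda-chain1} by applying the latter with $\lambda$ replaced by $-\lambda$, using the reversed $(-\lambda)$-chain supplied by \Cref{-lambda_chain}. No new Hecke algebra manipulation is needed; all the work is a bijective reindexing of summations.

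Concretely, I would start by invoking \Cref{-lambda_chain}: the reduced $\lambda$-chain $(\beta_1,\ldots,\beta_l)$ with separating hyperplanes $h_j=H_{-\beta_j,d_j}$ yields a reduced $(-\lambda)$-chain $(-\beta_l,\ldots,-\beta_1)$ whose $j$-th separating hyperplane is $h'_j = H_{\beta_{l+1-j},\,\langle\lambda,\beta^\vee_{l+1-j}\rangle - d_{l+1-j}}$, which agrees with \eqref{E:h'}. Applying \Cref{thm:lambda-chain1} to this $(-\lambda)$-chain (replacing $\lambda$ by $-\lambda$ in that statement) gives
\[ c_{u,\mu}^{w,\lambda} \;=\; \sum_{J'} (-1)^{n'(J')}(1-q)^{|J'|} q^{\frac{\ell(u)-\ell(w)-|J'|}{2}}, \]
summed over $J'\subset\{1,\ldots,l\}$ with $u \stackrel{J'_{>}}{\longrightarrow} w$ (using the hyperplanes $h'_j$) and $w \hat{r}_{J'_{<}}(\lambda)=\mu$, with $n'(J') = \#\{j \in J' : -\beta_{l+1-j}<0\} = \#\{j\in J' : \beta_{l+1-j}>0\}$.

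Next, I would introduce the order-reversing bijection $J' \leftrightarrow J$ defined by $j \in J' \iff l+1-j \in J$. The main bookkeeping step is to verify, one entry at a time, how each quantity transforms:
\[
|J'|=|J|,\qquad n'(J') = |J|-n(J),\qquad r_{h'_k}=r_{h_{l+1-k}},\qquad \hat{r}_{h'_k}=\tilde{r}_{h_{l+1-k}}.
\]
The first two identities are immediate. The last two identities, combined with tracing through the indexing, convert the increasing chain
\[ u < u r_{h'_{j'_t}} < \cdots < u r_{h'_{j'_t}}\cdots r_{h'_{j'_1}} = w \]
for $J'=\{j'_1<\cdots<j'_t\}$ into
\[ u < u r_{h_{j_1}} < \cdots < u r_{h_{j_1}}\cdots r_{h_{j_t}}=w \]
for $J=\{j_1<\cdots<j_t\}$, i.e.\ the condition $u \stackrel{J_{<}}{\longrightarrow} w$; simultaneously, $\hat{r}_{J'_{<}} = \hat{r}_{h'_{j'_1}}\cdots \hat{r}_{h'_{j'_t}} = \tilde{r}_{h_{j_t}}\cdots \tilde{r}_{h_{j_1}} = \tilde{r}_{J_{>}}$ by \eqref{E:defrtilde}, so the weight condition becomes $w\tilde{r}_{J_{>}}(\lambda)=\mu$.

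Finally, combining $(-1)^{n'(J')} = (-1)^{n(J)}(-1)^{|J|}$ with $(1-q)^{|J|}$ yields the sign $(-1)^{n(J)}(q-1)^{|J|}$, producing exactly the expression in \eqref{trans:lambda2}. The only place where care is needed is the order-reversal step in the chain condition: one must be attentive to the fact that the first reflection one applies when walking $J'_{>}$ (namely $r_{h'_{j'_t}}$) becomes the \emph{first} reflection $r_{h_{j_1}}$ in the $J_{<}$ convention, and verify that strict Bruhat ascents are preserved under this relabeling. Once this is checked, every other identification is formal.
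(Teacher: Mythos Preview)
Your proposal is correct and follows essentially the same approach as the paper's proof: both apply \Cref{thm:lambda-chain1} to the reversed $(-\lambda)$-chain from \Cref{-lambda_chain}, then reindex via the order-reversing bijection $j\leftrightarrow l+1-j$ to convert the $J'_{>}$ chain condition into the $J_{<}$ condition, identify $\hat{r}_{J'_<}$ with $\tilde{r}_{J_>}$, and absorb the sign discrepancy $(-1)^{|J|}$ into $(1-q)^{|J|}$ to produce $(q-1)^{|J|}$. Your bookkeeping of $n'(J')=|J|-n(J)$ is exactly the computation the paper performs implicitly.
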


\begin{proof}
Applying \Cref{thm:lambda-chain1} to the $(-\lambda)$-chain $(-\beta_l, -\beta_{l-1},\ldots, -\beta_1)$, we get
\[c_{u,\mu}^{w,\lambda}=\sum (-1)^{\#\{j_i\mid -\beta_{l+1-j_i}<0\}}(1-q)^tq^{\frac{\ell(u)-\ell(w)-t}{2}},\]
where the summation is over subsets $J':=\{1\leq j_1<j_2<\dots<j_t\leq l\}$, such that 
\begin{equation}\label{equ:cond1}
	u<ur_{h'_{j_t}}<\cdots<ur_{h'_{j_t}}\cdots r_{h'_{j_1}}=w
\end{equation}
and 
\begin{equation}\label{equ:cond2}
	w\hat{r}_{h'_{j_1}}\cdots \hat{r}_{h'_{j_t}}(\lambda)=\mu.
\end{equation}
Let 
\[J:=\{1\leq l+1-j_t<l+1-j_{t-1}<\cdots <l+1-j_1\leq l\}.\]
Then 
\[(-1)^{\#\{j_i\mid -\beta_{l+1-j_i}<0\}}(1-q)^t=(-1)^{n(J)}(q-1)^t,\]
where $n(J)$ is defined in \Cref{thm:lambda-chain1}.
Since $r_{h'_j}=r_{h_{l+1-j}}$, the condition \Cref{equ:cond1} is equivalent to $u\stackrel{J_{<}}{\longrightarrow} w$.
On the other hand, the condition \Cref{equ:cond2} is equivalent to $w \tilde{r}_{J_{>}}(\lambda)=\mu$ as $\tilde{r}_{h_{l+1-j}}=\hat{r}_{h'_j}$. This finishes the proof.
\end{proof}
For further use, we also record the following technical result, which will allow to rewrite
the elements $w\hat{r}_{J_{<}}(-\lambda)$ from \Cref{thm:lambda-chain1} and 
$w \tilde{r}_{J_{<}}$ from \Cref{thm:lambda-chain2}. 
\begin{prop}\label{lambda-chain:lemma}
Consider a $\lambda$-chain $\beta_1,\ldots,\beta_l$. 
For any subsequence $J=\{j_1<j_2<\ldots<j_t\}\subset \{1,2,\ldots,l\}$, we have 
$$-\hat{r}_{J_<}(-\lambda)=r_{J} \; \tilde{r}_{J_>}(\lambda) , \text{ where } {r}_{J}={r}_{h_{j_1}} {r}_{h_{j_2}}\cdots {r}_{h_{j_t}}.
$$
\end{prop}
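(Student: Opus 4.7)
The plan is to reduce the identity to a direct manipulation of affine transformations on $\ft^*_\bbR$. The key geometric input is that the hyperplane $h'_{l+1-k}$ from \eqref{E:h'} is exactly the $\lambda$-translate of $h_k=H_{-\beta_k,d_k}$: a one-line computation using the defining inequality of $h_k$ shows $\lambda+h_k = H_{\beta_k,\langle\lambda,\beta_k^\vee\rangle - d_k}$. Since the reflection along a translated hyperplane is the translation-conjugate of the original reflection, this gives the crucial identity
\[
\tilde r_{h_k} \;=\; t_\lambda\,\hat r_{h_k}\,t_{-\lambda}
\]
as affine transformations, where $t_\mu$ denotes translation by $\mu$.

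Iterating this yields $\tilde r_{J_>} = t_\lambda\,\hat r_{J_<}^{-1}\,t_{-\lambda}$, where I use that each $\hat r_{h_j}$ is an involution so that $\hat r_{h_{j_t}}\cdots \hat r_{h_{j_1}} = (\hat r_{h_{j_1}}\cdots \hat r_{h_{j_t}})^{-1} = \hat r_{J_<}^{-1}$. Applying this to $\lambda$ gives $\tilde r_{J_>}(\lambda) = \lambda + \hat r_{J_<}^{-1}(0)$, so the right-hand side of the proposition becomes $r_J(\lambda) + r_J\bigl(\hat r_{J_<}^{-1}(0)\bigr)$.

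For the left-hand side, I would decompose the affine map $\hat r_{J_<} = t_v\cdot r_J$, where $r_J = s_{\beta_{j_1}}\cdots s_{\beta_{j_t}}$ is the linear part and $v := \hat r_{J_<}(0)$ is the translation vector; evaluating at $-\lambda$ gives $-\hat r_{J_<}(-\lambda) = r_J(\lambda) - v$. Comparing the two expressions reduces the proposition to the identity $-v = r_J\bigl(\hat r_{J_<}^{-1}(0)\bigr)$, which falls out of the semidirect product inverse formula $(t_v r_J)^{-1} = r_J^{-1}\,t_{-v} = t_{-r_J^{-1}(v)}\,r_J^{-1}$: evaluating at $0$ gives $\hat r_{J_<}^{-1}(0) = -r_J^{-1}(v)$, whence $r_J\bigl(\hat r_{J_<}^{-1}(0)\bigr) = -v$.

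The entire argument is bookkeeping in the semidirect product $X^*(T)\rtimes W$, so I do not anticipate any genuine obstacle. The only step with real content is the conjugation identity $\tilde r_{h_k} = t_\lambda\,\hat r_{h_k}\,t_{-\lambda}$, and this is an immediate consequence of the description of the $(-\lambda)$-chain in \Cref{-lambda_chain} as the $\lambda$-translate (and reversal) of the $\lambda$-chain.
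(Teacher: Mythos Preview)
Your argument is correct and is a genuinely different, more structural, proof than the paper's. The paper proceeds by induction on $|J|$ to obtain three explicit closed formulas
\[
\hat r_{J_<}(-\lambda)=-\lambda+\sum_{i=1}^t (m_{j_i}-d_{j_i})\,r_{h_{j_1}}\!\cdots r_{h_{j_{i-1}}}\beta_{j_i},\quad
\tilde r_{J_>}(\lambda)=\lambda-\sum_{i=1}^t d_{j_i}\,r_{h_{j_t}}\!\cdots r_{h_{j_{i+1}}}\beta_{j_i},\quad
r_J(\lambda)=\lambda-\sum_{i=1}^t m_{j_i}\,r_{h_{j_1}}\!\cdots r_{h_{j_{i-1}}}\beta_{j_i},
\]
with $m_j=\langle\lambda,\beta_j^\vee\rangle$, and then checks the identity term by term after applying $r_J$ to the middle expression. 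Your route bypasses all of this: the single geometric observation $h'_{l+1-k}=\lambda+h_k$ (immediate from \Cref{-lambda_chain}) gives the conjugation $\tilde r_{h_k}=t_\lambda\,\hat r_{h_k}\,t_{-\lambda}$, and the rest is bookkeeping in $X^*(T)\rtimes W$. Your proof is shorter and more conceptual for the stated identity; the paper's proof has the side benefit of producing the explicit coordinate expansions above, which are of independent interest even though the paper does not invoke them elsewhere.
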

\begin{proof} Let $m_j=\langle \lambda, \beta_j^\vee\rangle$ ($1\leq j\leq l$).
By induction on $|J|$, it is easy to show the following three equalities:
$$\begin{array}{lcl}
\hat{r}_{h_{j_1}} \hat{r}_{h_{j_2}}\cdots  \hat{r}_{h_{j_t}}(-\lambda)&=&
-\lambda+(m_{j_1}-d_{j_1})\beta_{j_1}+(m_{j_2}-d_{j_2})r_{h_{j_1}}\beta_{j_2}+\cdots\\
& &+(m_{j_t}-d_{j_t})r_{j_1}r_{h_{j_2}}\cdots r_{h_{j_{t-1}}}\beta_{j_t},\\
\tilde{r}_{h_{j_t}} \tilde{r}_{h_{j_{t-1}}}\cdots  \tilde{r}_{h_{j_1}}(\lambda)&=&
\lambda-d_{j_t} \beta_{j_t}-d_{j_{t-1}} r_{h_{j_t}}\beta_{j_{t-1}}-\cdots -d_{j_1} r_{h_{j_t}}r_{h_{j_{t-1}}}\cdots r_{h_{j_2}}\beta_{j_1},\\
r_{h_{j_1}} r_{h_{j_2}}\cdots r_{h_{j_t}}(\lambda)&=&\lambda-m_{j_1}\beta_{j_1}-m_{j_2} r_{h_{j_1}}\beta_{j_2}-\cdots-
m_{j_t} r_{h_{j_1}}r_{h_{j_2}}\cdots r_{h_{j_{t-1}}}\beta_{j_t}.
\end{array}
$$
As $r_J=r_{h_{j_1}} r_{h_{j_2}}\cdots r_{h_{j_t}}$ is a linear transformation, we get
$$r_J \tilde{r}_{J_>}(\lambda)=
r_J (\lambda)+d_{j_t}r_{h_{j_{1}}}\cdots r_{h_{j_{t-1}}} \beta_{j_t}+d_{j_{t-1}}  r_{h_{j_{1}}}\cdots r_{h_{j_{t-2}}} \beta_{j_{t-1}}
+\cdots +d_{j_1} \beta_{j_1}=-\hat{r}_{J_<}(-\lambda).$$
\end{proof}

\begin{remark}\label{rem:w-to-u}
With the notation as above, 
the condition that 
$u\stackrel{J_{>}}{\longrightarrow} w$ in \Cref{thm:lambda-chain1} 
implies that $ur_J^{-1}=w$, thus by \Cref{lambda-chain:lemma},
\[ w \hat{r}_{J_{<}}(-\lambda) = -u \tilde{r}_{J_{>}}(\lambda) \/. \]
Similarly, in \Cref{thm:lambda-chain2}, we have that $w=ur_J$, 
thus
\[ w\tilde{r}_{J_{>}}(\lambda)= -u \hat{r}_{J_<}(-\lambda) \/.\]
This leads to alternative formulae in the aforementioned theorems 
\ref{thm:lambda-chain1} and \ref{thm:lambda-chain2}.
\end{remark}

In the appendix \S \ref{ex:A_2-1} below we included a fully worked out example illustrating calculations
of some coefficients $c_{u,\mu}^{w,\lambda}$ utilizing \Cref{thm:lambda-chain1} and \Cref{thm:lambda-chain2}.

\section{Motivic Chern classes of Schubert cells}\label{sec:MCclasses}
In this section, we recall basic properties of the motivic Chern classes of the Schubert cells in the (partial) flag varieties.

\subsection{Definition of the Motivic Chern classes}
Let $X$ be a quasi-projective complex algebraic variety, and let $G_0(var/X)$ be the (relative) Grothendieck group of varieties over $X$. It consists of isomorphism classes of morphisms $[f: Z \to X]$ modulo the scissor relations. Brasselet, Sch{\"u}rmann and Yokura \cite{brasselet.schurmann.yokura:hirzebruch} defined the {\em motivic Chern transformation} $MC_y: G_0(var/X) \to K(X)[y]$ with values in the K-theory group of coherent sheaves in $X$ to which one adjoins a formal variable $y$. The transformation $MC_y$ is a group homomorphism, it is functorial with respect to proper push-forwards, and if $X$ is smooth it satisfies the normalization condition \[ MC_y[id_X: X \to X] = \sum [\wedge^j T^*_X] y^j \/. \] Here $[\wedge^j T^*_X]$ is the K-theory class of the bundle of degree $j$ differential forms on $X$. As explained in \cite{brasselet.schurmann.yokura:hirzebruch}, the motivic Chern class is related by a Hirzebruch--Riemann--Roch type statement to the Chern--Schwartz--MacPherson (CSM) class 
in the homology of $X$; see \Cref{CSM:def}.

There is also an equivariant version of the motivic Chern class transformation, which uses equivariant varieties and morphisms, and has values in the suitable equivariant K-theory group. Its definition was given in \cite{AMSS:motivic,feher2021motivic}, {following closely  the approach of \cite{brasselet.schurmann.yokura:hirzebruch}.  

Assume $X$ is smooth and there is a torus $T$ acting on $X$. Let $K_T(X)$ denote the equivariant K-theory of $X$, see \cite{chriss2009representation}. If $X$ is a point, $K_T(pt)=K^0(\Rep(T))=\bbZ[T]$. For any $\calF\in K_T(X)$, let 
\[\chi_T(X,\calF):=\sum_i(-1)^i H^i(X,\calF)\in K_T(pt).\] 
Let $\langle-,-\rangle$ be the non-degenerate pairing on $K_T(X)$ defined by 
\[\langle\calF,\calG\rangle=\chi_T(X,\calF\otimes \calG)\in K_T(pt),\]
where $\calF,\calG\in K_T(X)$. For a vector bundle $E$, the $\lambda_y$-class of $E$ is the class \[ \lambda_y(E):= \sum_k [\wedge^k E] y^k \in K_T(X)[y] \/.\] The $\lambda_y$-class is multiplicative, i.e. for any short exact sequence $0 \to E_1 \to E \to E_2 \to 0$ of equivariant vector bundles there is an equality $\lambda_y(E) = \lambda_y(E_1) \lambda_y(E_2)$ as elements in $K_T(X)[y]$. 

Recall the (relative) motivic Grothendieck group ${G}_0^T(var/X)$ of varieties over $X$ is the free abelian group generated by the isomorphism classes $[f: Z \to X]$ where $Z$ is a quasi-projective $T$-variety and $f: Z \to X$ is a $T$-equivariant morphism modulo the usual additivity relations 
$$[f: Z \to X] = [f: U \to X] + [f:Z \setminus U \to X]$$ for $U \subset Z$ an open $T$-invariant subvariety. For any equivariant morphism $f:X \to Y$ of quasi-projective $T$-varieties there are well defined push-forwards $f_!: G_0^T(var/X) \to G_0^T(var/Y)$ given by composition.
The equivariant motivic Chern class is defined by the following theorem.
\begin{theorem}\cite{AMSS:motivic, feher2021motivic}\label{thm:existence} Let $X$ be a quasi-projective, non-singular, complex algebraic variety with an action of the torus $T$. There exists a unique natural transformation $MC_y: G_0^T(var/X) \to K_T(X)[y]$ satisfying the following properties:
\begin{enumerate} \item[(1)] It is functorial with respect to $T$-equivariant proper morphisms of non-singular, quasi-projective varieties. 

\item[(2)] It satisfies the normalization condition \[ MC_y[id_X: X \to X] = \lambda_y(T^*_X) = \sum y^i [\wedge^i T^*_X]_T \in K_T(X)[y] \/. \]
\end{enumerate}
The transformation $MC_y$ satisfies a Verdier--Riemann--Roch (VRR) formula: for any smooth, $T$-equivariant morphism $\pi: X \to Y$ of quasi-projective and non-singular algebraic varieties, and any $[f: Z \to Y] \in G_0^T(var/Y)$, the following holds:
\[ \lambda_y(T^*_\pi) \cap \pi^* MC_y[f:Z \to Y] = MC_y[\pi^* f:Z \times_Y X \to X] \/. \]
\end{theorem}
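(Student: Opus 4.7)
My plan is to adapt the construction of Brasselet--Sch\"urmann--Yokura \cite{brasselet.schurmann.yokura:hirzebruch} to the equivariant setting. First, I would address uniqueness. By additivity and the existence of $T$-equivariant stratifications, any class $[f:Z\to X]\in G_0^T(var/X)$ decomposes as an integer combination of classes $[f:Y\to X]$ with $Y$ smooth and quasi-projective. Applying $T$-equivariant resolution of singularities (available in characteristic zero; see Bierstone--Milman) to a compactification of $Y$, together with a further equivariant resolution of the boundary, the scissor relation lets me write $[f:Y\to X]$ as an alternating sum of classes $[g:\widetilde Y\to X]$ with $\widetilde Y$ smooth and $g$ proper. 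For such classes, functoriality and normalization force
\[
MC_y[g:\widetilde Y\to X] = g_*\bigl(\lambda_y(T^*_{\widetilde Y})\bigr),
\]
so the value of the transformation is completely determined.

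For existence, I would turn the displayed formula into a \emph{definition} on the generators $[g:\widetilde Y\to X]$ with $g$ proper and $\widetilde Y$ smooth, and then extend additively. The substantive task is to check that this recipe respects the scissor relations, i.e.\ that the value is independent of all the choices (compactification, resolution, resolution of the boundary). The standard way to do this non-equivariantly uses the weak factorization theorem of Abramovich--Karu--Matsuki--W\l odarczyk, which reduces the comparison of two resolutions to a sequence of blow-ups along smooth centers; one then verifies invariance under a single smooth blow-up by a direct K-theoretic computation with the relative tangent sheaf. The main obstacle will be importing this into the equivariant category: one needs the weak factorization theorem in its $T$-equivariant form (due to W\l odarczyk and his collaborators), so that all intermediate varieties and maps can be taken $T$-equivariant, and one needs to verify the blow-up computation in equivariant K-theory. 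Once this is in place, additivity and $T$-equivariant proper functoriality follow formally from the construction.

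Uniqueness of the normalization condition together with the multiplicativity of $\lambda_y$ in short exact sequences yields the Verdier--Riemann--Roch formula. Indeed, given a smooth $T$-equivariant $\pi:X\to Y$ and a proper $T$-equivariant $g:\widetilde Z\to Y$ with $\widetilde Z$ smooth, the base change $\widetilde Z\times_Y X$ is smooth, the projection $g':\widetilde Z\times_Y X\to X$ is proper and $T$-equivariant, and the relative cotangent sequence gives
\[
\lambda_y\bigl(T^*_{\widetilde Z\times_Y X}\bigr) \;=\; (\pi')^*\lambda_y\bigl(T^*_{\widetilde Z}\bigr)\otimes (g')^*\lambda_y(T^*_\pi),
\]
where $\pi':\widetilde Z\times_Y X\to \widetilde Z$ is the other projection. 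Applying $g'_*$, using the projection formula, and invoking flat base change for the smooth morphism $\pi$ recovers the VRR identity on the generating classes; additivity then extends it to all of $G_0^T(var/Y)$.

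I expect the hardest step to be the verification that the equivariant weak factorization machinery genuinely applies in the full generality stated (quasi-projective, not necessarily proper $X$, arbitrary torus actions), as opposed to the smooth projective setting in which it is most often cited; assembling the equivariant compactifications, resolutions, and intermediate blow-ups so that every morphism is $T$-equivariant is the delicate point.
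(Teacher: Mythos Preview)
The paper does not prove this theorem. It is quoted as a known result, with citation \cite{AMSS:motivic,feher2021motivic}, and the surrounding text says explicitly that the equivariant construction was carried out in those references ``following closely the approach of \cite{brasselet.schurmann.yokura:hirzebruch}.'' So there is no ``paper's own proof'' to compare against.

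That said, your sketch is broadly the approach taken in the cited references: reduce to smooth proper generators via equivariant compactification and resolution, define $MC_y$ on those by pushing forward $\lambda_y(T^*)$, and check independence of choices. One technical remark: in \cite{AMSS:motivic} the independence-of-resolution step is handled not via weak factorization but via Bittner's presentation of the Grothendieck group of varieties (which itself rests on weak factorization), adapted equivariantly. Your direct appeal to equivariant weak factorization is morally equivalent, but if you were to write this up you would want to either invoke the equivariant Bittner presentation or carefully state the equivariant weak factorization theorem you are using, since the quasi-projective non-proper case does require some care with boundaries, exactly as you flag at the end.
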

Define the Grothendieck--Serre dual operator $\caD: K_T(X) \to K_T(X)$ as follows. For any $[\caF]\in K_T(X)$, define
\begin{equation}\label{E:SerreD}\caD[\caF]:=[RHom(\caF,\omega^\bullet_{X})]:=[\omega_{X}^\bullet \otimes \caF^\vee ]\in K_T(X)\/,\end{equation}
where $\omega^\bullet_{X}\simeq \omega_{X}[\dim X]$ is the {(equivariant)} dualizing complex of $X$ (the canonical bundle $\omega_{X}$ shifted by dimension). The class $[\caF^\vee]$ is obtained by taking an equivariant resolution of $\caF$ by vector bundles, and then taking duals. Extend the operators $\caD$ and $(-)^\vee$ to $K_T(X)[y^{\pm1}]$ by sending  $y \mapsto y^{-1}$. We also let $(-)^\vee$ denote the operator on $K_T(pt)[y^{\pm1}]$, which sends $e^\lambda$  to $e^{-\lambda}$ and $y$ to $y^{-1}$.
\begin{defin}\label{def:smc}
Assume $\Omega \hookrightarrow X$ is a $T$-stable subvariety.
\begin{enumerate}
\item
The {\em motivic Chern} (MC) class of $\Omega$ is 
\[ MC_y(\Omega) :=MC_y[\Omega \hookrightarrow X] \in K_T(X)[y].\] 
\item 
If $\Omega$ is pure dimensional, the {\em Segre motivic class} (SMC) $SMC_y(\Omega) \in K_T(X)[[y]]$ is the class 
\[ SMC_y(\Omega):= (-y)^{\dim \Omega}\frac{\caD(MC_y(\Omega))}{\lambda_y(T^*_X)} \in K_T(X)[[y]]\/.\]
\end{enumerate}
\end{defin}

\subsection{Complete flag variety case}

Both $\{MC_y(X(w)^\circ)\mid w\in W\}$ and $\{SMC_y(Y(w)^\circ)\mid w\in W\}$ are bases for the localized equivariant K-theory 
\[ K_T(G/B)[[y]]_{\loc}:=K_T(G/B)[[y]]\otimes_{K_T(pt)}\Frac K_T(pt)\/.\]
These classes can be calculated recursively using the Demazure--Lusztig operators as follows. 

The left multiplication action of $G$ on $G/B$ induces a left Weyl group action on $K_T(G/B)$. For any $w\in W$, let $w^L$ denote this action. 
\begin{defin}\cite[Section 5.3]{mihalcea2020left}
 For any simple reflection $s_i$, we define the following left Demazure--Lusztig operator on 
 $K_T(G/B)_{\loc}$:
 \begin{equation}\label{equ:leftDL}
 	\calT^L_i:=\frac{1+ye^{-\alpha_i}}{1-e^{-\alpha_i}}s_i^L-\frac{1+y}{1-e^{-\alpha_i}}.
 \end{equation}
\end{defin}
These operators enjoy the following properties.
\begin{lemma}\cite{mihalcea2020left}\label{lem:leftDL}
	The operators $	\calT^L_i$ satisfy the braid relation and the quadratic relation 
	\[(\calT^L_i+1)(\calT^L_i+y)=0.\]
	Moreover, they commute with the operators of tensoring by elements in $K_G(G/B)$.
\end{lemma}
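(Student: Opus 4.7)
The three claims split cleanly: the quadratic relation and the commutation with $K_G(G/B)$ are short verifications, while the braid relation is the substantive part. Throughout I'll use the shorthand $\calT_i^L = a\,s_i^L + b$, with
\[ a = \frac{1 + y e^{-\alpha_i}}{1 - e^{-\alpha_i}}, \qquad b = -\frac{1+y}{1-e^{-\alpha_i}}, \]
and the fact that $s_i^L$ is an algebra automorphism of $K_T(G/B)_{\loc}$ whose restriction to equivariant parameters is the standard Weyl group action $e^\lambda \mapsto e^{s_i\lambda}$, since the left action is induced by a lift of $s_i$ in $N_G(T)$.

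For the quadratic relation, I would expand
\[ (\calT_i^L)^2 = \bigl(a\, s_i^L(a) + b^2\bigr) + \bigl(b\,a + a\, s_i^L(b)\bigr)\, s_i^L, \]
using $(s_i^L)^2 = \mathrm{id}$, and match coefficients against $-(1+y)\calT_i^L - y$. The $s_i^L$-coefficient reduces to $b + s_i^L(b) = -(1+y)$, which follows from the elementary identity $\tfrac{1}{1-e^{-\alpha_i}} + \tfrac{1}{1-e^{\alpha_i}} = 1$, and the identity-coefficient is a short manipulation after clearing the denominator $(1-e^{-\alpha_i})(1-e^{\alpha_i})$. The commutation with $K_G(G/B)$ is equally direct: any $f \in K_G(G/B)$ is left $G$-invariant, so $s_i^L(f) = f$, and the scalar coefficients $a, b$ commute with multiplication by $f$, yielding $\calT_i^L(f\,\xi) = f\, \calT_i^L(\xi)$.

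The main obstacle is the braid relation $\calT_i^L \calT_j^L \calT_i^L \cdots = \calT_j^L \calT_i^L \calT_j^L \cdots$ with $m_{ij}$ factors on each side. The most conceptual route is to exhibit an intertwiner between the left and right Demazure--Lusztig representations on $K_T(G/B)_{\loc}$: both realize the finite Hecke algebra action, the right version is classical, and once the quadratic relation is established the braid relation for one representation transfers to the other. Lacking such a shortcut, I would restrict $\calT_i^L$ and $\calT_j^L$ to the rank-two standard parabolic $P_{ij}$ (they preserve the relevant subspaces) and reduce the check to the four rank-two types $A_1 \times A_1$, $A_2$, $B_2$, and $G_2$, verifying each by direct computation in the style of \cite{R06,lenart.postnikov:affine}. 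In either approach the verification is finite, but $G_2$ is the case requiring the most care.
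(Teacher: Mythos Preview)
The paper does not prove this lemma; it is simply cited from \cite{mihalcea2020left}. So there is no ``paper's own proof'' to compare against, and your proposal supplies what the authors chose to outsource.

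Your arguments for the quadratic relation and for commutation with $K_G(G/B)$ are correct and essentially complete. For the braid relation, both routes you sketch are valid, but there is a cleaner way to frame the computation that avoids the case-by-case rank-two check. The operators $\calT_i^L$ lie in the subalgebra of $\End(K_T(G/B)_{\loc})$ generated by multiplication by elements of $K_T(\pt)_{\loc}$ and the left Weyl group automorphisms $s_i^L$; this subalgebra is a quotient of the twisted group algebra $K_T(\pt)_{\loc} \rtimes W$, since $s_i^L \cdot e^\lambda = e^{s_i\lambda} \cdot s_i^L$. The braid relation for the elements $\calT_i = \frac{1+ye^{-\alpha_i}}{1-e^{-\alpha_i}} s_i - \frac{1+y}{1-e^{-\alpha_i}}$ inside $K_T(\pt)_{\loc} \rtimes W$ is then exactly the classical verification (due to Lusztig and others) that these Demazure--Lusztig elements generate a copy of the Hecke algebra inside the twisted group algebra; see for instance \cite{lusztig:eqK}. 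This is the ``intertwiner'' you were reaching for: once the braid relation holds abstractly in $K_T(\pt)_{\loc} \rtimes W$, it holds in any module for that algebra, and $K_T(G/B)_{\loc}$ is one such. Your fallback rank-two reduction also works and is how one proves the abstract statement in the first place, but packaging it this way makes clear that nothing specific to $G/B$ is needed.
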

Moreover, it is easy to verify the following lemma. 
\begin{lemma}\label{lem:heckeactionk}
	The following map $\Psi$ defines an action of the affine Hecke algebra $\bbH$ (with $q=-y$) on 
$K_T(G/B)_{\loc}$,
	\[T_i\mapsto \calT^L_i, \textit{\quad and \quad} X^\lambda\mapsto e^\lambda,\]
	where $e^\lambda\in K_T(\pt)$ acts on 
$K_T(G/B)_{\loc}$,
by multiplication.
\end{lemma}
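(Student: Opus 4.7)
The plan is to verify that $\Psi$ respects each of the four defining relations of the affine Hecke algebra $\bbH$ listed in \S\ref{sec:AHA}, under the specialization $q=-y$. Note that $\Psi$ assigns to $T_i$ and to $X^{\lambda}$ operators on $K_T(G/B)_{\loc}$ (the former is the Demazure--Lusztig operator, the latter is multiplication by the scalar $e^{\lambda}\in K_T(\pt)$), so we must check each relation as an identity of operators.

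Three of the four relations are essentially already in hand. The relation $X^{\lambda}X^{\mu}=X^{\lambda+\mu}$ becomes $e^{\lambda}\cdot e^{\mu}=e^{\lambda+\mu}$, which is clear. The quadratic relation $(T_{s_{\alpha}}+1)(T_{s_{\alpha}}-q)=0$, with $q=-y$, becomes $(\calT^{L}_{i}+1)(\calT^{L}_{i}+y)=0$, which is precisely the quadratic relation recorded in \Cref{lem:leftDL}. Similarly, the relations $T_{w}T_{y}=T_{wy}$ for $\ell(wy)=\ell(w)+\ell(y)$ follow from the braid relations for the $\calT^{L}_{i}$, also recorded in \Cref{lem:leftDL}.

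The only substantive check is the cross relation, which after specialization reads
\[
\calT^{L}_{i}\bigl(e^{\lambda}\cdot f\bigr)-e^{s_{i}\lambda}\cdot \calT^{L}_{i}(f)\;=\;(1+y)\,\frac{e^{s_{i}\lambda}-e^{\lambda}}{1-e^{-\alpha_{i}}}\,f
\]
for every $f\in K_T(G/B)_{\loc}$. The key input is that the left Weyl group action $s_{i}^{L}$ on $K_T(G/B)$ is a ring automorphism whose restriction to the scalar subring $K_T(\pt)\hookrightarrow K_T(G/B)$ is the standard $W$-action on characters; in particular $s_{i}^{L}(e^{\lambda}\cdot f)=e^{s_{i}\lambda}\cdot s_{i}^{L}(f)$. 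I would then simply expand both sides using the formula \eqref{equ:leftDL}: in the difference $\calT^{L}_{i}(e^{\lambda}f)-e^{s_{i}\lambda}\calT^{L}_{i}(f)$, the two ``$s_{i}^{L}$-terms'' both become $\tfrac{1+ye^{-\alpha_{i}}}{1-e^{-\alpha_{i}}}\,e^{s_{i}\lambda}\,s_{i}^{L}(f)$ and cancel, while the two ``scalar'' terms combine into $\tfrac{1+y}{1-e^{-\alpha_{i}}}(e^{s_{i}\lambda}-e^{\lambda})f$, which is the claimed right hand side.

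There is no real obstacle, but the one point that deserves care is the justification that $s_i^L$ acts on the scalar subring by the standard $W$-action and that it is a ring automorphism of $K_T(G/B)$; this follows from the construction of the left $W$-action via lifts in $N_G(T)$ (the action on equivariant parameters is induced by conjugation on $T$) and is established in the references cited for \Cref{lem:leftDL}. Once this is in place, the verification of the cross relation reduces to a one-line cancellation, and the four relations together confirm that $\Psi$ extends to a well-defined action of $\bbH|_{q=-y}$ on $K_T(G/B)_{\loc}$.
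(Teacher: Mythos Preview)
Your proposal is correct and is exactly the straightforward verification the paper has in mind; the paper itself gives no proof beyond the phrase ``it is easy to verify the following lemma,'' and your check of the four defining relations (with the cross relation reduced to a one-line cancellation using the definition \eqref{equ:leftDL} and the fact that $s_i^L$ is a ring automorphism acting on scalars by the standard $W$-action) is precisely what is intended.
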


We have the following theorem. 
\begin{theorem}\label{thm:MCBorel} 
\begin{enumerate}
\item\cite[Theorem 7.6]{mihalcea2020left}
For $w \in W$ and a simple root $\alpha_i$,  
\[ \mathcal{T}^L_i (MC_y(X(w)^\circ)) =\begin{cases} MC_y( X(s_iw)^\circ) & \textrm{ if } s_iw>w ;\\ -(y+1) MC_y(X(w)^\circ ) - y MC_y(X(s_iw )^\circ ) & \textrm{ if } s_iw < w \/. \end{cases}\]
In particular, 
\[MC_y(X(w)^\circ)=\calT^L_{w}([\calO_{X(id)}]).\]
\item\cite[Theorem 7.1]{mihalcea2020left}
For any $w,u\in W$: 
\[ \langle MC_y(X(w)^\circ), SMC_y(Y(u)^\circ) \rangle =  \delta_{u,w} \/. \]
\end{enumerate}
\end{theorem}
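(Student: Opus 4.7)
The plan is to prove the two parts separately, both leveraging the geometric interpretation of Schubert cells as $B$-orbits and the functoriality properties of the motivic Chern transformation.

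For part (1), I would argue by induction on $\ell(w)$. The base case reads $MC_y(X(\mathrm{id})^\circ) = [\mathcal{O}_{1.B}]$, which is immediate from the normalization axiom in \Cref{thm:existence} since $X(\mathrm{id})^\circ$ is a reduced point. For the inductive step the key geometric input is the minimal parabolic projection $\pi_i: G/B \to G/P_i$, which is a $\mathbb{P}^1$-bundle with tangent line bundle $T_{\pi_i}$ along the fiber. When $s_iw > w$ one has the stratification $\pi_i^{-1}(\pi_i(X(w)^\circ)) = X(w)^\circ \sqcup X(s_iw)^\circ$, with $\pi_i$ restricting to an isomorphism on $X(w)^\circ$; when $s_iw < w$ the roles swap. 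Applying $\pi_i^* (\pi_i)_*$ to the motivic class of $X(w)^\circ$, combined with the scissor relation in $G_0^T(\mathrm{var}/(G/B))$ and the Verdier--Riemann--Roch formula from \Cref{thm:existence}, yields a push-pull relation among $MC_y$ of the three relevant cells.

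The main technical step is then to identify the operator $\pi_i^*(\pi_i)_* (\lambda_y(T_{\pi_i}^*) \otimes -)$ (up to a sign normalization) with the left Demazure--Lusztig operator $\mathcal{T}^L_i$ of \eqref{equ:leftDL}. Since both operators commute with $K_G(G/B)$-multiplication by \Cref{lem:leftDL}, it suffices to verify the identification on a single class (such as the class of a torus fixed point), where it reduces to the character of $T$ acting on the fiber of $\pi_i$ and is a direct computation. Once this is established, iterating the recursion in a reduced word for $w$ gives $MC_y(X(w)^\circ) = \mathcal{T}^L_w([\mathcal{O}_{X(\mathrm{id})}])$; the braid relations satisfied by $\mathcal{T}^L_i$ (\Cref{lem:leftDL}) ensure this is well-defined.

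For part (2), my plan is to unfold the pairing using the definition of $SMC_y$ in \Cref{def:smc} and Grothendieck--Serre duality on $G/B$: the denominator $\lambda_y(T^*_{G/B})$ in $SMC_y(Y(u)^\circ)$ is designed to cancel against the normalization factor coming from a smooth ambient resolution of $MC_y(X(w)^\circ)$, so that the pairing reduces to a weighted equivariant Euler characteristic supported on the intersection $X(w) \cap Y(u)$. Standard Bruhat transversality gives $X(w) \cap Y(u) \neq \emptyset$ iff $u \le w$, with the equality case $u = w$ giving the single fixed point $w.B$; direct localization at that point yields the desired $\delta_{u,w}$ in the diagonal case. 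To propagate this to general pairs $(u,w)$, I would use part (1) together with an adjointness property of $\mathcal{T}^L_i$ under $\langle -,- \rangle$: a dual operator acting on the $SMC_y(Y(u)^\circ)$ side intertwines with $\mathcal{T}^L_i$ up to the Serre twist, so induction on $\ell(w)$ reduces everything to the base case $w = \mathrm{id}$.

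The main obstacle I anticipate lies in part (2): the operators $\mathcal{T}^L_i$ are not self-adjoint under the naive pairing but only self-adjoint up to a twist by $\omega_{G/B}$ and the involution $y \mapsto y^{-1}$ hidden in $\mathcal{D}$. Tracking these twists cleanly is precisely what ensures the pairing evaluates to $\delta_{u,w}$ without stray equivariant characters or powers of $y$, and matches the discrepancy between the Schubert decomposition (by $B$) and the opposite decomposition (by $B^-$) used to define $Y(u)^\circ$.
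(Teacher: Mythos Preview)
The paper does not prove this theorem; it quotes both parts from \cite{mihalcea2020left} without argument. So there is no ``paper's proof'' to compare against, and I can only assess your proposal on its own terms.

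Part (1) has a genuine gap. The projection $\pi_i:G/B\to G/P_i$ relates $X(w)^\circ$ to $X(ws_i)^\circ$, not to $X(s_iw)^\circ$: since $X(w)^\circ=BwB/B$, one has $\pi_i^{-1}\pi_i(X(w)^\circ)=BwP_i/B=X(w)^\circ\cup X(ws_i)^\circ$. Push--pull through $\pi_i$ therefore produces the \emph{right} Demazure--Lusztig operators, which govern the recursion $w\mapsto ws_i$. The operator $\mathcal{T}^L_i$ in \eqref{equ:leftDL} is built from the left Weyl group action $s_i^L$ and is \emph{not} $K_T(\mathrm{pt})$-linear (it twists the equivariant parameters by $s_i$), whereas any operator of the form $\pi_i^*(\pi_i)_*(\lambda_y(T^*_{\pi_i})\otimes-)$ is $K_T(\mathrm{pt})$-linear. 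They cannot agree as operators on $K_T(G/B)$, and your ``it suffices to check on a single class because both commute with $K_G(G/B)$'' does not rescue this: $K_T(G/B)$ is not cyclic over $K_G(G/B)$. The actual proof in \cite{mihalcea2020left} uses the left $W$-action directly (or equivalently realizes $\mathcal{T}^L_i$ via convolution on $B\backslash G$ rather than $G/P_i$), which is what produces the $s_iw$ recursion.

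Part (2) is plausible in outline but the hard step is exactly the one you flag as an obstacle: the adjoint of $\mathcal{T}^L_i$ under $\langle-,-\rangle$ is not $\mathcal{T}^L_i$ itself but a twisted version, and the compatibility with the Grothendieck--Serre dual $\mathcal{D}$ inside $SMC_y$ is what makes the induction close. You have identified the issue but not resolved it; in \cite{mihalcea2020left} this is handled by an explicit computation of how $\mathcal{T}^L_i$ and its Serre-dual interact with the pairing, together with the localization formula at fixed points.
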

\begin{remark}\label{rem:y=0}
\begin{enumerate}
\item 
By \cite[Thm. 5.1 and Cor. 5.3]{AMSS:specializations},
\[MC_0(X(w)^\circ)=[\calO_{X(w)}(-\partial X(w))],\textit{\quad and \quad}SMC_0(Y(w)^\circ)=[\calO_{Y(w)}],\]
where $\partial X(w):=X(w)\setminus X(w)^\circ$. Thus, the duality in the second part of the theorem reduces to the classical fact
\[\langle [\calO_{X(w)}(-\partial X(w))], [\calO_{Y(u)}] \rangle =  \delta_{u,w}.\]
\item 
By definition, for any $w\in W$, 
\[MC_y(Y(w))=\sum_{u\geq w}MC_y(Y(u)^\circ).\]
Besides, by the linearity of the Grothendieck--Serre dual operator $\caD$,
\[SMC_y(Y(w))=\sum_{u\geq w}(-y)^{\ell(u)-\ell(w)}SMC_y(Y(u)^\circ).\]
Therefore, 
\[SMC_0(Y(w))=[\calO_{Y(w)}].\]
\end{enumerate}

\end{remark}

\subsection{Partial flag variety case}
For any $w\in W$, let $\ell(wW_P)$ denote the length of the minimal length representative in $wW_P$. Let $\pi:G/B \to G/P$ be the natural projection. It is proved in \cite[Remeark 5.7]{AMSS:motivic} that \begin{equation}\label{E:MCpush} \pi_* MC_y(X(w)^\circ) = (-y)^{\ell(w) - \ell(w W_P)} MC_y(X(wW_P)^\circ) \/.\end{equation} In particular, if $w$ is a minimal length representative, then $\pi_* MC_y(X(w)^\circ) = MC_y(X(wW_P)^\circ)$; this also follows directly from the functoriality property of the motivic classes. 
\begin{prop}\label{prop:segrepb}
\begin{enumerate}
\item \cite[Proposition 6.3]{mihalcea2020left}
Let $\Omega \subset G/P$ be a $T$-stable subvariety of $G/P$ and $\pi:G/B \to G/P$ be the projection. Then 
\[\pi^*SMC_y(\Omega) =  SMC_y(\pi^{-1} \Omega) \in K_T(G/B) [[y]] \/.\]
\item\cite[Theorem 7.2]{mihalcea2020left}
 Let $u,w \in W^P$. The Segre motivic classes are dual to the motivic Chern classes for any $G/P$, i.e. \[ \langle MC_y(X(wW_P)^\circ), SMC_y(Y(uW_P)^\circ) \rangle = \delta_{u,w} \/. \]
\end{enumerate}
\end{prop}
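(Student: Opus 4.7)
The plan is to derive part (1) from the Verdier--Riemann--Roch property of the motivic Chern class combined with Grothendieck--Serre duality, and to obtain part (2) by transporting the $G/P$ pairing to $G/B$ via the projection formula and invoking the duality on $G/B$ from \Cref{thm:MCBorel}(2).

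For part (1), applying VRR from \Cref{thm:existence} to the smooth fibration $\pi$ with fiber $P/B$ of dimension $n$ yields
\[ MC_y(\pi^{-1}\Omega) = \pi^*MC_y(\Omega)\cdot \lambda_y(T^*_\pi). \]
I then apply $\caD$ and divide by $\lambda_y(T^*_{G/B})$, using three identities. First, the comparison $\omega^\bullet_{G/B}\cong \pi^*\omega^\bullet_{G/P}\otimes \omega_\pi[n]$ with $\omega_\pi = \det T^*_\pi$ gives the twisted commutation $\caD_{G/B}\,\pi^* = (-1)^n[\omega_\pi]\,\pi^*\caD_{G/P}$. Second, the isomorphism $\wedge^k T_\pi \cong \wedge^{n-k}T^*_\pi\otimes \det T_\pi$ yields $\lambda_y(T^*_\pi)^\vee = \lambda_{y^{-1}}(T_\pi) = y^{-n}[\omega_\pi]^{-1}\lambda_y(T^*_\pi)$. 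Third, multiplicativity of $\lambda_y$ on the short exact sequence $0\to \pi^*T^*_{G/P}\to T^*_{G/B}\to T^*_\pi\to 0$ gives $\lambda_y(T^*_{G/B}) = \pi^*\lambda_y(T^*_{G/P})\cdot \lambda_y(T^*_\pi)$. Substituting these into $(-y)^{\dim\Omega+n}\,\caD(MC_y(\pi^{-1}\Omega))/\lambda_y(T^*_{G/B})$, the twists $[\omega_\pi]^{\pm 1}$ cancel, the prefactor $(-y)^n$ cancels the pair $(-1)^n y^{-n}$, and the factor $\lambda_y(T^*_\pi)$ cancels against its copy in $\lambda_y(T^*_{G/B})$, leaving $\pi^*SMC_y(\Omega)$.

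For part (2), the pushforward formula \eqref{E:MCpush} specialized to $w\in W^P$ gives $\pi_* MC_y(X(w)^\circ) = MC_y(X(wW_P)^\circ)$. Using this together with the projection formula and $\pi_*\cO_{G/B} = \cO_{G/P}$, the pairing on $G/P$ rewrites as
\[ \langle MC_y(X(wW_P)^\circ), SMC_y(Y(uW_P)^\circ)\rangle_{G/P}
= \langle MC_y(X(w)^\circ), \pi^* SMC_y(Y(uW_P)^\circ)\rangle_{G/B}. \]
Part (1) identifies the pullback as $SMC_y(\pi^{-1}Y(uW_P)^\circ)$. The key geometric input is the cellular decomposition
\[ \pi^{-1}(Y(uW_P)^\circ) = \bigsqcup_{v\in W_P} Y(uv)^\circ, \quad u\in W^P, \]
with $\ell(uv)=\ell(u)+\ell(v)$, and with the preimage irreducible of dimension $\dim G/B - \ell(u)$. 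Additivity of $MC_y$ on stratifications combined with the dimension shift in the definition of $SMC_y$ then yields
\[ \pi^*SMC_y(Y(uW_P)^\circ) = \sum_{v\in W_P}(-y)^{\ell(v)}\, SMC_y(Y(uv)^\circ), \]
the factor $(-y)^{\ell(v)}$ absorbing the discrepancy between $\dim\pi^{-1}Y(uW_P)^\circ$ and $\dim Y(uv)^\circ$. Pairing term by term, \Cref{thm:MCBorel}(2) reduces the sum to $\sum_v(-y)^{\ell(v)}\delta_{w,uv}$; since $u,w\in W^P$, the equality $w=uv$ forces $v=\id$ and $u=w$, giving $\delta_{u,w}$.

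The main obstacle I anticipate lies in part (1): carefully bookkeeping the interaction of the Grothendieck--Serre dual with smooth pullback, the involution $y\mapsto y^{-1}$, and the relative dualizing complex, so that all the twists by $[\omega_\pi]^{\pm 1}$, powers of $(-y)$, and signs $(-1)^n$ cancel simultaneously. Once this telescoping is verified, part (2) reduces to a formal projection-formula manipulation plus the orthogonality on $G/B$.
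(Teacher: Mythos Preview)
Your proof is correct. Note, however, that the paper does not give its own proof of this proposition: both parts are quoted from \cite{mihalcea2020left} (Proposition~6.3 and Theorem~7.2 there), so there is no in-paper argument to compare against.

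That said, your argument is exactly the natural one and essentially reconstructs what one finds in the cited reference. A couple of minor remarks. In part~(1), the identity $\caD(A\cdot B)=\caD(A)\cdot B^\vee$ that you implicitly use (with $B=\lambda_y(T^*_\pi)$) deserves a word: it holds because in $K$-theory $\caD[\caF]=[\omega^\bullet_X]\cdot[\caF]^\vee$ with $(-)^\vee$ a ring homomorphism, so $\caD$ is $(-)^\vee$-twisted multiplicative. Once this is said, your bookkeeping of $[\omega_\pi]^{\pm1}$, $(-1)^n$, and $y^{-n}$ is precisely right and the cancellation goes through cleanly. In part~(2), the step ``$w=uv$ with $u,w\in W^P$ and $v\in W_P$ forces $v=\id$'' is correct because $u$ is the unique minimal-length representative of the coset $uW_P$, and $uv\in uW_P\cap W^P$ implies $uv=u$; you might state this one-line justification explicitly. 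With these two clarifications your write-up would be complete and self-contained.
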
 
\begin{remark}\label{rem:Py=0}
By the first property in the proposition, for any $w\in W^P$,
\[\pi^*(SMC_y(Y(wW_P)))=SMC_y(Y(w)).\]
Letting $y=0$, we get
\[\pi^*(SMC_0(Y(wW_P)))=SMC_0(Y(w))=[\calO_{Y(w)}],\]
where the second equality follows from Remark \ref{rem:y=0}. By the definition of the SMC class, 
\begin{equation}\label{equ:SMCSchubertP}
SMC_y(Y(wW_P))=\sum_{u\in W^P, u\geq w}(-y)^{\ell(u)-\ell(w)}SMC_y(Y(uW_P)^\circ).
\end{equation}
Hence, 
\[\pi^*(SMC_0(Y(wW_P)^\circ))=\pi^*(SMC_0(Y(wW_P)))=[\calO_{Y(w)}].\]
Since $\pi^*([\calO_{Y(wW_P)}])=[\calO_{Y(w)}]$ and $\pi^*$ is injective, we get
\[SMC_0(Y(wW_P)^\circ)=SMC_0(Y(wW_P))=[\calO_{Y(wW_P)}].\]
Combining with the second part of the proposition, we get
\[MC_0(X(wW_P)^\circ)=[\calO_{X(wW_P)}(-\partial X(wW_P))].\]
This can also be proved using Remark \ref{rem:y=0}(1), Equation \eqref{E:MCpush}, and the fact that the pushforward of an ideal sheaf is an ideal sheaf, see \cite{brion:Kpos}.
\end{remark}

\section{Chevalley formulae for the motivic Chern classes}\label{sec:che}
In this section, we obtain several Chevalley formulae for the motivic Chern classes, 
in terms of alcove walks,  $\lambda$-chains, and certain operators. The main technique is
to reinterpret formulae from Hecke algebras such as \Cref{thm:lambda-chain1} in terms of multiplications
of motivic Chern classes by line bundles.  
Our main results are \Cref{thm:lambda} and \Cref{thm:operator}.
In \S \ref{sec:positivity}
we discuss several positivity properties and conjectures of the Chevalley coefficients.
Finally, in \S \ref{sec:parabolic} we discuss parabolic Chevalley formulae. 

\subsection{Chevalley coefficients}\label{sec:checoeff}
Consider a torus weight $\lambda\in X^*(T)$ and $u, w\in W$. The Chevalley coefficient $C_{u,\lambda}^w$ 
is defined by the following formula:
\begin{equation}\label{Chev:MC}
	\calL_\lambda\otimes MC_y(X(w)^\circ)=\sum_{u\leq w} C_{u,\lambda}^w MC_y(X(u)^\circ).
\end{equation}
Note that for any simple reflection $s_i$ there is a short exact
sequence of equivariant sheaves
\[ 0 \to \mathcal{L}_{\varpi_i} \otimes \C_{-w_0(\varpi_i)} \to \cO_{G/B} \to \cO_{X(w_0 s_i)} \to 0 \]
with $\varpi_i$ the fundamental weight,
see e.g., \cite[\S 8]{buch.m:nbhds}. Therefore,  
\[[\calO_{X(w_0s_i)}]=1-e^{-w_0(\omega_i)}\calL_{\varpi_i}\in K_T(G/B) \/,\]
and the coefficients
from \eqref{Chev:MC} for $\lambda=\varpi_i$ also recover the multiplication
of $[\calO_{X(w_0s_i)}]$ with the MC classes of the Schubert cells.

The coefficients $C_{u,\lambda}^w$ also arise from Chevalley formulae involving Segre motivic classes:
\begin{lemma}\label{lem:mcsmc} The following equation holds:
\begin{equation}\label{Chev:SMC}
\calL_\lambda\otimes SMC_y(Y(u)^\circ)=\sum_{w\geq u} C_{u,\lambda}^w SMC_y(Y(w)^\circ).
\end{equation}
\end{lemma}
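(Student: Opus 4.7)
The plan is to exploit the duality between motivic Chern classes and Segre motivic classes from \Cref{thm:MCBorel}(2), namely $\langle MC_y(X(w)^\circ), SMC_y(Y(u)^\circ) \rangle = \delta_{u,w}$. Since $\{MC_y(X(w)^\circ)\}$ and $\{SMC_y(Y(u)^\circ)\}$ are dual bases of the localized equivariant K-theory $K_T(G/B)[[y]]_{\loc}$, any expansion of an element in the SMC basis is obtained by pairing against the appropriate MC class. Concretely, write
\[ \calL_\lambda \otimes SMC_y(Y(u)^\circ) = \sum_v D_{u,\lambda}^v \, SMC_y(Y(v)^\circ), \]
so that
\[ D_{u,\lambda}^v = \langle MC_y(X(v)^\circ),\, \calL_\lambda \otimes SMC_y(Y(u)^\circ) \rangle. \]
The goal is therefore to identify this coefficient with $C_{u,\lambda}^v$.

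The key step is to move the line bundle across the pairing. Because the pairing is defined by $\langle \calF, \calG \rangle = \chi_T(G/B, \calF \otimes \calG)$, associativity and commutativity of tensor product give the self-adjointness
\[ \langle MC_y(X(v)^\circ),\, \calL_\lambda \otimes SMC_y(Y(u)^\circ) \rangle = \langle \calL_\lambda \otimes MC_y(X(v)^\circ),\, SMC_y(Y(u)^\circ) \rangle. \]
Now I would apply the Chevalley formula \eqref{Chev:MC} on the right-hand side and use duality again:
\[ \sum_{u' \le v} C_{u',\lambda}^v \, \langle MC_y(X(u')^\circ),\, SMC_y(Y(u)^\circ) \rangle = C_{u,\lambda}^v. \]
The support constraint $v \ge u$ in the lemma then follows because $C_{u,\lambda}^v = 0$ unless $u \le v$ (which is the range in \eqref{Chev:MC}).

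There is no serious obstacle here; the argument is formally a change-of-basis computation with respect to a non-degenerate pairing, and the only substantive input beyond bookkeeping is the self-adjointness of multiplication by $\calL_\lambda$ under $\langle -,- \rangle$, which is immediate from the definition via Euler characteristic. One mild technical point worth flagging is that the pairing and the expansions take place in the localization $K_T(G/B)[[y]]_{\loc}$; nonetheless, both $\calL_\lambda \otimes SMC_y(Y(u)^\circ)$ and $C_{u,\lambda}^v \, SMC_y(Y(v)^\circ)$ lie in $K_T(G/B)[[y]]$, so the identity descends to the non-localized setting.
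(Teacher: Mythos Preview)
Your proof is correct and follows essentially the same approach as the paper. The paper's proof is a terse version of yours: it simply observes that, by the duality in \Cref{thm:MCBorel}(2), both Chevalley coefficients are equal to $\langle \calL_\lambda\otimes MC_y(X(w)^\circ), SMC_y(Y(u)^\circ)\rangle$, which is exactly the computation you spell out using self-adjointness of $\calL_\lambda$ under the pairing.
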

\begin{proof} This follows from the (Poincar{\'e}) duality in \Cref{thm:MCBorel}(2), as the Chevalley coefficients in 
\Cref{Chev:MC} and \Cref{Chev:SMC} are given by  
\[ C_{u,\lambda}^w=\langle \calL_\lambda\otimes MC_y(X(w)^\circ), SMC_y(Y(u)^\circ)\rangle \/. \]
\end{proof}
We now relate the Chevalley coefficients above to the coefficients $c_{u,\mu}^{w,\lambda}$ from 
\Cref{equ:matrix2} in the Hecke algebra.
\begin{theorem}\label{thm:chemc} Let $\lambda$ be any weight in $X^*(T)$.
The following Chevalley formula holds in $K_T(G/B)[y]$:
\[\calL_{-\lambda}\otimes MC_y(X(w)^\circ)=\sum_{\mu\in X^*(T),u\in W}y^{\ell(w)-\ell(u)}e^{-\mu}c_{u,\mu}^{w,\lambda}|_{q=-y}MC_y(X(u)^\circ).\]
In particular, the following equation holds for the Chevalley coefficients in \eqref{Chev:MC}:
	 \begin{equation*}
	C_{u,-\lambda}^w=\sum_{\mu\in X^*(T)}y^{\ell(w)-\ell(u)}e^{-\mu}c_{u,\mu}^{w,\lambda}|_{q=-y}.
\end{equation*}
\end{theorem}
From \Cref{Chev:MC}, the Chevalley coefficients 
$C_{u,\lambda}^v$ belong to a localization 
$K_T(pt)[y]$ which allows division by 
$1 + e^\alpha y$ for any root $\alpha$. However, the expansion from 
\Cref{equ:matrix2} implies that the coefficients $(-q)^{\ell(w) - \ell(v)} c_{u,\mu}^{w,\lambda}$
are {\em polynomials} in $\bZ[q]$. 
Then it follows that $C_{u,\lambda}^v$ are in fact polynomials in 
$K_T(pt)[y]$. This will be seen explicitly in \Cref{cor:alcovewalk} below.
\begin{proof}[Proof of \Cref{thm:chemc}]
	Applying the map $\Psi$ in \Cref{lem:heckeactionk} to \Cref{equ:matrix2}, we get
		 \begin{equation*}
		\calT^L_{w}e^{-\lambda}=\sum_{\mu\in X^*(T),u\in W}y^{\ell(w)-\ell(u)}e^{-\mu}c_{u,\mu}^{w,\lambda}|_{q=-y}\calT^L_{u}\in \End_{\bbC}K_T(G/B)[y].
	\end{equation*}
Then the theorem follows by applying both sides to $[\calO_{X(id)}]$, and 
utilizing that
\begin{align*}
	\calT^L_{w}e^{-\lambda}([\calO_{X(id)}])=&\calT^L_{w}(\calL_{-\lambda}\otimes[\calO_{X(id)}])\\
	=&\calL_{-\lambda}\otimes MC_y(X(w)^\circ).
\end{align*}
Here the first equality follows from $\calL_{-\lambda}\otimes[\calO_{X(id)}]=e^{-\lambda}[\calO_{X(id)}]$, while the second one follows from \Cref{lem:leftDL} and \Cref{thm:MCBorel}. 
\end{proof}
\begin{remark}\label{rmk:bundle-mult} The above argument can be generalized to the case when 
the line bundle $\calL_{\lambda}$ is replaced by any homogeneous bundle 
$\mathcal{V} =G\times^B V \to G/B$ associated to a 
$B$-representation of $V$. If the character of $V$ is 
$ch(V) = \sum_\lambda a_\lambda e^{\lambda}$, then a localization
argument shows that the class of $\mathcal{V}$ in $K_T(G/B)$ 
is equal to 
\[ [\mathcal{V}] = \sum a_\lambda \mathcal{L}_\lambda \/. \] 
It follows that for any $w \in W$, 
\[ \mathcal{V}  \otimes MC(X(w)^\circ) = \sum_\lambda a_\lambda \mathcal{L}_\lambda   \otimes MC(X(w)^\circ)= \sum_{u} \sum_\lambda  a_\lambda C_{u,\lambda}^w MC(X(u)^\circ) \/. \]

We illustrate this for $G/B=\Fl(n)$, the complete flag manifold. This is equipped
with the tautological sequence $\mathcal{F}_0 = 0 \subset \mathcal{F}_ 1 \subset \mathcal{F}_2 \subset \ldots \subset \mathcal{F}_n= \C^n$. For $1 \le i \le n-1$ define $X_i= \mathcal{F}_i/\mathcal{F}_{i-1}$ 
regarded in $K_T(G/B)$. Then  
\[ \wedge^j \mathcal{F}_ i = e_j(X_1, \ldots, X_i)\/, \quad Sym^j \mathcal{F}_ i = h_j(X_1, \ldots, X_i) \]
where $e_j$ and $h_j$ denote the elementary symmetric function, respectively the complete homogeneous
symmetric function. Note that if $\varpi_i$ denotes the $i$th fundamental weight, then
$X_i = \mathcal{L}_{\varpi_i - \varpi_{i-1}}$ for $1 \le i \le n-1$, with the convention that $\varpi_0=0$. 
\Cref{thm:chemc} gives a formula for the multiplication by monomials 
$X_1^{a_1} \cdot \ldots \cdot X_{n-1}^{a_{n-1}}$, which in turn gives formulae to multiply by $e_j(X_1, \ldots, X_i)$ and $h_j(X_1, \ldots , X_i)$. 
\end{remark}
In the next section, we give explicit formulae for the Chevalley coefficients 
$C_{\lambda,w}^{u}$ based
on the formulae for the Hecke algebra coefficients $c_{u,\mu}^{w,\lambda}$. 
\subsection{Chevalley formulae via alcove walks and the $\lambda$-chains}
Let us recall the setting of \Cref{corol:Chev}. For $\lambda\in X^{*}(T)$, 
choose a  minimal length alcove walk 
$p_{v_\lambda}=c_{j_1}^{\epsilon_1}c_{j_2}^{\epsilon_2}\cdots c_{j_s}^{\epsilon_s}$  
from $\fA$ to $\fA+\lambda$, and
let  $\mathcal{H} = \{ h_1,h_2,\ldots,h_s\}$ be the ordered sequence of hyperplanes  
defined by the walls of alcoves crossed by $p_\lambda$.
Define $\beta_i\in R^{\epsilon_i}$
and $k_i\in \mathbb Z$ ($1\leq i\leq s$)
by the condition $h_i=H_{\beta_i, k_i}$.
Combining \Cref{corol:Chev} and \Cref{thm:chemc}, we get the following formula. 
\begin{theorem}[Alcove walk formula for the Chevalley coefficients]\label{cor:alcovewalk} 
\begin{equation*}
C_{u,-\lambda}^{w}= \sum_{\mathcal{M}}
(-1)^{f^{+}(\mathcal{M})}  (-1-y)^{|\mathcal{M}|} (-y)^{\frac{\ell(w)-\ell(u)-|\mathcal{M}|}{2}} 
{e^{-w \hat{r}_{\mathcal{M}}(\lambda)}}
\end{equation*}
where the sum is over ordered subsets $\mathcal{M} \subset \mathcal{H}$ such that
$w\stackrel{\mathcal{M}}{\Longrightarrow} u= w r_{\mathcal{M}}$.
\end{theorem}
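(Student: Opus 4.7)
The plan is to simply compose the two ingredients already established: the alcove-walk formula for the Hecke-algebra coefficients $c_{u,\mu}^{w,\lambda}$ (\Cref{corol:Chev}), and the bridge \Cref{thm:chemc} identifying the Chevalley coefficients $C_{u,-\lambda}^w$ with a specialization of $c_{u,\mu}^{w,\lambda}$ weighted by $y^{\ell(w)-\ell(u)}e^{-\mu}$. No geometric input beyond what is in \S \ref{sec:MCclasses} is needed, since \Cref{thm:chemc} already bakes in the action of the left Demazure--Lusztig operators and the recursion for $MC_y(X(w)^\circ)$.

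First I would recall the formula of \Cref{corol:Chev},
\[
c_{u,\mu}^{w,\lambda}=\sum_{\mathcal{M}}(-1)^{f^{+}(\mathcal{M})}(1-q)^{|\mathcal{M}|}q^{\frac{\ell(u)-\ell(w)-|\mathcal{M}|}{2}},
\]
with the sum running over $\mathcal{M}\subset\mathcal{H}$ satisfying $w\stackrel{\mathcal{M}}{\Longrightarrow}u=wr_\mathcal{M}$ and $\mu=w\hat{r}_\mathcal{M}(\lambda)$. Then I would specialize $q=-y$ and plug into the right-hand side of \eqref{equ:leftdle}. Because each admissible $\mathcal{M}$ determines $\mu$ uniquely through $\mu=w\hat{r}_\mathcal{M}(\lambda)$, the summation over $\mu$ in \Cref{thm:chemc} collapses to a single term per $\mathcal{M}$, contributing the factor $e^{-w\hat{r}_\mathcal{M}(\lambda)}$ that appears in the statement.

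The only non-trivial computation is reconciling the prefactors. After setting $q=-y$, one has $(1-q)^{|\mathcal{M}|}=(1+y)^{|\mathcal{M}|}$, and I would combine $y^{\ell(w)-\ell(u)}$ with $(-y)^{\frac{\ell(u)-\ell(w)-|\mathcal{M}|}{2}}$ by pulling out the sign,
\[
y^{\ell(w)-\ell(u)}(-y)^{\frac{\ell(u)-\ell(w)-|\mathcal{M}|}{2}}=(-1)^{|\mathcal{M}|}(-y)^{\frac{\ell(w)-\ell(u)-|\mathcal{M}|}{2}},
\]
using that $\ell(w)-\ell(u)-|\mathcal{M}|$ is even (the parity remark following \Cref{corol:Chev}). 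Absorbing the $(-1)^{|\mathcal{M}|}$ into $(1+y)^{|\mathcal{M}|}$ yields $(-1-y)^{|\mathcal{M}|}$, matching the stated formula exactly.

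The potential pitfall, and the step I would double-check, is the sign gymnastics in the $(-y)$-exponent, since the half-integer appearance requires the aforementioned parity condition to be used in the right place; in particular one must avoid interpreting $(-y)^{a}$ with $a$ negative as anything other than $(-1)^{a}y^{a}$ in the Laurent ring before concluding that the whole expression is polynomial. Once this identity is in hand, the theorem follows by substitution, with no further work. The parabolic or restriction-style generalizations mentioned later in \S \ref{sec:parabolic} are not needed here, nor are the $\lambda$-chain reformulations of \S \ref{sec:lambda-chains}, which would give the alternative forms \eqref{equ:-lambda-intro} and \eqref{equ:lambda-intro} of the main theorem.
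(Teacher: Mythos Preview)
Your proposal is correct and is exactly the approach the paper takes: the paper states just before the theorem that it follows by ``combining \Cref{corol:Chev} and \Cref{thm:chemc},'' and you have spelled out the substitution and sign bookkeeping that the paper leaves implicit. Your verification of the prefactor identity $y^{\ell(w)-\ell(u)}(-y)^{\frac{\ell(u)-\ell(w)-|\mathcal{M}|}{2}}=(-1)^{|\mathcal{M}|}(-y)^{\frac{\ell(w)-\ell(u)-|\mathcal{M}|}{2}}$ via the parity of $\ell(w)-\ell(u)-|\mathcal{M}|$ is the right way to handle the one delicate step.
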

We now recall the setup of \Cref{thm:lambda-chain1}. 
Assume $\lambda$ is an integral weight and fix a reduced 
$\lambda$-chain $(\beta_1, \beta_2,\ldots, \beta_l)$, 
which corresponds to an alcove walk from $\fA$ to 
$\fA - \lambda$, with separating hyperplanes $h_j:=H_{-\beta_j, d_j}$. 

Combining \Cref{thm:lambda-chain1} , \Cref{thm:lambda-chain2}, and \Cref{thm:chemc}, we get the following formulae.
\begin{theorem}[$\lambda$-chain formula for the Chevalley coefficients]\label{thm:lambda} 
The following hold:
\begin{equation}\label{equ:-lambda}
	C_{u,-\lambda}^w= \sum_{J\subset \{ 1,2,\ldots, l \}}
	(-1)^{n(J)} (1+y)^{|J|} (-y)^{\frac{\ell(w)-\ell(u)-|J|}{2}}
	e^{-w\tilde{r}_{J_{>}}(\lambda)},
\end{equation}
where the sum is over subsets $J = \{ j_1 < \ldots < j_t \} \subset \{ 1, 2, \ldots, l \}$
such that $u < u r_{h_{j_1}} < u r_{h_{j_1}} r_{h_{j_2}} < \ldots < u r_{h_{j_1}} r_{h_{j_2}} \cdot \ldots \cdot r_{h_{j_t}} =w$ and $\tilde{r}_{J_>}$ is defined in \eqref{E:defrtilde}.
Furthermore,
\begin{equation}\label{equ:lambda}
	C_{u,\lambda}^w= \sum_{J\subset \{ 1,2,\ldots, l \}}
	(-1)^{n(J)}
	(-1-y)^{|J|} (-y)^{\frac{\ell(w)-\ell(u)-|J|}{2}}
	e^{-w \hat{r}_{J_{<}}(-\lambda)},
\end{equation}
where the sum is over subsets $J = \{ j_1 < \ldots < j_t \} \subset \{ 1, 2, \ldots, l \}$
such that $u < u r_{h_{j_t}} < u r_{h_{j_t}} r_{h_{j_{t-1}}} < \ldots < u r_{h_{j_t}} 
\cdot \ldots \cdot r_{h_{j_1}} =w$ and $\hat{r}_{J_{<}}$ is defined in \eqref{E:hatr}.
\end{theorem}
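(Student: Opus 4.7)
The plan is to derive both formulas as direct consequences of Theorem~\ref{thm:chemc}, which expresses the Chevalley coefficients as a specialization at $q=-y$ of the affine Hecke algebra transition coefficients $c_{u,\mu}^{w,\lambda}$, combined with the explicit $\lambda$-chain formulas of Theorems~\ref{thm:lambda-chain1} and~\ref{thm:lambda-chain2}. Since both ingredients are already in hand, the proof reduces to a substitution, with careful tracking of signs and parities.

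For equation~\eqref{equ:-lambda}, I would start from
\begin{equation*}
C_{u,-\lambda}^w = \sum_{\mu\in X^*(T)} y^{\ell(w)-\ell(u)} e^{-\mu} c_{u,\mu}^{w,\lambda}\big|_{q=-y}
\end{equation*}
(Theorem~\ref{thm:chemc}) and substitute the formula of Theorem~\ref{thm:lambda-chain2} for $c_{u,\mu}^{w,\lambda}$. The sum over $\mu$ then collapses, because for each $J$ satisfying $u \stackrel{J_{<}}{\longrightarrow} w$ only the single weight $\mu = w\tilde{r}_{J_>}(\lambda)$ contributes. The one substantive manipulation is the identity
\begin{equation*}
y^{\ell(w)-\ell(u)} (-y)^{(\ell(u)-\ell(w)-|J|)/2} = (-1)^{|J|} (-y)^{(\ell(w)-\ell(u)-|J|)/2},
\end{equation*}
which is valid because $u \stackrel{J_{<}}{\longrightarrow} w$ is a strict Bruhat chain realized by $|J|$ reflections; each reflection changes the length by an odd amount, so $\ell(w)-\ell(u)$ and $|J|$ have the same parity and $(\ell(w)-\ell(u)-|J|)/2$ is a non-negative integer. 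Combining the factor $(-1)^{|J|}$ with $(q-1)^{|J|}\big|_{q=-y} = (-1-y)^{|J|}$ yields $(1+y)^{|J|}$, giving~\eqref{equ:-lambda}.

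For equation~\eqref{equ:lambda}, I would apply Theorem~\ref{thm:chemc} with $\lambda$ replaced by $-\lambda$ to get
\begin{equation*}
C_{u,\lambda}^w = \sum_{\mu \in X^*(T)} y^{\ell(w)-\ell(u)} e^{-\mu} c_{u,\mu}^{w,-\lambda}\big|_{q=-y},
\end{equation*}
and then substitute Theorem~\ref{thm:lambda-chain1}. The same parity identity now transforms $(1-q)^{|J|}\big|_{q=-y} = (1+y)^{|J|}$ into $(-1-y)^{|J|}$ after absorbing the $(-1)^{|J|}$ factor, while the chain condition becomes $u \stackrel{J_{>}}{\longrightarrow} w$ and the weight is $\mu = w\hat{r}_{J_{<}}(-\lambda)$, matching the stated formula.

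The only real obstacle is notational bookkeeping: one has to match the two chain conventions ($J_<$ versus $J_>$) and the corresponding affine reflection operators ($\tilde{r}$ versus $\hat{r}$) between the two halves of the theorem, and verify the parity identity carefully so as to see why $(1+y)^{|J|}$ and $(-1-y)^{|J|}$ appear on opposite sides. No further geometric or representation-theoretic input is required beyond what has been established in Sections~\ref{sec:lambda-chains} and~\ref{sec:MCclasses}.
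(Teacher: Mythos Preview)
Your proposal is correct and follows exactly the approach of the paper, which simply states that the result follows by combining \Cref{thm:lambda-chain1}, \Cref{thm:lambda-chain2}, and \Cref{thm:chemc}. You have supplied the explicit substitution and the parity bookkeeping that the paper leaves to the reader, and all of it is accurate.
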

\begin{remark}\label{rem:lambda_chev}
(1) It follows from \Cref{rem:w-to-u} that
$-w\tilde{r}_{J_{>}}(\lambda)= u \hat{r}_{J_<}(-\lambda)$
in \Cref{equ:-lambda}, and that 
$-w \hat{r}_{J_{<}}(-\lambda)= u \tilde{r}_{J_{>}}(\lambda)$
in \Cref{equ:lambda}, giving alternative ways to calculate these.

(2)
By \Cref{rem:y=0} (1), 
the MC and SMC classes specialize at $y=0$ to the 
ideal sheaves and the structure sheaves, respectively. 
Under this specialization, and using that 
$-w\tilde{r}_{J_{>}}(\lambda)= u \hat{r}_{J_<}(-\lambda)$,
\Cref{equ:-lambda}
reduces to the equivariant $K$-theory Chevalley formula of Lenart--Postnikov 
\cite[Theorem 6.1]{lenart.postnikov:affine}.
\end{remark}
One can also consider the more general situation of Kac--Moody flag varieties defined 
e.g. in \cite{kumar:book}. These are ind-varieties, and one can define the motivic Chern 
classes of the finite-dimensional Schubert cells using the ind-structure. 
There are analogues of the (left and right) Demazure operators, and 
the notion of $\lambda$-chains extends to this setting, by results of Lenart and Postnikov 
\cite{lenart2008combinatorial}. Note that for an infinite Weyl group $W$, a 
$\lambda$-chain may be an infinite sequence, i.e. $l=\infty$. But, 
for given $u\leq w$, $t=|J|$ is finite, and the number of $J$ which satisfies 
the condition in \Cref{thm:lambda} is also finite. Based on these similarities
to the finite case, we expect the following conjecture to hold.
\begin{conj}\label{con:km}
For a Kac--Moody Weyl group $W$ and a dominant integral weight $\lambda$,
the analogues of the equations \Cref{equ:-lambda} and \Cref{equ:lambda} hold.
\end{conj}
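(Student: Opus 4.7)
The plan is to adapt the proof strategy of \Cref{thm:chemc} and \Cref{thm:lambda} to the Kac--Moody setting, leveraging the Lenart--Postnikov construction of $\lambda$-chains for dominant integral weights \cite{lenart2008combinatorial}. First I would set up the geometric ingredients: the Kac--Moody flag ind-variety $G/B$ is filtered by finite-dimensional Schubert varieties, so each motivic Chern class $MC_y(X(w)^\circ)$ is well defined in $K_T(G/B)[y]$ via this ind-structure. The left Demazure--Lusztig operators $\mathcal{T}_i^L$ defined by \eqref{equ:leftDL} act on (a suitable completion of) $K_T(G/B)[y]$, and the braid and quadratic identities of \Cref{lem:leftDL}, together with the commutation with multiplication by line bundles $\calL_\lambda$, extend by the same formal computation. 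The recursion $MC_y(X(w)^\circ) = \mathcal{T}_w^L[\calO_{X(id)}]$ from \Cref{thm:MCBorel}(1) also carries over, since it involves only the local structure of one-parameter Bott--Samelson-type resolutions of finite-dimensional Schubert varieties.

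Next I would use the dominance of $\lambda$ crucially, in two ways. Dominance guarantees that the Lenart--Postnikov construction produces a well-defined (possibly infinite) reduced $\lambda$-chain $(\beta_1,\beta_2,\ldots)$ of \emph{positive} real roots, corresponding to a minimal alcove walk from $\fA$ to $\fA-\lambda$. Because every $\beta_j > 0$, the sign factor $(-1)^{n(J)}$ in \eqref{equ:-lambda} equals $+1$ for all contributing $J$. More importantly, for fixed $u \le w$ the Bruhat-increasing condition $u < ur_{h_{j_1}} < \cdots < ur_{h_{j_1}}\cdots r_{h_{j_t}} = w$ forces $|J| \le \ell(w)-\ell(u)$ and constrains each intermediate element to lie in the finite interval $[u,w]$; hence only finitely many subsets $J$ contribute, and the right-hand side of \eqref{equ:-lambda} is a well-defined finite expression. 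The analogous statement for \eqref{equ:lambda} then follows from the correspondence between $\lambda$-chains and $(-\lambda)$-chains recorded in \Cref{-lambda_chain}.

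The heart of the argument, and the main obstacle, is establishing the underlying Hecke algebra identity \eqref{E:Hecke-exp} (equivalently the $\lambda$-chain form of \Cref{thm:lambda-chain1}) in a setting where the alcove walk representing $X^{-\lambda}$ can be infinite. My plan is to work in an appropriate completion of the affine Hecke algebra attached to the Kac--Moody root datum, interpret $X^{-\lambda}$ through the infinite alcove walk, and argue by finite-dimensional reduction: after pairing any identity with the coefficient of a specific $MC_y(X(u)^\circ)$, only reflections along hyperplanes relevant to the Bruhat interval $[u,w]$ survive, so the required commutation of $T_w$ past the tail of the walk becomes a finite identity which follows inductively from the defining relations of \S \ref{sec:AHA} and from \Cref{lemma:paths-bij}. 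Applying the Iwahori--Matsumoto involution and specializing $q=-y$ as in the proof of \Cref{thm:chemc} then transports the Hecke identity onto \eqref{equ:-lambda}, and the argument of \Cref{thm:lambda-chain2} yields \eqref{equ:lambda}. The delicate step is making this truncation rigorous --- essentially a vanishing statement asserting that only finitely many terms in the completed expansion contribute to any fixed Chevalley coefficient --- and it is precisely here that the dominance of $\lambda$ (ensuring the positivity of all $\beta_j$ and the bounded-length finiteness in the previous paragraph) is indispensable.
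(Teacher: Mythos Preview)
The statement you are attempting to prove is labeled \emph{Conjecture} in the paper, and the paper does \emph{not} prove it. The paragraph preceding \Cref{con:km} gives only heuristic motivation: motivic Chern classes of Schubert cells can be defined via the ind-structure on Kac--Moody flag varieties, left and right Demazure--Lusztig operators exist, $\lambda$-chains for dominant $\lambda$ exist by \cite{lenart2008combinatorial}, and for fixed $u\le w$ only finitely many subsets $J$ satisfy the Bruhat-chain condition. Your proposal reproduces exactly this motivation and does not go further; you yourself flag the core difficulty when you write that ``the delicate step is making this truncation rigorous'' and call the Hecke algebra identity for infinite alcove walks ``the main obstacle.''

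So there is no discrepancy of approach to discuss: you and the paper agree on why the statement is plausible and on where the gap lies. But what you have written is a plan, not a proof. The substantive missing ingredients are (i) a rigorous construction of the element $X^{-\lambda}$ (or the action of $\calL_{-\lambda}$) in a completed Hecke algebra attached to the Kac--Moody datum, together with a proof that Ram's commutation identity \eqref{E:Hecke-exp} holds there, and (ii) a verification that the left Demazure--Lusztig action and the identity $MC_y(X(w)^\circ)=\mathcal{T}_w^L[\calO_{X(id)}]$ genuinely extend to the Kac--Moody $K_T(G/B)$ in a form compatible with (i). Neither of these is supplied; the ``finite-dimensional reduction'' you sketch is plausible but is precisely the content of the conjecture. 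Until those steps are carried out, the statement remains open.
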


\subsection{Miscelaneous}\label{sec:positivity} In this section 
we discuss positivity properties of 
the coefficients from the Chevalley formula,
{and some special properties of the multiplication
by line bundles given by minuscule weights.}
We start with the following consequence
of \Cref{thm:lambda}.

\begin{prop}\label{prop:lambda-pos} Let $\lambda$ be a dominant weight and set $q=-y$. Then:

(a) $C_{u,\lambda}^w$ may be written as a combination of
terms of the form $e^\mu q^a (q-1)^b$ with non-negative integer coefficients. 

(b) $C_{u,-\lambda}^w$ may be written as a combination of
terms of the form $(-1)^b e^\mu q^a (q-1)^b$ with non-negative integer coefficients.

Furthermore, in both situations $b$ has the same parity as $\ell(w)-\ell(u)$. 
\end{prop}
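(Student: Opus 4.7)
The plan is to apply \Cref{thm:lambda} to a carefully chosen $\lambda$-chain, after which the positivity becomes essentially tautological. The key observation is that for a dominant weight $\lambda$, one can always produce a $\lambda$-chain $(\beta_1,\ldots,\beta_l)$ consisting entirely of positive roots. Indeed, in the combinatorial construction recalled after \Cref{lemma_alcove_path}, the set $\mathcal{R}_\lambda$ of separating reflections $s_{\alpha,k}$ for dominant $\lambda$ only involves positive roots $\alpha\in R^+$ with $k\le 0$, so the function $b(s_{\alpha,k})=\alpha$ sends every reflection to a positive root. I would open the proof by recording this, and then fixing such a chain throughout.

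Once this is in hand, the integer $n(J)=\#\{j\in J:\beta_j<0\}$ vanishes for every subset $J$ appearing in the sums of \Cref{thm:lambda}. Substituting $q=-y$ (so $1+y=-(q-1)$ and $-1-y=q-1$) in \eqref{equ:-lambda} and \eqref{equ:lambda} respectively yields
\[
C_{u,-\lambda}^w=\sum_{J}(-1)^{|J|}(q-1)^{|J|}\,q^{(\ell(w)-\ell(u)-|J|)/2}\,e^{-w\tilde r_{J_{>}}(\lambda)},
\]
\[
C_{u,\lambda}^w=\sum_{J}(q-1)^{|J|}\,q^{(\ell(w)-\ell(u)-|J|)/2}\,e^{-w\hat r_{J_{<}}(-\lambda)},
\]
where the sums run over the subsets $J$ satisfying the appropriate saturated Bruhat chain condition from \Cref{thm:lambda}. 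Setting $a=(\ell(w)-\ell(u)-|J|)/2$, $b=|J|$, and letting $\mu$ denote the corresponding character, each individual term of the first sum has the form $(-1)^b e^\mu q^a(q-1)^b$ with coefficient $+1$, while each term of the second sum has the form $e^\mu q^a(q-1)^b$ with coefficient $+1$. Collecting terms with equal triples $(\mu,a,b)$ produces the required combinations with non-negative integer coefficients (the coefficients simply count chains in the Bruhat-style order).

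For the parity statement, I would note that the exponent $a=(\ell(w)-\ell(u)-|J|)/2$ must be a non-negative integer: integrality forces $|J|$ to have the same parity as $\ell(w)-\ell(u)$, and $a\ge 0$ follows from the fact that any Bruhat chain of length $|J|$ from $u$ to $w$ satisfies $|J|\le\ell(w)-\ell(u)$. Since the only $J$'s that contribute are precisely those indexing such chains, every $b=|J|$ in either sum automatically has the asserted parity.

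The only nontrivial step is the first one, the existence of a $\lambda$-chain with all roots positive when $\lambda$ is dominant; the rest is routine substitution and book-keeping of signs. I do not anticipate a real obstacle, though one should be careful with sign conventions: the exponent-matching between $(1+y)$ versus $(-1-y)$ in \eqref{equ:-lambda} and \eqref{equ:lambda} is what is responsible for the different overall signs in parts (a) and (b), and it is worth spelling this out when writing the proof.
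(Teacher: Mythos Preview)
Your proposal is correct and follows essentially the same route as the paper's proof: invoke \Cref{thm:lambda} for a reduced $\lambda$-chain with $\lambda$ dominant, observe that all $\beta_j>0$ so $n(J)=0$, substitute $q=-y$, and read off the claimed forms together with the parity of $b=|J|$. Your write-up supplies more detail than the paper (the explicit Lenart--Postnikov construction to exhibit a positive chain, and the sign bookkeeping under $q=-y$); the one place to tighten is the parity step, where ``integrality forces'' is circular as stated---the clean reason, also used elsewhere in the paper, is that each reflection changes length by an odd integer, so a Bruhat chain of $|J|$ steps from $u$ to $w$ forces $\ell(w)-\ell(u)\equiv |J|\pmod 2$.
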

\begin{proof} Both statements follow from \Cref{thm:lambda}, using that 
for a reduced $\lambda$-chain with $\lambda$ dominant we have $n(J) =0$, and that 
$\ell(w) - \ell(u) - |J|$ is an even integer.  
\end{proof}
\begin{example}\label{exam:chev} Consider
 type $A_2$, with $\lambda=2\varpi_1+\varpi_2$, and with the
$\lambda$-chain from \S \ref{ex:A_2-1}.
Take $w=s_2 s_1$.
Then from \Cref{thm:lambda} we get (with $q=-y$): 
\[ \begin{split} {\mathcal L}_{\lambda}\otimes MC_{-q}(X(s_2 s_1)^\circ )
& =  e^{\varpi_1-3\varpi_2}MC_{-q}(X(s_2 s_1)^\circ)\\
& +(q-1)
( e^{-\varpi_2}+e^{-\varpi_1+\varpi_2}+e^{-2 \varpi_1+3 \varpi_2})
MC_{-q}(X(s_1)^\circ)\\
& +(q-1)(e^{2\varpi_1-2\varpi_2}+e^{3\varpi_1-\varpi_2}) MC_{-q}(X(s_2)^\circ)\\
&+ (q-1)^2 
(e^{2\varpi_1+\varpi_2}+e^{2\varpi_2}+e^{\varpi_1})
MC_{-q}(X(id)^\circ).
\end{split}
\]
\[\begin{split} {\mathcal L}_{-\lambda}\otimes MC_{-q}(X(s_2 s_1)^\circ) & =  
e^{-\varpi_1+3\varpi_2} MC_{-q}(X(s_2 s_1)^\circ)\\
&-
(q-1)
 (e^{\varpi_1-\varpi_2} +e^{\varpi_2}+e^{-\varpi_1+3\varpi_2} )MC_{-q}(X(s_1)^\circ)\\
&-
(q-1)(e^{-2\varpi_1+2\varpi_2} +e^{-\varpi_1+3\varpi_2})MC_{-q}(X(s_2)^\circ)\\
&+
(q-1)^2 (e^{-\varpi_1}+e^{\varpi_1-\varpi_2}+e^{-2\varpi_1+2\varpi_2}+e^{\varpi_2}+e^{-\varpi_1+3\varpi_2} )MC_{-q}(X(id)^\circ) \/. 
\end{split}
\]
\end{example}
We now investigate the special multiplication by $\mathcal{L}_{\pm \varpi_i}$ in the case
of minuscule fundamental weights. In this case the coefficients have a particularly pleasing 
factorization.
\begin{lemma}\label{lemma:coeffs-minuscule}
If $\lambda = \varpi_i$ is a minuscule weight, then for any $u,w\in W$,
\[C_{u,\lambda}^w = e^{u(\lambda)} P_{u,\lambda}^w(y) \quad \textrm{ and } \quad 
C_{u,-\lambda}^w = (-1)^{\ell(w)-\ell(u)} e^{-w(\lambda)} P_{w_0w,\lambda}^{w_0u}(y) \/,\] 
where $P_{u,\lambda}^w(y)\in \bbZ[y]$. Furthermore, these polynomials satisfy
\[P_{u,\lambda}^w(y^{-1}) \cdot y^{\ell(w)-\ell(u)} = P_{u,\lambda}^w(y) \/, \]
i.e., they are palindromic.
\end{lemma}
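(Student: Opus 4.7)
The plan is to apply \Cref{thm:lambda} to a reduced $\varpi_i$-chain of roots $(\beta_1,\dots,\beta_l)$, exploiting two simplifications particular to the minuscule case: by the discussion preceding \Cref{ex:v-o2}, all the offsets $k_j$ (hence $d_j$) associated with such a chain vanish, so every hyperplane $h_j=H_{-\beta_j,0}$ passes through the origin and the affine reflection $\hat r_{h_j}$ coincides with the linear reflection $s_{\beta_j}$; moreover $\varpi_i$ is dominant, so every $\beta_j$ is positive and $n(J)=0$ for all admissible index sets $J$.

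For $C_{u,\lambda}^w$ I will read the Bruhat chain condition in \Cref{equ:lambda} as $u\,r_{h_{j_t}}\cdots r_{h_{j_1}}=w$, so that $\hat r_{J_<}=r_{h_{j_1}}\cdots r_{h_{j_t}}=w^{-1}u$ \emph{independently of the specific $J$}. Then $-w\hat r_{J_<}(-\lambda)=u(\lambda)$ and the exponential factors uniformly out of the sum, yielding
\[ C_{u,\lambda}^w = e^{u(\lambda)}\sum_{J} (-1-y)^{|J|}(-y)^{\frac{\ell(w)-\ell(u)-|J|}{2}}, \]
which defines $P_{u,\lambda}^w(y)\in\bbZ[y]$. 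For $C_{u,-\lambda}^w$ the chain condition of \Cref{equ:-lambda} gives instead $\hat r_{J_<}=u^{-1}w$; invoking the identity $-w\tilde r_{J_>}(\lambda)=u\hat r_{J_<}(-\lambda)$ of \Cref{rem:w-to-u} shows that the exponential equals the constant $e^{-w(\lambda)}$ on every admissible $J$, so $e^{-w(\lambda)}$ factors out of the sum in the same way.

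To identify $C_{u,-\lambda}^w$ with $(-1)^{\ell(w)-\ell(u)}e^{-w(\lambda)}P_{w_0 w,\lambda}^{w_0 u}(y)$ I will match the two sums term by term. Multiplication by $w_0$ on the left reverses the Bruhat order, so an admissible chain $w_0 w<w_0 w\,r_{h_{j'_{t'}}}<\cdots<w_0 w\,r_{h_{j'_{t'}}}\cdots r_{h_{j'_1}}=w_0 u$ translates to a descending chain from $w$ to $u$, which read in reverse is precisely an admissible chain $u<u\,r_{h_{j'_1}}<\cdots<u\,r_{h_{j'_1}}\cdots r_{h_{j'_{t'}}}=w$ of the type appearing in the formula for $C_{u,-\lambda}^w$. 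Under this bijection $|J|$ is preserved, and since $\ell(w)-\ell(u)-|J|$ is even the overall sign $(-1)^{\ell(w)-\ell(u)+|J|}$ equals $1$; the two sums therefore agree.

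The palindromic identity $y^N P_{u,\lambda}^w(y^{-1})=P_{u,\lambda}^w(y)$ with $N=\ell(w)-\ell(u)$ follows by direct substitution into the closed form, using
\[ y^N(-1-y^{-1})^{|J|}(-y^{-1})^{\frac{N-|J|}{2}} = (-1-y)^{|J|}(-y)^{\frac{N-|J|}{2}}, \]
which holds whenever $N-|J|$ is even. The main bookkeeping hurdle I anticipate is the bijection in the third step: the asymmetric index orderings in the definitions of $\hat r_{J_<}$ and $\tilde r_{J_>}$ and the $w_0$-reversal of the Bruhat order all need to be carefully aligned, but once these conventions are reconciled the identity reduces to the elementary observation that reversing an increasing chain produces a decreasing chain with the same underlying set of indices.
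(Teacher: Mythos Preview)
Your proof is correct, but it takes a different route from the paper for the second identity and (to a lesser extent) for the palindromic claim.

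For the first identity $C_{u,\lambda}^w = e^{u(\lambda)} P_{u,\lambda}^w(y)$, your argument is the same as the paper's: the minuscule hypothesis forces $d_j=0$, so $\hat r_{J_<}$ is linear, equals $w^{-1}u$ regardless of $J$, and the exponential factors out of \eqref{equ:lambda}.

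For the second identity, the paper instead invokes the Star duality of \Cref{prop:star}: applying $C_{u,-\lambda}^w=(-1)^{\ell(w)-\ell(u)}\iota(C_{w_0w,\lambda}^{w_0u})$ to the already-established first identity, and using $\iota(e^{w_0w(\lambda)})=e^{-w(\lambda)}$, gives the result in one line. Your approach avoids this forward reference to \S\ref{sec:chev-dualities} and works directly from the $\lambda$-chain formula \eqref{equ:-lambda}: you factor out $e^{-w(\lambda)}$ via \Cref{rem:w-to-u} (which is valid since $\hat r_{J_<}$ is again linear), and then match the remaining sum with $P_{w_0w,\lambda}^{w_0u}(y)$ by the bijection $J'\mapsto J'$ on index sets, using that left multiplication by $w_0$ reverses Bruhat order. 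This bijection is correct and is indeed the identity on subsets, preserving $|J|$; your sign analysis via the parity of $\ell(w)-\ell(u)-|J|$ is also right. The trade-off: the paper's route is shorter and more conceptual once the duality framework is in place, while yours is self-contained and purely combinatorial.

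For the palindromic statement, the paper just cites \Cref{prop:dual}(a), whose proof it describes as ``straightforward from \eqref{equ:lambda}''---which is exactly the direct substitution you carry out. So here the two approaches are effectively the same.
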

\begin{proof} Since $\lambda=\varpi_i$ is minuscule, 
for any reduced $\lambda$-chain 
$(\beta_1, \beta_2,\ldots, \beta_l)$, 
the separating hyperplanes must be of the form 
$h_j:=H_{-\beta_j, 0}$, thus $\hat{r}_{h_{j}}=r_{h_{j_1}}$. 
Therefore the first equality follows from 
\Cref{equ:lambda}, since 
$-w \hat{r}_{J_{<}}(-\lambda)=-w r_{J_{<}}(-\lambda)=u(\lambda)$. The second equality 
follows this and from the `Star duality' in \Cref{prop:star} below. 
The palindromic property follows from \Cref{prop:dual}(a) below.
\end{proof}
\begin{remark}\label{rmk:former-conj} In an earlier ar$\chi$iv version of this note,
we conjectured that if $\lambda=\varpi_i$ is a minuscule weight, then 
the coefficients $C_{u,\lambda}^w(y)$ in the multiplication
\[ MC(X(w)^\circ) \cdot \mathcal{L}_\lambda = \sum C_{u,\lambda}^w(y) MC(X(u)^\circ) \/\]
are polynomials with non-negative coefficients. Since then, we found counterexamples
to this conjecture in Lie types $D_6,E_6$ and $A_7$.\end{remark} 

The next example shows that even in the cases when a coefficient $C_{u,\lambda}^w(y)$ happens to have 
positive coefficients, cancellations
may still occur in the formula \eqref{equ:lambda} which calculates it. 
\begin{example}\label{ex:cancellations}
Consider type $A_3$, $\lambda=\varpi_2$, $w=s_1 s_2 s_3 s_1 s_2 s_1$ and $u=s_3 s_1$.
An $\varpi_2$-chain is
$\beta_1=\alpha_2, \beta_2=\alpha_2+\alpha_3, \beta_3=\alpha_1+\alpha_2, 
\beta_4=\alpha_1+\alpha_2+\alpha_3$; see \Cref{ex:v-o2}. We have
two  paths from $u$ to $w$ in \eqref{equ:lambda}:
\begin{itemize} \item $J_1=\{2,3\}$ which gives $u<u s_{\beta_3}<u s_{\beta_3}s_{\beta_2}=w$, 
where $\ell(u s_{\beta_3})+3=\ell(w)$;
\item $J_2=\{1,2,3,4\}$ which gives $u<u s_{\beta_4}<u s_{\beta_4} s_{\beta_3}<
u s_{\beta_4} s_{\beta_3} s_{\beta_2}<u s_{\beta_4} s_{\beta_3} s_{\beta_2} s_{\beta_1}=w$.
\end{itemize}
The path $J_1$ gives coefficient 
$(-1-y)\times (-1-y) (-y) e^{u(\lambda)}=-y(y+1)^2 e^{u(\lambda)}$, 
and the path
$J_2$ gives coefficient $(-1-y)^4 e^{u(\lambda)}=(y+1)^4 e^{u(\lambda)}$.
Therefore 
\[ C_{u,\lambda}^w = e^{u(\lambda)} \left( (y+1)^4-y(y+1)^2
\right)=e^{u(\lambda)} (y^2+y+1)(y+1)^2 \/. \]
\end{example}

\subsection{Operator formula}
In this section, we reformulate the $\lambda$-chain Chevalley formula from \Cref{thm:lambda} via operators generalizing to motivic Chern classes similar ones from \cite{lenart.postnikov:affine}.

Let $h:=(\rho, \theta^\vee)+1$ be the Coxeter number,
where $\rho:=\sum_{i=1}^{r} \varpi_{i}$ and $\theta^\vee$ is the highest coroot.
Let $\tilde{R}(T):=\bbZ[e^{\pm \varpi_1/h},\ldots, e^{\pm \varpi_r/h}]$, and let 
$$\tilde{K}_T(G/B):=K_T(G/B)[y]\otimes_{K_T(\pt)[y]} \Frac(\tilde{R}(T)[y]).$$
Then $\tilde{K}_T(G/B)$ has a basis over $\Frac(\tilde{R}(T)[y])$ given by the motivic Chern classes of the Schubert cells. Define $\Frac(\tilde{R}(T)[y])$-linear operators $B_{\beta}$ $(\beta\in R^{+})$ and $E^\mu$ 
($\mu \in X^*(T)$) on $\tilde{K}_T(G/B)$ by
$$\begin{array}{lll}
	B_{\beta} (MC_y(X(w)^\circ )):=
	\begin{cases}
		(-1-y) (-y)^{\frac{\ell(w)-\ell(w  s_\beta)-1}{2} }MC_y(X(w s_\beta)^\circ ) & \text{ if }w s_\beta<w\\[0.2cm]
		0 & \text{ otherwise}
	\end{cases}, \\
	\\
	E^\mu(MC_y(X(w)^\circ )):=e^{\frac{w(\mu)}{h}} MC_y(X(w)^\circ ).
\end{array}
$$
If $\beta\in R^{-}$, define $B_{\beta}:=-B_{-\beta}$. Then 
\[E^\mu E^{\mu'}=E^{\mu+\mu'},\textit{\quad and \quad} B_{\beta}E^{s_\beta\mu}=E^{\mu}B_{\beta}.\]

Given a $\lambda$-chain  $(\beta_1, \ldots, \beta_l)$, define
$$
R^{[\lambda]}:=R_{\beta_l}R_{\beta_{l-1}}\cdots R_{\beta_1},
\;\text{ where }\;
R_{\beta}:=E^\rho(E^\beta+  B_{\beta})E^{-\rho}=E^\beta+ E^{(\rho,\beta^\vee)\beta} B_{\beta} \/.
$$
Then we have the following operator formula.
\begin{theorem}	[Operator Chevalley  formula]\label{thm:operator}
For any integral weight $\lambda\in X^*(T)$,
	\begin{equation}\label{operator_lambda_MC}
		\calL_{\lambda}\otimes MC_y(X(w)^\circ)=R^{[\lambda]} \left( MC_y(X(w)^\circ) \right).
	\end{equation}
\end{theorem}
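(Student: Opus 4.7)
The plan is to establish the operator identity by expanding $R^{[\lambda]}$ term by term and matching with the $\lambda$-chain Chevalley formula from \Cref{thm:lambda}, equation \eqref{equ:lambda}. Since each $R_{\beta_i}=E^{\beta_i}+E^{(\rho,\beta_i^\vee)\beta_i} B_{\beta_i}$ is a sum of two operators, expanding the product $R^{[\lambda]}=R_{\beta_l}\cdots R_{\beta_1}$ yields $\sum_J T_J$ indexed by subsets $J\subseteq\{1,\ldots,l\}$, where $T_J$ selects the $B$-summand precisely at positions $i\in J$. Applying $T_J$ to $MC_y(X(w)^\circ)$, the defining rule of $B_\beta$ (Bruhat descent) forces the nonzero contributions to come from subsets $J=\{j_1<\cdots<j_t\}$ giving a saturated descending Bruhat chain $w>ws_{\beta_{j_1}}>\cdots>u:=ws_{\beta_{j_1}}\cdots s_{\beta_{j_t}}$, which is exactly the index set appearing in \eqref{equ:lambda}.

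For each such $J$, most factors of the scalar coefficient of $MC_y(X(u)^\circ)$ inside $T_J MC_y(X(w)^\circ)$ read off directly from the definition of $B_\beta$: the sign $(-1)^{n(J)}$ comes from the convention $B_\beta=-B_{-\beta}$ applied at negative roots, the factor $(-1-y)^{|J|}$ collects the leading prefactor of each $B_{\beta_{j_k}}$, and the power $(-y)^{(\ell(w)-\ell(u)-|J|)/2}$ telescopes from the individual length drops along the chain. What remains is to show that the accumulated product of $E$-operators produces the required exponential $e^{-w\hat{r}_{J_<}(-\lambda)}$.

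The main obstacle is this last exponent matching. To handle it, I would use the alternative factored form $R_\beta=E^\rho(E^\beta+B_\beta)E^{-\rho}$ together with the commutation relation $B_\beta E^\mu=E^{s_\beta\mu}B_\beta$, pushing every $E$ past the $B$'s to the left. The total $E$-exponent then emerges carrying a $1/h$ prefactor (from the definition $E^\mu(MC_y(X(w)^\circ))=e^{w(\mu)/h}MC_y(X(w)^\circ)$) and splits into contributions from the $i\notin J$ positions of the form $v_k(\beta_i)/h$ (where $v_k$ is the current partial Weyl-group element along the chain), together with $\rho$-conjugation shifts coming from $(\rho,\beta_i^\vee)\beta_i=\rho-s_{\beta_i}\rho$ at the positions $i\in J$. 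The identity to verify is that this quantity equals $-w\hat{r}_{J_<}(-\lambda)=u\tilde{r}_{J_>}(\lambda)$ (via \Cref{lambda-chain:lemma} and \Cref{rem:w-to-u}), which I would establish using the explicit expansion of $\tilde{r}_{J_>}(\lambda)$ recorded in the proof of \Cref{lambda-chain:lemma}, combined with the $\lambda$-chain identity $\sum_{i=1}^l \beta_i=h\lambda$ (this absorbs the $1/h$ factor and forces the exponent to lie in $X^*(T)$). A telescoping manipulation comparing $u\rho-w\rho$ with the accumulated shifts at positions in $J$ should then close the matching.
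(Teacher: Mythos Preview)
Your approach is essentially the same as the paper's: expand $R^{[\lambda]}=E^\rho(E^{\beta_l}+B_{\beta_l})\cdots(E^{\beta_1}+B_{\beta_1})E^{-\rho}$ as $\sum_J R^{[\lambda]}_J$, use the commutation rule $B_\beta E^{s_\beta\mu}=E^\mu B_\beta$ to push all $B$-operators to the left, and match the result against \eqref{equ:lambda}. Your identification of the Bruhat chain condition and of the scalar factors $(-1)^{n(J)}$, $(-1-y)^{|J|}$, $(-y)^{(\ell(w)-\ell(u)-|J|)/2}$ is correct.

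The one place where the paper is sharper is the exponent step. After commuting, one obtains
\[
R^{[\lambda]}_J=B_{\beta_{j_t}}\cdots B_{\beta_{j_1}}E^{\,\nu_J},\qquad
\nu_J=-\rho+\sum_{i<j_1}\beta_i+s_{\beta_{j_1}}\!\!\sum_{j_1<i<j_2}\!\!\beta_i+\cdots+s_{\beta_{j_1}}\cdots s_{\beta_{j_t}}\Bigl(\sum_{i>j_t}\beta_i+\rho\Bigr),
\]
and the paper invokes \Cref{lem:lp} (taken from Lenart--Postnikov) to identify $\nu_J=-h\,\hat r_{J_<}(-\lambda)$ directly; applying $E^{\nu_J}$ to $MC_y(X(w)^\circ)$ then yields the factor $e^{-w\hat r_{J_<}(-\lambda)}$ on the nose. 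Your proposed route through \Cref{lambda-chain:lemma} and the identity $\sum_i\beta_i=h\lambda$ is aimed at the same target (indeed, $\sum_i\beta_i=h\lambda$ is just the $J=\emptyset$ case of \Cref{lem:lp}), but the general identity requires the block-sum/affine-reflection relation for arbitrary $J$, which does not fall out of \Cref{lambda-chain:lemma} alone: the expansion there is in terms of the integers $d_{j_k}$, whereas $\nu_J$ is in terms of the consecutive runs $\sum_{j_k<i<j_{k+1}}\beta_i$. Bridging these two descriptions is precisely the content of \Cref{lem:lp}. So your outline is correct, but the cleanest way to close the exponent computation is to cite \Cref{lem:lp} rather than reconstruct it by hand.
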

\begin{remark}
	\begin{enumerate}
		\item 
		The theorem implies that the definition of $R^{[\lambda]}$ does not depend on the choice of the $\lambda$-chain, which is equivalent to the Yang--Baxter equations (\cite[Definition 9.1]{lenart.postnikov:affine}) satisfied by the operators $R_\beta$. 
		\item The formula analogous to \eqref{operator_lambda_MC} involving SMC classes is obtained by replacing the operator $R^{|\lambda|}$ by an operator defined via the adjoint operators of $B_\beta$ and $E^\mu$.
		\item 
		Recall from \Cref{rem:y=0}(1) that $MC_0(X(w)^\circ)=[\calO_{X(w)}(-\partial X(w))]$. Specializing $y=0$ in the Theorem, we get a dual version of Lenart--Postnikov's formula  \cite[Theorem 13.1]{lenart.postnikov:affine}.
	\end{enumerate}
\end{remark}

For the proof, we need to recall the following result from \cite{lenart.postnikov:affine}. Recall $(\beta_1, \ldots, \beta_l)$ is a $\lambda$-chain. The hyperplane $h_j:=H_{-\beta_j,d_j}$ separates the alcoves $A_{j-1}$ and $A_j$ in the corresponding alcove path, and $\hat{r}_{h_j}$ is the reflection along $h_j$.
\begin{lemma}\cite[Proof of Prop. 14.5]{lenart.postnikov:affine}\label{lem:lp}
	For any $1\leq j_1<j_2<\cdots <j_t\leq l$,
	\begin{align*}
	&-\rho+\beta_1+\cdots+\beta_{j_1-1}+s_{\beta_{j_1}}(\beta_{j_1+1}+\cdots \beta_{j_2-1})+\cdots+ s_{\beta_{j_1}}\cdots s_{\beta_{j_t}}(\beta_{j_t+1}+\cdots \beta_l+\rho)\\
	=&-h\hat{r}_{j_1}\cdots \hat{r}_{j_t}(-\lambda).
	\end{align*}
\end{lemma}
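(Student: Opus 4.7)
I would establish this identity by induction on $t = |J|$, with the base case and a useful strengthening carrying most of the content.

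\textbf{Base case ($t = 0$) and prefix strengthening.} The LHS collapses to $-\rho + \sum_{i=1}^l \beta_i + \rho = \sum_{i=1}^l \beta_i$ and the RHS to $h\lambda$, so the statement becomes $\sum_{i=1}^l\beta_i = h\lambda$. This rests on two facts: (i) by the explicit construction of a $\lambda$-chain via the function $b$ recalled in \S\ref{sec:lambda-chains}, each positive root $\alpha$ contributes in the chain with signed multiplicity $\langle\lambda,\alpha^\vee\rangle$, hence $\sum_i\beta_i = \sum_{\alpha>0}\langle\lambda,\alpha^\vee\rangle\alpha$; and (ii) the Lie-theoretic identity $\sum_{\alpha>0}\langle\lambda,\alpha^\vee\rangle\alpha = h\lambda$, which follows from $W$-equivariance and Schur's lemma on the irreducible standard representation, with the scalar pinned down to $h$ by checking on the highest root. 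For the inductive step I also need the prefix strengthening $\beta_1+\cdots+\beta_k = \rho - h\, p_k$ for every $0\le k\le l$, where $p_k := \hat{v}_k(\rho/h)$ with $\hat{v}_k := s_{i_1}\cdots s_{i_k}$ (so $p_k$ is the barycenter of $A_k$). This follows by induction on $k$ from the fact that $\langle p_{k-1},\beta_k^\vee\rangle + d_k = 1/h$ (equivalently, the barycenter lies at affine distance $1/h$ from each wall of its alcove, a consequence of $\langle\rho/h,\alpha_i^\vee\rangle = 1/h$ and $\langle\rho/h,\theta^\vee\rangle = (h-1)/h$), combined with $\hat{r}_{h_k}(x) = x - (\langle x,\beta_k^\vee\rangle + d_k)\beta_k$.

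\textbf{Inductive step.} Set $J' = J\setminus\{j_t\}$ and $v = s_{\beta_{j_1}}\cdots s_{\beta_{j_{t-1}}}$. Only the final block of the LHS is affected, and using $(s_\beta - \mathrm{id})(x) = -\langle x,\beta^\vee\rangle\beta$ one obtains
\[
\text{LHS}(J) - \text{LHS}(J') = -\bigl(\langle\beta_{j_t+1}+\cdots+\beta_l+\rho,\,\beta_{j_t}^\vee\rangle + 1\bigr)\,v(\beta_{j_t}).
\]
For the RHS, since $\hat{r}_{h_{j_t}}(-\lambda) - (-\lambda) = (\langle\lambda,\beta_{j_t}^\vee\rangle - d_{j_t})\beta_{j_t}$ is a displacement vector, the outer affine operator $\hat{r}_{h_{j_1}}\cdots\hat{r}_{h_{j_{t-1}}}$ acts on it through its linear part $v$, yielding
\[
\text{RHS}(J) - \text{RHS}(J') = -h(\langle\lambda,\beta_{j_t}^\vee\rangle - d_{j_t})\,v(\beta_{j_t}).
\]

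\textbf{Main obstacle.} Matching the two differences reduces the induction to the scalar identity $\langle\beta_{j_t+1}+\cdots+\beta_l,\beta_{j_t}^\vee\rangle + \langle\rho,\beta_{j_t}^\vee\rangle + 1 = h\langle\lambda,\beta_{j_t}^\vee\rangle - hd_{j_t}$. Using $\sum_i\beta_i = h\lambda$ together with the prefix identity at $k = j_t$, both paired against $\beta_{j_t}^\vee$, this collapses to $\langle p_{j_t},\beta_{j_t}^\vee\rangle = -d_{j_t} - 1/h$, i.e., the analogous distance-$1/h$ statement for the barycenter $p_{j_t}$ of $A_{j_t}$ relative to the wall $h_{j_t}$ it shares with $A_{j_t-1}$ (on the opposite side to $A_{j_t-1}$, whence the opposite sign). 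I expect the main difficulty to lie in careful sign bookkeeping --- affine versus linear parts, orientation of crossings, and the distinction between walls labelled $0$ and those labelled $i>0$ --- but no genuinely new geometric input is required beyond the elementary distance-$1/h$ property of alcove barycenters.
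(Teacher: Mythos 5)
Your proof is correct. The paper does not give an argument for this lemma; it cites it from the proof of Proposition~14.5 in Lenart--Postnikov, and the derivation you give --- the special point $\rho/h$ of the fundamental alcove at normalized distance $1/h$ from each wall, the prefix telescoping identity $\beta_1+\cdots+\beta_k = \rho - h\,p_k$, and the induction on $|J|$ that peels off the final reflection and reduces the matching of the two differences to the sign-reversed distance-$1/h$ statement for $p_{j_t}$ across $h_{j_t}$ --- is exactly the argument behind that citation, now written out in full. Two stylistic remarks: once the prefix identity is in hand, the $t=0$ base case $\sum_i\beta_i = h\lambda$ is just its $k=l$ instance (since $p_l = \rho/h - \lambda$), so the separate derivation via the explicit $b$-construction and the identity $\sum_{\alpha>0}\langle\lambda,\alpha^\vee\rangle\alpha = h\lambda$ is redundant; and the scalar $h$ in that identity is most cleanly pinned down by the trace computation $\operatorname{tr} = |R| = rh$ rather than by evaluating on the highest root.
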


\begin{proof}[Proof of \Cref{thm:operator}]
By definition, 
\begin{align*}
	R^{[\lambda]}=&R_{\beta_l}R_{\beta_{l-1}}\cdots R_{\beta_1}\\
	=&E^\rho(E^{\beta_l}+  B_{\beta_l})\cdots(E^{\beta_2}+  B_{\beta_2})(E^{\beta_1}+  B_{\beta_1})E^{-\rho}\\
	=&\sum_{J} R^{[\lambda]}_J.
\end{align*}
Here $J=\{j_1<j_2<\cdots <j_t\}$ is a subset of $\{1,2,\dots,l\}$, and $R^{[\lambda]}_J$ is the term that contains $B_{\beta_j}$ if $j\in J$, and $E^{\beta_j}$, otherwise. Moving all the $B$-operators to the left, we get 
\begin{align*}
	R^{[\lambda]}_J=B_{\beta_{j_t}}\cdots B_{\beta_{j_1}}E^{-h\hat{r}_{j_1}\cdots \hat{r}_{j_t}(-\lambda)}=B_{\beta_{j_t}}\cdots B_{\beta_{j_1}}E^{-h\hat{r}_{J<}(-\lambda)}.
\end{align*}
Here we have used the relation $B_{\beta}E^{s_\beta\mu}=E^{\mu}B_{\beta}$ and \Cref{lem:lp}. Then the Theorem follows from this and \Cref{thm:lambda}.
\end{proof}	

\subsection{Parabolic case}\label{sec:parabolic}
In this section, we extend the above Chevalley formula to the partial flag variety case $G/P$. 

\begin{theorem} [Chevalley formula for the $G/P$ case]\label{thm:parab}
Let $w\in W^P$ and $\lambda\in X^*(T)_P$. Then we have
\begin{equation*}
	\calL_\lambda\otimes MC_y(X(w W_P)^\circ)=
	\sum_{u\in W^P}
	\left(\sum_{v\in u W_P} (-y)^{\ell(v)-\ell(u)}C^{w}_{v, \lambda}
	\right)
	MC_y(X(u W_P)^\circ),
\end{equation*}
and
\begin{equation*}
\calL_\lambda\otimes SMC_y(Y(u W_P)^\circ)=
\sum_{w\in W^P}
\left(\sum_{v\in u W_P} (-y)^{\ell(v)-\ell(u)}C^{w}_{v, \lambda}
\right)
SMC_y(Y(w W_P)^\circ),
\end{equation*}
where 
$C^{v}_{u,\lambda}$ are the Chevalley coefficients for full flag $G/B$ 
given explicitly
\Cref{thm:lambda}.
\end{theorem}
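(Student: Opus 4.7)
The strategy is to push forward the already-established $G/B$ Chevalley formula along the projection $\pi \colon G/B \to G/P$ and then derive the $SMC$ version by duality. Two inputs make this work cleanly: first, the fact that when $\lambda \in X^*(T)_P$, the line bundle $\calL_\lambda$ on $G/B$ is the pullback of $\calL_\lambda$ on $G/P$, so the projection formula gives $\pi_*(\calL_\lambda \otimes \calF) = \calL_\lambda \otimes \pi_*\calF$; second, the pushforward identity $\pi_*MC_y(X(v)^\circ) = (-y)^{\ell(v)-\ell(vW_P)}MC_y(X(vW_P)^\circ)$ recalled in \eqref{E:MCpush}.

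Concretely, I would start with $w \in W^P$ viewed as the minimal length representative in $wW_P$, so $\ell(w) = \ell(wW_P)$, and apply $\pi_*$ to the $G/B$ Chevalley expansion
\[
\calL_\lambda \otimes MC_y(X(w)^\circ) = \sum_{v \in W} C^w_{v,\lambda}\, MC_y(X(v)^\circ).
\]
The left-hand side becomes $\calL_\lambda \otimes MC_y(X(wW_P)^\circ)$ by the projection formula and \eqref{E:MCpush}. On the right-hand side, interchanging $\pi_*$ with the sum and reorganizing over cosets $v \in uW_P$ with $u \in W^P$ yields
\[
\sum_{u \in W^P} \left( \sum_{v \in uW_P} (-y)^{\ell(v)-\ell(u)} C^w_{v,\lambda} \right) MC_y(X(uW_P)^\circ),
\]
which is the desired formula. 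This only uses that $C^w_{v,\lambda}$ are scalars in $K_T(pt)[y]$ and that $\pi_*$ is $K_T(pt)[y]$-linear.

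For the $SMC$ identity, I would use the parabolic Poincar\'e duality in \Cref{prop:segrepb}(2): for $u, w \in W^P$ the classes $MC_y(X(wW_P)^\circ)$ and $SMC_y(Y(uW_P)^\circ)$ are dual bases. Hence the coefficient of $SMC_y(Y(wW_P)^\circ)$ in $\calL_\lambda \otimes SMC_y(Y(uW_P)^\circ)$ is
\[
\langle MC_y(X(wW_P)^\circ),\, \calL_\lambda \otimes SMC_y(Y(uW_P)^\circ) \rangle
= \langle \calL_\lambda \otimes MC_y(X(wW_P)^\circ),\, SMC_y(Y(uW_P)^\circ) \rangle,
\]
and substituting the $MC$-expansion just obtained extracts exactly the bracketed sum, with the indices $w$ and $u$ in the expected position.

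The only real point to watch is the hypothesis $\lambda \in X^*(T)_P$, which is precisely what allows $\calL_\lambda$ on $G/B$ to be pulled back from $G/P$ and therefore commute with $\pi_*$; without it, the projection formula is unavailable and the argument breaks down. Everything else is bookkeeping: the length exponent $(-y)^{\ell(v)-\ell(u)}$ comes directly from \eqref{E:MCpush} applied coset-by-coset, and the duality step for $SMC$ is a two-line computation once the $MC$ formula is in hand.
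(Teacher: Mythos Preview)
Your proposal is correct and follows essentially the same approach as the paper: the paper's proof cites exactly equations \eqref{E:MCpush} and \eqref{Chev:MC} for the first identity (i.e., push forward the $G/B$ Chevalley formula via the projection formula, using that $\calL_\lambda$ is pulled back from $G/P$ when $\lambda\in X^*(T)_P$) and then invokes the duality in \Cref{prop:segrepb}(2) for the second. Your write-up simply makes these steps more explicit.
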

\begin{proof}
	The first equality follows from equations \eqref{E:MCpush} and \eqref{Chev:MC}, 
	while the second equality follows from the first one and the duality in \Cref{prop:segrepb}(2).
\end{proof}

\begin{remark}
In a paper in preparation we use this theorem to give a combinatorial Chevalley
formula for minuscule flag varieties and a
$K$-theoretic generalization of Nakada's
colored hook formula. See also \cite{fan2024pieri} for a Pieri formula for the motivic Chern classes of Schubert cells in the equivariant K-theory of Grassmannians.
\end{remark}

\section{Dualities of Chevalley coefficients}\label{sec:chev-dualities} Recall the expansion from \Cref{Chev:MC}:
\[
\calL_\lambda\otimes MC_y(X(w)^\circ)=\sum_{u\leq w} C_{u,\lambda}^w MC_y(X(u)^\circ).
\]
In this section we state and prove several duality properties 
of the Chevalley coefficients $C_{u,\lambda}^w$. All these dualities have 
a geometric origin (Serre duality, star duality, Dynkin automorphisms) and 
we name the Chevalley dualities correspondingly.

Recall the Grothendieck-Serre operator $\mathcal{D}$ from \eqref{E:SerreD}, and the 
duality functor $(-)^\vee$ on $K_T(G/B)[y^{\pm 1}]$ and $K_T(\pt)[y^{\pm 1}]$, 
which sends a vector bundle to its dual and $y^i$ to $y^{-i}$. 
This induces the following property of the Chevalley coefficients in \Cref{Chev:MC}.
\begin{prop}[Serre duality]\label{prop:serre}
	$$C^{w}_{u,\lambda}=(-y)^{\ell(w)-\ell(u)}w_0(C^{w_0u}_{w_0w,-\lambda})^\vee. $$
\end{prop}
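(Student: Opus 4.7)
The plan is to apply, in order, the left $w_0$-action on $K_T(G/B)[y]$ and then the Grothendieck--Serre dual operator $\mathcal{D}$ to the Chevalley formula \eqref{Chev:MC}, then compare coefficients with the SMC Chevalley formula from \Cref{lem:mcsmc}. We begin with
\[
\calL_{-\lambda} \otimes MC_y(X(w_0 u)^\circ) = \sum_{v'} C^{w_0 u}_{v', -\lambda}\, MC_y(X(v')^\circ),
\]
and apply the left $w_0$-action. Three facts are needed: the left action is ring-multiplicative and semilinear with $w_0(e^\mu) = e^{w_0\mu}$ and $w_0(y)=y$; it sends $MC_y(X(v')^\circ)$ to $MC_y(Y(w_0 v')^\circ)$ because $w_0 \cdot (Bv'B/B) = B^-(w_0 v')B/B$ and motivic Chern classes are functorial under automorphisms; and the class $[\calL_\lambda] \in K_T(G/B)$ is invariant under the left $W$-action (from $G$-equivariance of $\calL_\lambda$, or equivalently by checking that its localization at $vB$ is $e^{v\lambda}$ which is independent of $w_0$). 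Substituting $v = w_0 v'$ produces
\[
\calL_{-\lambda} \otimes MC_y(Y(u)^\circ) = \sum_{v} w_0\bigl(C^{w_0 u}_{w_0 v, -\lambda}\bigr)\, MC_y(Y(v)^\circ).
\]

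Next apply $\mathcal{D}$. The relevant compatibilities are: $\mathcal{D}(\calL_{-\lambda} \otimes \mathcal{F}) = \calL_\lambda \otimes \mathcal{D}(\mathcal{F})$ since $\calL_{-\lambda}^\vee = \calL_\lambda$; $\mathcal{D}$ is $(-)^\vee$-semilinear over $K_T(pt)[y^{\pm 1}]$; and from \Cref{def:smc} we have $\mathcal{D}(MC_y(Y(v)^\circ)) = (-y)^{-(\ell(w_0) - \ell(v))}\lambda_y(T^*_{G/B})\, SMC_y(Y(v)^\circ)$, using $\dim Y(v)^\circ = \ell(w_0) - \ell(v)$. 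Cancelling the common factor $(-y)^{-\ell(w_0)} \lambda_y(T^*_{G/B})$ from both sides gives
\[
\calL_\lambda \otimes SMC_y(Y(u)^\circ) = \sum_v (-y)^{\ell(v) - \ell(u)}\, w_0\bigl(C^{w_0 u}_{w_0 v, -\lambda}\bigr)^\vee\, SMC_y(Y(v)^\circ).
\]
Comparing with \eqref{Chev:SMC} and using linear independence of $\{SMC_y(Y(v)^\circ)\}_{v\in W}$ forces $C^v_{u,\lambda} = (-y)^{\ell(v)-\ell(u)}\, w_0(C^{w_0 u}_{w_0 v, -\lambda})^\vee$. Renaming $v$ as $w$, and noting that $w_0$ commutes with $(-)^\vee$ on $K_T(pt)[y^{\pm 1}]$, produces the stated identity.

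The main delicate step is the $W$-invariance of $[\calL_\lambda]$ under the left action, and carefully tracking how $\mathcal{D}$ interacts with multiplication by $e^\mu$, $y$, and by line bundles. Once these compatibilities are established, the remainder of the argument is bookkeeping of the powers of $(-y)$ coming from the factor $(-y)^{\dim\Omega}$ in the definition of $SMC_y$.
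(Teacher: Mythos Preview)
Your proof is correct and follows essentially the same approach as the paper's own argument: both use the left $w_0$-action and the Grothendieck--Serre dual $\mathcal D$ to pass between the MC Chevalley expansion \eqref{Chev:MC} and the SMC Chevalley expansion \eqref{Chev:SMC}. The only difference is the order of operations: the paper starts from \eqref{Chev:SMC}, rewrites via the definition of $SMC_y$, applies $\mathcal D$, and then $w_0$; you start from \eqref{Chev:MC}, apply $w_0$ first, and then $\mathcal D$. The bookkeeping with powers of $(-y)$ and the commutation of $w_0$ with $(-)^\vee$ are handled correctly.
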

\begin{proof}
	Since 
	\[SMC_y(Y(w)^\circ)=(-y)^{\dim Y(w)}\frac{\calD(MC_y(Y(w)^\circ))}{\lambda_y(T^*(G/B))},\]
	\Cref{Chev:SMC} can be written as
	$$\mathcal L_{\lambda}\otimes\calD(MC_y(Y(u)^\circ))=\sum_{w\in W} C^{w}_{u,\lambda} (-y)^{\ell(u)-\ell(w)}\calD(MC_y(Y(w)^\circ)).$$
	Taking the dual $\calD$ on both sides, we have
	$$\mathcal L_{-\lambda}\otimes MC_y(Y(u)^\circ)=\sum_{w\in W} (C^{w}_{u,\lambda})^\vee (-y)^{\ell(w)-\ell(u)}MC_y(Y(w)^\circ).$$
	Finally, applying the left $w_0$ action to the above identity, we get
	$$\mathcal L_{-\lambda}\otimes MC_y(X(w_0u)^\circ)=\sum_{w\in W} w_0(C^{w}_{u,\lambda})^\vee (-y)^{\ell(w)-\ell(u)}MC_y(X(w_0w)^\circ).$$
	This finishes the proof of the theorem by comparing the above equation with \Cref{Chev:MC}.
\end{proof}

The star duality $*$ acts on $K_T(G/B)[y^{\pm 1}]$ and $K_T(pt)[y^{\pm 1}]$ by sending a vector bundle to its dual,
and leaving $y^i$ unchanged. Consider the composition
\[ \iota:=w_0* : K_T(\pt)[y^{\pm 1}] \to K_T(\pt)[y^{\pm 1}] \/.\] 
Then we have the following result by combining Theorem 9.1(a) and Remark 4.7 of \cite{AMSS:specializations}. 
\begin{equation}\label{equ:star}
	\bbC_{-\rho}\otimes\calL_{-\rho}\otimes MC_y(X(w)^\circ)=(-1)^{\dim G/B-\ell(w)}\prod_{\alpha>0}(1+ye^{-\alpha})*\bigg(SMC_y(X(w)^\circ)\bigg).
\end{equation}
\begin{prop}[Star duality]\label{prop:star}
		\[C_{u,\lambda}^w=(-1)^{\ell(w)-\ell(u)}\iota(C_{w_0w,-\lambda}^{w_0u}).\]
\end{prop}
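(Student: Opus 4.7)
The plan is to mirror the structure of the Serre duality argument (\Cref{prop:serre}), but to substitute the star duality identity \eqref{equ:star} in place of the Grothendieck--Serre operator $\calD$. The input is the SMC Chevalley formula of \Cref{lem:mcsmc}, and the output is the MC Chevalley formula on the opposite side; the two are intertwined by $w_0^L$ followed by $*$.

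First I would apply the left $w_0$-action to the SMC Chevalley expansion
\[\calL_\lambda\otimes SMC_y(Y(u)^\circ) = \sum_{w\geq u} C_{u,\lambda}^w\, SMC_y(Y(w)^\circ).\]
This action sends $Y(v)^\circ$ to $X(w_0 v)^\circ$, is $w_0$-semilinear on scalar coefficients in $K_T(\mathrm{pt})$, and leaves each $G$-equivariant line bundle $\calL_\lambda$ invariant by the $K_G$-linearity recorded in \Cref{lem:leftDL}. The result is
\[\calL_\lambda\otimes SMC_y(X(w_0 u)^\circ) = \sum_{w\geq u} w_0\bigl(C_{u,\lambda}^w\bigr)\, SMC_y(X(w_0 w)^\circ).\]

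Next I would apply the star involution $*$ to both sides and substitute \eqref{equ:star}, which rewrites
\[*\!\bigl(SMC_y(X(v)^\circ)\bigr) = \frac{(-1)^{\dim G/B - \ell(v)}}{\prod_{\alpha>0}(1+y e^{-\alpha})}\, \bbC_{-\rho}\otimes\calL_{-\rho}\otimes MC_y(X(v)^\circ).\]
Because $*$ is a ring involution sending $\calL_\lambda$ to $\calL_{-\lambda}$, the common scalar factor $\bbC_{-\rho}\otimes\calL_{-\rho}\big/\prod_{\alpha>0}(1+ye^{-\alpha})$ cancels from both sides, and since $\ell(w_0 v) = \dim G/B - \ell(v)$ the residual signs combine into $(-1)^{\ell(w)-\ell(u)}$, yielding
\[\calL_{-\lambda}\otimes MC_y(X(w_0 u)^\circ) = \sum_{w\geq u} (-1)^{\ell(w)-\ell(u)}\, *\!\bigl(w_0(C_{u,\lambda}^w)\bigr)\, MC_y(X(w_0 w)^\circ).\]

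Finally, since $*$ and $w_0$ commute on $K_T(\mathrm{pt})[y^{\pm 1}]$ (both act diagonally on the character lattice), we have $*w_0 = w_0* = \iota$. Comparing the display above with the Chevalley expansion \eqref{Chev:MC} of $\calL_{-\lambda}\otimes MC_y(X(w_0 u)^\circ)$ and reading off coefficients gives $C_{w_0 w,\,-\lambda}^{w_0 u} = (-1)^{\ell(w)-\ell(u)}\,\iota(C_{u,\lambda}^w)$; the relabeling $u\mapsto w_0 w$, $w\mapsto w_0 u$, $\lambda\mapsto -\lambda$ then produces the asserted formula. The only real obstacle is the bookkeeping: tracking how each factor ($\calL_\mu$, the sign $(-1)^{\dim G/B - \ell(v)}$, $\bbC_{-\rho}$, and $\prod_{\alpha>0}(1+ye^{-\alpha})$) transforms under the two successive operations $w_0^L$ and $*$; once this is in place, the identity drops out with no further computation.
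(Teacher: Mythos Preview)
Your proposal is correct and follows essentially the same approach as the paper: both arguments combine the identity \eqref{equ:star} with the left $w_0$-action to pass between the MC and SMC Chevalley expansions \eqref{Chev:MC} and \eqref{Chev:SMC}. The only cosmetic difference is that the paper applies $*$ first to \eqref{Chev:MC} (landing in an SMC expansion) and then $w_0^L$, whereas you apply $w_0^L$ first to \eqref{Chev:SMC} and then $*$ (landing in an MC expansion); the two routes are inverses of one another and the bookkeeping you describe is exactly what is needed.
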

\begin{proof}
	Applying the star duality functor $*$ to \Cref{Chev:MC}, then use \Cref{equ:star} to get
	\[\calL_{-\lambda}\otimes SMC_y(X(w)^\circ)=\sum_{u}(-1)^{\ell(w)-\ell(u)}*(C_{u,\lambda}^w)SMC_y(X(u)^\circ).\]
	Applying the left $w_0$ action to the above equation and comparing with \Cref{Chev:SMC}, we get the result.
\end{proof}

The map sending $\alpha_i \mapsto -w_0(\alpha_i)$ for every simple root $\alpha_i$
induces an automorphism on the Dynkin diagram, hence also on the flag variety $G/B$.
This automorphism maps $X(w)^\circ$ to $X(w_0ww_0)^\circ$, and it induces a ring 
automorphism $\phi$ on $K_T(G/B)$, which sends $\calL_\lambda$ to $\calL_{-w_0\lambda}$,
and twists the base ring $K_T(\pt)$ by the map $\iota$ above. 
\begin{prop}[Dynkin duality]\label{prop:w0}
	$$C_{u,\lambda}^w=\iota(C_{ w_0uw_0,-w_0\lambda}^{w_0ww_0}).$$
\end{prop}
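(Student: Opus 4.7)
The plan is to apply the Dynkin automorphism $\phi$ of $K_T(G/B)[y]$ to the defining expansion
\[
\calL_\lambda\otimes MC_y(X(w)^\circ)=\sum_{u\leq w} C_{u,\lambda}^w\, MC_y(X(u)^\circ),
\]
and then read off the coefficients on both sides. First I would realize $\phi$ geometrically: there is a group automorphism $\sigma$ of $G$ which preserves $B$ and $T$, acts on the character lattice $X^*(T)$ by $\lambda\mapsto -w_0\lambda$, and permutes the simple reflections via the Dynkin diagram automorphism $s_i\mapsto s_{w_0(-\alpha_i)}$. On the Weyl group $W=N_G(T)/T$ this automorphism is conjugation $w\mapsto w_0ww_0$. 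Pulling back via $\sigma$ induces a ring automorphism $\phi$ of $K_T(G/B)[y]$ which is twisted over $K_T(\pt)[y]$ by the map $\iota=w_0\cdot{*}$, i.e.\ $\phi(a\cdot x)=\iota(a)\cdot\phi(x)$ for $a\in K_T(\pt)[y]$.

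Next I would verify three compatibilities: (i) $\phi(\calL_\lambda)=\calL_{-w_0\lambda}$, which follows from the fact that the character of the $B$-module defining the line bundle is transformed by $\lambda\mapsto -w_0\lambda$; (ii) $\phi([\cO_{X(w)^\circ}\hookrightarrow G/B])=[\cO_{X(w_0ww_0)^\circ}\hookrightarrow G/B]$ in the equivariant motivic Grothendieck group, which holds because $\sigma$ maps the Schubert cell $BwB/B$ isomorphically onto $B(w_0ww_0)B/B$; and (iii) combining (ii) with naturality of $MC_y$ from \Cref{thm:existence}, one gets
\[
\phi\bigl(MC_y(X(w)^\circ)\bigr)=MC_y(X(w_0ww_0)^\circ).
\]

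Now I apply $\phi$ to the Chevalley expansion \eqref{Chev:MC}. Using the twisted-linearity and the three compatibilities above,
\[
\calL_{-w_0\lambda}\otimes MC_y(X(w_0ww_0)^\circ)=\sum_{u\leq w}\iota(C_{u,\lambda}^w)\,MC_y(X(w_0uw_0)^\circ).
\]
On the other hand, by the very definition of the Chevalley coefficients,
\[
\calL_{-w_0\lambda}\otimes MC_y(X(w_0ww_0)^\circ)=\sum_{u'}C_{u',-w_0\lambda}^{w_0ww_0}\,MC_y(X(u')^\circ).
\]
Setting $u'=w_0uw_0$ and equating coefficients of $MC_y(X(w_0uw_0)^\circ)$ in the basis $\{MC_y(X(v)^\circ)\}_{v\in W}$ yields
\[
C_{w_0uw_0,-w_0\lambda}^{w_0ww_0}=\iota(C_{u,\lambda}^w),
\]
and applying $\iota$ to both sides (noting $\iota^2=\id$) gives the stated duality.

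The main obstacle is the rigorous identification of $\phi$ and verification that it acts on motivic Chern classes as claimed; once naturality of $MC_y$ under the $T$-equivariant isomorphism $\sigma:G/B\xrightarrow{\sim}G/B$ (compatible with the $T$-action twisted by $\iota$) is in place, the rest is a formal manipulation of the Chevalley expansion.
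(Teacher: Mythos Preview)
Your proof is correct and follows essentially the same approach as the paper: apply the Dynkin automorphism $\phi$ to the defining Chevalley expansion \eqref{Chev:MC} and compare coefficients. The paper states the properties of $\phi$ (action on $\calL_\lambda$, on Schubert cells, and the $\iota$-twist on $K_T(\pt)$) as facts in the paragraph preceding the proposition, whereas you spell out the geometric construction via the group automorphism $\sigma$ and invoke naturality of $MC_y$ to justify them; but the core argument is identical.
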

\begin{proof}
	Applying the Dynkin automorphism $\phi$ to \Cref{Chev:MC}, we obtain
	\begin{equation*}
		\mathcal L_{-w_0\lambda}\otimes MC_y(X(w_0ww_0)^\circ)=\sum_{u\in W} \iota(C^{w}_{u,\lambda}) MC_y(X(w_0uw_0)^\circ).
	\end{equation*}
	Then the claim follows from the definition of the coefficients $C_{u,\lambda}^w$.
\end{proof}
Combining the above dualities we obtain the following. 
\begin{prop}\label{prop:w_0 lam}
	\begin{enumerate}
		\item (Serre duality + star duality)
		$$(C^{w}_{u,\lambda})|_{y\to y^{-1}} \times y^{\ell(w)-\ell(u)}=C^{w}_{u,\lambda}\/.$$
		\item (Star duality + Dynkin duality)
		$$C^{w}_{u,\lambda}=(-1)^{\ell(w)-\ell(u)} C^{u w_0}_{w w_0,w_0 \lambda}\/.$$
	\end{enumerate}
\end{prop}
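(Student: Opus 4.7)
Both claims should follow by directly composing the three dualities established in \Cref{prop:serre}, \Cref{prop:star}, and \Cref{prop:w0}. The key preliminary observation is that the operator $\iota = w_0 \circ *$ on $K_T(\pt)[y^{\pm 1}]$ is an involution that commutes with the substitution $\sigma : y \mapsto y^{-1}$. Indeed, $*$ sends $e^\lambda \mapsto e^{-\lambda}$ and leaves $y$ fixed, while $w_0$ acts as $e^\lambda \mapsto e^{w_0 \lambda}$, so $\iota(e^\lambda) = e^{-w_0 \lambda}$ and $\iota^2 = \mathrm{id}$. Moreover, the Grothendieck--Serre duality decomposes as $(-)^\vee = * \circ \sigma$, which lets us rewrite the factor $w_0 \circ (-)^\vee$ appearing in \Cref{prop:serre} as $\iota \circ \sigma$.

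For part (1), I plan to compose Serre and Star dualities. Rewriting \Cref{prop:serre} as $C^w_{u,\lambda} = (-y)^{\ell(w)-\ell(u)} (\iota \sigma)(C^{w_0 u}_{w_0 w, -\lambda})$, I then apply \Cref{prop:star} to $C^{w_0 u}_{w_0 w, -\lambda}$; since $\ell(w_0 u) - \ell(w_0 w) = \ell(w) - \ell(u)$ and $-(-\lambda) = \lambda$, this substitution yields $\iota(C^{w_0 u}_{w_0 w, -\lambda}) = (-1)^{\ell(w)-\ell(u)} C^w_{u,\lambda}$, and hence $C^{w_0 u}_{w_0 w, -\lambda} = (-1)^{\ell(w)-\ell(u)} \iota(C^w_{u,\lambda})$ via $\iota^2 = \mathrm{id}$. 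Substituting back, using that $\iota$ and $\sigma$ commute (so $\iota \sigma \iota = \sigma$), and collapsing the signs $(-y)^{\ell(w)-\ell(u)} (-1)^{\ell(w)-\ell(u)} = y^{\ell(w)-\ell(u)}$ produces exactly the palindromicity relation in part (1).

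For part (2), the plan is to compose Star and Dynkin dualities. Starting from $C^w_{u,\lambda} = (-1)^{\ell(w)-\ell(u)} \iota(C^{w_0 u}_{w_0 w, -\lambda})$, I substitute \Cref{prop:w0} with parameters $(u, w, \lambda) \mapsto (w_0 w, w_0 u, -\lambda)$, giving $C^{w_0 u}_{w_0 w, -\lambda} = \iota(C^{u w_0}_{w w_0, w_0 \lambda})$ (using $w_0(w_0 u) w_0 = u w_0$, $w_0(w_0 w) w_0 = w w_0$, and $-w_0(-\lambda) = w_0 \lambda$). One more application of $\iota^2 = \mathrm{id}$ yields the stated identity. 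The whole argument is formal; the only point requiring attention is the bookkeeping of signs and of length identities under the various Weyl-group substitutions, but I anticipate no genuine obstacle beyond this.
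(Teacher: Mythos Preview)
Your proposal is correct and follows exactly the approach of the paper, which simply says that part (1) follows directly from \Cref{prop:serre} and \Cref{prop:star}, and part (2) from \Cref{prop:star} and \Cref{prop:w0}. You have fleshed out the bookkeeping (the involutivity of $\iota$, its commutation with $\sigma$, and the relevant length and sign identities) that the paper leaves implicit, but the underlying argument is identical.
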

\begin{proof}
	The first part follows directly from \Cref{prop:serre} and \Cref{prop:star}, and the second 
	part follows from \Cref{prop:star} and \Cref{prop:w0}.	
This finishes the proof.
\end{proof}

Using the $\lambda$-chain formula in \Cref{thm:lambda}, we can also give a 
direct combinatorial proof for the various duality identities in this section.
The proofs are similar to \cite[Theorem 8.6]{lenart.postnikov:affine} and
	\cite[Theorem 8.7]{lenart.postnikov:affine}.

\begin{prop}\label{prop:dual}
	For any integral weight $\lambda$ and $w,u\in W$,
	\begin{itemize} 
		\item[(a)]
		$C^w_{u,\lambda}\in K_T(pt)[y]$ and $(C^{w}_{u,\lambda})|_{y\to y^{-1}} \times y^{\ell(w)-\ell(u)}=C^{w}_{u,\lambda}$ i.e., $C^{w}_{u,\lambda}$ is palindromic regarded as a polynomial in $y$;
		(cf.~\Cref{prop:w_0 lam} (1)); 
		
		\item[(b)] 
		$C^{w}_{u,\lambda}=(-1)^{\ell(w)-\ell(u)} C^{u w_0}_{w w_0,w_0 \lambda};$\;
		(cf.~\Cref{prop:w_0 lam} (2));

		\item[(c)] 
		$C^{w}_{u,\lambda}=(-1)^{\ell(w)-\ell(u)} \iota\left(C^{{w_o} u }_{{w_o} w, -\lambda} \right);$\;
		(cf.~\Cref{prop:star}).
		
		\item[(d)] 
		$C^{w}_{u,\lambda}= \iota\left(C^{w_0 w w_0}_{ w_0 u w_0, -w_0 \lambda} \right)$;~ 
		(cf.~\Cref{prop:w0} ).
		
	\end{itemize}
	
\end{prop}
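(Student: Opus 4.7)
The plan is to derive each identity directly from the $\lambda$-chain formulas \eqref{equ:-lambda} and \eqref{equ:lambda} in \Cref{thm:lambda}, by exhibiting explicit bijections between the subsets $J$ indexing the two sides and checking that the auxiliary data (signs, weight exponents, and powers of $y$) transform as required. The fact that each $C^{w}_{u,\lambda}$ belongs to $K_T(\pt)[y]$ (rather than involving negative powers of $y$) is immediate from \eqref{equ:lambda}, since $\ell(w)-\ell(u)-|J|$ is a non-negative even integer on every valid $J$. For the palindromic property in (a), I would substitute $y \mapsto y^{-1}$ in \eqref{equ:lambda} term-by-term and use the identities $(-1-y^{-1})^{|J|} = y^{-|J|}(-1-y)^{|J|}$ and $(-y^{-1})^{(\ell(w)-\ell(u)-|J|)/2} = y^{-(\ell(w)-\ell(u)-|J|)}(-y)^{(\ell(w)-\ell(u)-|J|)/2}$; multiplying by $y^{\ell(w)-\ell(u)}$ returns precisely the original summand, so the identity on polynomials is verified term-by-term.

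For (c), I would fix a reduced $\lambda$-chain $(\beta_1,\ldots,\beta_l)$ with separating hyperplanes $h_j$, express the left-hand side via \eqref{equ:lambda} (combined with \Cref{rem:w-to-u} to rewrite $-w\hat{r}_{J_<}(-\lambda) = u\tilde{r}_{J_>}(\lambda)$), and express $C^{w_0 u}_{w_0 w, -\lambda}$ on the right via \eqref{equ:-lambda} using the same $\lambda$-chain. The key combinatorial lemma is that the identity map $J \mapsto J$ gives a bijection between the sets $\{J: u\stackrel{J_>}{\longrightarrow} w\}$ and $\{J: w_0w \stackrel{J_<}{\longrightarrow} w_0 u\}$; this follows because left multiplication by $w_0$ reverses Bruhat order, so the ascending chain $u < u r_{h_{j_t}} < \cdots < u r_{h_{j_t}}\cdots r_{h_{j_1}} = w$ becomes the ascending chain $w_0 w < w_0 w r_{h_{j_1}} < \cdots < w_0 w r_{h_{j_1}}\cdots r_{h_{j_t}} = w_0 u$ after left multiplication by $w_0$ and re-indexing. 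Applying $\iota$ sends $e^{-w_0 u \tilde{r}_{J_>}(\lambda)}$ to $e^{u\tilde{r}_{J_>}(\lambda)}$, matching the left-hand side, while the sign comparison reduces to the identity $(-1)^{|J|} = (-1)^{\ell(w)-\ell(u)}$, which holds by the parity constraint $\ell(w)-\ell(u)-|J| \in 2\bbZ$.

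For (d), I would use the Dynkin diagram automorphism $\sigma: \alpha_i \mapsto -w_0(\alpha_i)$, which acts on $W$ by $w \mapsto w_0 w w_0$ and which sends a reduced $\lambda$-chain $(\beta_1,\ldots,\beta_l)$ to a reduced $(-w_0\lambda)$-chain $(-w_0\beta_1,\ldots,-w_0\beta_l)$, with separating hyperplanes $\sigma(h_j) = H_{w_0\beta_j,\, d_j}$. Because $\sigma$ is an involutive automorphism of $W$ preserving length and Bruhat order, the identity $J \mapsto J$ again yields a bijection between the admissible subsets on the two sides, and direct tracking shows that $n(J)$ is preserved, that $\sigma$ intertwines the reflections $\hat{r}_{h_j}$ and $\hat{r}_{\sigma(h_j)}$, and that the weight exponents on both sides agree once $\iota$ is applied. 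Finally, (b) is an immediate consequence: applying (d) to the right-hand side of (c), together with $\iota^2 = \mathrm{id}$, yields $C^w_{u,\lambda} = (-1)^{\ell(w)-\ell(u)} C^{uw_0}_{ww_0, w_0\lambda}$.

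The main obstacle will be the bookkeeping in (d): one must verify that the Dynkin automorphism, viewed at the level of alcove geometry, carries the ordered sequence of separating hyperplanes of a $\lambda$-chain to a corresponding sequence for a $(-w_0\lambda)$-chain in a way compatible with the orderings $J_<$ and $J_>$ and with the offsets $d_j$. The other parts reduce essentially to the single observation that left multiplication by $w_0$ reverses the traversal order of a chain, but (d) requires keeping the same order while simultaneously twisting every root; verifying that the resulting sequence is indeed a reduced $(-w_0\lambda)$-chain (rather than just a permutation of one) is the step that demands the most care.
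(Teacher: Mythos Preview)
Your proposal is correct and follows essentially the same combinatorial route as the paper: verify each identity term-by-term using the $\lambda$-chain formulas \eqref{equ:-lambda} and \eqref{equ:lambda}, together with the basic transformations of $\lambda$-chains. The only differences are organizational. For (c), the paper invokes the fact that $(-\beta_l,\ldots,-\beta_1)$ is a $(-\lambda)$-chain and applies \eqref{equ:lambda} on both sides, whereas you compare \eqref{equ:lambda} for $C^{w}_{u,\lambda}$ against \eqref{equ:-lambda} for $C^{w_0 u}_{w_0 w,-\lambda}$ using the \emph{same} $\lambda$-chain; since \eqref{equ:-lambda} was itself derived from \eqref{equ:lambda} via that very $(-\lambda)$-chain transformation (cf.\ the proof of \Cref{thm:lambda-chain2}), the two arguments are equivalent. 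For the remaining pair, the paper proves (b) directly from the fact that $(w_0\beta_l,\ldots,w_0\beta_1)$ is a $(w_0\lambda)$-chain and then deduces (d) from (b)+(c), while you prove (d) directly via the Dynkin automorphism and deduce (b) from (c)+(d). Your $(-w_0\lambda)$-chain $(-w_0\beta_1,\ldots,-w_0\beta_l)$ and the paper's $(w_0\lambda)$-chain $(w_0\beta_l,\ldots,w_0\beta_1)$ are related by \Cref{-lambda_chain}, so again the content is the same.
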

\begin{proof}
	Part (a) is straightforward from \Cref{equ:lambda}; part
	(d) follows from (b) and (c). Parts
	(b) and (c) follow from known properties of $\lambda$-chain, i.e.,
	if $(\beta_1,\ldots, \beta_l)$ is a $\lambda$-chain, then 
	$(w_0\beta_l,\ldots, w_0\beta_1)$ is a $(w_0 \lambda)$-chain and
	$(-\beta_l,\ldots, -\beta_1)$ is a $(-\lambda)$-chain.
	\end{proof}

\begin{remark} The Serre duality extends to any $G/P$.
	For $w,u\in W^P$and $\lambda\in X^*(T)_P$ recall the expansion: 
	\begin{equation*}
		\calL_\lambda\otimes MC_y(X(w W_P)^\circ)=
		\sum_{u\in W^P}
		C^{w,P}_{u,\lambda}
		MC_y(X(u W_P)^\circ).
	\end{equation*}
	Then  \Cref{thm:parab} can be rewritten as 
	$C^{w,P}_{u,\lambda}=\displaystyle\sum_{v\in u W_P} (-y)^{\ell(v)-\ell(u)}C^{w}_{v, \lambda}$.
	By the same argument as in the proof of \Cref{prop:serre},
	we have the Serre duality on parabolic Chevalley coefficients;
	\begin{equation}\label{parabolic:serre}
		C^{w,P}_{u,\lambda}=(-y)^{\ell(w)-\ell(u)}w_0(C^{\overline{w_0u},P}_{\overline{w_0w},-\lambda})^\vee,
	\end{equation}
	where $\overline{w_0w}$ and $\overline{w_0u}\in W^P$ are minimal coset representatives of
	$w_0w W_P$ and $w_0u W_P$.
\end{remark}

\section{K-theoretic stable envelopes for $T^*(G/B)$}}\label{sec:K-stab}
In this section we apply the Chevalley formula for motivic classes to calculate the transformation 
of stable envelopes in $T^*(G/B)$ under the change of arbitrary alcoves. For alcoves adjacent to the fundamental
alcove, a formula for this transformation was obtained in \cite[Theorem 5.4]{su2020k}, see also \cite{koncki2023hecke}.
\subsection{Definition of the stable envelopes} 
The stable envelopes were defined by Maulik and Okounkov in their seminal work on quantum cohomology of Nakajima quiver varieties \cite{maulik2019quantum}. Later, this was generalized by Okounkov and his collaborators to $K$-theory and elliptic cohomology \cite{okounkov:Klectures, aganagic2021elliptic}. We recall next the definition of the stable envelopes for $T^*(G/B)$.

The torus $T$ acts by left multiplication on $G/B$. Hence, it induces a natural action on $T^*(G/B)$. There is also a natural dilation $\bbC^*$ action on the cotangent fibers by a character of 
$q^{-1}$. 
Throughout this section, we use $q^{1/2}$ to denote the standard representation of $\bbC^*$, so that  
$K_{T \times \bbC^*}(\pt) = K_T(\pt)[q^{\pm 1/2}]$. 

The definition of the stable envelopes depends on three parameters:
\begin{itemize}
	\item 
	a {\em chamber} $\calC$ in the Lie algebra of the maximal torus $T$.
	\item 
	a {\em polarization} $T^{\frac{1}{2}}\in K_{T\times\bbC^*}(T^*(G/B))$ of the
	tangent bundle $T(T^*(G/B))$, i.e., 
	a solution of the equation 
	\[
	T^{\frac{1}{2}}+q^{-1}(T^{\frac{1}{2}})^\vee=T(T^*(G/B)) 
	\] 
	in the ring $K_{T\times\bbC^*}(T^*(G/B))_{\loc}$.
	\item 
	an {\em alcove} $A$ in $\ft^*_\bbR$, which is also called a {\em slope} in \cite{okounkov:Klectures}.
\end{itemize}
Given a polarization $T^{\frac{1}{2}}$, there is an opposite polarization defined by 
$T^{\frac{1}{2}}_{\opp}:=q^{-1}(T^{\frac{1}{2}})^\vee$. 
A typical example of a polarization is $T(G/B)$ (pulled back from $G/B$ to $T^*(G/B)$). 
Its opposite is equal to $T^*(G/B)$.
Because we will work with fibers of polarizations over fixed points, in what follows we 
will assume that the polarization is given by a subbundle of $T(T^*(G/B))$, or 
possibly a virtual vector subbundle, i.e.,~a formal difference of such subbundles.

The torus fixed point set $(T^*(G/B))^{T}= (G/B)^T$ is in one-to-one
correspondence with the Weyl group $W$. For every $w\in W$, 
recall that $e_w$ 
denotes the corresponding fixed point. For a chosen Weyl chamber
$\fC$ in $\Lie T$, pick any cocharacter $\sigma\in
\fC$. 
The attracting set of the fixed point $e_w$,  
also called the 
Bia\l{}ynicki-Birula cell in the literature, is defined by
\[
\Attr_\fC(w)=\left\{x\in T^*(G/B) \mid
\lim\limits_{z\rightarrow 0}\sigma(z)\cdot x=e_w \right\}.
\] 
By analyzing the (signs of the roots in the) weight
space decomposition of $T_w(T^*(G/B))$, one may show that 
$\Attr_\fC(w)$ is the conormal bundle of the attracting variety of $w$ in $G/B$,
i.e., the conormal bundle of the Schubert cell stable 
under the Borel subgroup associated to the chamber $\fC$.\begin{footnote}
{We note that $T^*(G/B)$ is not compact, so not all points have well defined limits at $0$.
For example, if $\fC$ is the positive chamber, then the points in the open 
set $T^*(X(w_0)^\circ)\setminus X(w_0)^\circ \subset T^*(G/B)$ do not have limits at $0$.}\end{footnote}
Define a partial order on the fixed point set $W$ to be the (transitive 
closure of the) following relation:
\[
e_w\preceq_\fC e_v \text{\quad if \quad}
\overline{\Attr_\fC(v)}\cap e_w\neq \emptyset.
\]
Then the order determined by the positive (resp., negative) chamber is
the same as the Bruhat order (resp., the opposite Bruhat order).

Any chamber $\fC$ determines a decomposition of the tangent
space $N_w:= T_w(T^*(G/B))$ as $N_w=N_{w,+}\oplus N_{w,-}$ into
$T$-weight spaces which are positive and negative with respect to
$\fC$ respectively. For every polarization $T^{\frac{1}{2}}$,
denote 
$N_w\cap T^{1/2}|_w$ by $N_w^{\frac{1}{2}}$.
Similarly, we have
$N_{w,+}^{\frac{1}{2}}$ and $N_{w,-}^{\frac{1}{2}}$. In particular,
$N_{w,-}=N^{\frac{1}{2}}_{w,-}\oplus
q^{-1}(N_{w,+}^{\frac{1}{2}})^\vee$. Consequently, we have
\[
N_{w,-}- N_w^{\frac{1}{2}}=q^{-1}(N_{w,+}^{\frac{1}{2}})^\vee-
N_{w,+}^{\frac{1}{2}}
\] 
as virtual vector bundles. The determinant bundle of the virtual
bundle $N_{w,-}- N_w^{\frac{1}{2}}$ is a complete square and its
square root will be denoted by $\left(\frac{\det N_{w,-}}{\det
	N_w^{\frac{1}{2}}}\right)^{\frac{1}{2}}$;
cf.~\cite[\S9.1.5]{okounkov:Klectures}. For instance, if we choose
the polarization $T^{1/2} = T(G/B)$, the positive chamber, and $w=\id$
then both $N_{\id}^{\frac{1}{2}}$ and $N_{\id, -}$ have weights $-
\alpha$, where $\alpha$ varies in the set of positive roots; in this
case the virtual bundle $N_{\id,-}- N_{\id}^{\frac{1}{2}}=0$. 

Let $f:=\sum_\mu f_\mu e^\mu\in K_{T\times \bbC^*}
(\pt)$ be a Laurent polynomial, where $e^\mu\in K_T(\pt)$ 
and $f_\mu\in \bbQ[q^{1/2},q^{-1/2}]$. The {\em Newton polytope\/}
of $f$, denoted by $\deg_Tf$, is
\[
\deg_T f=\mbox{Convex hull } (\{\mu| f_\mu\neq 0\})\subseteq
X^*(T)\otimes_\bbZ \bbQ,
\] 
where $X^*(T)$ denotes the character lattice of $T$. ~The 
following theorem defines the $K$-theoretic stable envelopes.
\begin{theorem}\label{thm:geostable}~\cite[\S 9.1]{okounkov:Klectures}, \cite[Thm.~1]{okounkov2022quantum}.
	For every chamber $\fC$,  a polarization~$T^{1/2}$, and an alcove $A$, there exists a unique map of
	$K_{T\times\bbC^*}(\pt)$-modules
	\[
	\stab_{\fC,T^{\frac{1}{2}}, A}:K_{T\times
		\bbC^*}((T^*(G/B))^T)\rightarrow K_{T\times\bbC^*}(T^*(G/B))
	\]
	such that for every $w\in W$, the class
	$\Gamma:=\stab_{\fC,T^{\frac{1}{2}}, A}(w)$ satisfies:
	\begin{enumerate}
		\item (\textit{Support}) 
		$\Supp \Gamma\subseteq \cup_{z\preceq_\fC
			w}\overline{\Attr_\fC(z)}$;
		\item (\textit{Normalization}) 
		$\Gamma|_w=(-1)^{\rk N_{w,+}^{\frac{1}{2}}}\left(\frac{\det
			N_{w,-}}{\det N_w^{\frac{1}{2}}}\right)^{\frac{1}{2}}
		\calO_{\Attr_\fC(w)}|_w$;
		\item (\textit{Degree}) For every $e_v\prec_\fC e_w$,
		\[ 
		\deg_T\Gamma|_v\subseteq \deg_T\stab_{\fC,T^{\frac{1}{2}},
			A}(v)|_v+ v\lambda-w\lambda \/,
		\]
		where $\lambda\in (X^*(T)\otimes_{\bbZ}\bbQ)\cap A$ is any rational weight in the alcove $A$.
	\end{enumerate}
\end{theorem}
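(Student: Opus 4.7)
The plan is to establish existence and uniqueness separately, both by downward induction on the partial order $\preceq_\fC$ on the $T$-fixed locus of $T^*(G/B)$. The principal structural input is that $T^*(G/B)$ is paved by the attracting sets $\Attr_\fC(v)$, which here are conormal bundles to Schubert cells (for the Borel subgroup associated to $\fC$); consequently $K_{T\times\bbC^*}(T^*(G/B))$ has a basis indexed by $W$, and any class is determined by its restrictions to the fixed points via equivariant localization.

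For uniqueness, suppose $\Gamma, \Gamma'$ both satisfy axioms (1)--(3) for a given $w$, and set $\Delta = \Gamma - \Gamma'$. Axiom (2) gives $\Delta|_w = 0$. I would then proceed by downward induction in $\preceq_\fC$: for $v \prec_\fC w$ the support axiom forces $\Delta|_v$ to be supported (after localization) on the lower stratum, while the degree axiom applied to both $\Gamma$ and $\Gamma'$ forces $\deg_T \Delta|_v$ to lie in the same translate of $v\lambda - w\lambda$. Combined with the explicit product-of-roots formula for $\calO_{\Attr_\fC(v)}|_v$ (which has a distinguished extremal weight), these two constraints are incompatible unless $\Delta|_v = 0$, so by localization $\Delta = 0$.

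For existence, I would construct $\stab_{\fC, T^{1/2}, A}(w)$ starting from a seed class $\Gamma_0$ supported on $\overline{\Attr_\fC(w)}$ and normalized to satisfy axiom (2) at $w$ (for instance, a suitable line-bundle twist of $[\calO_{\overline{\Attr_\fC(w)}}]$ prescribed by $T^{1/2}$ and the alcove $A$). Then I would successively add correction terms $c_v \cdot \xi_v$ with $\xi_v$ supported on $\overline{\Attr_\fC(v)}$ for $v \prec_\fC w$, iteratively adjusting the Newton polytopes of the restrictions $\Gamma|_{v}$ to satisfy axiom (3). The required correction classes can be produced either via the Maulik--Okounkov geometric $R$-matrix transported to $K$-theory as in Okounkov's lectures, or, in the present $T^*(G/B)$ setting, via explicit Demazure--Lusztig operators, which is the route most consistent with the rest of the paper.

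The main obstacle is ensuring that the existence induction terminates at a class satisfying axiom (3) globally: a correction that kills the excess weights of $\Gamma|_v$ may reintroduce excess weights at some $v' \prec_\fC v$. Handling this demands a careful accounting of how the shift $v\lambda - w\lambda$ propagates along chains in the partial order, together with a compatibility between the correction weights and the root structure of $T_v(T^*(G/B))$. This is the $K$-theoretic analogue of the Maulik--Okounkov `triangle lemma' (as codified in \cite{okounkov:Klectures}), and is where the bulk of the technical work resides.
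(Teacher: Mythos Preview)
The paper does not give a proof of this theorem at all: it is stated with an external citation to \cite[\S 9.1]{okounkov:Klectures} and \cite[Thm.~1]{okounkov2022quantum}, and the authors use it as a black box. So there is no ``paper's own proof'' to compare your proposal against.

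That said, your outline is a faithful summary of how the argument runs in the cited references. The uniqueness part is essentially correct as written: the key mechanism is that the degree axiom (3) shifts the Newton polytope of $\Gamma|_v$ into the \emph{interior} of the polytope $\deg_T\stab(v)|_v$ translated by $v\lambda-w\lambda$, while any nonzero class supported on $\overline{\Attr_\fC(v)}$ must have its restriction at $v$ divisible by the normal Euler class, whose Newton polytope has a vertex on the boundary of that same polytope. The incompatibility of ``strictly inside'' versus ``has a boundary vertex'' forces $\Delta|_v=0$. You allude to this but could state it more sharply.

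For existence, your sketch is more impressionistic. In the references the construction is not done by ad hoc corrections but either (a) via the abelianization/interpolation procedure in \cite{okounkov:Klectures}, or (b) in the specific case of $T^*(G/B)$, by recursion in the Demazure--Lusztig operators (which is indeed the route the present paper exploits downstream, cf.~\Cref{thm:szz}). Your description of the ``main obstacle'' is accurate, but the resolution you gesture at (a $K$-theoretic triangle lemma) is the heart of the matter and is not something one can simply assert; this is precisely the content of the cited theorems.
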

Strictly speaking, $\stab_{\fC,T^{\frac{1}{2}}, A}(w) \in K_{T \times \C^*}(G/B)$ denotes the image of $1 \in K_{T \times \C^*}(e_w)$
under the map $\stab_{\fC,T^{\frac{1}{2}}, A}$. 
From the definition, it is immediate to see that $\{\stab_{\fC,T^{\frac{1}{2}}, A}(w)\mid w\in W\}$ forms a basis for the localized equivariant K-theory $K_{T\times\bbC^*}(T^*(G/B))_{\loc}$,
called the {\em stable basis}. Explicit combinatorial formulae 
and recursions for the localizations
of stable envelopes in $T^*(G/B)$ may be found in \cite[\S 8.3]{AMSS:motivic}.

\subsection{Changing the polarizations}
A natural question is to study the change of the stable envelopes
when we vary the above three parameters. To start, the change of chambers
is encoded in the left Weyl group action. More precisely, 
the group $G$ acts on $T^*(G/B)$ by left multiplication, which induces a left Weyl group action on $K_{T\times\bbC^*}(T^*(G/B))$, see \cite{mihalcea2020left}. If we change the chamber $\fC$ to another chamber $w(\fC)$ ($w \in W$), we have the following formula (see \cite[Lemma 8.2(a)]{AMSS:motivic})
\[w\cdot(\stab_{\calC, T^{\frac{1}{2}},
	A}(u))=\stab_{w(\calC), w(T^{\frac{1}{2}}),A}(wu).\]

Next we consider the change of polarizations. In this case the results 
are stated e.g. in \cite{okounkov:Klectures} in the more general setting of symplectic resolutions; 
for the convenience of the reader we include proofs for $T^*(G/B)$. 
We have the following lemma relating 
two arbitrary polarizations see \cite[Section 7.5.8]{okounkov:Klectures}. 
\begin{lemma}\label{lem:balance}
	For any two polarizations $T_1^{\frac{1}{2}}$ and $T_2^{\frac{1}{2}}$, there exists a class 
$\calF\in K_{T\times\bbC^*}(T^*(G/B))$ 	
such that $T_1^{\frac{1}{2}}-T_2^{\frac{1}{2}}=\calF-q^{-1}\calF^\vee$.
\end{lemma}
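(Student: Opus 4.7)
The plan is to reduce the lemma to a simple splitting problem on the $\mathbb{C}^*$-graded $K$-theory ring. Setting $\mathcal{G} := T_1^{\frac{1}{2}} - T_2^{\frac{1}{2}}$, the defining equation of a polarization subtracted in the two cases yields
\[
\mathcal{G} + q^{-1}\mathcal{G}^\vee = 0 \quad \text{in } K_{T\times \mathbb{C}^*}(T^*(G/B))_{\loc}.
\]
Thus I only need to solve the equation $\mathcal{F} - q^{-1}\mathcal{F}^\vee = \mathcal{G}$ for a class $\mathcal{G}$ satisfying the antisymmetry above.

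Next, I would use the fact that the projection $\pi: T^*(G/B) \to G/B$ induces an isomorphism
\[
K_{T\times \mathbb{C}^*}(T^*(G/B)) \;\cong\; K_T(G/B)[q^{\pm 1}],
\]
since $T^*(G/B)$ is a vector bundle over $G/B$ and $\mathbb{C}^*$ acts trivially on the base. Under this identification, the duality operator sends $q^k \mathcal{G}_k$ to $q^{-k}\mathcal{G}_k^\vee$, so writing $\mathcal{G} = \sum_{k} q^k \mathcal{G}_k$ as a finite Laurent polynomial with $\mathcal{G}_k \in K_T(G/B)$, the antisymmetry condition becomes the family of identities $\mathcal{G}_k = -\mathcal{G}_{-k-1}^\vee$, which pairs up each index $k$ with $-k-1 \neq k$.

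Since the pairing $k \leftrightarrow -k-1$ has no fixed point, I can pick one element from each pair: define
\[
\mathcal{F} := \sum_{k \geq 0} q^k \mathcal{G}_k \;\in\; K_{T\times \mathbb{C}^*}(T^*(G/B)).
\]
A short computation, using $\mathcal{G}_{-k-1}^\vee = -\mathcal{G}_k$ to re-express the negative-$q$-degree part of $q^{-1}\mathcal{F}^\vee$, then shows $\mathcal{F} - q^{-1}\mathcal{F}^\vee = \mathcal{G}$, as required.

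There is no real obstacle in this argument: the content is the easy observation that duality together with multiplication by $q^{-1}$ is a free involution on the $\mathbb{C}^*$-weight grading of $K_T(G/B)[q^{\pm 1}]$. The only care needed is to verify the action of $(-)^\vee$ on the $\mathbb{C}^*$-weights (i.e.\ that $(q^k)^\vee = q^{-k}$), which is immediate from the definition of duality in equivariant $K$-theory. Note also that $\mathcal{F}$ is far from unique; any class $\mathcal{H}$ satisfying $\mathcal{H} = q^{-1}\mathcal{H}^\vee$ can be added to it.
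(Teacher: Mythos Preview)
Your proof is correct and follows essentially the same approach as the paper. Both arguments reduce to showing that any class $\mathcal{G}$ with $\mathcal{G}+q^{-1}\mathcal{G}^\vee=0$ is balanced, and both do this by splitting $\mathcal{G}$ according to the sign of the $q$-degree in the identification $K_{T\times\mathbb{C}^*}(T^*(G/B))\cong K_T(G/B)[q^{\pm 1}]$; your coefficient-by-coefficient matching $\mathcal{G}_k=-\mathcal{G}_{-k-1}^\vee$ is just a more explicit rendering of the paper's decomposition $\mathcal{G}=\mathcal{F}+q^{-1}\mathcal{F}'$ with $\mathcal{F}$ in non-negative $q$-degrees.
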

\begin{remark}
	Classes of the form $\calF-q^{-1}\calF^\vee$ are called balanced classes in \textit{loc.~cit.}
\end{remark}
\begin{proof}
It suffices to prove the following fact: any solution of the 
equation 
$\calG+q^{-1}\calG^\vee=0\in K_{T\times\bbC^*}(T^*(G/B))$ 
must be of the form 
$\calG=\calF-q^{-1}\calF^\vee$ for some $\calF\in K_{T\times\bbC^*}(T^*(G/B))$.
Since $\calG\in K_{T\times\bbC^*}(T^*(G/B))\simeq K_T(G/B)[q,q^{-1}]$, we can write $\calG=\calF+q^{-1}\calF'$ for some $\calF\in K_T(G/B)[q]$ and $\calF'\in q^{-1}K_T(G/B)[q^{-1}]$. Thus, 
\[\calF+q^{-1}\calF'+q^{-1}\calF^\vee+(\calF')^\vee=0.\]
Since $\calF,(\calF')^\vee\in K_T(G/B)[q]$ and $q^{-1}\calF', q^{-1}\calF^\vee\in q^{-1}K_T(G/B)[q^{-1}]$, we get
\[\calF+(\calF')^\vee=0,\textit{\quad and \quad }q^{-1}\calF'+ q^{-1}\calF^\vee=0.\]
Therefore, $\calF'=-\calF^\vee$, and $\calG=\calF-q^{-1}\calF^\vee$.
\end{proof}

For any two polarizations $T_1^{\frac{1}{2}}$ and $T_2^{\frac{1}{2}}$
let $\mathcal{F}$ be defined by $T_1^{\frac{1}{2}}-T_2^{\frac{1}{2}}=\calF-q^{-1}\calF^\vee$
as in the above lemma.
Define a $T\times\bbC^*$-equivariant line bundle $\calL$ on $T^*(G/B)$ by the following formula
\[\calL:=\det\calF.\]
Here for a virtual bundle $\calF=\calV_1-\calV_2$, $\det \calF:=\det \calV_1/\det\calV_2$.
\begin{example} Consider the polarization
$T_1^{\frac{1}{2}}:=T(G/B)$ and the opposite polarization $T_2^{\frac{1}{2}}=q^{-1}(T_1^{\frac{1}{2}})^\vee=T^*(G/B)$. Then the element $\calF$ in \Cref{lem:balance} can be taken to be $T(G/B)$, therefore $\calL=\det \calF=\calL_{-2\rho}$.
\end{example}

The next proposition shows that the change of polarizations results in a multiplication
by a line bundle, therefore it is encoded in the Chevalley formula for the stable envelopes.
\begin{prop}\cite[Exercise 9.1.12]{okounkov:Klectures}\label{prop:polchange}
		The following holds:
		\[\stab_{\fC,T_2^{\frac{1}{2}}, A}(w)=(-1)^{\rk N_{w,+,2}^{\frac{1}{2}}-\rk N_{w,+,1}^{\frac{1}{2}}}q^{\frac{\rk \calF|_w}{2}}\calL\otimes\stab_{\fC,T_1^{\frac{1}{2}}, A}(w),\]
		where $N_{w,+,i}^{\frac{1}{2}}:=N_{w,+,i}\cap T^{\frac{1}{2}}_i$ for $i=1,2$, and if $\calF=T_1^{1/2}-T_2^{1/2} = \calV_1-\calV_2$ is the virtual bundle from \Cref{lem:balance}, then $\rk\calF|_w:=\rk\calV_{1}|_w-\rk\calV_2|_w.$
\end{prop}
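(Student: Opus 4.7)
The plan is to invoke the uniqueness characterization in \Cref{thm:geostable} by setting
\[
\Gamma_2(w) := (-1)^{\rk N_{w,+,2}^{\frac{1}{2}}-\rk N_{w,+,1}^{\frac{1}{2}}}\, q^{\frac{\rk\calF|_w}{2}}\,\calL\otimes\stab_{\fC,T_1^{\frac{1}{2}},A}(w),
\]
and verifying that $\Gamma_2(w)$ satisfies the three defining properties (Support, Normalization, Degree) of $\stab_{\fC,T_2^{\frac{1}{2}},A}(w)$. The Support property is immediate: tensoring with a line bundle does not change the scheme-theoretic support, so $\Supp \Gamma_2(w)$ coincides with $\Supp \stab_{\fC,T_1^{\frac{1}{2}},A}(w)$, which is already contained in $\bigcup_{z\preceq_\fC w}\overline{\Attr_\fC(z)}$. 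The Degree property is likewise routine: multiplication by $\calL$ shifts the Newton polytope at each fixed point $v$ by the single character $\calL|_v$, and this shift cancels on both sides of the inclusion $\deg_T\Gamma_2(w)|_v\subseteq \deg_T\Gamma_2(v)|_v+v\lambda-w\lambda$, so the condition reduces to the corresponding one for $\stab_{\fC,T_1^{\frac{1}{2}},A}$.

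The main work is checking the Normalization property. Since the attracting variety $\Attr_\fC(w)$ is determined by $\fC$ alone, the factor $\cO_{\Attr_\fC(w)}|_w$ is common to the normalization formulae for both polarizations, and what must be matched is the prefactor. Restricting the defining relation $T_1^{\frac{1}{2}}-T_2^{\frac{1}{2}}=\calF-q^{-1}\calF^\vee$ from \Cref{lem:balance} to the fixed point $e_w$ gives, as virtual $T\times\bbC^*$-modules,
\[
N_{w,1}^{\frac{1}{2}}-N_{w,2}^{\frac{1}{2}}=\calF|_w-q^{-1}(\calF|_w)^\vee.
\]
Taking determinants of both sides and using $\det(q^{-1}(\calF|_w)^\vee)=q^{-\rk\calF|_w}(\det\calF|_w)^{-1}$, one obtains
\[
\frac{\det N_{w,1}^{\frac{1}{2}}}{\det N_{w,2}^{\frac{1}{2}}}=q^{\rk\calF|_w}(\det\calF|_w)^2=q^{\rk\calF|_w}(\calL|_w)^2.
\]
Extracting the square root (with a fixed consistent choice) and combining with the sign factor $(-1)^{\rk N_{w,+,2}^{\frac{1}{2}}-\rk N_{w,+,1}^{\frac{1}{2}}}$ yields precisely the ratio of the two normalization constants, so $\Gamma_2(w)|_w$ equals the normalization required of $\stab_{\fC,T_2^{\frac{1}{2}},A}(w)$.

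The main obstacle is bookkeeping the square root of $\det N_{w,-}/\det N_{w,i}^{\frac{1}{2}}$ in a way that is coherent across the two polarizations. The key observation is that the class $\calF-q^{-1}\calF^\vee$ is balanced, so its determinant is automatically a square whose canonical square root is $q^{-\rk\calF|_w/2}\cdot\det\calF|_w$; this is what makes the identification above unambiguous. Once the Normalization is verified, the uniqueness asserted in \Cref{thm:geostable} forces $\Gamma_2(w)=\stab_{\fC,T_2^{\frac{1}{2}},A}(w)$, completing the proof.
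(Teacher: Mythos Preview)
Your proposal is correct and follows essentially the same strategy as the paper: both invoke the uniqueness from \Cref{thm:geostable}, dispose of Support and Degree identically, and reduce Normalization to computing $\bigl(\det N_{w,1}^{1/2}/\det N_{w,2}^{1/2}\bigr)^{1/2}=q^{\rk\calF|_w/2}\calL|_w$ from the balanced-class identity at $e_w$. One small slip: in your final paragraph the ``canonical square root'' should be $q^{+\rk\calF|_w/2}\det\calF|_w$, not $q^{-\rk\calF|_w/2}\det\calF|_w$, since $\det(\calF|_w-q^{-1}\calF^\vee|_w)=q^{\rk\calF|_w}(\det\calF|_w)^2$.
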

\begin{proof} From the characterization \Cref{thm:geostable}, 
it suffices to show that the right hand side satisfies the defining 
properties of the stable envelope on the left hand side. 
The support condition is immediate. For the degree condition, we need to check that for every $e_v\prec_\fC e_w$,
\[ 
\deg_T(\calL\otimes\stab_{\fC,T_1^{\frac{1}{2}}, A}(w))|_v\subseteq \deg_T(\calL\otimes\stab_{\fC,T_1^{\frac{1}{2}}, A}(v))|_v+ v\lambda-w\lambda \/,
\]
for some $\lambda\in A$. The terms $\calL|_v$ on both sides cancel, and the above inclusion reduces to the degree condition for $\stab_{\fC,T_1^{\frac{1}{2}}, A}(w)$.
The normalization condition follows from the next calculation.
We utilize our assumption that $\calF$ is 
represented by a subbundle of $T (T^*(G/B))$, and we note 
also that the weights of $N_w= T_w(T^*(G/B))$ are distinct. 
	\begin{align*}
		&(-1)^{\rk N_{w,+,2}^{\frac{1}{2}}-\rk N_{w,+,1}^{\frac{1}{2}}}\frac{\stab_{\fC,T_2^{\frac{1}{2}}, A}(w)|_w}{\stab_{\fC,T_1^{\frac{1}{2}}, A}(w)|_w}\\
		=&	\left(\frac{\det N_{w,-}}{\det N_{w,2}^{\frac{1}{2}}}\right)^{\frac{1}{2}}\left(\frac{\det N_{w,-}}{\det N_{w,1}^{\frac{1}{2}}}\right)^{-\frac{1}{2}}\\
		=&\left(\frac{\det N_{w,1}^{\frac{1}{2}}}{\det N_{w,2}^{\frac{1}{2}}}\right)^{\frac{1}{2}}\\
		=&\left(\det N_w\cap (\calF|_w-q^{-1}\calF^\vee|_w)\right)^{\frac{1}{2}}\\
		=&\left(\frac{\det(N_w\cap \calF|_w)}{\det(N_w\cap q^{-1}(\calF|_w)^\vee)}\right)^{\frac{1}{2}}\\
		=&q^{\frac{\rk \calF|_w}{2}}\calL|_w.			
	\end{align*}
The reason for the last equality is as follows. 
We have that $N_w\cap\calF|_w=\calF|_w$, and suppose $e^\lambda$ is a 
torus weight of it. Since $N_w$ is a symplectic vector space, $q^{-1}e^{-\lambda}$ is a weight of $N_w$. Hence 
$q^{-1}e^{-\lambda}$ is also a weight of the intersection $N_w\cap q^{-1}(\calF|_w)^\vee$ and in fact
$N_w\cap q^{-1}(\calF|_w)^\vee= q^{-1}(\calF|_w)^\vee$, giving the equality.
The case of  $\calF=\calV_1-\calV_2$ being a virtual bundle follows from linearity of the constructions.  
\end{proof}
\subsection{Changing the alcoves} We now turn to what happens under the change of alcoves.
This can be answered utilizing the recursive 
formulae from \cite{su2020k,su2021wall}, see \Cref{thm:szz} below. Our Chevalley formula provides a non-recursive answer, in terms of $\lambda$-chains, relating the stable envelope for 
the fundamental alcove $\fA$ to the stable envelope for a translate $\fA+\lambda$; see \Cref{prop:chestab} below. We recall some of the formulae below, and state our Chevalley based formula in \Cref{prop:chestab}.

The alcoves in $\ft^*_\bbR$ are of the form $x(\fA)+\lambda$ 
for some $x\in W$ and some $\lambda$ in the root lattice.
It was proved in \cite[Lemma 8.2]{AMSS:motivic},\cite[Rmk. 2.3]{su2020k} that
\begin{equation}\label{equ:shift2}
	\stab_{\fC,T^{\frac{1}{2}}, A+\lambda}(w)=e^{-w\lambda}\calL_\lambda\otimes \stab_{\fC,T^{\frac{1}{2}}, A}(w) \/,
\end{equation}
where $\calL_\lambda$ is the pullback of $G\times^B\bbC_\lambda$ from 
$G/B$ to $T^*(G/B)$. Fix the chamber $\calC$ to be the anti-dominant Weyl chamber 
\[ \calC :=\{\lambda\in \ft^*_\bbR\mid \langle\lambda,\alpha^\vee\rangle<0, \textit{ for any positive root } \alpha \} \] and the polarization $T^{\frac{1}{2}}=T^*(G/B)$. 
To simplify the notations, we denote $\stab_{\calC, T^*(G/B),A}(w)$ by $\stab_A(w)$.

Let $Z=T^*(G/B)\times_\calN T^*(G/B)$ be the Steinberg variety where $\calN\subset \mathfrak{g}$ denotes the nilpotent cone. 
There is an algebra isomorphism due to Kazhdan--Lusztig and Ginzburg \cite{kazhdan1987proof,chriss2009representation}
\begin{equation*}
\bbH \simeq K_{G\times \bbC^*}(Z),
\end{equation*}
where $K_{G\times \bbC^*}(Z)$ has an algebra structure given by convolution. 
The convolution induces an action of
the Hecke algebra $\bbH$ on $K_{T\times \bbC^*}(T^*(G/B))$ which we recall next. 
For a simple root $\alpha_i$, and the corresponding minimal parabolic subgroup 
$P_i \supset B$, define the operator $T_i$
on $K_{T\times \bbC^*}(T^*(G/B))$ by the following formula:
\[T_{i}(\calF):=-\calF-\pi_{1*}(\pi_2^*\calF\otimes \pi_2^*\calL_{\alpha_i}).\]
Here $\calF\in K_{A\times \bbC^*}(T^*(G/B))$, $Y_i:=G/B\times_{G/P_i}G/B\subset G/B\times G/B$,
$T^*_{Y_i}$ is the conormal bundle of $Y_i$ inside 
$G/B\times G/B$, and 
$\pi_j:T^*_{Y_i}\to  T^*(G/B)$ $(j=1,2)$ are the two projections. 
These operators satisfy the quadratic relations and the braid 
relations in $\bbH$. In particular, $T_w$ is well defined for any $w\in W$.
Recall that $\fA$ denotes the fundamental alcove.

The following have been proved in \cite{su2020k,su2021wall}.
\begin{theorem}\label{thm:szz}
(a) \cite[Theorem 4.5]{su2020k}
		\begin{equation*}
			T_i(\stab_{\fA}(w))=\begin{cases}
				(q-1)\stab_{\fA}(w)+q^{1/2}\stab_{\fA}(ws_i),&\textit{\quad if \quad} ws_i<w,\\
				q^{1/2}\stab_{\fA}(ws_i), & \textit{\quad if \quad } ws_i>w.
			\end{cases}
		\end{equation*}	
		
(b) \cite[Theorem 5.4]{su2021wall} Let $x \in W$. Then 
\[ \stab_{x(\fA)}(w)=q^{-\ell(x)/2}T_x(\stab_{\fA}(wx)) \/. \]

(c) \cite[Lemma 3.5 and Corollary 5.3]{su2021wall} Assume $A_1$ and $A_2$ are two adjacent alcoves separated by a wall of the form $H_{\alpha,n}$, where $\alpha>0$. Assume $A_2$ is on the positive side of $H_{\alpha,n}$, i.e., for any $\mu\in A_2$, $(\mu,\alpha^\vee)>n$. Then \begin{equation*}
		\stab_{A_1}(w)=\begin{cases}
			\stab_{A_2}(w)+e^{-nw\alpha}(q^{1/2}-q^{-1/2})\stab_{A_2}(ws_\alpha) & \textit{\quad if \quad } ws_\alpha>w,\\
			\stab_{A_2}(w) & \textit{\quad if \quad} ws_\alpha<w.
		\end{cases}
	\end{equation*}
\end{theorem}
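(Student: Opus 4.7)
The plan is to verify all three parts by reducing to the characterization of stable envelopes in \Cref{thm:geostable} via the support, normalization, and degree conditions. In each case the left-hand side is a stable envelope, and one must show the claimed right-hand side satisfies the three defining axioms for it.

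For part (a), I would first establish the support condition for $T_i \stab_\fA(w)$: since $T_i$ is convolution with a kernel supported on the conormal bundle $T^*_{Y_i}$ of $Y_i = G/B \times_{G/P_i} G/B$, the resulting class has support contained in the union $\overline{\Attr_\calC(w)} \cup \overline{\Attr_\calC(ws_i)}$, as the $P_i$-fibration either preserves the Schubert stratum of $w$ or moves it to that of $ws_i$. The normalization at the relevant fixed points would be obtained by an equivariant-localization calculation, in which the pushforward-pullback along $T^*_{Y_i}$ simplifies to an explicit rational function in $e^{\alpha_i}$ and $q$; matching this with the normalization condition for the polarization $T^{1/2} = T^*(G/B)$ produces the prefactors $q-1$ and $q^{1/2}$. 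For the degree condition, the key point is that $\fA$ has $H_{\alpha_i,0}$ as one of its walls, and the Hecke operator $T_i$ transforms Newton polytopes by $s_i$ in a manner compatible with slope $\fA$.

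For part (b), I would induct on $\ell(x)$. The base case $x = e$ is immediate. For the inductive step, write $x = s_i x'$ with $\ell(x) = \ell(x') + 1$, so $T_x = T_i T_{x'}$; applying the inductive hypothesis (to the translated alcove $x'(\fA)$) and then extending (a) to an arbitrary slope produces the stable envelope with slope $s_i(x'(\fA)) = x(\fA)$. The required extension of (a) to any alcove, rather than only $\fA$, is again verified by checking the three axioms. For part (c), I would reduce an arbitrary wall $H_{\alpha,n}$ to the case of a wall bordering the fundamental alcove. Using \eqref{equ:shift2}, tensoring by an appropriate line bundle $e^{-\mu}\calL_\mu$ translates $A_1$ and $A_2$ to alcoves straddling a wall of the form $H_{\alpha,0}$ (after possibly conjugating by an element of $W$), at which point (b) applies with the reflection $s_\alpha$. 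Tracking the translation factor yields the coefficient $e^{-nw\alpha}$, while the $q^{1/2} - q^{-1/2}$ scalar arises from applying the quadratic relation of the Hecke algebra to the crossing operator.

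The main obstacle, I expect, lies in part (a): the normalization at fixed points requires delicate bookkeeping of the square-root factor $(\det N_{w,-}/\det N_w^{1/2})^{1/2}$ and the dilation twists by powers of $q^{1/2}$, and these must match the $q-1$ and $q^{1/2}$ coefficients exactly. A secondary difficulty appears in (c), where reducing the general wall $H_{\alpha,n}$ to one adjacent to $\fA$ forces a careful choice of translation $\mu$ so that the Weyl element emerging is precisely $s_\alpha$ rather than a conjugate; the dichotomy $ws_\alpha > w$ versus $ws_\alpha < w$ then arises naturally from whether the reflected fixed point sits in the closure of the original attracting set, but tracking the signs coherently through the chain of reductions is where one must be most vigilant.
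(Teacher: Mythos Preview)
The paper does not prove this theorem; each part is simply cited from the references \cite{su2020k} and \cite{su2021wall} and used as input for the subsequent discussion. So there is no ``paper's own proof'' to compare your proposal against.

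That said, your outline is broadly in the spirit of how these statements are established in the cited papers: uniqueness in \Cref{thm:geostable} reduces each claim to verifying support, normalization, and degree for the candidate class. A few points would need tightening if you actually carried this out. In (b), your inductive step invokes an extension of (a) to an arbitrary alcove, but (a) as stated is specifically about the fundamental alcove $\fA$; the correct mechanism in \cite{su2021wall} is rather that passing from $x'(\fA)$ to $s_i x'(\fA)$ is a single wall-crossing, handled directly by the degree condition and the Hecke relation, not by re-proving (a). In (c), your reduction by translation and Weyl conjugation is more elaborate than necessary: in \cite{su2021wall} the wall-crossing formula is obtained by a direct comparison of the degree polytopes on the two sides of $H_{\alpha,n}$, and the coefficient $e^{-nw\alpha}(q^{1/2}-q^{-1/2})$ falls out of matching normalizations at the fixed point $ws_\alpha$ without any reduction to $\fA$. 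Your sketch is not wrong as a strategy, but it introduces an extra layer of bookkeeping that the original proofs avoid.
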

Part (a) of the theorem implies that $T_x(\stab_{\fA}(wx))$ may be 
recursively calculated in terms of 
$\{\stab_{\fA}(w)\mid w\in W\}$. Part (b) implies that the same is true for $\stab_{x(\fA)}(w)$.
Finally, part (c) may be used to relate directly the stable bases for any two adjacent alcoves, and 
therefore recursively relate the stable bases for two arbitrary alcoves.

We focus next on relating \eqref{equ:shift2} to our 
Chevalley formulae obtained earlier in this paper. Together with (a) and (b) from \Cref{thm:szz}
above, this gives an alternative recursion to (c), 
calculating the stable envelope for an arbitrary
alcove $x \fA + \lambda$ starting from the stable envelope for $\fA$.

Fix $\lambda$ an integral weight. 
By \Cref{thm:szz}(a), $\stab_{x(\fA)}(w)$ 
may be written as a linear combination of $\{\stab_{\fA}(w)\mid w\in W\}$. By \eqref{equ:shift2},
to determine $\stab_{x(\fA)+\lambda}(w)$, it suffices to find a formula for 
$\calL_\lambda\otimes \stab_{\fA}(w)$. 
This can be achieved by the Chevalley formula for the motivic Chern classes.
The key is the following result. 
\begin{lemma}\cite[Theorem 8.6]{AMSS:motivic} \label{lem:stabmc}
Let $\iota:G/B\hookrightarrow T^*(G/B)$ denote the inclusion of the zero section. 
For any $w\in W$,
\[\iota^*(\stab_{\fA}(w))=(-1)^{\dim G/B}q^{\dim G/B-\frac{\ell(w)}{2}}MC_{-q^{-1}}(Y(w)^\circ)\otimes \calL_{-2\rho}.\]
\end{lemma}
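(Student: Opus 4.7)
The plan is to verify the claimed identity by invoking the uniqueness of the $K$-theoretic stable envelope from \Cref{thm:geostable}. Set
\[ R(w) := (-1)^{\dim G/B}\,q^{\dim G/B - \ell(w)/2}\,MC_{-q^{-1}}(Y(w)^\circ)\otimes \calL_{-2\rho}, \]
viewed in $K_{T\times\bbC^*}(G/B)$. Since the bundle projection $p\colon T^*(G/B)\to G/B$ induces an isomorphism on equivariant $K$-theory with inverse $\iota^*$, I would lift $R(w)$ to $\widetilde R(w):=p^*R(w)$ and aim to show $\widetilde R(w) = \stab_{\fA}(w)$; pulling back along $\iota$ then recovers the statement of the lemma.

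I would verify the three axioms of \Cref{thm:geostable} for $\widetilde R(w)$. For the support axiom, with chamber $\calC$ antidominant the attracting set of $e_z$ is the conormal variety $\overline{T^*_{Y(z)^\circ}(G/B)}$ and the order $\preceq_{\calC}$ is the Bruhat order. The motivic Chern class $MC_{-q^{-1}}(Y(w)^\circ)$ is supported on $\overline{Y(w)}$, so $\widetilde R(w)$ is a priori supported on $p^{-1}(\overline{Y(w)}) = T^*(G/B)|_{\overline{Y(w)}}$; passing to the natural Lagrangian refinement (the characteristic cycle of $MC_{-q^{-1}}$) refines this support to $\bigcup_{z \ge w}\overline{T^*_{Y(z)^\circ}(G/B)}$, matching the required condition.

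For the normalization axiom at $e_w$, I would compute both sides explicitly. On the left the axiom asserts
\[ \stab_{\fA}(w)|_{e_w} = (-1)^{\mathrm{rk}\,N_{w,+}^{1/2}}\Bigl(\tfrac{\det N_{w,-}}{\det N_{w}^{1/2}}\Bigr)^{1/2}\calO_{\Attr_\calC(w)}|_{e_w}, \]
which with polarization $T^{1/2}=T^*(G/B)$ and antidominant chamber unfolds into an explicit product of monomials $q^{?}e^{w\alpha}$ over positive roots $\alpha$ with $w\alpha<0$. On the right, since $Y(w)^\circ$ is smooth at $e_w$ of dimension $\dim G/B -\ell(w)$, the localization of $MC_{-q^{-1}}(Y(w)^\circ)$ at $e_w$ is the $\lambda_{-q^{-1}}$-class of the cotangent space $T^*_{Y(w)^\circ,e_w}$; combined with $\calL_{-2\rho}|_{e_w}=e^{-2w\rho}$ and the prefactor $(-1)^{\dim G/B}q^{\dim G/B-\ell(w)/2}$, this should simplify to the same monomial product after routine algebra.

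For the degree axiom at $v\succ_\calC w$, after the twist by $\calL_{-2\rho}$ the Newton-polytope condition becomes a $T$-weight bound on $MC_{-q^{-1}}(Y(w)^\circ)|_{e_v}$, matching the weight bound for motivic Chern classes established in \cite[\S 8]{AMSS:motivic} (see also \cite{feher2021motivic}) for the fundamental alcove slope. The main obstacle will be the normalization at $e_w$: the half-square-root $(\det N_{w,-}/\det N_{w}^{1/2})^{1/2}$ requires careful sign and $q^{1/2}$ bookkeeping, and matching it precisely against the $\lambda_{-q^{-1}}$-class of $Y(w)^\circ$, the twist $e^{-2w\rho}$, and the prefactor in the statement is what simultaneously pins down the three constants $(-1)^{\dim G/B}$, $q^{\dim G/B-\ell(w)/2}$, and $\calL_{-2\rho}$; any sign or power-of-$q$ mismatch would propagate through every other fixed point, so this one computation is where the proof stands or falls.
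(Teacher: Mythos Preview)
The paper does not prove this lemma; it is quoted from \cite[Theorem~8.6]{AMSS:motivic}. Your overall strategy---verify the three axioms of \Cref{thm:geostable} for $\widetilde R(w)=p^*R(w)$ and invoke uniqueness---is the one used in \cite{AMSS:motivic} and \cite{feher2021motivic}, and your identification of the normalization at $e_w$ as the computation that pins down the three constants $(-1)^{\dim G/B}$, $q^{\dim G/B-\ell(w)/2}$, $\calL_{-2\rho}$ is correct.

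The genuine gap is in the support condition, not the normalization. The class $p^*R(w)$ is supported on $T^*(G/B)|_{\overline{Y(w)}}$, which strictly contains the required union of conormal closures $\bigcup_{z\ge w}\overline{T^*_{Y(z)^\circ}(G/B)}$; so the first axiom of \Cref{thm:geostable} is \emph{not} satisfied by the na\"ive lift. Your phrase ``passing to the natural Lagrangian refinement (the characteristic cycle of $MC_{-q^{-1}}$)'' invokes a cohomological construction with no direct $K$-theoretic content, and since $\iota^*$ is an isomorphism, asserting that some conormal-supported class pulls back to $R(w)$ is already equivalent to the lemma itself. The fix is either to observe that the uniqueness in \Cref{thm:geostable} actually only uses the fixed-point consequence of the support axiom---namely $\widetilde R(w)|_{e_v}=0$ for $v\not\ge w$, which does hold here because $e_v\notin\overline{Y(w)}$---and to verify carefully that this weaker triangularity together with the normalization and degree bounds still forces uniqueness; or, as is also done in \cite{AMSS:motivic}, to bypass the axioms and instead match the Demazure--Lusztig recursions satisfied by $\iota^*\stab_{\fA}(w)$ (via \Cref{thm:szz}(a)) and by $R(w)$ (via the opposite-Borel analogue of \Cref{thm:MCBorel}), reducing the identity to a single base case.
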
 
Recall the operator $(-)^\vee$ on $K_T(pt)[y^{\pm1}]$, which sends $e^\mu$  to $e^{-\mu}$ and $y$ to $y^{-1}$. We have the following Chevalley formula for the stable bases. 
\begin{theorem}\label{prop:chestab}
Let $\lambda \in X^*(T)$ be a weight and fix 
$\beta_1, \ldots , \beta_l$ a reduced $\lambda$-chain corresponding
to an alcove walk from  $\fA $ to $\fA - \lambda$. Then
\[\calL_\lambda\otimes \stab_{\fA}(u)=\sum_w q^{\frac{\ell(u)-\ell(w)}{2}} (C_{u,-\lambda}^w)^\vee|_{y=-q^{-1}}\stab_{\fA}(w),\]
where $C_{u,\lambda}^w$ are the coefficients defined in \Cref{Chev:MC}.

One can write this in terms of $\lambda$-chains as follows (with the notation
from \Cref{thm:lambda}): 
	\begin{equation*}\label{stab_lambda}
		\calL_\lambda\otimes \stab_{\fA}(u)=\sum_{J\subset \{ 1,2,\ldots, l \}}	(-1)^{n(J)} (q^{-1/2}-q^{1/2})^{|J|}
		e^{w\tilde{r}_{J_{>}}(\lambda)}\stab_{\fA}(ur_{h_{j_1}}r_{h_{j_2}}\cdots r_{h_{j_t}}),
	\end{equation*}
where the sum is over subsets $J = \{ j_1 < \ldots < j_t \} \subset \{ 1, 2, \ldots, l \}$
such that $u < u r_{h_{j_1}} < u r_{h_{j_1}} r_{h_{j_2}} < \ldots < u r_{h_{j_1}} r_{h_{j_2}} \cdot \ldots \cdot r_{h_{j_t}}$.
\end{theorem}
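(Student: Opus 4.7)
The plan is to reduce the statement to a Chevalley-type formula for motivic Chern classes of the opposite Schubert cells, via \Cref{lem:stabmc}. Since $T^*(G/B) \to G/B$ is a vector bundle, the zero-section inclusion $\iota$ induces an isomorphism $\iota^*$ on equivariant K-theory, and $\iota^*\calL_\lambda = \calL_\lambda$; hence it suffices to verify the identity after applying $\iota^*$. By \Cref{lem:stabmc}, the problem then becomes computing $\calL_\lambda \otimes MC_{-q^{-1}}(Y(u)^\circ)$ in the basis $\{MC_{-q^{-1}}(Y(w)^\circ)\}$, up to the universal twist by $\calL_{-2\rho}$ and the normalization constants $c_w := (-1)^{\dim G/B} q^{\dim G/B - \ell(w)/2}$.

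To derive the required Chevalley formula for the opposite Schubert cells, I will transport \Cref{Chev:MC} through the left $W$-action on $K_T(G/B)$. The operator $w_0^L$ fixes the $G$-equivariant line bundle $\calL_\lambda$, sends $MC_y(X(v)^\circ)$ to $MC_y(Y(w_0 v)^\circ)$ via the equivariant functoriality of the MC transformation applied to the translation $w_0 \cdot X(v)^\circ = Y(w_0 v)^\circ$, and acts semilinearly on coefficients via the standard $w_0$-action on $K_T(\pt)$. Applying $w_0^L$ to \Cref{Chev:MC} and then invoking Serre duality \Cref{prop:serre}, rewritten via the substitution $(u,w) \mapsto (w_0 w, w_0 u)$ and applying $w_0$ to both sides as
\[ w_0(C^{w_0 u}_{w_0 w,\lambda}) = (-y)^{\ell(w)-\ell(u)} (C^{w}_{u,-\lambda})^\vee, \]
yields the desired expansion
\[ \calL_\lambda \otimes MC_y(Y(u)^\circ) = \sum_{w \geq u} (-y)^{\ell(w)-\ell(u)} (C^{w}_{u,-\lambda})^\vee \, MC_y(Y(w)^\circ). \]

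Combining this with \Cref{lem:stabmc} and specializing $y = -q^{-1}$, the normalization ratio $c_u/c_w = q^{(\ell(w)-\ell(u))/2}$ multiplies $(-y)^{\ell(w)-\ell(u)}|_{y=-q^{-1}} = q^{-(\ell(w)-\ell(u))}$ to give precisely $q^{(\ell(u)-\ell(w))/2}$, which is the first identity. For the $\lambda$-chain formula, I will substitute the expansion \eqref{equ:-lambda} of $C_{u,-\lambda}^w$ from \Cref{thm:lambda}: the operator $(-)^\vee$ sends $e^{-w \tilde{r}_{J_{>}}(\lambda)} \mapsto e^{w \tilde{r}_{J_{>}}(\lambda)}$ and $y \mapsto y^{-1}$, and then specializing $y = -q^{-1}$ and absorbing the prefactor $q^{(\ell(u)-\ell(w))/2}$ converts $(1+y)^{|J|} (-y)^{\frac{\ell(w)-\ell(u)-|J|}{2}}$ into $(q^{-1/2}-q^{1/2})^{|J|}$, producing the second identity as stated.

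The hard part will be the semilinear transport of the Chevalley formula through $w_0^L$: one must carefully verify that $w_0^L(\calL_\lambda) = \calL_\lambda$ (using that $\calL_\lambda$ is $G$-equivariant rather than merely $T$-equivariant) and that Serre duality interacts cleanly with the $w_0$-action on the coefficient ring $K_T(\pt)[y]$. Once these checks are in place, the remaining bookkeeping of $q$-powers and character duals is routine.
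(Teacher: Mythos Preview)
Your proposal is correct and arrives at the same key intermediate identity
\[
\calL_\lambda \otimes MC_y(Y(u)^\circ) = \sum_{w \geq u} (-y)^{\ell(w)-\ell(u)} (C^w_{u,-\lambda})^\vee \, MC_y(Y(w)^\circ)
\]
that the paper uses, and your bookkeeping with \Cref{lem:stabmc} and the $\lambda$-chain substitution is accurate.

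The route to that intermediate identity differs slightly from the paper's. The paper starts from the Segre-motivic Chevalley formula \eqref{Chev:SMC} (already stated for the $Y$-cells), unwinds $SMC_y$ in terms of $\calD(MC_y)$, and applies the duality $(-)^\vee$ directly. You instead transport \eqref{Chev:MC} through $w_0^L$ and then invoke the Serre duality identity of \Cref{prop:serre}. These are two orderings of the same ingredients: \Cref{prop:serre} is itself proved by exactly the combination of \eqref{Chev:SMC}, the $\calD$-relation between $MC_y$ and $SMC_y$, and the $w_0^L$-action, so your argument essentially inlines that proof. The paper's version is marginally more direct because \eqref{Chev:SMC} already lives on the $Y$-side; your version has the pedagogical advantage of making explicit where the $w_0$-twist on $K_T(\pt)$ and the $G$-equivariance of $\calL_\lambda$ enter. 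Either way the substance is the same.
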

\begin{remark}
    Recall we have the following duality between the stable bases (see \cite[Proposition 1]{okounkov2022quantum}):
    \[\langle \stab_{\calC,T^\frac{1}{2},A}(u), \stab_{-\calC,T_{\opp}^\frac{1}{2},-A}(w)\rangle=\delta_{u,w}.\]
    Therefore, similar arguments as in the proof of \Cref{lem:mcsmc} will give a Chevalley formula for the dual stale basis $ \stab_{-\calC,T(G/B)),-\fA}(u)$.
\end{remark}
\begin{proof}
By the definition of the SMC classes from definition \ref{def:smc}(2) , \eqref{Chev:SMC} becomes
\begin{equation*}
	\calL_\lambda\otimes \calD(MC_y(Y(u)^\circ))=\sum_{w\geq u} (-y)^{\ell(u)-\ell(w)}C_{u,\lambda}^w \calD(MC_y(Y(w)^\circ)).
\end{equation*}
Taking $(-)^\vee$ on both sides of the equation, we get
	\begin{equation*}
		\calL_{-\lambda}\otimes MC_y(Y(u)^\circ)=\sum_{w\geq u}(-y)^{\ell(w)-\ell(u)} (C_{u,\lambda}^w)^\vee MC_y(Y(w)^\circ).
	\end{equation*}
Then the first equation of the theorem follows from this and \Cref{lem:stabmc}. The second 
equation is a consequence of the first and of \Cref{equ:-lambda} in \Cref{thm:lambda}.
\end{proof}
\begin{example}
In type $A_2$, set $u=s_2 s_1$ and $\lambda=2\varpi_1+\varpi_2$.
A $\lambda$-chain of roots is given by 
$\beta_1=\alpha_2,\;\beta_2=\alpha_1+\alpha_2,\;
\beta_3=\alpha_1,\;\beta_4=\alpha_1+\alpha_2,\;
\beta_5=\alpha_1,\;\beta_6=\alpha_1+\alpha_2$; see \Cref{ex:A_2-1}.
From \Cref{stab_lambda}, we have
\[\begin{array}{lll}
	\calL_\lambda\otimes \stab_{\fA}(s_2 s_1)&=&
	e^{\varpi_1-3\varpi_2}  \stab_{\fA}(s_2 s_1)
+(q^{-\frac{1}{2}}-q^{\frac{1}{2}})  e^{-\varpi_1-2\varpi_2}  \stab_{\fA}(s_1 s_2 s_1).
\end{array}
\]
Therefore by \eqref{equ:shift2}, using that
$-u(\lambda)=-\varpi_1+3\varpi_2$, we have
\[ \begin{array}{lll}
	\stab_{\fA+\lambda}(s_2 s_1)
	&=&
	\stab_{\fA}(s_2 s_1)
+(q^{-\frac{1}{2}}-q^{\frac{1}{2}}) e^{-\alpha_1} \stab_{\fA}(s_1 s_2 s_1).
\end{array}
\]
\end{example}
\begin{example}
Consider $u=s_2$, $\lambda=2\varpi_1+\varpi_2$,
$w_0 u=s_2 s_1$.

We can use 
Serre duality (\Cref{prop:serre}) and \Cref{exam:chev}
to get

$\begin{array}{lll}
\calL_\lambda\otimes \stab_{\fA}(s_2)&=&
e^{3\varpi_1-\varpi_2}  \stab_{\fA}(s_2)\\
&&+(q^{-\frac{1}{2}}-q^{\frac{1}{2}}) (e^{\varpi_1}+e^{-\varpi_1+\varpi_2}+ e^{-3\varpi_1+2\varpi_2}) \stab_{\fA}(s_1 s_2)\\
&&+(q^{-\frac{1}{2}}-q^{\frac{1}{2}})  (e^{2\varpi_1-2\varpi_2}+ e^{\varpi_1-3\varpi_2} ) 
\stab_{\fA}(s_2 s_1)\\
&&+(q^{-\frac{1}{2}}-q^{\frac{1}{2}})^2  (e^{-\varpi_1-2\varpi_2}+e^{-2\varpi_1}+ e^{-\varpi_2})
\stab_{\fA}(s_1 s_2 s_1).\\
\end{array}
$

As $- s_2(\lambda)=-3\varpi_1+\varpi_2$, we have

$\begin{array}{lll}
\stab_{\fA+\lambda}(s_2)&=&
 \stab_{\fA}(s_2)\\
&&+(q^{-\frac{1}{2}}-q^{\frac{1}{2}}) (e^{-\alpha_1}+ e^{-2\alpha_1}+ e^{-3\alpha_1}) \stab_{\fA}(s_1 s_2)\\
&&+(q^{-\frac{1}{2}}-q^{\frac{1}{2}})  (e^{-\alpha_1-\alpha_2}+ e^{-2\alpha_1-2\alpha_2} ) 
\stab_{\fA}(s_2 s_1)\\
&&+(q^{-\frac{1}{2}}-q^{\frac{1}{2}})^2  (e^{-3\alpha_1-2\alpha_2}+e^{-3\alpha_1-\alpha_2}
+ e^{-2\alpha_1-\alpha_2})
\stab_{\fA}(s_1 s_2 s_1).\\
\end{array}
$
\end{example}

\section{Whittaker functions and Hall--Littlewood polynomials}\label{sec:special-functions}
In this section we apply the Chevalley formula to obtain combinatorial expressions for 
Whittaker functions and Hall--Littlewood polynomials. Variants of the formulae 
we obtain were already available in the literature, and our approach based on the cohomological 
calculations adds a geometric perspective to this.
\subsection{Whittaker functions} In this section we study Whittaker functions. These appear in $p$-adic 
representation theory, and in this note we utilize a cohomological construction of these functions
from \cite{AMSS:motivic}, see also \cite{mihalcea2022whittaker}. 
Recall the definition of the Demazure--Lusztig operators on $K_T(pt)[y]$:
\[\widetilde{T_i}(e^\lambda)=-e^\lambda\frac{1+y}{1-e^{-\alpha_i}}+
e^{s_i\lambda}\frac{1+ye^{\alpha_i}}{1-e^{-\alpha_i}},\] 
and
\[\widetilde{T_i^\vee}(e^\lambda)=-e^\lambda\frac{1+y}{1-e^{-\alpha_i}}+e^{s_i\lambda}\frac{1+ye^{-\alpha_i}}{1-e^{-\alpha_i}}.\]
The operators satisfy the usual quadratic and braid relations in the Hecke algebra, 
therefore for any $w \in W$ there are operators $\widetilde{T_w}$ and 
$\widetilde{T^\vee_w}$ acting on $K_T(pt)[y]$, and defined using any reduced
decomposition of $w$. The following has been proved in \cite[Thm. 1.1]{mihalcea2022whittaker}:
\begin{prop}\label{prop:geo} 
Let $\lambda_y(id):= \prod_{\alpha >0} (1+ye^\alpha)$, and
denote by \[ MC'_y(X(w)^\circ):= \lambda_y(id) \frac{MC_y(X(w)^\circ)}{\lambda_y(T^*_{G/B})} \/. \] 
Then for any $\lambda \in X^*(T)$,
\begin{enumerate}
\item
$\chi_T\left(G/B, \calL_\lambda\otimes MC_y(X(w)^\circ)\right)=\widetilde{T^\vee_{w}}(e^\lambda)$;
\item 
$\chi_T\left(G/B, \calL_\lambda\otimes MC_y'(X(w)^\circ)\right)=\widetilde{T_{w}}(e^\lambda).$
\end{enumerate}
\end{prop}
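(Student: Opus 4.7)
The plan is to establish (1) by induction on $\ell(w)$ using the recursive description of motivic Chern classes from \Cref{thm:MCBorel}(1). The base case $w=\mathrm{id}$ is immediate: since $MC_y(X(\mathrm{id})^\circ)=[\mathcal{O}_{1.B}]$ and $\mathcal{L}_\lambda$ has $T$-weight $\lambda$ at $1.B$, we obtain $\chi_T(G/B,\mathcal{L}_\lambda\otimes[\mathcal{O}_{1.B}])=e^\lambda=\widetilde{T^\vee_{\mathrm{id}}}(e^\lambda)$. For the inductive step, suppose the identity holds for $w$ and that $s_iw>w$. By \Cref{thm:MCBorel}(1), $MC_y(X(s_iw)^\circ)=\mathcal{T}^L_i(MC_y(X(w)^\circ))$, and it suffices to establish the intertwining identity
\[
\chi_T\bigl(G/B,\,\mathcal{L}_\lambda\otimes \mathcal{T}^L_i(F)\bigr) \;=\; \widetilde{T^\vee_i}\bigl(\chi_T(G/B,\,\mathcal{L}_\lambda\otimes F)\bigr)
\]
for $F=MC_y(X(w)^\circ)$. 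Combined with the inductive hypothesis, and with the braid relations shared by $\mathcal{T}^L_\bullet$ and $\widetilde{T^\vee_\bullet}$, this yields the formula for $s_iw$.

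To prove the intertwining identity, I would invoke Atiyah--Bott equivariant localization, which expresses
\[
\chi_T(G/B,\mathcal{L}_\lambda\otimes F)=\sum_{v\in W}\frac{e^{v(\lambda)}\,F|_{e_v}}{\prod_{\alpha>0}(1-e^{-v(\alpha)})}.
\]
Since the left Weyl group action permutes $T$-fixed points via $(s_i^L F)|_{e_v}=F|_{e_{s_iv}}$, the defining formula \eqref{equ:leftDL} of $\mathcal{T}^L_i$ gives an explicit local expression for $(\mathcal{T}^L_i F)|_{e_v}$ in terms of $F|_{e_v}$ and $F|_{e_{s_iv}}$. Plugging this into the localization formula for the left hand side and performing the substitution $v\leftrightarrow s_iv$ in half of the terms, one rearranges the resulting sum so that the coefficient of $F|_{e_v}$ at each fixed point matches what is produced by applying $\widetilde{T^\vee_i}$ to the factor $e^{v(\lambda)}$ inside the sum on the right hand side, with the denominators $\prod_{\alpha>0}(1-e^{-v(\alpha)})$ absorbing the necessary root factors.

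The main obstacle is managing the "commutator" between $\mathcal{T}^L_i$ and multiplication by $\mathcal{L}_\lambda$: because $\mathcal{L}_\lambda$ is only $B$-equivariant (not $G$-equivariant), these two operations do not commute on $K_T(G/B)[y]$, so the intertwining identity is non-trivial. Its content is precisely that the discrepancy, once pushed forward by $\chi_T$, is exactly the difference between the bare reflection $s_i$ acting on $e^\lambda$ and the Demazure--Lusztig operator $\widetilde{T^\vee_i}(e^\lambda)$. For part (2), note that $MC'_y(X(w)^\circ)$ differs from $MC_y(X(w)^\circ)$ by multiplication by the class $\lambda_y(\mathrm{id})/\lambda_y(T^*_{G/B})$, in which $\lambda_y(\mathrm{id})$ is a scalar and $\lambda_y(T^*_{G/B})$ is $G$-equivariant and hence commutes with the left DL operators. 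A parallel induction (or, equivalently, an application of the Grothendieck--Serre/star dualities from \S\ref{sec:chev-dualities} to exchange $MC_y$ with $SMC_y$) converts the operators $\widetilde{T^\vee_i}$ into $\widetilde{T_i}$ by replacing $e^{\alpha_i}$ with $e^{-\alpha_i}$ in the corresponding local formulae, thereby producing (2).
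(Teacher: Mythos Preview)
Your inductive strategy via the intertwining identity
\[
\chi_T\bigl(G/B,\mathcal L_\lambda\otimes \mathcal T^L_i(F)\bigr)=\widetilde{T^\vee_i}\bigl(\chi_T(G/B,\mathcal L_\lambda\otimes F)\bigr)
\]
is correct and yields the result; the paper itself does not give a proof here but cites \cite{mihalcea2022whittaker}. Two points are worth correcting, and together they substantially simplify your argument.

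First, your identification of the ``main obstacle'' is based on a misconception: $\mathcal L_\lambda=G\times^B\mathbb C_\lambda$ \emph{is} a $G$-equivariant line bundle, so by \Cref{lem:leftDL} multiplication by $\mathcal L_\lambda$ commutes with $\mathcal T^L_i$. This is exactly the mechanism used in the proof of \Cref{thm:chemc}. Second, your localization formula $(s_i^L F)|_{e_v}=F|_{e_{s_iv}}$ omits the twist on $K_T(\pt)$: the correct identity is $(s_i^L F)|_{e_v}=s_i\bigl(F|_{e_{s_iv}}\bigr)$.

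Once these are in place, the intertwining identity becomes a one-line consequence of $\chi_T\circ s_i^L=s_i\circ\chi_T$ (immediate from the definition of the left action) together with $K_T(\pt)$-linearity of $\chi_T$: the defining formula \eqref{equ:leftDL} for $\mathcal T^L_i$ then gives $\chi_T\circ\mathcal T^L_i=\widetilde{T^\vee_i}\circ\chi_T$ directly, without the fixed-point bookkeeping you outline. The induction for part (1) then runs exactly as you describe, and part (2) follows e.g.\ from \eqref{equ:twoDL} (the relation $\widetilde{T_w}=e^\rho\,\widetilde{T^\vee_w}|_{y\mapsto y^{-1}}\,e^{-\rho}y^{\ell(w)}$), which is how the paper relates the two operators in the paragraphs following the proposition.
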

We note that for any anti-dominant weight $\lambda$ and $w\in W$, 
\begin{equation}\label{equ:iwahoriwhittaker}
\chi_T \left(G/B, \calL_\lambda\otimes MC_{y}'(X(w)^\circ)\right)=\widetilde{T_{w}}(e^\lambda)=\calW_{\lambda,w},
\end{equation}
where $\calW_{\lambda,w}$ is the Iwahori--Whittaker function for the Langlands dual 
group over a non-archimedean local field; we refer to \cite{mihalcea2022whittaker} 
for more details, including the number 
theoretic definition of $\calW_{\lambda,w}$. Here we identified $y$ with $-q^{-1}$, where $q$ is the number of elements in the residue field. As explained in \cite{mihalcea2022whittaker}, from the fact that
\[ \sum_{w \in W} MC_{y}'(X(w)^\circ) =1 \]
by the additivity motivic Chern classes, one recovers the 
Casselman--Shalika formula for the spherical 
Whittaker function \cite{casselman1980unramifiedII}:
\begin{equation}\label{equ:cs}
\sum_{w\in W}\calW_{\lambda,w}=\prod_{\alpha >0} (1+ye^\alpha)\chi_T (G/B, \calL_\lambda)=\prod_{\alpha >0} (1+ye^\alpha)\chi_{w_0\lambda} \/.
\end{equation}
Here 
$\chi_{w_0\lambda}$ denotes the character for the irreducible representation of $G$ of highest weight
$w_0(\lambda)$. We also note that, in type A, an interpretation of the Iwahori--Whittaker function in terms of the partition function of the Iwahori lattice model has been obtained in \cite{brubaker2019colored}.

Using the Chevalley coefficients in \Cref{Chev:MC}, we obtain the following formula for the Iwahori--Whittaker function $\calW_{\lambda,w}$. Let $\rho$ denote the half sum of the positive roots. 
\begin{theorem}
For any anti-dominant weight $\lambda$ and $w\in W$,
\[ \calW_{\lambda,w}=e^{\rho}\sum_{u}(-1)^{\ell(u)}C_{\lambda-\rho,u}^w|_{y\mapsto y^{-1}}y^{\ell(w)-\ell(u)}\/.\]
\end{theorem}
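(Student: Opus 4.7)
The plan is to combine \Cref{Chev:MC} and \Cref{prop:geo}(1) with a $\rho$-shifted intertwining identity between the two families of Demazure--Lusztig operators; I read the right-hand side of the theorem as using the Chevalley coefficient $C^w_{u,\lambda-\rho}$ of \Cref{Chev:MC}. First, applying $\chi_T(G/B,-)$ to the Chevalley expansion
\[\calL_{\lambda-\rho}\otimes MC_y(X(w)^\circ)=\sum_u C^w_{u,\lambda-\rho}\,MC_y(X(u)^\circ)\]
and invoking \Cref{prop:geo}(1) gives $\widetilde T_w^\vee(e^{\lambda-\rho})=\sum_u C^w_{u,\lambda-\rho}\,\widetilde T_u^\vee(1)$. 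A direct check with the defining formula shows $\widetilde T_i^\vee(1)=-y$, and since each $\widetilde T_j^\vee$ is $\bbZ[y]$-linear, iteration along any reduced word gives $\widetilde T_u^\vee(1)=(-y)^{\ell(u)}$. Hence
\[\widetilde T_w^\vee(e^{\lambda-\rho})=\sum_u (-y)^{\ell(u)}\,C^w_{u,\lambda-\rho}.\]

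The main remaining step is the identity
\[\widetilde T_w(e^{\mu+\rho})=y^{\ell(w)}\,e^\rho\,\widetilde T_w^\vee(e^\mu)\big|_{y\mapsto y^{-1}},\qquad \mu\in X^*(T),\ w\in W.\]
I would prove it by introducing the $\rho$-conjugate $\hat T_w:=m_{e^{-\rho}}\circ\widetilde T_w\circ m_{e^\rho}$, where $m_{e^\nu}$ denotes multiplication by $e^\nu$. These operators inherit the braid and quadratic relations from $\widetilde T_w$, so $\hat T_w=\hat T_{s_{i_1}}\cdots\hat T_{s_{i_{\ell(w)}}}$ for any reduced decomposition. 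Using $s_i\rho=\rho-\alpha_i$, the formula for $\widetilde T_i$ collapses to
\[\hat T_i(e^\mu)=-e^\mu\frac{1+y}{1-e^{-\alpha_i}}+e^{s_i\mu}\frac{y+e^{-\alpha_i}}{1-e^{-\alpha_i}}=y\cdot\widetilde T_i^\vee(e^\mu)\big|_{y\mapsto y^{-1}}.\]
Since $y\mapsto y^{-1}$ is a ring automorphism that commutes with each $\widetilde T_j^\vee$ up to inverting $y$ inside its coefficients, and since every $\hat T_i$ is $\bbZ[y^{\pm 1}]$-linear, an induction on $\ell(w)$ along a reduced decomposition propagates the rank-one identity to $\hat T_w(e^\mu)=y^{\ell(w)}\,\widetilde T_w^\vee(e^\mu)|_{y\mapsto y^{-1}}$, which is equivalent to the displayed claim.

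Combining the two pieces with \Cref{equ:iwahoriwhittaker} (valid since $\lambda$ is antidominant),
\[\calW_{\lambda,w}=\widetilde T_w(e^\lambda)=y^{\ell(w)}\,e^\rho\,\widetilde T_w^\vee(e^{\lambda-\rho})\big|_{y\mapsto y^{-1}}=e^\rho\sum_u(-1)^{\ell(u)}\,y^{\ell(w)-\ell(u)}\,C^w_{u,\lambda-\rho}\big|_{y\mapsto y^{-1}}.\]
The only nontrivial step is the $\rho$-shifted intertwining identity, whose proof reduces cleanly to a rank-one computation via the Weyl translation $s_i\rho-\rho=-\alpha_i$ and a routine induction.
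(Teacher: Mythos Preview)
Your proof is correct and follows essentially the same approach as the paper. The only minor differences are: you compute $\chi_T(G/B,MC_y(X(u)^\circ))=(-y)^{\ell(u)}$ by identifying it with $\widetilde T_u^\vee(1)$ via \Cref{prop:geo}(1) and evaluating the operator directly, whereas the paper invokes the motivic functoriality $MC_y[\bbA^1\to\pt]=-y$; and you supply the explicit rank-one check and induction for the $\rho$-shifted intertwining identity $\widetilde T_w=y^{\ell(w)}e^\rho(\widetilde T_w^\vee|_{y\mapsto y^{-1}})e^{-\rho}$, which the paper simply asserts as ``immediate to check'' (their equation \eqref{equ:twoDL}).
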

\begin{proof} For any $u\in W$,
\begin{equation}\label{equ:mceuler}
\chi_T(G/B, MC_y(X(u)^\circ))=MC_y[X(u)^\circ\rightarrow \pt]=MC_y[\bbA^1\rightarrow \pt]^{\ell(u)}=(-y)^{\ell(u)},
\end{equation}
where the second equality follows from \cite[Theorem 4.2(3)]{AMSS:motivic}.
Therefore, taking the equivariant Euler characteristics of both sides of \Cref{Chev:MC}, we get
\[\widetilde{T^\vee_{w}}(e^\lambda)=\chi_T\left(G/B, \calL_\lambda\otimes MC_y(X(w)^\circ)\right)=\sum_uC_{\lambda,u}^w(-y)^{\ell(u)}.\]
On the other hand, it is immediate to check the following relation between the two Demazure--Lusztig operators:
\begin{equation}\label{equ:twoDL}
\widetilde{T_{w}}=e^\rho \widetilde{T^\vee_{w}}|_{y\mapsto y^{-1}}e^{-\rho}y^{\ell(w)}.
\end{equation}
Hence,
\[\chi_T\left(G/B, \calL_\lambda\otimes MC_y'(X(w)^\circ)\right)=\widetilde{T_{w}}(e^\lambda)=e^{\rho}\sum_{u}(-1)^{\ell(u)}C_{\lambda-\rho,u}^w|_{y\mapsto y^{-1}}y^{\ell(w)-\ell(u)}\/.\]
The proof ends by applying \Cref{prop:geo}.
\end{proof}
\begin{remark}
Notice that $C_{0,u}^w=\delta_{u,w}$. The above proof shows
\[\chi_T\left(G/B, \calL_\rho\otimes MC_y'(X(w)^\circ)\right)=(-1)^{\ell(w)}e^{\rho}.\]
\end{remark}
As a corollary we prove a variant of the Casselman--Shalika formula \eqref{equ:cs}, obtained by Li \cite{li1992some}, see also \cite[Proposition 9.4]{brubaker2019colored}. First define 
\begin{equation}\label{equ:R}
R_\lambda(y):=\chi_T(G/B, \lambda_y (T^{*}_{G/B})\otimes \calL_\lambda)\in K_T(pt)[y].
\end{equation}
\begin{corol} Let $\lambda$ be an anti-dominant integral weight. Then 
\[ \sum_{w}y^{-\ell(w)}\calW_{\lambda,w} = e^\rho R_{\lambda-\rho}(y^{-1}) \/. \]
\end{corol}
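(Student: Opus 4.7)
The plan is to deduce the identity from the formula $\mathcal{W}_{\lambda,w}=\widetilde{T_w}(e^\lambda)$ of \eqref{equ:iwahoriwhittaker}, combined with the relation \eqref{equ:twoDL} between the two Demazure--Lusztig operators and the additivity of motivic Chern classes with respect to the Schubert cell decomposition.

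First I would apply \eqref{equ:twoDL} to rewrite
\[ y^{-\ell(w)}\widetilde{T_{w}} \;=\; e^{\rho}\,\widetilde{T^\vee_{w}}\big|_{y\mapsto y^{-1}}\,e^{-\rho}. \]
Summing over $w\in W$ and evaluating at $e^{\lambda}$ yields
\[ \sum_{w}y^{-\ell(w)}\,\widetilde{T_w}(e^\lambda) \;=\; e^{\rho}\Bigl(\sum_{w}\widetilde{T^\vee_{w}}\Bigr)\Bigr|_{y\mapsto y^{-1}}(e^{\lambda-\rho}), \]
so the task reduces to evaluating $\sum_{w}\widetilde{T^\vee_w}(e^\mu)$ at $\mu=\lambda-\rho$.

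For this, I would use that the Schubert cells partition $G/B = \bigsqcup_{w\in W} X(w)^\circ$, so by additivity of the motivic Chern transformation and the normalization property in \Cref{thm:existence}(2),
\[ \sum_{w\in W} MC_y(X(w)^\circ) \;=\; MC_y[\mathrm{id}\colon G/B\to G/B] \;=\; \lambda_y(T^*_{G/B}). \]
Tensoring with $\mathcal{L}_\mu$, taking the equivariant Euler characteristic, and invoking \Cref{prop:geo}(1) then gives
\[ \sum_{w}\widetilde{T^\vee_w}(e^\mu) \;=\; \chi_T\bigl(G/B,\,\mathcal{L}_\mu\otimes \lambda_y(T^*_{G/B})\bigr) \;=\; R_\mu(y), \]
by the definition \eqref{equ:R} of $R_\mu(y)$. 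Substituting $\mu=\lambda-\rho$ and performing the substitution $y\mapsto y^{-1}$ yields the claim.

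There is no real obstacle; the computation is essentially bookkeeping. The one point that requires a brief check is that the prefactors $y^{\ell(w)}$ on the two sides of \eqref{equ:twoDL} are arranged so that the sum $\sum_w y^{-\ell(w)}\widetilde{T_w}$ converts cleanly, under the $y\mapsto y^{-1}$ substitution applied to $\widetilde{T^\vee_w}$, into $e^\rho(\sum_w\widetilde{T^\vee_w})|_{y\mapsto y^{-1}}e^{-\rho}$; once this is verified, the identity drops out immediately from the Casselman--Shalika-type computation above and can be viewed as a refinement of \eqref{equ:cs} recording the contribution of each Iwahori--Whittaker function weighted by $y^{-\ell(w)}$.
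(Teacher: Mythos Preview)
Your proposal is correct and follows essentially the same approach as the paper: both use \eqref{equ:twoDL} to convert $y^{-\ell(w)}\widetilde{T_w}$ into $e^\rho\,\widetilde{T^\vee_w}|_{y\mapsto y^{-1}}\,e^{-\rho}$, and both use the additivity of motivic Chern classes together with \Cref{prop:geo}(1) to identify $\sum_w\widetilde{T^\vee_w}(e^\mu)$ with $R_\mu(y)$. The only difference is the order of presentation.
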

\begin{proof}
By the additivity of the motivic Chern classes and Proposition \ref{prop:geo}(1), 
\begin{equation}\label{equ:Rmc}
	R_\lambda(y)=\sum\chi_T\left(G/B, \calL_\lambda\otimes MC_y(X(w)^\circ)\right)=\sum_w\widetilde{T^\vee_{w}}(e^\lambda).
\end{equation}
On the other hand, for an anti-dominant weight $\lambda$, we have
\begin{align*}
	\sum_{w}y^{-\ell(w)}\calW_{\lambda,w}&=\sum_wy^{-\ell(w)}\widetilde{T_{w}}(e^\lambda)\\
	&=e^\rho\sum_w y^{-\ell(w)}e^{-\rho}\widetilde{T_{w}}e^{\rho}(e^{\lambda-\rho})\\
	&=e^\rho\sum_w\widetilde{T^\vee_{w}}|_{y\mapsto y^{-1}}(e^{\lambda-\rho})\\
	&=e^\rho R_{\lambda-\rho}(y^{-1}).
\end{align*}
Here, the first equality follows from \Cref{prop:geo}(2), the third one follows from \Cref{equ:twoDL}, 
and the last one follows from \Cref{equ:Rmc}. 
\end{proof}
\subsection{Hall--Littlewood polynomials}
In this section, we assume either $\lambda$ or $-\lambda$ to be a dominant integral weight. Set 
$\Sigma_\lambda:=\{\alpha\in \Sigma\mid \langle\lambda,\alpha^\vee\rangle=0\}$, and
let $R^+_\lambda$ be the set of positive roots which are linear combinations of the simple roots 
in $\Sigma_\lambda$. Denote by $W_\lambda\subset W$ the subgroup generated 
by the simple reflections $s_\alpha$, where $\alpha\in \Sigma_\lambda$.
Let $W^\lambda$ be the set of minimal length representatives for the cosets $W/W_\lambda$.
Finally, let $P_\lambda$ be the parabolic subgroup containing the Borel 
subgroup $B$ defined by the condition that $W_{P_\lambda}=W_\lambda$. 

\begin{defin}\label{def:H_lambda}
	\begin{enumerate}
		\item Define
			$H_\lambda(y):=
			\chi_T(G/P_\lambda, \lambda_y (T^{*}_{G/P_\lambda})\otimes \calL_\lambda)\in K_T(pt)[y]$.
		\item (Hall--Littlewood polynomial cf. \cite[P.208 (2.2)]{macdonald1998symmetric})
		For a dominant weight $\lambda$, define
		$$HL_\lambda(\bfx;t):=\sum_{w\in W^{\lambda}} w 
		\left( \bfx^\lambda \prod_{\alpha\in R^{+}\setminus R^{+}_{\lambda}} 
		\frac{1-t \bfx^{-\alpha}}{1-\bfx^{-\alpha}}
		\right)$$ where $\bfx^\lambda$ denotes $e^\lambda$.
		\end{enumerate}
		 
\end{defin}
Let $\pi_\lambda:G/B \to G/P_\lambda$ be the natural projection. Then
\[ \lambda_y(G/B) = \pi_\lambda^*(\lambda_y(G/P_\lambda)) \cdot \lambda_y(P_\lambda/B) \/. \]
By the projection formula, and using that $\pi_\lambda^*(\mathcal{L}_\lambda)=\mathcal{L}_\lambda$, we have that
\[ H_\lambda(y)  = \chi_T(G/P_\lambda, \lambda_y (T^{*}_{G/P_\lambda})\otimes \calL_\lambda) 
= \frac{\chi_T(G/B, \lambda_y (T^{*}_{G/B})\otimes \calL_\lambda)}{\chi_T(P_\lambda/B, \lambda_y (T^{*}_{P_\lambda/B}))} = \frac{R_\lambda(y)}{\sum_{w\in W_\lambda}(-y)^{\ell(w)}} \]
Here the last equality follows from \Cref{equ:R}, and the fact that 
\[ \chi_T(P_\lambda/B, \lambda_y (T^{*}_{P_\lambda/B})) = \sum_{ w \in W_\lambda} \chi(MC_y(X(w)^\circ)) =
\sum_{ w \in W_\lambda} (-y)^{\ell(w)} \/. \]
The relation between $H_\lambda$ and the Hall--Littlewood polynomial, summarized next, was obtained in a related (upcoming) collaboration with B. Ion.
\begin{lemma}
We have the following formulae for $H_\lambda(y)$:
\begin{equation}\label{parabolicMC}
H_\lambda(y)=\sum_{w\in W^P} \sum_{u\in W} C^{w}_{u,\lambda} (-y)^{\ell(u)},
\end{equation}
and
\begin{equation}\label{localization}
	H_\lambda(y)
	=\displaystyle
	\sum_{w\in W^{\lambda}} w \left( e^{\lambda}
	\prod_{\alpha\in R^{+}\setminus R^{+}_{\lambda}}\frac{1+y e^{\alpha}}{1-e^{\alpha}}
	\right).
\end{equation}
\end{lemma}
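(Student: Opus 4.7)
The two identities will be proved by independent, relatively direct arguments. For the first formula I would combine the parabolic Chevalley formula (\Cref{thm:parab}) with the additivity of the motivic Chern class transformation, applied to the Schubert stratification of $G/P_\lambda$. For the second formula I would invoke Atiyah--Bott--Berline--Vergne equivariant K-theoretic localization on $G/P_\lambda$.

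For identity \eqref{parabolicMC}, the first step is to verify that $\lambda\in X^*(T)_{P_\lambda}$, which is the hypothesis required to apply \Cref{thm:parab} with $P=P_\lambda$. This reduces to the fact that $R_\lambda$ is a root subsystem of $R$ with simple roots $\Sigma_\lambda$, so every $\gamma^\vee$ with $\gamma\in R^+_\lambda$ lies in the $\mathbb{Z}$-span of $\{\alpha^\vee:\alpha\in\Sigma_\lambda\}$, and the defining vanishing of $\lambda$ on these simple coroots then extends to all of $R^+_\lambda$. Next, using the normalization axiom of \Cref{thm:existence} together with additivity of $MC_y$ on the Schubert stratification, one writes $\lambda_y(T^*_{G/P_\lambda})=\sum_{w\in W^{P_\lambda}} MC_y(X(wW_{P_\lambda})^\circ)$. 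Applying $\chi_T(G/P_\lambda,\mathcal{L}_\lambda\otimes -)$ and expanding each term via \Cref{thm:parab}, and using $\chi_T(G/P_\lambda,MC_y(X(uW_{P_\lambda})^\circ))=(-y)^{\ell(u)}$ for $u\in W^{P_\lambda}$ (exactly as in \eqref{equ:mceuler}, via the pushforward functoriality of $MC_y$ and \eqref{E:MCpush}), the contribution from a fixed $w\in W^{P_\lambda}$ becomes $\sum_{u\in W^{P_\lambda}}\sum_{v\in uW_{P_\lambda}}(-y)^{\ell(v)}C_{v,\lambda}^w$. The inner double sum collapses to a single sum over $v\in W$, yielding \eqref{parabolicMC}.

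For identity \eqref{localization}, I would apply the K-theoretic Lefschetz fixed-point formula directly to $H_\lambda(y)=\chi_T(G/P_\lambda,\lambda_y(T^*_{G/P_\lambda})\otimes\mathcal{L}_\lambda)$. The $T$-fixed points of $G/P_\lambda$ are the cosets $wP_\lambda$ for $w\in W^\lambda$, and at such a point the tangent space has $T$-weights $\{-w(\alpha):\alpha\in R^+\setminus R^+_\lambda\}$. Hence $\mathcal{L}_\lambda|_{wP_\lambda}=e^{w\lambda}$, $\lambda_y(T^*_{G/P_\lambda})|_{wP_\lambda}=\prod_\alpha(1+ye^{w\alpha})$, and the equivariant K-theoretic Euler class of the cotangent evaluates to $\prod_\alpha(1-e^{w\alpha})$. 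Assembling these data into the Lefschetz sum and rewriting each summand as the action of $w$ on the corresponding expression at the identity fixed point produces \eqref{localization}.

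Neither step is technically demanding; the principal bookkeeping obstacle lies in \eqref{parabolicMC}, where one must confirm the hypothesis $\lambda\in X^*(T)_{P_\lambda}$ via the coroot containment mentioned above, and then correctly track the telescoping of the double sum over $u\in W^{P_\lambda}$ and $v\in uW_{P_\lambda}$ into a single sum over $v\in W$. The localization computation for \eqref{localization} is routine once the tangent weights at each fixed point have been identified.
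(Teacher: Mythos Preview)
Your proposal is correct and follows essentially the same approach as the paper: for \eqref{parabolicMC} you use additivity of $MC_y$ over the Schubert stratification, the parabolic Chevalley formula, and the Euler characteristic computation $\chi_T(MC_y(X(uW_P)^\circ))=(-y)^{\ell(u)}$, exactly as the paper does; for \eqref{localization} you use equivariant $K$-theoretic localization with the same identification of fixed points and tangent weights. The only difference is that you spell out the verification $\lambda\in X^*(T)_{P_\lambda}$ and the collapsing of the double sum, which the paper leaves implicit.
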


\begin{proof}
\Cref{parabolicMC} follows because
$\lambda_y(T^{*}_{G/P})=MC_y(G/P)=\displaystyle\sum_{w\in W^P}MC_y(X(w W_P)^\circ)$, \Cref{thm:parab},
and $\chi_T(MC_y(X(uW_P)^\circ))=(-y)^{\ell(u)}$ for any $u\in W^P$.
\Cref{localization} follows from the localization formula (cf. \cite{nielsen:diag}, \cite[Theorem 2.1 (c)]{mihalcea2022whittaker}.) To be more specific, the torus fixed points in $G/P_\lambda$ are $\{wP_\lambda\mid w\in W^P\}$, and the torus weights of the tangent space at the fixed point $wP_\lambda$ are $\{-w\alpha\mid \alpha\in R^+\setminus R^+_\lambda\}$.
\end{proof}

\begin{corol} For a dominant integral $\lambda$,
the Hall--Littlewood polynomial $HL_\lambda({x};t)$ can be expressed using 
$H_{-\lambda}(y)$ or
$H_{\lambda}(y)$
as follows:
	\begin{equation}\label{MC-HL1} 
		HL_\lambda(\bfx;t)=H_{-\lambda} (y) |_{e^{\alpha}\mapsto \bfx^{-\alpha},y\mapsto -t},
	\end{equation}
and
	\begin{equation}\label{MC-HL2}
		HL_\lambda(\bfx;t)=\left(\frac{1}{(-y)^{\dim G/P_\lambda}} 
		H_\lambda (y)\right) |_{e^{\alpha}\mapsto \bfx^{\alpha},y\mapsto -t^{-1}}.
	\end{equation}
\end{corol}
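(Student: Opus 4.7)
The plan is to derive both identities directly from the localization formula \eqref{localization} for $H_\lambda(y)$, which expresses $H_\lambda(y)$ as a Weyl sum whose shape already closely mirrors that of $HL_\lambda(\bfx;t)$. Since $\Sigma_{-\lambda}=\Sigma_\lambda$, $R^{+}_{-\lambda}=R^{+}_{\lambda}$, $W_{-\lambda}=W_\lambda$, and $W^{-\lambda}=W^\lambda$, the formula \eqref{localization} applies verbatim to both $\lambda$ and $-\lambda$, so only an algebraic rearrangement is needed.

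For \eqref{MC-HL1}, I would write
\[ H_{-\lambda}(y)=\sum_{w\in W^{\lambda}} w\!\left(e^{-\lambda}\prod_{\alpha\in R^{+}\setminus R^{+}_{\lambda}}\frac{1+ye^{\alpha}}{1-e^{\alpha}}\right), \]
and then substitute $e^\alpha\mapsto \bfx^{-\alpha}$ and $y\mapsto -t$. The Weyl group action is compatible with this substitution (the action on $\bfx^{\pm\alpha}$ is the natural one $w\cdot\bfx^{\pm\alpha}=\bfx^{\pm w\alpha}$), so each term inside $w(\cdot)$ becomes exactly the corresponding term in the defining expression of $HL_\lambda(\bfx;t)$.

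For \eqref{MC-HL2}, I would apply \eqref{localization} to $H_\lambda(y)$ and divide by $(-y)^{\dim G/P_\lambda}=(-y)^{|R^{+}\setminus R^{+}_{\lambda}|}$, distributing one factor of $(-y)^{-1}$ to each $\alpha\in R^{+}\setminus R^{+}_\lambda$. After substituting $y\mapsto -t^{-1}$ and $e^\alpha\mapsto \bfx^\alpha$, each factor becomes
\[ \frac{1+ye^{\alpha}}{(-y)(1-e^{\alpha})}\;\longmapsto\;\frac{t-\bfx^{\alpha}}{1-\bfx^{\alpha}}. \]
The last rational function coincides with $\dfrac{1-t\bfx^{-\alpha}}{1-\bfx^{-\alpha}}$ by multiplying the numerator and denominator by $\bfx^{-\alpha}$, which produces exactly the factors appearing in $HL_\lambda(\bfx;t)$. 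Combined with the $\bfx^\lambda$ (from $e^\lambda$), this matches the definition of $HL_\lambda(\bfx;t)$ term-by-term.

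The argument is essentially algebraic bookkeeping: there is no substantive obstacle, but one must be careful that the substitutions commute with the Weyl group action (they do, being just a relabeling of the group algebra of $X^*(T)$), that $\dim G/P_\lambda$ equals the cardinality $|R^{+}\setminus R^{+}_\lambda|$ so the overall power of $(-y)$ distributes evenly over the product, and that the simple identity $\tfrac{t-\bfx^{\alpha}}{1-\bfx^{\alpha}}=\tfrac{1-t\bfx^{-\alpha}}{1-\bfx^{-\alpha}}$ reconciles the two presentations of $HL_\lambda$.
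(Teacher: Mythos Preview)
Your proposal is correct and is precisely the argument the paper has in mind: the corollary is stated immediately after the lemma giving the localization formula \eqref{localization}, with no separate proof, so the intended derivation is exactly the term-by-term substitution into \eqref{localization} that you carry out. Your bookkeeping observations (that $\Sigma_{-\lambda}=\Sigma_\lambda$ so $W^{-\lambda}=W^\lambda$, that $\dim G/P_\lambda=|R^+\setminus R^+_\lambda|$, and the identity $\tfrac{t-\bfx^{\alpha}}{1-\bfx^{\alpha}}=\tfrac{1-t\bfx^{-\alpha}}{1-\bfx^{-\alpha}}$) are exactly the small checks one needs.
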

We assume next that $\lambda$ is a dominant integral weight. Fix a reduced $(-\lambda)$-chain $\Gamma=(-\beta_1,-\beta_2,\cdots, -\beta_l)$ and
the  sequence of hyperplanes $H_{\beta_1,d_1},H_{\beta_2,d_2},\ldots, H_{\beta_l, d_l}$. This corresponds to an alcove path from $\fA$ to $\fA+\lambda$. Since $\lambda$ is dominant,
$\beta_j>0, d_j>0$.

Then we can recover the following known formula for the Hall--Littlewood polynomial, see \cite[Theorem 2.7]{lenart2011hall}.
\begin{prop}\cite{schwer2006galleries,R06,lenart2011hall}\label{HL_Lenart}
\begin{equation}\label{HL:formula1}
HL_\lambda({\bf x};t)=\sum_{(w,J,u)\in \mathcal A(\Gamma)}
t^{\frac{\ell(w)+\ell(u)-|J|}{2}} (1-t)^{|J|} {\bf x}^{w\hat{r}_{J_<}(\lambda)},
\end{equation}
where (with notation as in \S \ref{ss:lambda-chain}), 
\[ \mathcal A(\Gamma)=\{(w,J,u) \mid w\in W^P, u\in W,J\subset \{1,2,\ldots, l\},
	u\stackrel{J_{>}}{\longrightarrow} w \} \/. \]
\end{prop}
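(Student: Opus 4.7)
The approach is to chain together three ingredients already established in the paper: the bridge \eqref{MC-HL1} between $H_{-\lambda}(y)$ and the Hall--Littlewood polynomial; the parabolic expansion \eqref{parabolicMC} of $H_{-\lambda}(y)$ in terms of Chevalley coefficients; and the $\lambda$-chain formula from \Cref{thm:lambda} for those coefficients. First I would observe that $P_{-\lambda}=P_\lambda$, so that by \eqref{parabolicMC} we have
\[
H_{-\lambda}(y)=\sum_{w\in W^P}\sum_{u\in W} C^{w}_{u,-\lambda}\,(-y)^{\ell(u)}.
\]
By \eqref{MC-HL1}, it then suffices to compute each $C^{w}_{u,-\lambda}$ via the reduced $(-\lambda)$-chain $\Gamma=(-\beta_1,\ldots,-\beta_l)$ given in the setup, and then perform the substitution $e^{\alpha}\mapsto\bfx^{-\alpha}$, $y\mapsto -t$.

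Next I would apply \Cref{thm:lambda}, with the integral weight in that theorem taken to be $-\lambda$ and the $(-\lambda)$-chain taken to be $\Gamma$. The separating hyperplanes are precisely $h_j=H_{\beta_j,d_j}$, so the linear reflections $r_{h_j}=s_{\beta_j}$ and the affine reflections $\hat r_{h_j}$ are exactly those in the statement. Formula \eqref{equ:lambda} (applied with weight $-\lambda$, so that $-w\hat r_{J_<}(-(-\lambda))=-w\hat r_{J_<}(\lambda)$) yields
\[
C^{w}_{u,-\lambda}=\sum_{J}(-1)^{n(J)}(-1-y)^{|J|}(-y)^{\frac{\ell(w)-\ell(u)-|J|}{2}}\,e^{-w\hat r_{J_<}(\lambda)},
\]
the sum being over subsets $J\subset\{1,\ldots,l\}$ with $u\stackrel{J_>}{\longrightarrow}w$. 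Now $\lambda$ is dominant, so every chain root $-\beta_j$ is negative and hence $n(J)=|J|$; this collapses $(-1)^{n(J)}(-1-y)^{|J|}$ to $(1+y)^{|J|}$.

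Inserting this into the expansion of $H_{-\lambda}(y)$ and combining powers of $-y$ via
\[
\tfrac{\ell(w)-\ell(u)-|J|}{2}+\ell(u)=\tfrac{\ell(w)+\ell(u)-|J|}{2},
\]
one gets
\[
H_{-\lambda}(y)=\sum_{(w,J,u)\in\mathcal A(\Gamma)} (1+y)^{|J|}\,(-y)^{\frac{\ell(w)+\ell(u)-|J|}{2}}\,e^{-w\hat r_{J_<}(\lambda)}.
\]
Applying the substitution dictated by \eqref{MC-HL1}, namely $y\mapsto -t$ (so $1+y\mapsto 1-t$ and $(-y)^k\mapsto t^k$) and $e^{\alpha}\mapsto \bfx^{-\alpha}$ (so $e^{-w\hat r_{J_<}(\lambda)}\mapsto \bfx^{w\hat r_{J_<}(\lambda)}$), yields the claimed formula \eqref{HL:formula1}.

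The argument is essentially bookkeeping once the three earlier results are in place; the one step that requires attention is the sign computation, i.e.\ verifying that for a dominant $\lambda$ the $(-\lambda)$-chain roots are all negative so that $n(J)=|J|$ and the factor $(-1)^{n(J)}(-1-y)^{|J|}$ simplifies to $(1+y)^{|J|}$. I expect this to be the only subtle point; everything else is a direct substitution and re-indexing.
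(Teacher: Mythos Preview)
Your proof is correct and follows essentially the same route as the paper's own argument, which simply cites \eqref{MC-HL1}, \eqref{parabolicMC}, and \eqref{equ:lambda} applied to the $(-\lambda)$-chain $\Gamma$. Your identification of the one nontrivial step---that dominance of $\lambda$ forces all $-\beta_j<0$, hence $n(J)=|J|$ and $(-1)^{n(J)}(-1-y)^{|J|}=(1+y)^{|J|}$---is exactly the point the paper leaves implicit.
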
	
\begin{proof} 
This is a direct consequence of \Cref{MC-HL1}, \Cref{parabolicMC}, and
\Cref{equ:lambda} from \Cref{thm:lambda}, applied to the $(-\lambda)$-chain $(-\beta_1,-\beta_2,\cdots, -\beta_l)$.
\end{proof}
We also get a new formula for $HL_\lambda(\bfx;t)$ as follows.
\begin{prop}\label{new_HL}
\begin{equation}\label{HL:formula2}
HL_\lambda({\bf x};t)=
\sum_{(u,J,w)\in\; \mathcal A^{op}(\Gamma)}
t^{\frac{2\dim G/P_\lambda-\ell(w)-\ell(u)-|J|}{2}} (1-t)^{|J|} {\bf x}^{u \hat{r}_{J_<}(\lambda))},
\end{equation}
where $\mathcal A^{op}(\Gamma)=\{(u,J,w) \mid
	w\in W^P, u\in W,
	J\subset \{1,2,\ldots, l\}, 	u\stackrel{J_{<}}{\longrightarrow} w\}$.
\end{prop}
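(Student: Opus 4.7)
\begin{demo}
The strategy mirrors the proof of \Cref{HL_Lenart}, but now passes through \Cref{MC-HL2} rather than \Cref{MC-HL1}, so that the relevant Chevalley coefficients become $C^{w}_{u,\lambda}$ instead of $C^{w}_{u,-\lambda}$. First I would combine \Cref{MC-HL2} with \Cref{parabolicMC} to obtain
\[
HL_\lambda(\bfx;t)=\frac{1}{(-y)^{\dim G/P_\lambda}}\sum_{w\in W^P}\sum_{u\in W} C^{w}_{u,\lambda}(-y)^{\ell(u)}\bigg|_{e^\alpha\mapsto \bfx^\alpha,\ y\mapsto -t^{-1}}.
\]
The goal is then to express $C^{w}_{u,\lambda}$ using the fixed reduced $(-\lambda)$-chain $\Gamma=(-\beta_1,\ldots,-\beta_l)$ so that the index set of the resulting sum is precisely $\mathcal{A}^{op}(\Gamma)$.

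To this end, I would apply \Cref{equ:-lambda} of \Cref{thm:lambda} with the role of ``$\lambda$'' played by $-\lambda$ and the chain $\Gamma$ as the $(-\lambda)$-chain; this directly computes $C^{w}_{u,-(-\lambda)}=C^{w}_{u,\lambda}$ as a sum over subsets $J$ satisfying $u\stackrel{J_<}{\longrightarrow}w$. Two simplifications make the final expression clean. First, \Cref{rem:w-to-u} gives $-w\tilde{r}_{J_>}(-\lambda)=u\hat{r}_{J_<}(\lambda)$, which rewrites the weight factor as $e^{u\hat{r}_{J_<}(\lambda)}$, exactly matching the exponent in \Cref{HL:formula2}. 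Second, since $\lambda$ is dominant, every root of $\Gamma$ is negative, so the sign count $n(J)$ from \Cref{thm:lambda} equals $|J|$.

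Having these ingredients in hand, what remains is routine bookkeeping of the specializations. Under $y\mapsto -t^{-1}$ the factor $(-y)^{\ell(u)+(\ell(w)-\ell(u)-|J|)/2}$ becomes $t^{-(\ell(w)+\ell(u)-|J|)/2}$, the factor $(-1)^{n(J)}(1+y)^{|J|}=(-1)^{|J|}(1+y)^{|J|}$ becomes $t^{-|J|}(1-t)^{|J|}$, and the normalization $1/(-y)^{\dim G/P_\lambda}$ contributes $t^{\dim G/P_\lambda}$. Collecting all the $t$-powers yields the exponent $(2\dim G/P_\lambda-\ell(w)-\ell(u)-|J|)/2$, and $e^{u\hat{r}_{J_<}(\lambda)}\mapsto \bfx^{u\hat{r}_{J_<}(\lambda)}$, which together produce exactly \Cref{HL:formula2}.

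The only genuine obstacle in the plan is not conceptual but notational: one must be careful with the $J_<$ versus $J_>$ conventions when invoking \Cref{equ:-lambda} with the chain parameter $-\lambda$, and consistently apply \Cref{rem:w-to-u} so that the ``$u$'' appearing in the exponent of $\bfx$ in \Cref{HL:formula2} (rather than the ``$w$'' of \Cref{HL:formula1}) is correctly produced. Once this is aligned the identity follows by direct substitution, so no further structural input beyond \Cref{MC-HL2}, \Cref{parabolicMC}, \Cref{thm:lambda}, and \Cref{rem:w-to-u} is needed.
\end{demo}
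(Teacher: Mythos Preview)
Your proposal is correct and follows essentially the same approach as the paper's own proof: both combine \Cref{MC-HL2}, \Cref{parabolicMC}, and \Cref{equ:-lambda} of \Cref{thm:lambda} applied to the $(-\lambda)$-chain $\Gamma$, together with the weight rewriting from \Cref{rem:w-to-u} (equivalently \Cref{rem:lambda_chev}(1)). Your explicit tracking of the specialization $y\mapsto -t^{-1}$ and the observation that $n(J)=|J|$ since all roots of $\Gamma$ are negative are exactly the bookkeeping the paper leaves implicit.
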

\begin{proof} 

This is a direct consequence of \Cref{MC-HL2}, \Cref{parabolicMC}, and
\Cref{equ:-lambda} from \Cref{thm:lambda}, applied to the $(-\lambda)$-chain
together with \Cref{rem:lambda_chev} (1).
\end{proof}
\begin{remark} When $P_\lambda=B$, the equations \eqref{HL:formula1} 
and \eqref{HL:formula2} give the same formula. The correspondence may be seen using 
the Serre duality in \Cref{parabolic:serre}. However, the formulae are in 
general different, as shown by the examples below. 
\end{remark}
\begin{example}
(Type $A_2$) Let $G=GL_3(\mathbb C)$, $T=(\mathbb C^*)^3$, and $x_i=e^{\varepsilon_i}$, for $i=1,2,3$. Let $\lambda=\varpi_1=\varepsilon_1$, then $W_\lambda=\langle s_2\rangle\subset W=\langle s_1,s_2\rangle$, and $W^\lambda=\{id, s_1,s_2 s_1\}$.  
Fix a reduced $(-\lambda)$-chain is  $(-\beta_1=-\alpha_1-\alpha_2, -\beta_2=-\alpha_1)$.
\vspace{0.2cm}

\Cref{HL_Lenart} sums over the following seven terms:

$
\begin{array}{c|c|c|r}
w&J&u&\\
\hline
\hline
id & \emptyset&id&x_1\\
\hline
s_1 & \emptyset& s_1&t x_2\\
& \{2 \}&id&(1-t) x_2\\
\hline
\end{array}
$\hspace{1cm}
$\begin{array}{c|c|c|r}
\hline
s_2s_1&\emptyset& s_2 s_1&t^2 x_3\\
&\{1\}& s_1& t(1-t) x_3\\
&\{2\}& s_2&t(1-t) x_3\\
&\{1,2\}&id& (1-t)^2 x_3\\
\hline\end{array}
$.\\
Hence, with $\bfx = (x_1, x_2, x_3)$, 
\[HL_\lambda(\bfx;t)=(x_1)+(t x_2+(1-t) x_2)+(t^2 x_3+t(1-t) x_3+ t(1-t) x_3+(1-t)^2 x_3)
=x_1+x_2+x_3.\]
On the other hand, \Cref{new_HL} sums over the following six terms:

$
\begin{array}{c|c|c|r}
w&J&u&\\
\hline
\hline
id & \emptyset&id&t^2 x_1\\
\hline
s_1 & \emptyset&s_1& t x_2\\
& \{2 \}&id&t(1-t) x_1\\
\hline
\end{array}
$
\hspace{1cm}
$\begin{array}{c|c|c|r}
\hline
s_2s_1&\emptyset&s_2 s_1&  x_3\\
&\{1\}& s_1&(1-t) x_2\\
&\{2\}&s_2 &(1-t) x_1\\
\hline\end{array}
$.\\
Hence,
\[HL_\lambda(\bfx;t)=
(t^2 x_1)+(t x_2 +t (1-t) x_1)+(x_3+(1-t)x_2+
(1-t)x_1))
=x_1+x_2+x_3.\]

\end{example}

\begin{example}
Same setup as in the above example, but with $\lambda=2 \varpi_2$. Then
$W_\lambda=\langle s_1 \rangle$, $W^\lambda=\{id, s_2, s_1 s_2 \}$, and $d=\dim G/P_\lambda=2$.
A $(-\lambda)$-chain is 
$(-\beta_1=-(\alpha_1+\alpha_2),-\beta_2=-\alpha_2, -\beta_3=-(\alpha_1+\alpha_2),-\beta_4=-\alpha_2)$
, and $d_1=1,d_2=1,d_3=2,d_4=2$.

\Cref{HL_Lenart} sums over the following twelve terms:

$
\begin{array}{c|c|c|r}
w&J&u&\\
\hline
\hline
id & \emptyset& id& x_1^2 x_2^2\\
\hline
s_2 & \emptyset& s_1& t x_1^2 x_3^2\\
& \{2 \}& id&(1-t) x_1^2 x_2 x_3\\
& \{4 \}&id&(1-t) x_1^2 x_3^2\\
\hline
\end{array}
$
\hspace{1cm}
$\begin{array}{c|c|c|r}

\hline
w=s_1s_2&\emptyset& s_1 s_2&t^2 x_2^2 x_3^2\\
&\{1\}& s_2 &t(1-t) x_1 x_2 x_3^2\\
&\{2\}& s_1 & t(1-t) x_1 x_2^2 x_3\\
&\{3\}& s_2& t (1-t) x_2^2 x_3^2\\
&\{4\}& s_1&t(1-t) x_2^2 x_3^2\\
&\{1,2\}&id& (1-t)^2 x_1 x_2^2 x_3\\
&\{1,4\}&id& (1-t)^2 x_1 x_2 x_3^2\\
&\{3,4\}& id&(1-t)^2 x_2^2 x_3^2\\
\hline
\end{array}.
$\\

We get 
$HL_\lambda(\bfx;t)=s_{22}-t s_{211}$, where
\[ s_{22}=x_1^2 x_2^2+x_1^2 x_3^2+x_2^2 x_3^2+
x_1^2 x_2 x_3+x_1 x_2^2 x_3+x_1 x_2 x_3^2 \/,
\]
and
\[ s_{211}=x_1^2 x_2 x_3+x_1 x_2^2 x_3+x_1 x_2 x_3^2 \/. \]
The \Cref{new_HL} sums over the following ten terms: \\
$
\begin{array}{c|c|c|r}
w&J&u&\\
\hline
\hline
id & \emptyset& id & t^2 x_1^2 x_2^2\\
\hline
s_2 & \emptyset& s_2& t x_1^2 x_3^2\\
& \{2 \}& id &t (1-t) x_1^2 x_2 x_3\\
& \{4 \}&id&t (1-t) x_1^2 x_2^2\\
\hline
\end{array}
$\hspace{1cm}
$\begin{array}{c|c|c|r}
\hline
s_1s_2&\emptyset& s_1 s_2& x_2^2 x_3^2\\
&\{1\}&  s_2&(1-t) x_1 x_2 x_3^2\\
&\{2\}& s_1&(1-t) x_1 x_2^2 x_3\\
&\{3\}&  s_2&(1-t) x_1^2 x_3^2\\
&\{4\}& s_1&(1-t) x_1^2 x_2^2\\
&\{2,3 \}& id&(1-t)^2 x_1^2 x_2 x_3\\
\hline
\end{array}.
$\\

The summation also gives 
$HL_\lambda(\bfx;t)=(x_1^2 x_2^2+x_1^2 x_3^2+x_2^2 x_3^2)+(1-t)(x_1^2 x_2 x_3+x_1 x_2^2 x_3+x_1 x_2 x_3^2)
=s_{22}-t s_{211}$.

\end{example}

\appendix{
\section{Chevalley formulae for the Chern--Schwartz--MacPherson classes of Schubert cells}\label{eqCSM}

In this appendix, we give a short proof of the Chevalley formulae for the equivariant Chern--Schwartz--MacPherson (CSM) and Segre--MacPherson (SM) classes of the Schubert cells in the (partial) flag varieties, see \cite[Thm~ 9.10]{AMSS:shadows}
and \Cref{thm:che4} below. Our proof again relies on the action of the appropriate Hecke algebra, this time on the equivariant cohomology of $G/P$. {Besides the intrinsic interest in these Chevalley formulae, we mention that recursions based on it were recently utilized to obtain proofs of Nakada's colored hook formula for finite Weyl groups \cite{mihalcea2022hook} and more general Coxeter groups \cite{mihalcea2024Coxhook}.}

\subsection{Degenerate affine Hecke algebra}
\subsubsection{A change of bases formula}
Recall that the degenerate affine Hecke algebra $\calH$ is generated by $T_w, w\in W$ and $x_\lambda$, $\lambda\in X^*(T)$, such that
\begin{itemize}
	\item
	$T_wT_u=T_{wu}$ for any $w,u\in W$;
	\item 
	$x_\lambda x_\mu=x_\mu x_\lambda$ for any $\lambda,\mu \in X^*(T)$;
	\item 
	$x_{\lambda+\mu}=x_\lambda+ x_\mu$ for any $\lambda,\mu \in X^*(T)$;
	\item
	for any simple root $\alpha_i$, $T_{s_i}x_\lambda-x_{s_i\lambda}T_{s_i}=-\langle\lambda,\alpha_i^\vee\rangle$.
\end{itemize}

We have the following commutation relation.
\begin{lemma}\label{lem:comm}
	For any $w\in W$ and $\lambda\in X^*(T)$, the following holds in $\calH$
	\[T_wx_\lambda =x_{w\lambda}T_w -\sum_{\alpha>0,ws_\alpha <w}\langle\lambda,\alpha^\vee\rangle T_{ws_\alpha }.\]
\end{lemma}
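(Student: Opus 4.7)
The plan is to proceed by induction on $\ell(w)$, using the defining commutation relation $T_{s_i}x_\lambda - x_{s_i\lambda}T_{s_i} = -\langle \lambda,\alpha_i^\vee\rangle$ as both the base case and the engine of the inductive step.

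\emph{Base case.} For $w=s_i$, the only positive root $\alpha$ with $s_i s_\alpha < s_i$ is $\alpha=\alpha_i$, in which case $s_is_\alpha = e$ and $T_{s_is_\alpha}=T_e=1$. Thus the claimed formula reduces exactly to the defining relation $T_{s_i}x_\lambda = x_{s_i\lambda}T_{s_i} - \langle \lambda,\alpha_i^\vee\rangle$.

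\emph{Inductive step.} Suppose the formula holds for all elements of length $<\ell(w)$, and write $w=s_i w'$ with $\ell(w)=\ell(w')+1$; in particular $(w')^{-1}\alpha_i >0$. Using $T_w=T_{s_i}T_{w'}$, the inductive hypothesis applied to $w'$, and then the defining relation to commute $T_{s_i}$ past $x_{w'\lambda}$, one computes
\[
T_w x_\lambda \;=\; x_{w\lambda}T_w \;-\;\langle \lambda,(w')^{-1}\alpha_i^\vee\rangle\, T_{w'} \;-\; \sum_{\substack{\alpha>0\\ w's_\alpha<w'}}\langle\lambda,\alpha^\vee\rangle\, T_{s_iw's_\alpha}.
\]

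\emph{Matching the two sums.} The main (and only nontrivial) step is to identify the right-hand side above with the target sum $-\sum_{\beta>0,\,ws_\beta<w}\langle\lambda,\beta^\vee\rangle T_{ws_\beta}$. For this, I use the standard characterization $ws_\beta<w \iff w(\beta)<0$, together with the fact that for $\gamma\in R\setminus\{\pm\alpha_i\}$, $s_i$ preserves the sign of $\gamma$. Since $w=s_i w'$ with $(w')^{-1}\alpha_i>0$, the case $w'(\beta)=-\alpha_i$ is ruled out, so $\{\beta>0: ws_\beta<w\}$ splits as a disjoint union of
\begin{itemize}
\item $\beta=(w')^{-1}\alpha_i$, in which case $ws_\beta = w'$; and
\item $\beta=\alpha$ with $\alpha>0$ and $w'(\alpha)<0$ (equivalently $w's_\alpha<w'$), in which case $ws_\beta = s_iw's_\alpha$.
\end{itemize}
This bijection, combined with the relation $\langle w'\lambda,\alpha_i^\vee\rangle = \langle \lambda,(w')^{-1}\alpha_i^\vee\rangle$, makes the two expressions agree term by term, completing the induction.

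The main obstacle is the bookkeeping in the bijection above, particularly ensuring that no $\beta>0$ with $ws_\beta<w$ is missed and that no spurious term appears; the condition $(w')^{-1}\alpha_i>0$ coming from $\ell(s_iw')=\ell(w')+1$ is precisely what excludes the pathological case $w'(\beta)=-\alpha_i$.
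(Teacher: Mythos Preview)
Your proof is correct and follows essentially the same inductive strategy as the paper. The only difference is cosmetic: you peel off a simple reflection on the \emph{left} ($w=s_iw'$), whereas the paper peels one off on the \emph{right} ($w=(ws_i)s_i$ with $ws_i<w$), leading to the mirror-image inversion-set decomposition $\{\alpha>0:w\alpha<0\}=\{\alpha_i\}\sqcup\{s_i\alpha:\alpha>0,\,ws_i\alpha<0\}$ in place of yours; both arguments are equally standard.
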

\begin{proof}
	We utilize induction on $\ell(w)$. The claim is clear when $\ell(w)=0$ or $1$. Now assume $\ell(w)>1$, and 
	that the claim holds for any Weyl group element with length smaller than $\ell(w)$. Pick a simple root $\alpha_i$, such that $w>ws_i$. By induction,
	\[T_{ws_i}x_{s_i\lambda} = x_{w\lambda}T_{ws_i}-\sum_{\alpha>0, ws_is_\alpha<ws_i}\langle s_i\lambda,\alpha^\vee\rangle T_{ ws_is_\alpha}.\]
	Using the commutation of $T_{s_i}$ and $x_\lambda$ and the induction hypothesis we obtain:
\[ 
\begin{split}
T_wx_\lambda -x_{w\lambda}T_w = & \quad T_{ws_i}x_{s_i\lambda}T_{s_i}-\langle\lambda,\alpha_i^\vee\rangle T_{ws_i}-x_{w\lambda}T_w \\
		=&-\sum_{\alpha>0,ws_is_\alpha <ws_i}\langle \lambda,s_i\alpha^\vee\rangle T_{ws_is_\alpha}T_{s_i}-\langle\lambda,\alpha_i^\vee\rangle T_{ws_i}\\
		=&-\sum_{\alpha>0,ws_\alpha <w}\langle\lambda,\alpha^\vee\rangle T_{ws_\alpha }.
\end{split}
\]
	Here in the last equality, we have used the facts that if $ws_i <w$ then
	\begin{align*}
		\{\alpha>0\mid ws_\alpha <w\}&=\{\alpha>0\mid w\alpha<0\}\\
		&=\{\alpha_i\}\sqcup\{s_i\alpha\mid \alpha>0,ws_is_\alpha <ws_i\},
	\end{align*} 
and $T_{ws_is_\alpha}T_{s_i}=T_{ws_is_\alpha s_i}=T_{ws_{s_i(\alpha)}}$.
\end{proof}

\subsubsection{The Hecke action on the equivariant cohomology}
Since $G$ acts on $G/P$ by left multiplication, there is a natural left Weyl group action on $H_T^*(G/P)$ for any partial flag variety $G/P$ and which acts on the base ring $H_T^*(\pt)$ by the usual Weyl group action; see e.g. \cite{mihalcea2020left}.
For any $w\in W$, we use $w^L$ to denote this action.

For any simple root $\alpha_i$, define the left Demazure--Lusztig operator on $H_T^*(G/P)$ by the following formula (see \cite[Section 3.2]{mihalcea2020left}):
\[\calT_i^L:=\frac{\alpha_i+1}{\alpha_i}s_i^L-\frac{1}{\alpha_i}.\]
Then it is proved in \textit{loc. cit.} that these operators satisfy the braid relations and $(\calT_i^L)^2=\id$. Moreover, it is immediate to check the following lemma. 
\begin{lemma}\label{lem:heckeaction}
	There is an action $\Psi$ of the degenerate affine Hecke algebra $\calH$ on $H_T^*(G/P)$, sending $T_i$ to $\calT_i^L$ and $x_\lambda$ to $\lambda\in H_T^*(\pt)$. 
\end{lemma}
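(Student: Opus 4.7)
The plan is to verify that the proposed assignment $\Psi(T_i) = \calT_i^L$, $\Psi(x_\lambda) = \lambda \cdot$ respects all four defining relations of $\calH$. The relations purely among the $T_w$'s (namely $T_w T_u = T_{wu}$, which in particular forces $(T_{s_i})^2 = 1$ and the braid relations) are already established for the operators $\calT_i^L$ in \cite{mihalcea2020left}, as recalled immediately before the lemma. The relations among the $x_\lambda$'s (additivity and commutativity in $\lambda$) are automatic, since multiplication in $H_T^*(G/P)$ by elements of $H_T^*(\pt)$ is commutative and $\bbZ$-linear in the weight.

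The only substantive content is therefore the mixed commutation relation
\[
\calT_i^L \circ \lambda - (s_i\lambda) \circ \calT_i^L = -\langle \lambda, \alpha_i^\vee \rangle,
\]
which I would verify by a direct computation. Given a class $c \in H_T^*(G/P)$, the key input is that the left action $s_i^L$ on $H_T^*(G/P)$ restricts to the standard Weyl group action on the scalar ring $H_T^*(\pt)$, so $s_i^L(\lambda c) = (s_i\lambda)\, s_i^L(c)$. Expanding both sides of the claimed identity using $\calT_i^L = \frac{\alpha_i+1}{\alpha_i} s_i^L - \frac{1}{\alpha_i}$, the terms involving $s_i^L(c)$ cancel, leaving $\frac{s_i\lambda - \lambda}{\alpha_i}\,c$. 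Applying the standard identity $s_i\lambda - \lambda = -\langle \lambda, \alpha_i^\vee\rangle\, \alpha_i$ yields exactly $-\langle \lambda, \alpha_i^\vee\rangle\, c$, as required.

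The main (and essentially only) obstacle is bookkeeping: ensuring that the sign conventions in the definition of $\calT_i^L$ and in the presentation of $\calH$ are matched correctly, and that the scalar $\alpha_i$ in the denominator does indeed divide $s_i\lambda - \lambda$ in $H_T^*(\pt)$. Once the matching of conventions is fixed, the verification is a one-line calculation with no cancellations to arrange, which justifies the assertion in the text that the check is immediate.
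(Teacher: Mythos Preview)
Your proposal is correct and matches the paper's approach: the paper simply declares the lemma ``immediate to check,'' and your verification of the four defining relations (with the commutation relation reduced to the identity $s_i\lambda-\lambda=-\langle\lambda,\alpha_i^\vee\rangle\alpha_i$) is precisely the intended direct check.
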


\subsection{Definition of the CSM/SM classes}\label{CSM:def}
Next we recall the basic definitions and properties of CSM classes for 
the Schubert cells in $G/P$, we will be brief. We refer the reader e.g. to \cite{ohmoto:eqcsm,AMSS:shadows}
for details, including a construction of these classes in the equivariant setting and for general varieties.

The (additive) group of constructible functions 
$\calF(X)$ consists of functions $\varphi = \sum_Z c_Z \one_Z$, 
where the sum is over a finite set of constructible subsets $W \subset X$,
 $c_Z \in \bbZ$ are integers, and $\one_Z$ is the characteristic function of $Z$.
 For a proper morphism $f:Y\rightarrow X$, there is a linear map 
 $f_*:\calF(Y)\rightarrow \calF(X)$, such that for any constructible subset 
 $Z\subset Y$, $f_*(\one_Z)(x)=\chi_{\textit{top}}(f^{-1}(x)\cap Z)$, where 
 $x\in X$ and $\chi_{\textit{top}}$ denotes the topological Euler characteristic.
A conjecture attributed to Deligne and Grothendieck states that there is a unique natural 
transformation $c_*: \calF \to H_*$ from the functor of constructible functions 
on a complex algebraic variety $X$ to the homology functor, where all morphisms are proper, 
such that if $X$ is smooth then $c_*(\one_X)=c(T_X)\cap [X]$. 
This conjecture was proved by MacPherson \cite{macpherson:chern}; 
the class $c_*(\one_X)$ for possibly singular $X$ was shown to coincide with a class defined earlier by M.-H.~Schwartz \cite{schwartz:1, schwartz:2, BS81}. 

There is an equivariant version of MacPherson's transformation defined 
by Ohmoto \cite{ohmoto:eqcsm}. In this case one starts with a variety $X$ 
with a $T$-action, and the equivariant version $\calF^T(X)$ of 
the group of constructible functions $\calF(X)$ contains the characteristic functions $\one_Z$ for $Z$ stable under the $T$-action. If $f:X \to Y$ is a proper $T$-equivariant morphism of algebraic varieties the induced homomorphism and $Z\subset X$ is constructible and $T$-stable then one defines $f_*^T: \calF^T(X) \to \calF^T(Y)$ with the property that $f_*^T(\one_Z) = f_*(\one_Z)$. Ohmoto proves \cite[Theorem 1.1]{ohmoto:eqcsm} that there is an equivariant version of MacPherson transformation $c_*^T: \calF^T(X) \to H_*^T(X)$ that satisfies $c_*^T(\one_X) = c^T(T_X) \cap [X]_T$ if $X$ is a non-singular $T$-variety, which and is functorial with respect to proper push-forwards. 
 The last statement means that for all proper $T$-equivariant morphisms $Y\to X$ the following diagram
commutes:
$$\xymatrix{ 
\calF^T(Y) \ar[r]^{c_*^T} \ar[d]_{f_*^T} & H_*^T(Y) \ar[d]^{f_*^T} \\ 
\calF^T(X) \ar[r]^{c_*^T} & H_*^T(X).}$$ 

\begin{defin}
Let $Z$ be a $T$-invariant constructible subvariety of $X$. 
\begin{enumerate}
\item
We denote by $\csm(Z):=c_*^T(\one_{Z}) {~\in H_*^T(X)}$ the {\em equivariant Chern--Schwartz--MacPherson (CSM) class\/} of $Z$. 
\item 
If $X$ is smooth, we denote by $\ssm(Z \subset X):=\frac{c_*^T(\one_{Z})}{c(T_X)} {~\in H_*^T(X)_{\loc}}$ the {\em equivariant Segre--MacPherson (SM) class\/} of $Z$, where $H_*^T(X)_{\loc}:=H_*^T(X)\otimes_{H_*^T(pt)} \Frac H_*^T(pt)$ denotes the localization of $H_*^T(X)$, and $\Frac H_*^T(pt)$ is the fraction field of $H_*^T(pt)$.
\end{enumerate}
\end{defin}

\subsection{The Chevalley formula in cohomology}\label{CSM:Schubert} We now specialize to $X=G/P$ 
with the usual $T$-action. For simplicity we will denote by 
$\ssm(Z \subset G/P)$ simply by $\ssm(Z)$. 
We will identify the equivariant (Borel-Moore) homology group $H_*^T(G/P)$ with 
the equivariant cohomology $H^*_T(G/P)$, using the Poincar{\'e} duality. 
The sets of CSM classes of Schubert cells $\{\csm(X(wW_P)^\circ)\mid w\in W^P\}$ and of the SM classes $\{\ssm(X(wW_P)^\circ)\mid w\in W^P\}$ form bases for
$H_T^*(G/P)_{\loc}:=H_T^*(G/P)\otimes_{H_T^*(pt)}\Frac H_T^*(pt)$. Moreover, if one takes the opposite 
Schubert cells in any of these sets, then the two bases are dual under the usual intersection pairing,
see \cite[Theorem 9.4]{AMSS:shadows}:
\begin{equation}\label{equ:dualP}
	\langle \csm(X(wW_P)^\circ),\ssm(Y(uW_P)^\circ)\rangle_{G/P}=\delta_{w,u} \textit{ for any } w,u\in W^P.
\end{equation}
The left Demazure--Lusztig operator acts on the CSM classes by the following formula (see \cite[Theorem 4.3]{mihalcea2020left})
\begin{equation*}
	\calT_i^L(\csm(X(wW_P)^\circ))=\csm(X(s_iwW_P)^\circ).
\end{equation*}
Hence, for any $w\in W$,
\begin{equation}\label{equ:leftcsm}
	\csm(X(wW_P)^\circ)=\calT_w^L([X(\id)]).
\end{equation}
Recall for any $\lambda\in X^*(T)_P$, $\calL_\lambda:=G\times^P\bbC_\lambda\in\Pic_T(G/P)$. 
The following is our main result in this Appendix, and it has also been proved in 
\cite[Thm. 9.10]{AMSS:shadows} using the Chevalley formula for the cohomological stable envelopes from \cite[Thm. 3.7]{su:quantum}. Here we give a direct proof based on the action of the degenerate affine Hecke algebra.
\begin{theorem}\label{thm:che4} 
For any $w\in W^P$ and $\lambda\in X^*(T)_P$, the following holds in $H_T^*(G/P)$:
\[c_1^T(\calL_\lambda)\cup \csm (X(wW_P)^\circ)=w(\lambda) \csm (X(wW_P)^\circ)-\sum_{\alpha>0,ws_\alpha <w}\langle\lambda,\alpha^\vee\rangle\csm (X(ws_\alpha W_P)^\circ),\]
and
\[c_1^T(\calL_\lambda)\cup \ssm (Y(wW_P)^\circ)=w(\lambda) \ssm (Y(wW_P)^\circ) -\sum_{\alpha>0,ws_\alpha>w}\langle\lambda,\alpha^\vee\rangle \ssm (Y(ws_\alpha W_P)^\circ) \/.\]
\end{theorem}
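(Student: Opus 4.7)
The strategy is to push the degenerate-affine-Hecke commutation relation of \Cref{lem:comm} through to the equivariant cohomology of $G/P$ using the action $\Psi$ of \Cref{lem:heckeaction}, in direct analogy with the K-theoretic proof of \Cref{thm:chemc}. The hypothesis $\lambda\in X^*(T)_P$ ensures that $\calL_\lambda = G\times^P\C_\lambda$ descends to a well-defined $G$-equivariant line bundle on $G/P$, so $c_1^T(\calL_\lambda)$ lies in the image of $H_G^*(G/P)\to H_T^*(G/P)$ and is, in particular, invariant under each left reflection $s_i^L$.

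The first ingredient is a routine check that multiplication by any left-$W$-invariant class $\beta\in H_T^*(G/P)$ commutes with each $\calT_i^L$: a short expansion gives
\[ \calT_i^L(\beta\cup\gamma)-\beta\cup\calT_i^L(\gamma)=\frac{\alpha_i+1}{\alpha_i}\bigl(s_i^L(\beta)-\beta\bigr)\cup s_i^L(\gamma), \]
which vanishes when $\beta=c_1^T(\calL_\lambda)$. Iterating, $c_1^T(\calL_\lambda)$ commutes with every $\calT_w^L$. Combined with \Cref{equ:leftcsm}, this reduces the first displayed identity to evaluating $\calT_w^L\bigl(c_1^T(\calL_\lambda)\cup [X(\id)]\bigr)$. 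Equivariant localization at the identity $T$-fixed point $1\cdot P$, where $\calL_\lambda$ has fiber $\C_\lambda$, yields $c_1^T(\calL_\lambda)\cup [X(\id)]=\lambda\cdot [X(\id)]$, since $[X(\id)]$ is supported at a single fixed point. Applying $\Psi$ to \Cref{lem:comm} produces the operator identity
\[ \calT_w^L\circ(\lambda\,\cdot\,-) \;=\; (w\lambda)\cdot\calT_w^L \;-\sum_{\alpha>0,\,ws_\alpha<w}\langle\lambda,\alpha^\vee\rangle\,\calT_{ws_\alpha}^L \]
on $H_T^*(G/P)$; specializing to $[X(\id)]$ and reapplying \Cref{equ:leftcsm} delivers the CSM Chevalley formula.

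The Segre--MacPherson identity then follows by duality. Writing $c_1^T(\calL_\lambda)\cup \ssm(Y(uW_P)^\circ)=\sum_{w\in W^P}b_{u,w}\,\ssm(Y(wW_P)^\circ)$, the orthogonality \Cref{equ:dualP} identifies
\[ b_{u,w}=\langle c_1^T(\calL_\lambda)\cup\csm(X(wW_P)^\circ),\,\ssm(Y(uW_P)^\circ)\rangle; \]
substituting the CSM Chevalley formula just proved gives $b_{u,u}=u(\lambda)$, $b_{u,us_\alpha}=-\langle\lambda,\alpha^\vee\rangle$ whenever $\alpha>0$ and $us_\alpha>u$, and $0$ otherwise, which is precisely the second stated equation after relabeling $u \to w$. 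The only substantive point in the entire argument is the left-$W$-invariance of $c_1^T(\calL_\lambda)$, which is exactly where the hypothesis $\lambda\in X^*(T)_P$ enters.
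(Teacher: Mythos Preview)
Your proof is correct and follows essentially the same route as the paper's: both push the commutation relation of \Cref{lem:comm} through the action $\Psi$ of \Cref{lem:heckeaction}, use the $W^L$-invariance of $c_1^T(\calL_\lambda)$ to commute it past $\calT_w^L$, evaluate on $[X(\id)]$ via localization, and then deduce the Segre--MacPherson identity from the CSM one using the duality \eqref{equ:dualP}. Your write-up is in fact slightly more explicit than the paper's in two places (the commutator computation for $\calT_i^L$ against an invariant class, and the extraction of the coefficients $b_{u,w}$ in the duality step), but the argument is the same.
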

\begin{proof}
	Applying the Hecke action $\Psi$ in \Cref{lem:heckeaction} to the equation in \Cref{lem:comm}, and acting on the point class $[X(\id)]$, we get
	\begin{align*}
     c_1^T(\calL_\lambda)\cup \csm (X(wW_P)^\circ)=&c_1^T(\calL_\lambda)\cup \calT^L_w([X(\id)])\\
     =&\calT^L_w(c_1^T(\calL_\lambda)\cup [X(\id)])\\
     =&\calT^L_w(\lambda \cdot [X(\id)])\\
     =&\Psi(T_wx_\lambda)([X(\id)])\\
     =&\Psi(x_{w\lambda}T_w -\sum_{\alpha>0,ws_\alpha <w}\langle\lambda,\alpha^\vee\rangle T_{ws_\alpha })([X(\id)])\\
     =&w(\lambda) \csm (X(wW_P)^\circ)-\sum_{\alpha>0,ws_\alpha <w}\langle\lambda,\alpha^\vee\rangle \csm (X(ws_\alpha W_P)^\circ).
	\end{align*}
The second equality follows from the fact that the left operator $\calT_w^L$ commutes with $c_1^T(\calL_\lambda)$ because the latter is Weyl-group invariant, as $\calL_\lambda$ is 
a $G$-equivariant line bundle; see \cite{mihalcea2020left}.
Finally, the Chevalley formula for the SM classes follows from the one on CSM via the duality in \Cref{equ:dualP}, similar to the proof of \Cref{lem:mcsmc} above.
\end{proof}

\section{An example of the $\lambda$-chain formula}\label{ex:A_2-1}
We consider Lie type $A_2$, with the Weyl group $W=S_3$,
with
$\lambda=2\varpi_1+\varpi_2$, and $w=s_2 s_1$. 
We can find an alcove walk $p_{-\lambda}$ from $\fA$ to $\fA-\lambda$ as indicated by the red path in diagram (b).
This gives the corresponding reduced expression  
$v_{-\lambda}=s_2 s_1 s_2 s_0 s_1 s_2$ and   the corresponding alcove path 
 $$\fA=
 A_0  \stackrel{-\beta_1}{\longrightarrow}  
 A_1  \stackrel{-\beta_2}{\longrightarrow} 
 A_2  \stackrel{-\beta_3}{\longrightarrow} 
 A_3  \stackrel{-\beta_4}{\longrightarrow} 
 A_4  \stackrel{-\beta_5}{\longrightarrow} 
 A_5  \stackrel{-\beta_6}{\longrightarrow} 
 A_6=\fA-\lambda\;
 (A_{i}=r_i A_{i-1}, 1\leq i\leq 6),
 $$
with $\lambda$-chain
$\beta_1=\alpha_2,\;\beta_2=\alpha_1+\alpha_2,\;
\beta_3=\alpha_1,\;\beta_4=\alpha_1+\alpha_2,\;
\beta_5=\alpha_1,\;\beta_6=\alpha_1+\alpha_2$.

The associated sequence of hyperplanes is
$$h_1=H_{\alpha_2,0},
h_2=H_{\alpha_1+\alpha_2,0},
h_3=H_{\alpha_1,0},
h_4=H_{\alpha_1+\alpha_2,-1},
h_5=H_{\alpha_1,-1},
h_6=H_{\alpha_1+\alpha_2,-2}.
$$

\begin{figure}[hbtp]
\begin{minipage}[b]{6cm}
\newcommand*\rows{10}
\begin{center}
\begin{tikzpicture}[scale=0.95]
    \clip(0.6,0.5) rectangle (7.2, {4.3*sqrt(3)});
    \foreach \row in {0, 1, ...,\rows} {
        \draw[gray] ($\row*(0, {0.5*sqrt(3)})$) -- ($(\rows,0)+\row*(0, {0.5*sqrt(3)})$);};
        
    \foreach \row in {0,1,...,4,5}{
        \draw[gray] ($\row*(1, 0)$) -- ($(5, {\rows*0.5*sqrt(3)})+\row*(1.0,{0.0*sqrt(3)})$);};        
   \foreach \row in {5,...,\rows}{
        \draw[gray] ($\row*(1, 0)$) -- ($(\row-5,{\rows*0.5*sqrt(3)})$);};
 
     \foreach \row in {1,2,3,4}{
        \draw[gray] ($\row*(1, 0)$) -- ($(0 ,{\row*sqrt(3)})$);};       

     \foreach \row in {1,2,3,4}{
        \draw[gray] ($(\row, {5*sqrt(3)})$) -- ($(0 ,{(5-\row)*sqrt(3)})$);};  

          \foreach \row in {1,2,3,4}{
        \draw[gray] ($({\rows-\row}, 0)$) -- ($(\rows ,{\row*sqrt(3)})$);};  

           \foreach \row in {1,2,3,4}{
        \draw[gray] ($({\rows-\row},  {5*sqrt(3)})$) -- ($(\rows ,{(5-\row)*sqrt(3)})$);};

\newcommand{\trig}[2]{
\draw [lightgray,fill] ( #1,#2)--({#1+0.5},{#2+sqrt(3)/2-0.05})--({#1-0.5},{#2+sqrt(3)/2-0.05})--cycle;}
\trig{4}{3.5};
\trig{3.5}{6.1};

\foreach \x/\y in {0/1,0/-1, 0.75/0.5,-0.75/-0.5,-0.75/0.5, 0.75/-0.5,-0.75/1.5}
{\node[gray] at ({\x+4},{(\y+4)*sqrt(3)/2}) {\tiny 0};
\node[gray] at ({\x+4},{(\y+6)*sqrt(3)/2}) {\tiny 0};
};

\foreach \x/\y in {0/1,0/-1, 0.75/0.5,-0.75/-0.5,-0.75/0.5, 0.75/-0.5}
{\node[gray] at ({\x+5},{(\y+4)*sqrt(3)/2}) {\tiny 1};
\node[gray] at ({\x+5},{(\y+6)*sqrt(3)/2}) {\tiny 1};
};
\foreach \x/\y in {0/0,0/2,1.5/1,1.5/-1}
{\node[gray] at ({\x+4.5},{(\y+4)*sqrt(3)/2}) {\tiny 2};
\node[gray] at ({\x-0.25+4},{(\y+4)*sqrt(3)/2+sqrt(3)/4}) {\tiny 2};
\node[gray] at ({\x-0.25+4},{(\y+4)*sqrt(3)/2-sqrt(3)/4}) {\tiny 2};
};

\foreach \x/\y in {-1.5/0,-1.5/-2,1.5/0,1.5/2}
{\node[gray] at ({\x+4},{(\y+4)*sqrt(3)/2}) {\tiny 0};};
\foreach \x/\y in {-2.75/0.5,-2.75/-0.5,-2.75/-2.5,-2.75/-1.5,-1.25/-1.5}
{\node[gray] at ({\x+4.5},{(\y+4)*sqrt(3)/2}) {\tiny 0};};

\foreach \x/\y in {-0.9/2,-0.9/0}
{\node[gray] at ({\x+4.5},{(\y+4)*sqrt(3)/2}) {\tiny 1};};

\foreach \x/\y in {-1.75/0.5,-1.75/-0.5,-1.75/1.5,-1.75/-1.5}
{\node[gray] at ({\x+4.5},{(\y+4)*sqrt(3)/2}) {\tiny 1};};

\foreach \x/\y in {-1.5/1,-1.5/3,-1.5/-1}
{\node[gray] at ({\x+4.5},{(\y+4)*sqrt(3)/2}) {\tiny 2};};
\foreach \x/\y in {-2.25/0.5,-2.25/-0.5,-2.25/-2.5,-2.25/-1.5,-0.75/-1.5}
{\node[gray] at ({\x+4.5},{(\y+4)*sqrt(3)/2}) {\tiny 2};};

\draw [brown,thick] ($(2,0)$)--($(6.5, {(4.5)*sqrt(3)})$);
\draw [brown,thick] ($(6,0)$)--($(1.5, {(4.5)*sqrt(3)})$);
\draw [brown,thick] ($(0,{2*sqrt(3)})$)--($(8, {(2*sqrt(3)})$);

\node at (6.3,7.2) {$H_{\alpha_1,0}$};  
\node at (1.9,7.2) {$H_{\alpha_2,0}$}; 
\node at (6.4,3.47) {$H_{\alpha_1+\alpha_2,0}$}; 

\coordinate (a) at  (4,{2*sqrt(3)});
\draw[name path=circleA,thick] (a) circle (0.07);

\draw [->,thick,gray] (4,{2*sqrt(3)})--(3.5,{2.5*sqrt(3)});
\draw [->,thick,gray] (4,{2*sqrt(3)})--(4.5,{2.5*sqrt(3)});
\node at (3.3,4.5) {$\varpi_1$};
\node at (4.3,4.5) {$\varpi_2$};

\node at (4.05,3.9) {\tiny$\fA$};      
\node at (3.5,6.4) {\tiny$\fA+\lambda$};  
\node at (1.5,1.2) {\tiny$w(\fA+\lambda)$};  

\node at (1.5,1.6) {\tiny$(0)$};
\node at (1.5,3.5) {\tiny$(4)$};
\node at (2,4.3) {\tiny$(8)$};
\node at (1.3,2) {\tiny$(5)$};
\node at (2,2.6) {\tiny$(10)$};
\node at (2.7,1.8) {\tiny$(3)$};
\node at (4.2,2.8) {\tiny$(2)$};
\node at (5.55,3.9) {\tiny$(1)$};
\node at (3.3,5.0) {\tiny$(6)$};
\node at (5, 4.2) {\tiny$(7)$};
\node at (3.5,2.8) {\tiny$(9)$};

\draw [->,thick,red]  ($(4 , {(2+1/3)*sqrt(3)})$)--($(4 , {(2+2/3)*sqrt(3)})$)
                    --($(3.5 , {(3-1/6)*sqrt(3)})$)--($(3.5 , {(3+1/6)*sqrt(3)})$)
                   --($(3 , {(3+1/3)*sqrt(3)})$)--($(3 , {(3+2/3)*sqrt(3)})$)
                   --($(3.5 , {(4-1/6)*sqrt(3)})$); 
\draw [->,thick,green]  ($(4 , {(2+1/3)*sqrt(3)})$)--($(3.55 , {(2+1/6)*sqrt(3)})$)
                                         --($(3.55 , {(2-1/6)*sqrt(3)})$);
                        
     \draw [->,thick,blue]  ($(3.5 , {(2-1/6)*sqrt(3)})$)--($(3.0 , {(2-1/3)*sqrt(3)})$)
                    --($(3.0 , {(2-2/3)*sqrt(3)})$) --($(2.5 , {(1+1/6)*sqrt(3)})$)
                    --($(2.5 , {(1-1/6)*sqrt(3)})$)--($(2.0 , {(2/3)*sqrt(3)})$)--($(1.5 , {(1-1/6)*sqrt(3)})$); 
       \draw [->,thick] ($(2.5 , {sqrt(3)})$)--($(2.4 , {sqrt(3)})$)--($(2.4 , {(1+1/6)*sqrt(3)})$)
                               --($(2 , {(2-2/3)*sqrt(3)})$)--($(1.5 , {(1+1/6)*sqrt(3)})$);
        \draw [->,thick]  ($(3.0 , {(1.5)*sqrt(3)})$)--($(2.9 , {(1.5)*sqrt(3)})$)--($(2.9 , {(2-1/3)*sqrt(3)})$)
                     --($(2.5 , {(2-1/6)*sqrt(3)})$)--($(2.5 , {(2+1/6)*sqrt(3)})$)--($(2 , {(2+1/3)*sqrt(3)})$)
                     --($(1.5 , {(2+1/6)*sqrt(3)})$);
        \draw [->,thick]  ($(1.75 , {(2+1/4)*sqrt(3)})$)--($(1.7 , {(2+1/4+0.05)*sqrt(3)})$)
                               --($(2 , {(2+1/3+0.06)*sqrt(3)})$);
        \draw [->,thick]  ($(1.75 , {(1+1/4)*sqrt(3)})$)--($(1.7 , {(1+1/4+0.05)*sqrt(3)})$)
                               --($(2 , {(1+1/3+0.06)*sqrt(3)})$);
        \draw [->,thick]  ($(2.75 , {(1+1/4)*sqrt(3)})$) --($(2.8 , {(1+1/5)*sqrt(3)})$)
                       --($(3 , {(1+1/3-0.06)*sqrt(3)})$)--($(3.5 , {(1+1/6-0.02)*sqrt(3)})$)
                       --($(4 , {(1+1/3)*sqrt(3)})$) --($(4 , {(1+2/3-0.05)*sqrt(3)})$);

        \draw [->,thick]  ($(3.22 , {(1+1/5+0.02)*sqrt(3)})$)--($(3.26 , {(1+1/5+0.06)*sqrt(3)})$)
                       --($(3.1 , {(1+1/3-0.03)*sqrt(3)})$) --($(3.1 , {(1+2/3-0.03)*sqrt(3)})$) 
                       --($(3.5 , {(2-1/6-0.06)*sqrt(3)})$);
        \draw [->,thick]  ($(2.25 , {(3/4)*sqrt(3)})$)--($(2.3 , {(2/4+1/5)*sqrt(3)})$)
        --($(2.6 , {(1-0.21)*sqrt(3)})$)--($(2.6 , {(1+1/7)*sqrt(3)})$);

\draw [->,thick]   ($(3.25 , {(1.75)*sqrt(3)})$)--($(3.21 , {(1.8)*sqrt(3)})$)
                       --($(3.5 , {(2-1/6+0.07)*sqrt(3)})$)--($(4 , {(2-1/3+0.02)*sqrt(3)})$)
                       --($(4.5 , {(2-1/6)*sqrt(3)})$)--($(5 , {(2-1/3)*sqrt(3)})$)
                       --($(5.5 , {(2-1/6)*sqrt(3)})$)--($(5.5 , {(2+1/6)*sqrt(3)})$);
 \draw [->,thick]  ($(3.78 , {(2-1/4+0.02)*sqrt(3)})$)--($(3.82 , {(2-1/4+0.065)*sqrt(3)})$)
                       --($(3.5 , {(2-1/6+0.12)*sqrt(3)})$)--($(3.5 , {(2+1/6)*sqrt(3)})$)
                       --($(3 , {(2+1/3)*sqrt(3)})$)--($(3 , {(2+2/3)*sqrt(3)})$)
                       --($(3.4 , {(2+2/3+1/6-0.02)*sqrt(3)})$);
                       
\draw [->,thick]   ($(4.75 , {(2-1/4)*sqrt(3)})$)--($(4.785 , {(2-1/4+0.035)*sqrt(3)})$)
                       --($(4.55 , {(2-1/6+0.03)*sqrt(3)})$)--($(4.55 , {(2+1/6)*sqrt(3)})$)
                       --($(5 , {(2+1/3)*sqrt(3)})$);
                        
\newcommand{\hexagon}[2]{
\node[red] at (#1,{#2*sqrt(3)/2}) {$\circ$};
\draw [thick, yshift=0.0cm,xshift=0cm] 
({-0.5+#1},{-sqrt(3)/2 +#2*sqrt(3)/2})--
({0.5+#1},{-sqrt(3)/2 +#2*sqrt(3)/2})
--({1+#1},{0 +#2*sqrt(3)/2})--({0.5+#1},{sqrt(3)/2 +#2*sqrt(3)/2})--
({-0.5+#1},{sqrt(3)/2 +#2*sqrt(3)/2})--
({-1+#1},{0 +#2*sqrt(3)/2})
--({-0.5+#1},{-sqrt(3)/2 +#2*sqrt(3)/2});
}

\hexagon{2}{2};
\hexagon{2}{4};
\hexagon{3.5}{3};
\hexagon{3.5}{5};
\hexagon{3.5}{7};
\hexagon{5}{4};

\end{tikzpicture}
\end{center}
 \subcaption{Alcove walk $\color{red} p_{\lambda}$ from $\fA$ to $\fA+\lambda$\\
 $\color{blue}p_w=c^{-}_2 c^{-}_1$
,
$\color{red} p_\lambda=c^{+}_{0} c^{+}_{2} c^{+}_{1} c^{+}_{0} c^{+}_{2} c^{+}_{0}$}
\end{minipage}\hspace{1cm}
\begin{minipage}[b]{6cm}
\newcommand*\rows{10}
\begin{center}
\begin{tikzpicture}[scale=0.95]
    \clip(0.6,0.5) rectangle (7.2, {4.3*sqrt(3)});
    \foreach \row in {0, 1, ...,\rows} {
        \draw[gray] ($\row*(0, {0.5*sqrt(3)})$) -- ($(\rows,0)+\row*(0, {0.5*sqrt(3)})$);};
    \foreach \row in {0,1,...,4,5}{
        \draw[gray] ($\row*(1, 0)$) -- ($(5, {\rows*0.5*sqrt(3)})+\row*(1.0,{0.0*sqrt(3)})$);};        
   \foreach \row in {5,...,\rows}{
        \draw[gray] ($\row*(1, 0)$) -- ($(\row-5,{\rows*0.5*sqrt(3)})$);};
 
     \foreach \row in {1,2,3,4}{
        \draw[gray] ($\row*(1, 0)$) -- ($(0 ,{\row*sqrt(3)})$);};       

     \foreach \row in {1,2,3,4}{
        \draw[gray] ($(\row, {5*sqrt(3)})$) -- ($(0 ,{(5-\row)*sqrt(3)})$);};  

          \foreach \row in {1,2,3,4}{
        \draw[gray] ($({\rows-\row}, 0)$) -- ($(\rows ,{\row*sqrt(3)})$);};  

           \foreach \row in {1,2,3,4}{
        \draw[gray] ($({\rows-\row},  {5*sqrt(3)})$) -- ($(\rows ,{(5-\row)*sqrt(3)})$);};  

\newcommand{\trig}[2]{
\draw [lightgray,fill] ( #1,#2)--({#1+0.5},{#2+sqrt(3)/2-0.05})--({#1-0.5},{#2+sqrt(3)/2-0.05})--cycle;}
\trig{4}{3.5};
\trig{4.5}{0.9};

\foreach \x/\y in {0/1,0/-1, 0.75/0.5,-0.75/-0.5,-0.75/0.5, 0.75/-0.5,-0.75/1.5}
{\node[gray] at ({\x+4},{(\y+4)*sqrt(3)/2}) {\tiny 0};
\node[gray] at ({\x+4},{(\y+6)*sqrt(3)/2}) {\tiny 0};
};

\foreach \x/\y in {0/1,0/-1, 0.75/0.5,-0.75/-0.5,-0.75/0.5, 0.75/-0.5}
{\node[gray] at ({\x+5},{(\y+4)*sqrt(3)/2}) {\tiny 1};
\node[gray] at ({\x+5},{(\y+6)*sqrt(3)/2}) {\tiny 1};
};
\foreach \x/\y in {0/0,0/2,1.5/1,1.5/-1}
{\node[gray] at ({\x+4.5},{(\y+4)*sqrt(3)/2}) {\tiny 2};
\node[gray] at ({\x-0.25+4},{(\y+4)*sqrt(3)/2+sqrt(3)/4}) {\tiny 2};
\node[gray] at ({\x-0.25+4},{(\y+4)*sqrt(3)/2-sqrt(3)/4}) {\tiny 2};
};

\foreach \x/\y in {-1.5/0,-1.5/-2,1.5/0,1.5/2}
{\node[gray] at ({\x+4},{(\y+4)*sqrt(3)/2}) {\tiny 0};};
\foreach \x/\y in {-2.75/0.5,-2.75/-0.5,-2.75/-2.5,-2.75/-1.5,-1.25/-1.5}
{\node[gray] at ({\x+4.5},{(\y+4)*sqrt(3)/2}) {\tiny 0};};

\foreach \x/\y in {-0.9/2,-0.9/0}
{\node[gray] at ({\x+4.5},{(\y+4)*sqrt(3)/2}) {\tiny 1};};

\foreach \x/\y in {-1.75/0.5,-1.75/-0.5,-1.75/1.5,-1.75/-1.5,-0.25/-1.5}
{\node[gray] at ({\x+4.5},{(\y+4)*sqrt(3)/2}) {\tiny 1};};

\foreach \x/\y in {-1.5/1,-1.5/3,-1.5/-1}
{\node[gray] at ({\x+4.5},{(\y+4)*sqrt(3)/2}) {\tiny 2};};
\foreach \x/\y in {-2.25/0.5,-2.25/-0.5,-2.25/-2.5,-2.25/-1.5,-0.75/-1.5,-0.75/3.5}
{\node[gray] at ({\x+4.5},{(\y+4)*sqrt(3)/2}) {\tiny 2};};

\node[gray] at ({4.5},{(2)*sqrt(3)/2}) {\tiny 2};

\draw [brown,thick] ($(2,0)$)--($(6.5, {(4.5)*sqrt(3)})$);
\draw [brown,thick] ($(6,0)$)--($(1.5, {(4.5)*sqrt(3)})$);
\draw [brown,thick] ($(0,{2*sqrt(3)})$)--($(8, {(2*sqrt(3)})$);

\node at (6.3,7.2) {$H_{\alpha_1,0}$};  
\node at (1.9,7.2) {$H_{\alpha_2,0}$}; 
\node at (6.4,3.47) {$H_{\alpha_1+\alpha_2,0}$}; 

\coordinate (a) at  (4,{2*sqrt(3)});
\draw[name path=circleA,thick] (a) circle (0.07);

\draw [->,thick,gray] (4,{2*sqrt(3)})--(3.5,{2.5*sqrt(3)});
\draw [->,thick,gray] (4,{2*sqrt(3)})--(4.5,{2.5*sqrt(3)});
\node at (3.3,4.5) {$\varpi_1$};
\node at (4.3,4.5) {$\varpi_2$};

\node at (4.05,3.9) {\tiny$\fA$};      
\node at (4.5,1.2) {\tiny $\fA-\lambda$};  

\node at (5.5,5.0) {\tiny$(0)$};
\node at (4.8,4.5) {\tiny$(1)$};
\node at (3.4,3.7) {\tiny$(2)$};
\node at (1.9,2.8) {\tiny$(3)$};
\node at (4.5,3.0) {\tiny$(8)$};
\node at (5.5,3.95) {\tiny$(4)$};
\node at (5.5,2.3) {\tiny$(5)$};
\node at (4.15,1.5) {\tiny$(6)$};
\node at (3,2.1) {\tiny$(7)$};

\draw [->,thick,red]  ($(4 , {(2+1/3+0.02)*sqrt(3)})$)--($(3.5 , {(2+1/6)*sqrt(3)})$)
                   --($(3.5 , {(2-1/6)*sqrt(3)})$)--($(4 , {(2-1/3)*sqrt(3)})$)
                  --($(4 , {(1+1/3)*sqrt(3)})$)--($(4.5 , {(1+1/6)*sqrt(3)})$)
                  --($(4.5 , {(1-1/6)*sqrt(3)})$); 

\draw [->,thick,green]  ($(4 , {(2+1/3)*sqrt(3)})$)--($(3.55 , {(2+1/6)*sqrt(3)})$)
                                         --($(3.55 , {(2-1/6)*sqrt(3)})$);
\draw [->,thick,blue]  ($(3.55 , {(2-1/6)*sqrt(3)})$)--($(4.0 , {(2-1/3+0.02)*sqrt(3)})$)
                    --($(4.5 , {(2-1/6)*sqrt(3)})$) --($(4.5 , {(2+1/6)*sqrt(3)})$)
                    --($(5 , {(2+1/3)*sqrt(3)})$)--($(5 , {(2+2/3)*sqrt(3)})$)
                    --($(5.5 , {(3-1/6)*sqrt(3)})$); 
   \draw [->,thick] ($(5.25 , {(2.5+1/4)*sqrt(3)})$)--($(5.2 , {(2.5+1/4+0.08)*sqrt(3)})$)
   --($(4.95 , {(2+2/3+0.08)*sqrt(3)})$);
   \draw [->,thick]  ($(4.75 , {(2+1/4)*sqrt(3)})$)--($(4.7 , {(2+1/4+0.05)*sqrt(3)})$)
   --($(4.5 , {(2+1/6+0.06)*sqrt(3)})$)--($(4 , {(2+1/3+0.08)*sqrt(3)})$)
   --($(3.5 , {(2+1/6+0.08)*sqrt(3)})$);
    \draw [->,thick]  ($(4.3 , {(2+1/4+0.05)*sqrt(3)})$)--($(4.25 , {(2+1/4)*sqrt(3)})$)
    --($(4.43 , {(2+1/6)*sqrt(3)})$)--($(4.41 , {(2-1/6)*sqrt(3)})$);
   \draw [->,thick]  ($(4.25 , {(2-1/4)*sqrt(3)})$)--($(4.3 , {(2-1/4-0.05)*sqrt(3)})$)
                         --($(4.0 , {(2-1/3-0.05)*sqrt(3)})$)--($(3.5 , {(2-1/6-0.06)*sqrt(3)})$)
                         --($(3 , {(2-1/3)*sqrt(3)})$)--($(2.5 , {(2-1/6)*sqrt(3)})$)
                         --($(2 , {(2-1/3)*sqrt(3)})$);
    \draw [->,thick] ($(2.75 , {(2-1/4)*sqrt(3)})$)--($(2.71 , {(2-1/4-0.035)*sqrt(3)})$)
    --($(3 , {(2-1/3-0.05)*sqrt(3)})$)--($(3 , {(1+1/3-0.05)*sqrt(3)})$);
                         
  \draw [->,thick]  ($(3.7 , {(2-1/4-0.05)*sqrt(3)})$)--($(3.66 , {(2-1/4-0.08)*sqrt(3)})$)
  --($(4-0.05 , {(2-1/3-0.08)*sqrt(3)})$)--($(4-0.05 , {(2-2/3-0.05)*sqrt(3)})$)
  --($(4.45-0.03 , {(1+1/6-0.03)*sqrt(3)})$)--($(4.45-0.03 , {(1-1/6)*sqrt(3)})$);
                         
 \draw [->,thick] ($(5 , {(2+1/2)*sqrt(3)})$)--($(5.1 , {(2+1/2)*sqrt(3)})$)
 --($(5.1 , {(2+1/3-0.05)*sqrt(3)})$)--($(5.5 , {(2+1/6)*sqrt(3)})$);
                                
  \draw [->,thick]  ($(4.5 , {(2)*sqrt(3)})$) --($(4.6 , {(2)*sqrt(3)})$)
  --($(4.6 , {(2-1/6-0.05)*sqrt(3)})$)--($(5 , {(2-1/3)*sqrt(3)})$)
  --($(5 , {(2-2/3)*sqrt(3)})$)--($(5.5 , {(1+1/6)*sqrt(3)})$);

\newcommand{\hexagon}[2]{
\node[red] at (#1,{#2*sqrt(3)/2}) {$\circ$};
\draw [thick, yshift=0.0cm,xshift=0cm] 
({-0.5+#1},{-sqrt(3)/2 +#2*sqrt(3)/2})--
({0.5+#1},{-sqrt(3)/2 +#2*sqrt(3)/2})
--({1+#1},{0 +#2*sqrt(3)/2})--({0.5+#1},{sqrt(3)/2 +#2*sqrt(3)/2})--
({-0.5+#1},{sqrt(3)/2 +#2*sqrt(3)/2})--
({-1+#1},{0 +#2*sqrt(3)/2})
--({-0.5+#1},{-sqrt(3)/2 +#2*sqrt(3)/2});
}

\hexagon{3}{2};
\hexagon{3}{4};
\hexagon{1.5}{3};
\hexagon{4.5}{1};
\hexagon{4.5}{3};
\hexagon{4.5}{5};
\hexagon{6}{2};
\hexagon{6}{4};
\hexagon{6}{6};

\end{tikzpicture}
\end{center}

 \subcaption{Alcove walk $\color{red} p_{-\lambda}$ from $\fA$ to $\fA-\lambda$\\
 $\color{blue}p_w=c^{-}_2 c^{-}_1$,
 $\color{red} p_{-\lambda}=c^{-}_{2} c^{-}_{1} c^{-}_{2} c^{-}_{0} c^{-}_{1} c^{-}_{2}$}

\end{minipage}

\end{figure}

We first calculate $c_{u,\mu}^{w,\lambda}$ according to the formula 
\Cref{thm:lambda-chain2}.
In the proof we introduced $\lambda$-shifted (reversed) hyperplane sequence
 which can be seen in diagram (a).
$$h'_1=H_{\alpha_1+\alpha_2,1},h'_2=H_{\alpha_1,1},
h'_3=H_{\alpha_1+\alpha_2,2},h'_4=H_{\alpha_1,2},
,h'_5=H_{\alpha_1+\alpha_2,3},h'_6=H_{\alpha_2,1}.$$

According to the formula, we need to choose $J\subset \{ 1,2,\ldots, l \}$ such that 
$u\stackrel{J_{<}}{\longrightarrow} w$.

If $u=s_1$, $J=\{6\},\{4\},\{2\}$ as $s_{\alpha_1+\alpha_2}=s_1 s_2 s_1$.
For the weight $\mu$ , when $J=\{6\}$, we need to calculate $\mu=w \tilde{r}_{h_6}(\lambda)$.
But as is explained in the proof, $\tilde{r}_{h_6}=\hat{r}_{h'_1}$, so,
$\mu=w \hat{r}_{h'_1}(\lambda)=w (-\varpi_2)=-\varpi_1+\varpi_2$.
Likewise when $J=\{4\}$, $\mu=w \tilde{r}_{h_4}(\lambda)=w \hat{r}_{h'_3}(\lambda)=-\varpi_2$,
and when $J=\{2\}$, $\mu=w \tilde{r}_{h_2}(\lambda)=w \hat{r}_{h'_5}(\lambda)=\varpi_1-3\varpi_2$.
If $u=id$,  there are two possible Bruhat chains $u=id< s_1<s_2 s_1=w$ and 
$u=id< s_2<s_2 s_1=w$. For the first case, $J=\{5,6\}, \{3,6\},\{3,4\}$, and for the second case,
$J=\{1,5\},\{1,3\}$. 
When $J=\{5,6\}$, 
$\mu=w \tilde{r}_{h_6} \tilde{r}_{h_5} (\lambda)
=w \hat{r}_{h'_1} \hat{r}_{h'_2} (\lambda)=w \hat{r}_{h'_1}(2\varpi_2)=w(-\varpi_1+\varpi_2)=\varpi_1$.
All the possibilities and the corresponding (folded) alcove walks $\tilde{p}_{\lambda}$ are listed in the  table below.
(We can also see the bijection of \Cref{lemma:paths-bij}, cf. diagram (a).)
$$
\begin{array}{|c|c|c|c|c|c|}
\hline
p_w \tilde{p}_{\lambda}
&\tilde{p}_{\lambda}&
\mathcal{M}
&J
& \varphi(p)=u
&{\rm wt}(p)=w \tilde{r}^\lambda_{J_{>}}(\lambda)\\
\hline
\hline
(0)& {\color{blue}c^{-}_{0}c^{-}_{2}c^{-}_{1}c^{-}_{0}c^{-}_{2}c^{+}_{0}} & \{ \}&  \{ \}&s_2 s_1& \varpi_1-3\varpi_2\\
\hline
(1)& f^{+}_{0}c^{-}_{2}c^{+}_{1}c^{-}_{0}c^{+}_{2}c^{+}_{0} &\{ h_1\}& \{ 6\}&s_1 & -\varpi_1+\varpi_2\\
(2)& {\color{blue}c^{-}_{0}c^{-}_{2}}f^{+}_{1}c^{-}_{0}c^{+}_{2}c^{+}_{0} &\{ h_3\}&  \{ 4\}&s_1& -\varpi_2\\
(3)& {\color{blue}c^{-}_{0}c^{-}_{2}c^{-}_{1}c^{-}_{0}}f^{+}_{2}c^{+}_{0} &\{ h_5\}& \{ 2\}& s_1& \varpi_1-3\varpi_2\\ 
\hline
(4)& {\color{blue}c^{-}_{0}}f^{+}_{2}c^{+}_{1}c^{+}_{0}c^{+}_{2}c^{-}_{0} &\{ h_2\}& \{ 5\}& s_2&  2\varpi_1-2\varpi_2\\
(5)& {\color{blue}c^{-}_{0}c^{-}_{2}c^{-}_{1}}f^{+}_{0}c^{+}_{2}c^{-}_{0} &\{ h_4\}& \{ 3\}& s_2& \varpi_1-3\varpi_2\\
\hline
(6)& f^{+}_{0}f^{+}_{2}c^{+}_{1}c^{+}_{0}c^{+}_{2}c^{+}_{0} &\{ h_1, h_2\}& \{5, 6\}& id&\varpi_1 \\
(7)& f^{+}_{0}c^{-}_{2}c^{+}_{1}f^{+}_{0}c^{+}_{2}c^{+}_{0} &\{h_1, h_4 \}& \{ 3,6\}& id&-\varpi_1+\varpi_2\\
(8)& {\color{blue}c^{-}_{0}}f^{+}_{2}c^{+}_{1}c^{+}_{0}c^{+}_{2}f^{+}_{0} &\{ h_2, h_6\}& \{ 1,5\}& id& 2\varpi_1-2\varpi_2\\
(9)& {\color{blue}c^{-}_{0}c^{-}_{2}}f^{+}_{1}f^{+}_{0}c^{+}_{2}c^{+}_{0} &\{h_3, h_4 \}& \{ 3,4\}& id&-\varpi_2\\
(10)& {\color{blue}c^{-}_{0}c^{-}_{2}c^{-}_{1}}f^{+}_{0}c^{+}_{2}f^{+}_{0} &\{ h_4, h_6\}& \{1,3\}& id& \varpi_1-3\varpi_2\\
\hline
\end{array}
$$

By this table,  we get  all the coefficients $c^{w,\lambda}_{u,\mu}$ as follows.
$$
\begin{minipage}{12cm}
$c^{w,\lambda}_{s_2 s_1,\mu}=1 \text{ for } \mu= \varpi_1-3\varpi_2$,

$c^{w,\lambda}_{s_1,\mu}= (q-1) q^{-1} \text{ for } \mu=-\varpi_1+\varpi_2, -\varpi_2, \varpi_1-3\varpi_2$,

$c^{w,\lambda}_{s_2,\mu}= (q-1) q^{-1} \text{ for } \mu=2\varpi_1-2 \varpi_2,\varpi_1 -3\varpi_2$,

$c^{w,\lambda}_{id,\mu}=(q-1)^2 q^{-2}  \text{ for } \mu=\varpi_1,-\varpi_1+\varpi_2,2\varpi_1-2\varpi_2,
 -\varpi_2, \varpi_1-3\varpi_2$.
\end{minipage}
$$

Next we calculate $c^{w,-\lambda}_{u,\mu}$ according to \Cref{thm:lambda-chain1}.
For this case we need to chose
$J\subset \{ 1,2,\ldots, l \}$ such that 
$u\stackrel{J_{>}}{\longrightarrow} w$.

For example $u=id$, then there are three possible $J$ 
corresponding to the Bruhat chains
$u<u s_{\beta_3} <u  s_{\beta_3}  s_{\beta_2}=w$,
 $u<u s_{\beta_5} <u  s_{\beta_5}  s_{\beta_2}=w$,
$u<u s_{\beta_5} <u  s_{\beta_5}  s_{\beta_4}=w$.
For the first case the weight $\mu$ can be calculated as (using diagram (b)),
$$\mu=w \hat{r}_{J_{<}}(-\lambda)=s_2 s_1  \hat{r}_{h_2}  \hat{r}_{h_3}(-\lambda)=
s_2 s_1(3\varpi_1-2\varpi_2)=-2\varpi_1-\varpi_2.$$

All the possible $J$ and corresponding alcove walks $\tilde{p}_{-\lambda}$ are listed in the table below.
$$
\begin{array}{|c|c|c|c|c|c|}
\hline
p_w \tilde{p}_{-\lambda}
&\tilde{p}_{-\lambda}& 
\mathcal{M}&J
& \varphi(p)=u
&wt(p)=w \hat{r}_{J_{<}}(-\lambda)\\
\hline
\hline
(0)& {\color{blue}c^{-}_{2}c^{+}_{1}c^{+}_{2}c^{+}_{0}c^{+}_{1}c^{+}_{2}} & \{ \}& \{ \}& s_2 s_1& -\varpi_1+3\varpi_2\\
\hline
(1)& {\color{blue}c^{-}_{2}c^{+}_{1}c^{+}_{2}c^{+}_{0}c^{+}_{1}}f^{-}_{2} &\{ h_6\}& \{ 6\}& s_1 & \varpi_2\\
(2)& {\color{blue}c^{-}_{2}c^{+}_{1}c^{+}_{2}}f^{-}_{0}c^{+}_{1}c^{-}_{2} &\{ h_4\}&  \{4 \}&s_1&\varpi_1 -\varpi_2\\
(3)& {\color{blue}c^{-}_{2}}f^{-}_{1}c^{+}_{2}c^{-}_{0}c^{+}_{1}c^{-}_{2} &\{ h_2\}& \{ 2\}& s_1& 2\varpi_1-3\varpi_2\\ 
\hline
(4)& {\color{blue}c^{-}_{2}c^{+}_{1}c^{+}_{2}c^{+}_{0}}f^{-}_{1}c^{-}_{2} &\{ h_5\}&  \{ 5\}&s_2&  -2\varpi_1+2\varpi_2\\
(5)& {\color{blue}c^{-}_{2}c^{+}_{1}}f^{-}_{2}c^{-}_{0}c^{-}_{1}c^{-}_{2} &\{ h_3\}& \{3 \}& s_2& -3\varpi_1+\varpi_2\\
\hline
(6)& {\color{blue}c^{-}_{2}}f^{-}_{1}f^{-}_{2}c^{-}_{0}c^{-}_{1}c^{-}_{2} &\{ h_2, h_3\}& \{ 2,3\}& id&-2\varpi_1-\varpi_2 \\
(7)& {\color{blue}c^{-}_{2}}f^{-}_{1}c^{+}_{2}c^{-}_{0}f^{-}_{1}c^{-}_{2} &\{h_2, h_5 \}&  \{2,5 \}&id&-2\varpi_2\\
(8)& {\color{blue}c^{-}_{2}c^{+}_{1}c^{+}_{2}}f^{-}_{0}f^{-}_{1}c^{-}_{2} &\{ h_4, h_5\}& \{4,5 \}& id& -\varpi_1\\
\hline
\end{array}
$$

By this table, we get all the coefficients $c^{w,-\lambda}_{u,\mu}$ as  follows.
$$
\begin{minipage}{12cm}
$c^{w,-\lambda}_{s_2 s_1,\mu}=1 \text{ for } \mu= -\varpi_1+3\varpi_2$,

$c^{w,-\lambda}_{s_1,\mu}= (1-q) q^{-1} \text{ for } \mu=\varpi_2, \varpi_1-\varpi_2, 2\varpi_1-3\varpi_2$,

$c^{w,-\lambda}_{s_2,\mu}= (1-q) q^{-1} \text{ for } \mu=-2\varpi_1+2 \varpi_2,-3\varpi_1 +\varpi_2$,

$c^{w,-\lambda}_{id,\mu}=(1-q)^2 q^{-2}  \text{ for } \mu=-2\varpi_1-\varpi_2, -2\varpi_2, -\varpi_1$.
\end{minipage}
$$
\bibliographystyle{halpha}
\bibliography{che.bib}

\end{document}